%% file: front.tex
\documentclass[11pt]{amsart}

\input{header} 

\begin{document}
\thispagestyle{empty}

\begin{abstract} 
We construct cubic Dirac operators and relative cubic Dirac operators for infinite-dimensional quadratic $\ZZ$-graded color Lie algebras with finite-dimensional components. These operators are defined in completions of the quantum Weil algebra determined by the $\ZZ$-grading. The same grading fixes the normal-ordering convention. The failure of the normally ordered Casimir to be central, and of the normally ordered cubic Dirac operator to be $\mathfrak{g}$-invariant, is measured by a color analogue of the Kac--Peterson class. If this class is trivial, the Casimir admits a central correction and the cubic Dirac operator admits a corrected $\mathfrak{g}$-invariant form. For the corrected (relative) cubic Dirac operators, we establish Parthasarathy-type square formulas. We also extend the Chern--Weil homomorphism to completed $\mathfrak{g}$-differential algebras and identify the classical element whose quantization is the cubic Dirac operator with the Chern--Simons element associated with the quadratic invariant polynomial defined by $B$.

As applications, we consider symmetrizable Kac--Moody superalgebras. In this setting the Kac--Peterson class is trivial, with primitive given by the Weyl vector. For the affine Kac--Moody superalgebra associated to $\mathfrak{osp}(1\vert 2n)$, we compute $\mathrm{ker}\operatorname{D}_{\mathfrak{g},\mathfrak{g}_{\bar{0}}}^{2}$ on integrable highest weight supermodules. We then apply the relative square formula to $\omega$-unitarizable highest weight supermodules and obtain a Dirac inequality giving necessary conditions for unitarity. Finally, under assumptions satisfied by Kac--Moody superalgebras such as $\widehat{\mathfrak{sl}}(m\vert n)$, we identify the Dirac kernel with Lie superalgebra cohomology.
\end{abstract}

\maketitle

\setlength{\parindent}{1em}
\setcounter{tocdepth}{2}

\tableofcontents

\input{main} 

\bibliography{literatur}
\bibliographystyle{alpha}
\end{document}

%% file: header.tex
\usepackage{marginnote}
\usepackage{anysize} \marginsize{1.3in}{1.3in}{1in}{1in}
\usepackage{comment}
\usepackage{listings}
\usepackage{xcolor}
\usepackage{amsmath}
\usepackage{mathtools}
\usepackage{booktabs}
\usepackage[all]{xy}
\usepackage[utf8]{inputenc}
\usepackage{varioref}
\usepackage{upgreek}
\usepackage{amsfonts}
\usepackage{amssymb}
\usepackage{bbm}
\usepackage{esint}
\usepackage{graphicx}
\usepackage{subcaption}
\usepackage{tikz}
\usepackage{empheq}
\usepackage{enumitem}
\usepackage{tikz-cd}
\usepackage{upgreek}
\usepackage{todonotes}
\usetikzlibrary{matrix,arrows,decorations.pathmorphing}
\usepackage{mathrsfs}
\usepackage[hypertexnames=false,backref=page,pdftex,
 	pdfpagemode=UseNone,
 	breaklinks=true,
 	extension=pdf,
 	colorlinks=true,
 	linkcolor=blue,
 	citecolor=red,
 	urlcolor=blue,
 ]{hyperref}

\newcommand{\tr}{\operatorname{tr}}

\newcommand{\Hom}{\operatorname{Hom}}

\newcommand{\diag}{\operatorname{diag}}

\newcommand{\sgn}{\operatorname{sgn}}

\newcommand{\str}{\operatorname{str}}

\newcommand{\id}{\operatorname{id}}

\renewcommand{\Im}{\operatorname{im}}
\newcommand{\Ker}{\operatorname{ker}}

\newcommand{\End}{\operatorname{End}}
\newcommand{\ad}{\operatorname{ad}}

\newcommand{\Dirac}{\operatorname{D}}

\def\WW{{\mathcal{W}}}

\def\gr{{\mathrm{gr}}}

\def\calQ{{\mathcal{Q}}}

\def\br{{\operatorname{br}}}

\def\CC{{\mathbb C}}
\def\RR{{\mathbb R}}
\def\ss{{\mathfrak s}}

\def\psl{{\mathfrak{psl}}}
\def\rk{{\operatorname{rk}}}
\def\ggbar{{\overline{\gg}}}

\def\Mat{{\text{Mat}}}
\def\deg{{\operatorname{deg}}}
\def\sl{{\mathfrak{sl}}}
\def\st{{\text{st}}}
\def\Cl{{\operatorname{Cl}}}

\def\jj{{\mathfrak{j}}}
\def\Weil{{\mathcal{W}_{\varepsilon}(\gg)}}
\def\etr{{\operatorname{tr}_{\varepsilon}}}
\def\osp{{\mathfrak{osp}}}

\def\ng{{0_{\scriptscriptstyle \Gamma}}}
\def\cWeil{{\widehat{\WW}_{\varepsilon}(\gg)}}
\def\widew{{\widehat{\WW}}}
\def\dw{{\diff^{W}}}
\def\hh{{\mathfrak h}}
\def\diff{{\operatorname{d}}}
\def\calB{{\mathcal{B}}}
\def\CS{{\mathrm{CS}}}
\def\univ{{\mathrm{univ}}}
\def\Der{{\mathrm{Der}}}
\def\Auniv{{A_{\text{univ}}}}
\def\calD{{\mathcal{D}}}
\def\calA{{\mathcal{A}}}
\def\pp{{\mathfrak p}}

\def\KP{{\psi_{\operatorname{KP}}}}
\def\cat{{\textbf{Vec}_{\Gamma}^{\varepsilon}}}
\def\KPsharp{{\psi^{\sharp}_{\operatorname{KP}}}}

\def\ll{{\mathfrak l}}

\def\bb{{\mathfrak b}}
\def\even{{\mathfrak{g}_{\bar{0}}}}
\def\odd{{\mathfrak{g}_{\bar{1}}}}
\def\pp{{\mathfrak p}}

\def\NN{{\mathbb N}}
\def\nn {{\mathfrak{n}}}
\def\pp{{\mathfrak{p}}}
\def\gg{{\mathfrak{g}}}

\def\ZZ{{\mathbb Z}}

\def\aff{{\widehat{\osp}(1\vert 2)}}

\def\UE{{\mathfrak U}}

\newcommand{\gl}{{\mathfrak{gl}}}

\newcommand{\abs}[1]{\left|{#1}\right|}

\makeatletter
\newcommand*{\da@rightarrow}{\mathchar"0\hexnumber@\symAMSa 4B }
\newcommand*{\da@leftarrow}{\mathchar"0\hexnumber@\symAMSa 4C }
\newcommand*{\xdashrightarrow}[2][]{%
  \mathrel{%
    \mathpalette{\da@xarrow{#1}{#2}{}\da@rightarrow{\,}{}}{}%
  }%
}
\newcommand{\xdashleftarrow}[2][]{%
  \mathrel{%
    \mathpalette{\da@xarrow{#1}{#2}\da@leftarrow{}{}{\,}}{}%
  }%
}
\newcommand*{\da@xarrow}[7]{%
  \sbox0{$\ifx#7\scriptstyle\scriptscriptstyle\else\scriptstyle\fi#5#1#6\m@th$}%
  \sbox2{$\ifx#7\scriptstyle\scriptscriptstyle\else\scriptstyle\fi#5#2#6\m@th$}%
  \sbox4{$#7\dabar@\m@th$}%
  \dimen@=\wd0 %
  \ifdim\wd2 >\dimen@
    \dimen@=\wd2 %
  \fi
  \count@=2 %
  \def\da@bars{\dabar@\dabar@}%
  \@whiledim\count@\wd4<\dimen@\do{%
    \advance\count@\@ne
    \expandafter\def\expandafter\da@bars\expandafter{%
      \da@bars
      \dabar@ 
    }%
  }%
  \mathrel{#3}%
  \mathrel{%
    \mathop{\da@bars}\limits
    \ifx\\#1\\%
    \else
      _{\copy0}%
    \fi
    \ifx\\#2\\%
    \else
      ^{\copy2}%
    \fi
  }%
  \mathrel{#4}%
}
\makeatother

\makeatletter
\newsavebox\myboxA
\newsavebox\myboxB
\newlength\mylenA

\newcommand*\xtilde[2][0.8]{%
    \sbox{\myboxA}{$\m@th#2$}%
    \setbox\myboxB\null
    \ht\myboxB=\ht\myboxA%
    \dp\myboxB=\dp\myboxA%
    \wd\myboxB=#1\wd\myboxA
    \sbox\myboxB{$\m@th\widetilde{\copy\myboxB}$}
    \setlength\mylenA{\the\wd\myboxA}
    \addtolength\mylenA{-\the\wd\myboxB}%
    \ifdim\wd\myboxB<\wd\myboxA%
       \rlap{\hskip 0.5\mylenA\usebox\myboxB}{\usebox\myboxA}%
    \else
        \hskip -0.5\mylenA\rlap{\usebox\myboxA}{\hskip 0.5\mylenA\usebox\myboxB}%
    \fi}

\newbox\usefulbox

\def\getslant #1{\strip@pt\fontdimen1 #1}

\def\xxtilde #1{\mathchoice
 {{\setbox\usefulbox=\hbox{$\m@th\displaystyle #1$}%
    \dimen@ \getslant\the\textfont\symletters \ht\usefulbox
    \divide\dimen@ \tw@ 
    \kern\dimen@ 
    \xtilde{\kern-\dimen@ \box\usefulbox\kern\dimen@ }\kern-\dimen@ }}
 {{\setbox\usefulbox=\hbox{$\m@th\textstyle #1$}%
    \dimen@ \getslant\the\textfont\symletters \ht\usefulbox
    \divide\dimen@ \tw@ 
    \kern\dimen@ 
    \xtilde{\kern-\dimen@ \box\usefulbox\kern\dimen@ }\kern-\dimen@ }}
 {{\setbox\usefulbox=\hbox{$\m@th\scriptstyle #1$}%
    \dimen@ \getslant\the\scriptfont\symletters \ht\usefulbox
    \divide\dimen@ \tw@ 
    \kern\dimen@ 
    \xtilde{\kern-\dimen@ \box\usefulbox\kern\dimen@ }\kern-\dimen@ }}
 {{\setbox\usefulbox=\hbox{$\m@th\scriptscriptstyle #1$}%
    \dimen@ \getslant\the\scriptscriptfont\symletters \ht\usefulbox
    \divide\dimen@ \tw@ 
    \kern\dimen@ 
    \xtilde{\kern-\dimen@ \box\usefulbox\kern\dimen@ }\kern-\dimen@ }}%
 {}}

\newcommand*\xoverline[2][0.75]{%
    \sbox{\myboxA}{$\m@th#2$}%
    \setbox\myboxB\null
    \ht\myboxB=\ht\myboxA%
    \dp\myboxB=\dp\myboxA%
    \wd\myboxB=#1\wd\myboxA
    \sbox\myboxB{$\m@th\overline{\copy\myboxB}$}
    \setlength\mylenA{\the\wd\myboxA}
    \addtolength\mylenA{-\the\wd\myboxB}%
    \ifdim\wd\myboxB<\wd\myboxA%
       \rlap{\hskip 0.5\mylenA\usebox\myboxB}{\usebox\myboxA}%
    \else
        \hskip -0.5\mylenA\rlap{\usebox\myboxA}{\hskip 0.5\mylenA\usebox\myboxB}%
    \fi}

\def\xxoverline #1{\mathchoice
 {{\setbox\usefulbox=\hbox{$\m@th\displaystyle #1$}%
    \dimen@ \getslant\the\textfont\symletters \ht\usefulbox
    \divide\dimen@ \tw@ 
    \kern\dimen@ 
    \overline{\kern-\dimen@ \box\usefulbox\kern\dimen@ }\kern-\dimen@ }}
 {{\setbox\usefulbox=\hbox{$\m@th\textstyle #1$}%
    \dimen@ \getslant\the\textfont\symletters \ht\usefulbox
    \divide\dimen@ \tw@ 
    \kern\dimen@ 
    \xoverline{\kern-\dimen@ \box\usefulbox\kern\dimen@ }\kern-\dimen@ }}
 {{\setbox\usefulbox=\hbox{$\m@th\scriptstyle #1$}%
    \dimen@ \getslant\the\scriptfont\symletters \ht\usefulbox
    \divide\dimen@ \tw@ 
    \kern\dimen@ 
    \xoverline{\kern-\dimen@ \box\usefulbox\kern\dimen@ }\kern-\dimen@ }}
 {{\setbox\usefulbox=\hbox{$\m@th\scriptscriptstyle #1$}%
    \dimen@ \getslant\the\scriptscriptfont\symletters \ht\usefulbox
    \divide\dimen@ \tw@ 
    \kern\dimen@ 
    \xoverline{\kern-\dimen@ \box\usefulbox\kern\dimen@ }\kern-\dimen@ }}%
 {}}
\makeatother

\makeatletter
\newcommand{\mylabel}[2]{#2\def\@currentlabel{#2}\label{#1}}
\makeatother

\makeatletter
\newcommand{\Mac}{}
\DeclareRobustCommand{\Mac}{%
  M%
  \raisebox{\dimexpr\fontcharht\font`M-\height}{%
    \check@mathfonts\fontsize{\sf@size}{0}\selectfont
    c%
  }%
}
\makeatother

\newtheoremstyle{citing}
  {}
  {}
  {\itshape}
  {}
  {\bfseries}
  {\textbf{.}}
  {.5em}
  {\thmnote{#3}}

\theoremstyle{plain}
\newtheorem{theorem}{Theorem}

\newtheorem{lemma}[theorem]{Lemma}
\newtheorem{corollary}[theorem]{Corollary}

\newtheorem{proposition}[theorem]{Proposition}

\theoremstyle{remark}
\newtheorem{example}[theorem]{Example}

\theoremstyle{definition}

\newtheorem{definition}[theorem]{Definition}

\numberwithin{equation}{section}

\theoremstyle{remark}
\newtheorem{remark}[theorem]{Remark}

{\theoremstyle{citing}
}

{\theoremstyle{definition}
}

\title{Dirac operators for infinite-dimensional color Lie algebras}

\author{Steffen Schmidt}
\address{Center for Quantum Mathematics, 
University of Southern Denmark, DK-5230 Odense, Denmark}
\email{stschmidt@imada.sdu.dk}
\author{Konstantin Wernli}
\address{Center for Quantum Mathematics, 
University of Southern Denmark, DK-5230 Odense, Denmark}
\email{kwernli@imada.sdu.dk}

\let\origmaketitle\maketitle
\def\maketitle{
  \begingroup
  \def\uppercasenonmath##1{} 
  \let\MakeUppercase\relax 
  \origmaketitle
  \endgroup
}


%% file: main.tex
\section{Introduction}
Let $(\gg,\ll)$ be a finite-dimensional quadratic pair of complex Lie superalgebras. Thus $\gg$ is equipped with a non-degenerate even supersymmetric invariant bilinear form $B$, and $\ll\subseteq\gg$ is a Lie subsuperalgebra for which $B_{\ll} \coloneqq B\vert_{\ll}$ is non-degenerate. We write
\begin{equation}
\gg=\ll\oplus\pp,\qquad \pp=\ll^{\perp}.
\end{equation}
In this setting Chen and Kang \cite{Dirac_quadratic} constructed a \emph{relative cubic Dirac operator} in
$
(\UE(\gg)\otimes\Cl(\pp))^{\ll},
$
which is the $\ll$-basic subalgebra $W(\gg,\ll)$
of the quantum Weil algebra
$
\WW(\gg)\coloneqq\UE(\gg)\otimes\Cl(\gg)
$
of $\gg$ \cite{Schmidt_perturbations}. In terms of a homogeneous basis $\{e_a\}$ of $\pp$ with $B|_{\pp}$-dual basis $\{e^a\}$, this element is given by
\begin{equation}
\Dirac_{\gg,\ll}
=
\sum_a e^a\otimes e_a
-\frac{1}{12}
\sum_{a,b,c}
(-1)^{p(e_{a})p(e_{b})+p(e_{c})}B(e_{a},[e_{b},e_{c}])
\bigl(1\otimes e^a\wedge e^b\wedge e^c\bigr).
\end{equation}
It is $\ll$-invariant and satisfies the Parthasarathy-type formula
\begin{equation}
\Dirac_{\gg,\ll}^{2}
=
\Omega_{\gg}\otimes 1
+
\frac{1}{24}\str(\ad_{\gg}(\Omega_{\gg}))
-
j\!\left(
\Omega_{\ll}\otimes 1
+
\frac{1}{24}\str(\ad_{\ll}(\Omega_{\ll}))
\right),
\end{equation}
where $\Omega_{\gg}$ and $\Omega_{\ll}$ denote the quadratic Casimir elements and $j$ is the canonical embedding of $\WW(\ll)$ into $\WW(\gg)$. The associated supercommutator
$
d=[\Dirac_{\gg,\ll},\cdot]_{\WW}
$
defines a differential on $\WW(\gg,\ll)$. Its cohomology identifies with the center $\mathfrak Z(\ll)$ of $\UE(\ll)$. This construction extends Kostant's cubic Dirac operator \cite{Kostant_cubic_Dirac} to quadratic Lie superalgebras and provides the algebraic framework underlying the super-analogue of Vogan's conjecture. Representation-theoretic applications have been studied in \cite{Schmidt_Noja,Schmidt_perturbations,Schmidt,SchmidtDirac}. These works develop Dirac cohomology for Lie superalgebras, compute it in several classes of modules, and derive structural consequences for highest weight theory. In particular, the square formula gives rise to Dirac inequalities, which play a role in the classification of unitarizable highest weight supermodules, including the case of $\sl(m\vert n)$ \cite{Schmidt}.

The super setting admits a direct extension to color Lie algebras, introduced by Rittenberg and Wyler \cite{RittenbergWyler1978} and systematically studied by Scheunert \cite{Scheunert1979}. These are common generalizations of Lie algebras and Lie superalgebras, obtained by replacing the $\ZZ_2$-grading by a grading over an arbitrary abelian group $\Gamma$ and the Koszul sign rule by a commutation factor $\varepsilon:\Gamma\times\Gamma\to\CC^\times$. The first genuinely color examples occur for $\Gamma=\ZZ_2\times\ZZ_2$, for instance the $\ZZ_2\times\ZZ_2$-graded general linear algebras $\mathfrak{gl}(m_1,m_2\vert n_1,n_2)$ and their orthosymplectic analogues (see~\cite{RittenbergWyler1978}). Recently, color Lie algebras have reappeared in mathematical physics as a natural language for symmetry structures beyond the ordinary boson--fermion paradigm. This renewed interest is particularly visible in $\ZZ_2\times\ZZ_2$-graded models, including the graded symmetries of the L\'evy--Leblond equation \cite{AizawaKuznetsovaTanakaToppan2016}, $\ZZ_2\times\ZZ_2$-graded mechanics and its quantization \cite{AizawaKuznetsovaToppan2021}, double-graded supersymmetric quantum mechanics \cite{BruceDuplij2020}, and $\ZZ_2\times\ZZ_2$-graded parastatistics in multiparticle quantum Hamiltonians \cite{Toppan2021}. Moreover, the first author showed in \cite{Schmidt_perturbations} that the general formulation of cubic Dirac operators for Lie superalgebras naturally leads to color Lie algebras, since the quantum Weil algebra carries an intrinsic color Lie algebra structure.

The finite-dimensional color analogue of the relative cubic Dirac operator was constructed by Meyer \cite{Meyer}. The properties of $\Dirac_{\gg,\ll}$, the Parthasarathy-type square formula, and the algebraic form of Vogan's conjecture extend to this setting after replacing the super sign rules by the corresponding $\varepsilon$-sign rules.

This article constructs relative cubic Dirac operators for infinite-dimensional $\ZZ$-graded color Lie algebras $\gg=\bigoplus_{i\in\ZZ}\gg_i$ with finite-dimensional homogeneous components. This includes, in particular, Kac--Moody superalgebras. We study representation-theoretic consequences of the resulting operators, compute the kernel of the square for integrable highest weight modules over $\widehat{\osp}(1\vert 2n)$, and derive consequences of unitarity. Our construction generalizes Meinrenken's infinite-dimensional version of Kostant's cubic Dirac operator \cite{Meinrenken} for ordinary Lie algebras. In this setting, formal infinite sums are replaced by normally ordered elements in suitable completions; the obstruction to the finite-dimensional identities is the Kac--Peterson cocycle, and triviality of its cohomology class yields a corrected cubic Dirac operator satisfying the expected square formula.

\subsection{Main Results and Organization}
We describe the main results and the organization of the paper. Throughout, $\mathfrak g$ is a $\mathbb Z$-graded quadratic color Lie algebra with finite-dimensional homogeneous components; see Section~\ref{subsec::conventions}. The commutation factor is denoted by $\varepsilon$, and the non-degenerate invariant $\varepsilon$-symmetric bilinear form by $B$. The $\mathbb Z$-grading determines the decomposition
\begin{equation}\label{eq::decomposition_intro}
\mathfrak g=\mathfrak g_-\oplus\mathfrak g_+,\qquad
\mathfrak g_+=\bigoplus_{i>0}\mathfrak g_i,\qquad
\mathfrak g_-=\bigoplus_{i\leq0}\mathfrak g_i.
\end{equation}

The starting point is the usual finite-dimensional formulas for the quadratic Casimir element and for the cubic Dirac operator. For a $\mathbb Z$-graded infinite-dimensional color Lie algebra these formulas become infinite sums. The decomposition $\mathfrak g=\mathfrak g_-\oplus\mathfrak g_+$ fixes the normal-ordering convention and determines the completions in which the infinite-dimensional analogues of the Casimir element and the cubic Dirac operator are defined. Concretely, \eqref{eq::decomposition_intro} is used to define the completions appearing below: the completed exterior and Clifford algebras $\widehat{\bigwedge}_{\varepsilon}(\gg)$ in Section~\ref{subsec::completion_exterior_clifford_superalgebra}, the completed symmetric $\widehat{S}_{\varepsilon}(\gg)$ and enveloping algebras $\widehat{\UE}_{\varepsilon}(\gg)$ in Section~\ref{subsec::completion_symmetric_universal_superalgebra}, and the completed Weil $\widehat{W}_{\varepsilon}(\gg)$ and quantum Weil algebras $\widehat{\WW}(\gg)$ in Section~\ref{subsec::completion_Weil_quantum_Weil_superalgebra}. In this context, the exterior-to-Clifford quantization has to be replaced by its normally ordered analogue $\widehat q:\widehat{\bigwedge}_{\varepsilon}(\mathfrak g)\to\widehat{\operatorname{Cl}}_{\varepsilon}(\mathfrak g)$, constructed in Definition~\ref{def::normal_ordered_quantization}. Similarly, the PBW symmetrization is replaced by the normally ordered map $\widehat Q:\widehat S_{\varepsilon}(\mathfrak g)\to\widehat U_{\varepsilon}(\mathfrak g)$, constructed in Definition~\ref{def::normal_ordered_color_PBW_symmetrization}. Combining both yields an isomorphism of graded vector spaces $\widehat{\calQ}: \widehat{W}_{\varepsilon}(\gg)\xrightarrow{\sim}\widehat{\WW}_{\varepsilon}(\gg)$.

The obstruction to the finite-dimensional formulas is the color analogue of the Kac--Peterson class. The chosen decomposition defines a color Lie algebra $2$-cocycle
$
\KP\in \widehat{\bigwedge}_{\varepsilon}^{2}(\gg^{\ast})
$
by
\begin{equation}
\KP(x,y)
\coloneqq
\tfrac12\etr(\ad_x\pi_{-}\ad_y\pi_{+})
-\tfrac12\varepsilon(x,y)\etr(\ad_y\pi_{-}\ad_x\pi_{+})
\end{equation}
for homogeneous $x,y \in \gg$, where $\pi_{\pm}$ denote the projections onto $\gg_{\pm}$. Its cohomology class is the Kac--Peterson class.  It measures the anomaly introduced by normal ordering: the normally ordered Casimir and the cubic Dirac operator have the expected finite-dimensional properties precisely after this class becomes trivial and a corresponding linear correction is chosen. More precisely, via $B$ the cocycle $\KP$ corresponds to an endomorphism $\Psi_{\KP}$ defined by $\KP(x,y)=B(\Psi_{\operatorname{KP}}(x),y)$, and Lemma~\ref{lemm::properties_Psi} shows that $\KP$ is a coboundary, that is, $\KP=d\rho$, if and only if $\Psi_{\operatorname{KP}}(x)=[\rho^\sharp,x]$ for all $x\in\gg$. In this case the anomaly is inner.

This gives the corrected Casimir and Dirac operators. The normal-ordered Casimir is the quantization $\Omega'_{\gg}\coloneqq \widehat Q(p)$
of the invariant quadratic element
\begin{equation}
p=\sum_a\varepsilon(e_a)e_ae^a=\sum_a e^ae_a\in\widehat S_{\varepsilon}^{2}(\gg).
\end{equation}
Theorem~\ref{thm::Lie_derivative_normal_ordered_Casimir} gives $L_x\Omega'_{\gg}=2\Psi_{\KP}(x)$. Hence, if $\KP=d\rho$, then $\Omega_{\gg}\coloneqq\Omega'_{\gg}+2\rho^\sharp$ is central.

The classical precursor of the cubic Dirac operator is the element
\begin{equation}
\calD\coloneqq \sum_a e^a\otimes e_a+1\otimes\phi \in \widehat{W}_{\varepsilon}(\gg)
\end{equation}
introduced in Section~\ref{subsec::CS_element}. Here $\phi\in \widehat{\bigwedge}_{\varepsilon}^{3}(\gg)$ is the structure constants tensor defined in Section~\ref{subsec::structure_constants_tensor}, that is,  the invariant cubic tensor encoding the Lie bracket of $\gg$ through the bilinear form $B$. 

Before quantizing, we explain the origin of the classical element $\calD$. We first extend the notion of a $\gg$-differential algebra in \cite{Alekseev_Meinrenken} to color Lie algebras and to the completed setting, and define connections $A$ together with their curvatures $F^{A}$. The completed Weil algebra satisfies the expected universal property: by Theorem~\ref{thm::Chern_Weil_map}, every connection in a completed $\gg$-differential algebra $\widehat{\calA}$ induces a unique homomorphism of completed $\gg$-differential algebras $\widehat{W}_{\varepsilon}(\gg)\to \widehat{\calA}$, namely the Chern--Weil homomorphism. In this setting, a Chern--Simons element in a completed $\gg$-differential algebra for an invariant polynomial $P$ and a connection $A$ is an odd element $\CS_P(A)$ satisfying $d\,\CS_P(A)=P(F^A)$. Applying this to the quadratic invariant polynomial defined by $B$ gives a canonical Chern--Simons element in $\widehat{W}_{\varepsilon}(\gg)$. Proposition~\ref{prop::D_is_CS_element} identifies this element with $\calD$.

The uncorrected cubic Dirac operator is obtained by normal-ordered quantization,
$
\Dirac'\coloneqq \calQ(\calD).
$
It is not $\gg$-invariant, and its square contains an obstruction term; see Theorem~\ref{thm::square_uncorrected}. This obstruction is again determined by $\KP$. If $\KP=d\rho$, the corrected cubic Dirac operator is
\begin{equation}
\Dirac\coloneqq \Dirac'+1\otimes\rho^\sharp .
\end{equation}
We prove that $\Dirac$ is $\gg$-invariant and satisfies the square formula
\begin{equation}
\Dirac^{2}
=
\Omega_{\gg}\otimes 1
+
\tfrac{1}{24}\etr(\ad_{\gg_{0}}(\Omega_{\gg_{0}}))(1\otimes 1)
+
B(\rho^{\sharp},\rho^{\sharp})(1\otimes 1);
\end{equation}
see Corollary~\ref{cor::D_square}.

The relative construction is developed in Section~\ref{subsec::relative_cubic_Dirac_operator}. Let $(\mathfrak g,\mathfrak l)$ be a quadratic pair satisfying the assumptions of that section. Then one has two Dirac operators $\Dirac_{\gg}\in \widehat{\WW}_{\varepsilon}(\gg)$ and $\Dirac_{\ll}\in \widehat{\WW}_{\varepsilon}(\ll)$. In order to compare these two absolute Dirac operators, one first has to regard the quantum Weil algebra of $\ll$ inside that of $\gg$. This is done by a homomorphism
$j:\widehat{\mathcal W}_{\varepsilon}(\ll)\longrightarrow
\widehat{\mathcal W}_{\varepsilon}(\gg)$, constructed in Section~\ref{subsec::relative_cubic_Dirac_operator}. Conceptually, $j$ is the diagonal embedding: it sends the enveloping part of $\ll$ to its image in $\gg$, and the Clifford part is embedded using the action of $\ll$ on the $B$-orthogonal complement $\ss$ in $\gg=\ll\oplus\ss$. With this embedding, the absolute Dirac operator of $\ll$ can be subtracted from the absolute Dirac operator of $\gg$ and one defines the \emph{relative cubic Dirac operator} by
\begin{equation}
\Dirac_{\gg,\ll}\coloneqq \Dirac_{\gg}-j(\Dirac_{\ll}).
\end{equation}

Theorem~\ref{thm::square_Dirac} shows that $\Dirac_{\gg,\ll}$ belongs to the completion of the $\ll$-basic subalgebra of $\WW_{\varepsilon}(\gg)$; equivalently, $\iota_x\Dirac_{\gg,\ll}=0$ and $L_x\Dirac_{\gg,\ll}=0$ for all $x \in \ll$. Moreover,
\begin{equation}
\Dirac_{\gg,\ll}^{2}
=
\Omega_{\gg}-j(\Omega_{\ll})
+\frac1{24}\operatorname{tr}_{\varepsilon}
\bigl(\operatorname{ad}_{\gg_0}(\Omega_{\gg_0})\bigr)
-\frac1{24}\operatorname{tr}_{\varepsilon}
\bigl(\operatorname{ad}_{\ll_0}(\Omega_{\ll_0})\bigr)
+B(\rho^\sharp,\rho^\sharp)
-B(\rho_{\ll}^\sharp,\rho_{\ll}^\sharp).
\end{equation}
If $[\ss,\ss]\subset\ll$, then the cubic term vanishes and $\Dirac_{\gg,\ll}$ is quadratic.

As an application, we consider Kac--Moody superalgebras in Section~\ref{subsec::Generalities_KM_Superalgebras}. For every symmetrizable Kac--Moody superalgebra, Proposition~\ref{prop::trivial_KP_class_KM} gives $\psi_{\mathrm{KP}}=d\rho$, where $\rho$ is the Weyl vector. Hence the Kac--Peterson class is trivial in this setting. In particular, every symmetrizable Kac--Moody superalgebra admits a corrected cubic Dirac operator $\Dirac \in\widehat{\mathcal W}(\gg)$, which we simply call the Dirac operator of $\gg$.

Sections~\ref{subsec::Oscillator_supermodule} and \ref{subsec::action_of_the_cubic_Dirac_operator} study the action of the relative Dirac operator for the quadratic pair $(\gg,\even)$. For a highest weight $\gg$-supermodule $M$, the operator acts on the completed endomorphism space of $M\otimes M(\odd)$,
\begin{equation}
\Dirac_{\gg,\even}\in \widehat{\End}(M\otimes M(\odd)),
\end{equation}
where $M(\odd)$ is the oscillator module constructed in Section~\ref{subsec::Oscillator_supermodule}. We prove that $\ker\Dirac_{\gg,\even}$ is an $\even$-module. Moreover, if $M$ has highest weight $\Lambda$ and $V(\mu)$ is a highest weight $\even$-submodule of $M\otimes M(\odd)$, then $\Dirac_{\gg,\even}^{2}$ acts on $V(\mu)$ by the scalar
\begin{equation}
B(\Lambda+\rho,\Lambda+\rho)-B(\mu+\rho_{\bar0},\mu+\rho_{\bar0});
\end{equation}
see Proposition~\ref{prop::action_on_HW_of_Dirac}. Thus the square measures the difference between the $\gg$- and $\even$-Casimir eigenvalues. Proposition~\ref{prop::non_vanishing_HW} gives the corresponding non-vanishing criterion and shows that $\ker\Dirac_{\gg,\even}^{2}\neq0$ for every highest weight supermodule.

Section~\ref{subsec::osp} specializes the preceding considerations to the Kac--Moody superalgebras associated to  $\mathfrak{osp}(1|2n)$. For integrable highest weight supermodules, Theorem~\ref{thm::main_osp} gives an explicit computation of $\ker\Dirac_{\gg,\even}^{2}$. Here we refer to Section~\ref{subsec::Generalities_KM_Superalgebras} for the definition of $\rho_{\bar1}$ in the Kac--Moody setting.

\begin{theorem}
Let $L_{\bb}(\Lambda)$ be a simple integrable highest weight supermodule of $\widehat{\mathfrak{osp}}(1|2n)$. Then
\[
\ker \Dirac_{\gg,\even}^{2}=L_0(\Lambda-\rho_{\bar1}),
\]
where $L_0(\Lambda-\rho_{\bar1})$ is the integrable highest weight $\even$-module of highest weight $\Lambda-\rho_{\bar1}$ relative to $\bb_{\bar0}$.
\end{theorem}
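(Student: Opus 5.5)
The plan is to reduce the computation of $\ker\Dirac_{\gg,\even}^{2}$ on $L_{\bb}(\Lambda)\otimes M(\odd)$ to a statement about Casimir eigenvalues, using Proposition~\ref{prop::action_on_HW_of_Dirac}. First I decompose $L_{\bb}(\Lambda)\otimes M(\odd)$ as an $\even$-module. For $\widehat{\osp}(1|2n)$ the even part is the affine algebra $\widehat{\mathfrak{sp}}(2n)$, with the same level. $L_{\bb}(\Lambda)$ is integrable, and $M(\odd)$ is the oscillator (Weil-type) module of $\odd$. Hence the tensor product is a direct sum of integrable highest weight $\even$-modules $L_0(\mu)$, each occurring with finite multiplicity. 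The point is that the $\ZZ$-grading keeps weight spaces finite-dimensional, and complete reducibility holds in category $\mathcal{O}$ for integrable modules of the symmetrizable Kac--Moody algebra $\even$. On each summand $L_0(\mu)$, Proposition~\ref{prop::action_on_HW_of_Dirac} says $\Dirac_{\gg,\even}^{2}$ acts by
\[
B(\Lambda+\rho,\Lambda+\rho)-B(\mu+\rho_{\bar0},\mu+\rho_{\bar0}).
\]
So the kernel is the sum of those $L_0(\mu)$ with $\|\mu+\rho_{\bar0}\|^{2}=\|\Lambda+\rho\|^{2}$.

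Next I locate the candidate summand. The vector $v_\Lambda\otimes \mathbf{1}$ has $\even$-weight $\Lambda-\rho_{\bar1}$, where the vacuum of $M(\odd)$ carries the shift $-\rho_{\bar1}$ by its construction in Section~\ref{subsec::Oscillator_supermodule}. This vector is $\bb_{\bar0}$-singular and occurs with multiplicity one. Since $\rho=\rho_{\bar0}-\rho_{\bar1}$, we get $\mu+\rho_{\bar0}=\Lambda+\rho$, so $L_0(\Lambda-\rho_{\bar1})\subseteq\ker\Dirac_{\gg,\even}^{2}$. This is the non-vanishing statement of Proposition~\ref{prop::non_vanishing_HW}.

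The main obstacle is the converse: no other $\mu$ occurring in the decomposition satisfies the norm equality. Every such $\mu$ has the form $\mu=\Lambda-\rho_{\bar1}-\beta$, where $\beta$ is a nonnegative integer combination of positive roots. The contributions come from even roots of $\nn^-$ acting on $L_{\bb}(\Lambda)$ and from odd roots coming from the oscillator creation operators. My first approach is the classical Kostant/Parthasarathy argument. Write
\[
\|\Lambda+\rho\|^{2}-\|\mu+\rho_{\bar0}\|^{2}=2B(\Lambda+\rho,\beta)-\|\beta\|^{2}
\]
and rewrite this as $\|\Lambda+\rho\|^{2}-\|w(\mu+\rho_{\bar0})\|^{2}$ for an affine Weyl group element $w$ making $\mu+\rho_{\bar0}$ dominant. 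Then I need strict positivity whenever $\beta\neq0$. For $\osp(1|2n)$ the Weyl groups of $\gg$ and $\even$ coincide, every odd root $\alpha$ has $2\alpha$ an even root, and there are no isotropic roots. This makes $\Lambda+\rho$ regular dominant for $\widehat{\mathfrak{sp}}(2n)$ in the appropriate sense (with $\rho$ pairing positively with all simple coroots). In addition, any weight of $L_{\bb}(\Lambda)\otimes M(\odd)$, shifted by $\rho_{\bar0}$, lies in the convex hull of $W(\Lambda+\rho)$ minus the positive cone. I would prove this second fact first, by combining the character identity $\mathrm{ch}\,M(\odd)=e^{-\rho_{\bar1}}\prod_{\alpha\in\Delta_{\bar1}^+}(1+e^{-\alpha})^{-1}$ (up to the paper's sign conventions) with the Kac--Kazhdan/Weyl--Kac character formula for $L_{\bb}(\Lambda)$. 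Using $\mathrm{ch}\,L_0(\Lambda-\rho_{\bar1})$ and the denominator identity for $\widehat{\osp}(1|2n)$, one expects the tensor product character to factor in a way that shows every highest weight $\mu$ satisfies $\mu+\rho_{\bar0}\in W(\Lambda+\rho)-Q^+$. The standard lemma on strict decrease of norms, applied at positive level, then forces equality only for $\mu+\rho_{\bar0}=\Lambda+\rho$.

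The remaining step is to argue that this summand appears with multiplicity one, which follows from the one-dimensionality of the weight space. Combining the three steps gives $\ker\Dirac_{\gg,\even}^{2}=L_0(\Lambda-\rho_{\bar1})$.
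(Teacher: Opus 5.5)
Your final step does not go through, and it cannot be repaired, because the statement is false as written.

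\textbf{A smaller error.} $\rho_{\bar1}=\rho_{\bar0}-\rho$ has level $(n+1)-(n+\tfrac12)=\tfrac12$. So $M(\odd)$ is the symplectic-boson module of level $-\tfrac12$, and every $\even$-weight of $L_{\bb}(\Lambda)\otimes M(\odd)$ has level $k-\tfrac12$. Hence none of the $\even$-constituents is integrable, and complete reducibility cannot be quoted from the integrable theory.

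\textbf{The decisive gap.} Grant the containment $\mu+\rho_{\bar0}\in\operatorname{conv}W(\Lambda+\rho)-Q^{+}$, which you only sketch. The equality $B(\mu+\rho_{\bar0},\mu+\rho_{\bar0})=B(\Lambda+\rho,\Lambda+\rho)$ then gives only $\mu+\rho_{\bar0}\in W(\Lambda+\rho)$. You silently upgrade this to $\mu+\rho_{\bar0}=\Lambda+\rho$, which requires excluding $\mu+\rho_{\bar0}=w(\Lambda+\rho)$ with $w\neq 1$. Such orbit points do occur.

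\textbf{Counterexample.} Use the standard simple system, whose odd simple root is $\epsilon_n$.
\begin{itemize}
\item For $\Lambda=0$, every term of $\Dirac_{\gg,\even}=2\sum_a(x_a\otimes\partial_a-\partial_a\otimes x_a)$ contains an element of $\odd$ acting on the trivial module. So $\Dirac_{\gg,\even}=0$ and $\ker\Dirac_{\gg,\even}^{2}=M(\odd)$. This contains the root vector $x_{-\epsilon_n}\in(\odd)_{-}$, of weight $-\rho_{\bar1}-\epsilon_n$. It does not lie in $L_0(-\rho_{\bar1})$, since $\epsilon_n$ is not in the even root lattice. Here $\mu+\rho_{\bar0}=\rho-\epsilon_n=r_{2\epsilon_n}(\rho)$.
\item The same holds whenever $B(\Lambda,\epsilon_n)=0$, e.g.\ $\Lambda=k\Lambda_0$. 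Then $Y_{\epsilon_n}v_\Lambda=0$ in $L_{\bb}(\Lambda)$, hence $\Dirac_{\gg,\even}(v_\Lambda\otimes x_{-\epsilon_n})=0$.
\item For arbitrary integrable $\Lambda$, put $m=2B(\Lambda,\epsilon_n)\in\ZZ_{\geq0}$, and let $\mathfrak{sl}_2$ be the subalgebra of the root $2\epsilon_n$. On $\CC[x_{-\epsilon_n}]$ the normally ordered operators for $\pm 2\epsilon_n$ reduce to multiples of $\partial^2$ and $x_{-\epsilon_n}^2$. Hence $\UE(\mathfrak{sl}_2)v_\Lambda\otimes x_{-\epsilon_n}\CC[x_{-\epsilon_n}^{2}]$ is the tensor product of the $(m+1)$-dimensional module with the Verma module of highest weight $-\tfrac32$. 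This splits into Verma modules with distinct central characters.
\item The lowest of these gives a vector of weight $\mu=\Lambda-\rho_{\bar1}-(2m+1)\epsilon_n$. The other even simple root vectors kill it for weight reasons, so it is $\bb_{\bar0}$-singular.
\item Since $\mu+\rho_{\bar0}=r_{2\epsilon_n}(\Lambda+\rho)$, Proposition~\ref{prop::action_on_HW_of_Dirac} shows that $\Dirac_{\gg,\even}^{2}$ vanishes on the $\even$-module this vector generates. That module is not contained in $L_0(\Lambda-\rho_{\bar1})$.
\end{itemize}

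\textbf{Comparison with the paper.} The paper argues differently: it restricts to $\mu=\nu-\rho_{\bar1}$ with $\nu$ $\even$-dominant, and combines Lemma~\ref{lemm::estimate} with $B(\Lambda-\nu,\rho)\geq 0$. That restriction is exactly what fails here, since $2B(\nu,2\epsilon_n)/B(2\epsilon_n,2\epsilon_n)=-m-1<0$. So the paper's route does not rescue the claim either.

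What your Weyl-orbit method could establish, once the convex-hull containment is actually proved, is this: $\ker\Dirac_{\gg,\even}^{2}$ is the sum of the $\even$-constituents with $\mu+\rho_{\bar0}\in W(\Lambda+\rho)$. That is strictly larger than $L_0(\Lambda-\rho_{\bar1})$.
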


We finally study the implications of $\Dirac_{\gg,\even}$ for unitarizable highest weight supermodules in Section~\ref{subsec::unitaizable_case}. We recall the notion of unitarity following Jakobsen in \cite{Jakobsen_affine_Lie_superalgebras}. Unitarity is defined relative to a conjugate-linear anti-involution $\omega$ of $\gg$: a highest weight module is $\omega$-unitarizable if its contravariant Hermitian form is positive definite. The oscillator module $M(\odd)$ is constructed from a polarization of $\odd$. To combine it with an $\omega$-unitarizable module $M$, this polarization must be compatible with the anti-involution $\omega$. This is the $\omega$-adaptedness condition. Under this assumption, Proposition~\ref{prop::unitarity_tensor_twist} shows that $M\otimes M(\odd)$ is a $\omega|_{\even}$-unitarizable $\even$-module. If the polarization is $\omega$-adapted, then $\Dirac_{\gg,\even}$ is self-adjoint or skew-adjoint. Combined with the square formula, this yields the Dirac inequality in Proposition~\ref{prop::Dirac_inequality}: if $M$ is $\omega$-unitarizable of highest weight $\Lambda$, if $\Dirac_{\gg,\even}$ is self-adjoint, and if $V(\mu)\subset M\otimes M(\odd)$ is a highest weight $\even$-submodule, then
\begin{equation}
B(\Lambda+\rho,\Lambda+\rho)-B(\mu+\rho_{\bar0},\mu+\rho_{\bar0})\leq0.
\end{equation}

The section concludes by relating the Dirac kernel to Lie superalgebra cohomology. Under assumptions on the polarization $\odd = \gg_{+1}\oplus \gg_{-1}$ satisfied, for example, by Kac--Moody superalgebras such as $\widehat{\sl}(m\vert n)$, Theorem~\ref{thm::unitarity_cohomology} gives an isomorphism of $\even$-modules
\begin{equation}
\ker \Dirac_{\gg,\even}
\cong
H^\ast(\gg_{+1};M)\otimes\mathbb C_{-\rho_{\bar1}}.
\end{equation}

\subsection{Conventions}
We write $\ZZ_{+}$ for the set of positive integers and $\ZZ_{2}\coloneqq\ZZ/2\ZZ$, with elements $\bar 0$ and $\bar 1$. The ground field is $\CC$.

Unless stated otherwise, $\gg$ denotes a $\ZZ$-graded quadratic color Lie algebra. All color objects are $\Gamma$-graded. The additional $\ZZ$-grading is the grading used for completions. Thus, by a $\ZZ$-graded color Lie algebra, we mean a color Lie algebra endowed with a compatible $\ZZ$-grading. If, in addition, there is a compatible $\ZZ_{2}$-grading, we speak of a $\ZZ$-graded color superalgebra in order to emphasize the role of this additional $\ZZ_{2}$-grading.

If an object carries both a $\ZZ$-grading and a $\Gamma$-grading, integer subscripts refer to the $\ZZ$-grading and Greek subscripts to the $\Gamma$-grading.

For homogeneous $x\in\gg$, the symbol $L_x$ denotes the induced Lie derivative on the algebra under consideration. Thus the same notation is used on $\bigwedge_{\varepsilon}(\gg)$, $S_{\varepsilon}(\gg)$, $U_{\varepsilon}(\gg)$, $W_{\varepsilon}(\gg)$, and $\WW_{\varepsilon}(\gg)$. The ambient algebra determines the meaning. If several such operators occur simultaneously, we write $L_x^{\wedge}$, $L_x^S$, $L_x^U$, $L_x^W$, and $L_x^{\WW}$.

\section{Preliminaries}
This section introduces the conventions and notation underlying this work, as well as the basic definitions employed throughout.

\subsection{Color Lie Algebras} \label{subsec::conventions} Color Lie algebras arise as the natural generalization of Lie superalgebras obtained by replacing the parity group $\ZZ_{2}\coloneqq\ZZ/2\ZZ$ and the super sign by an arbitrary abelian grading group $\Gamma$ and a commutation factor $\varepsilon\colon \Gamma\times\Gamma\to \CC^{\times}$. Historically, such structures already appeared in the work of Ree \cite{Ree}, were developed further in the physics literature by Rittenberg and Wyler \cite{RittenbergWyler1978} in connection with generalized statistics, and were subsequently given a systematic algebraic treatment by Scheunert \cite{Scheunert1979}.  Thus color Lie algebras provide a framework for graded Lie theory beyond the ordinary and super cases.

Let $\Gamma$ be an abelian group, written additively, with neutral element $\ng$. A \emph{$\Gamma$-graded vector space} is a vector space $V$ together with a decomposition $V=\bigoplus_{\gamma\in\Gamma}V_{\gamma}$. An element $v\in V_{\gamma}$ is called \emph{homogeneous of degree} $\gamma$, and we write $\vert v\vert =\gamma$. 

A \emph{morphism} $f\colon V\to W$ of $\Gamma$-graded vector spaces is a linear map preserving the grading, that is, $f(V_{\gamma})\subseteq W_{\gamma}$ for all $\gamma\in\Gamma$. For fixed $\Gamma$, we denote by $\Hom_{\gr}(V,W)$ the space of morphisms of $\Gamma$-graded vector spaces from $V$ to $W$, and we set $\End_{\gr}(V)\coloneqq \Hom_{\gr}(V,V)$. We write $\Hom(V,W)$ for the graded vector space of finite sums of homogeneous linear maps from $V$ to $W$, and set $\End(V)\coloneqq\Hom(V,V)$. This is again a $\Gamma$-graded vector space $\Hom(V,W) = \bigoplus_{\gamma \in \Gamma} \Hom(V,W)_{\gamma}$, where
\begin{equation}
\Hom(V,W)_{\gamma} \coloneqq \{T \in \Hom(V,W) : T(V_{\alpha}) \subseteq W_{\alpha+\gamma} \ \text{for all } \alpha \in \Gamma\}. 
\end{equation}

The \emph{$\Gamma$-graded tensor product of two $\Gamma$-graded vector spaces} $V=\bigoplus_{\alpha\in\Gamma}V_{\alpha}$ and $W=\bigoplus_{\beta\in\Gamma}W_{\beta}$ is the $\Gamma$-graded vector space
\begin{equation}V\otimes W=\bigoplus_{\gamma\in\Gamma}(V\otimes W)_{\gamma},\qquad (V\otimes W)_{\gamma}\coloneqq\bigoplus_{\alpha+\beta=\gamma}V_{\alpha}\otimes W_{\beta}.\end{equation}
In particular, if $v\in V_{\alpha}$ and $w\in W_{\beta}$ are homogeneous, then $\vert v\otimes w\vert =\alpha+\beta$.

A \emph{commutation factor} is a map $\varepsilon\colon\Gamma\times\Gamma\to\CC^{\times}$ such that, for all $\alpha,\beta,\gamma\in\Gamma$, one has (see~\cite[III.116]{Bou})
\begin{equation}\label{eq::commutation_factor}\varepsilon(\alpha+\beta,\gamma)=\varepsilon(\alpha,\gamma)\varepsilon(\beta,\gamma),\qquad \varepsilon(\alpha,\beta+\gamma)=\varepsilon(\alpha,\beta)\varepsilon(\alpha,\gamma),\qquad \varepsilon(\alpha,\beta)\varepsilon(\beta,\alpha)=1.\end{equation}
In particular, $\varepsilon(\gamma,\gamma)^{2}=1$ for all $\gamma\in\Gamma$. For $\Gamma$-graded vector spaces, a commutation factor $\varepsilon$ defines a symmetry
\begin{equation}\tau_{V,W}\colon V\otimes W\to W\otimes V,\qquad v\otimes w\mapsto \varepsilon(\vert v\vert ,\vert w\vert)w\otimes v\end{equation}
for homogeneous $v\in V$ and $w\in W$. It satisfies $\tau_{W,V}\circ\tau_{V,W}=\id_{V\otimes W}$. The symmetry induced by $\varepsilon$ yields canonical actions of symmetric groups on tensor powers $V^{\otimes n}$. Concretely, for each $n\geq 1$, there is a unique right action
$
\pi\colon S_{n}\to\operatorname{GL}(V^{\otimes n})
$
such that, for every adjacent transposition $s_{i}=(i,i+1)$,
\begin{equation}
\pi(s_{i})(v_{1}\otimes\cdots\otimes v_{n})=\varepsilon(\vert v_{i}\vert,\vert v_{i+1}\vert)v_{1}\otimes\cdots\otimes v_{i+1}\otimes v_{i}\otimes\cdots\otimes v_{n}
\end{equation}
for homogeneous $v_{1},\dots,v_{n}\in V$. In general, for $\sigma\in S_{n}$,
\begin{equation}
\pi(\sigma)(v_{1}\otimes\cdots\otimes v_{n})=p'(\sigma;v_{1},\dots,v_{n})v_{\sigma(1)}\otimes\cdots\otimes v_{\sigma(n)},
\end{equation}
where
\begin{equation}
p'(\sigma;v_{1},\dots,v_{n})\coloneqq \prod_{\substack{1\leq i<j\leq n\\ \sigma^{-1}(i)>\sigma^{-1}(j)}}\varepsilon(\vert v_{i}\vert, \vert v_{j}\vert).
\end{equation}
Analogously, there is a unique signed right action of $S_{n}$ given by \begin{equation}\pi_{\mathrm{sgn}}(\sigma)(v_1\otimes\cdots\otimes v_n)
=
p(\sigma;v_1,\dots,v_n)v_{\sigma(1)}\otimes\cdots\otimes v_{\sigma(n)},\end{equation} where $p(\sigma;v_{1},\ldots,v_{n})\coloneqq\sgn(\sigma)p'(\sigma;v_{1},\ldots,v_{n})$. Moreover,
\begin{equation}
p(\sigma\sigma';v_{1},\dots,v_{n})=p(\sigma';v_{\sigma(1)},\dots,v_{\sigma(n)})p(\sigma;v_{1},\dots,v_{n}),\qquad p(\operatorname{id};v_{1},\dots,v_{n})=1.
\end{equation}
These are the color analogues of the permutation action and signed permutation action, respectively.

In what follows, we fix the pair $(\Gamma,\varepsilon)$. For homogeneous elements $v$ and $w$, we write $\varepsilon(v,w)\coloneqq\varepsilon(\vert v\vert ,\vert w\vert)$ and $\varepsilon(v)\coloneqq\varepsilon(\vert v\vert ,\vert v\vert)$. Then $\cat$ denotes the symmetric monoidal category whose objects are $\Gamma$-graded vector spaces, whose morphisms are grading-preserving linear maps, whose tensor product is the $\Gamma$-graded tensor product, and whose symmetry is induced by the commutation factor $\varepsilon$.

The category $\cat$ is equipped with the $\varepsilon$-trace. Concretely, let $V\in\cat$ be finite-dimensional, and define $\mathcal E\in\End(V)$ by $\mathcal E(v)\coloneqq\varepsilon(v)v$ for homogeneous $v\in V$, extended linearly. The \emph{$\varepsilon$-trace} is the linear map $\tr_{\varepsilon}\colon\End(V)\to\CC$ defined by
\begin{equation}
\tr_{\varepsilon}(T)\coloneqq\tr(\mathcal E\circ T).
\end{equation}
Its basic property is the $\varepsilon$-cyclicity
\begin{equation}
\tr_{\varepsilon}(ST)=\varepsilon(S,T)\tr_{\varepsilon}(TS)
\end{equation}
for homogeneous $S,T\in\End(V)$. Of particular interest are $\varepsilon$-quadratic spaces in $\cat$. A $\Gamma$-graded vector space $V$ is called \emph{$\varepsilon$-quadratic} if it is endowed with an even, nondegenerate, $\varepsilon$-symmetric bilinear form $\langle\cdot,\cdot\rangle\colon V\times V\to\CC$. Here, \emph{even} means that $\langle v,w\rangle=0$ for homogeneous $v,w\in V$ unless $\vert v\vert +\vert w\vert = \ng$, and \emph{$\varepsilon$-symmetric} means that
$
\langle v,w\rangle=\varepsilon(v,w)\langle w,v\rangle
$
for all homogeneous $v,w\in V$. This notion is particularly relevant for the study of color Lie algebras endowed with invariant bilinear forms, to which we now turn.

A \emph{color algebra} $\calA$ is an algebra object in the symmetric monoidal category of $\Gamma$-graded vector spaces determined by the commutation factor $\varepsilon$. If $\calA$ and $\mathcal{B}$ are color algebras, then $\calA\otimes \mathcal{B}$ is again a color algebra, with multiplication
\begin{equation}(a_{1}\otimes b_{1})(a_{2}\otimes b_{2})=\varepsilon(b_{1},a_{2})a_{1}a_{2}\otimes b_{1}b_{2}\end{equation}
for homogeneous $a_{1},a_{2}\in \calA$ and $b_{1},b_{2}\in \mathcal{B}$. Moreover, a homogeneous map $D\in\End(\calA)$ of degree $\vert D\vert \in\Gamma$ is called an \emph{$\varepsilon$-derivation} if
\begin{equation}\label{def::epsilon_derivation}
D(ab)=D(a)b+\varepsilon(D,a)\,aD(b)
\end{equation}
for all homogeneous $a,b\in\calA$. We denote the $\Gamma$-graded vector space of all $\varepsilon$-derivations of $\calA$ by $\Der_{(\Gamma,\varepsilon)}(\calA)$. The two main examples of color algebras considered below also carry a compatible \(\ZZ_2\)-grading with parity function $p(\cdot)$. In this case we use the combined commutation factor
$
\varepsilon'(D,a)\coloneqq (-1)^{p(D)p(a)}\varepsilon(D,a).
$
A homogeneous $\varepsilon'$-derivation $D\in\Der_{(\Gamma,\varepsilon')}(\calA)$ is called \emph{even} or \emph{odd} according as $p(D)=\bar0$ or $p(D)=\bar1$, and then satisfies
\begin{equation}\label{eq::parity_epsilon_derivations}
D(ab)=D(a)b+(-1)^{p(D)p(a)}\varepsilon(D,a)\,aD(b)
\end{equation}
for homogeneous $a,b\in\calA$. We denote the spaces of even and odd $\varepsilon'$-derivations by $\Der_{(\Gamma,\varepsilon')}^{\bar0}(\calA)$ and $\Der^{\bar{1}}_{(\Gamma,\varepsilon')}(\calA)$, respectively.

A \emph{color Lie algebra} is a Lie algebra object in this symmetric monoidal category. Concretely:

\begin{definition}
 A \emph{color Lie algebra} is a $\Gamma$-graded vector space $\gg =\bigoplus_{\gamma \in \Gamma}\gg_{\gamma}$ equipped with a bilinear bracket $[\cdot,\cdot]: \gg \times \gg \to \gg$ such that for all homogeneous $x,y,z\in \gg$ one has
 \begin{enumerate}
 \item[a)] $[\gg_{\alpha},\gg_{\beta}] \subseteq \gg_{\alpha+\beta}$;
 \item[b)] $[x,y]=-\varepsilon(x,y)[y,x]$;
 \item[c)] $[x,[y,z]]=[[x,y],z]+\varepsilon(x,y)[y,[x,z]].$
 \end{enumerate}
\end{definition}

\begin{example}\label{ex::color_Lie_algebras}
\begin{enumerate}
\item[(a)] For $\Gamma=\ZZ_{2}$ and $\varepsilon(a,b)=(-1)^{ab}$, color Lie algebras are precisely Lie superalgebras.
\item[(b)] Let $V=\bigoplus_{\gamma\in\Gamma}V_{\gamma}$ be a $\Gamma$-graded vector space. Then
$\End(V)=\bigoplus_{\gamma\in\Gamma}\End(V)_{\gamma}$
is a color Lie algebra with $\varepsilon$-bracket
\begin{equation*}[S,T]_{\End}\coloneqq ST-\varepsilon(S,T)TS\end{equation*}
for homogeneous $S,T\in\End(V)$, extended linearly. We also denote this color Lie algebra by $\gl_{\varepsilon}(V)$.
\item[(c)] Let $V\in\cat$ be $\varepsilon$-quadratic with bilinear form $\langle\cdot,\cdot\rangle$. We write $\mathfrak{so}_{\varepsilon}(V,\langle\cdot,\cdot\rangle)$, or simply $\mathfrak{so}_{\varepsilon}(V)$, for the color Lie algebra of $\langle\cdot,\cdot\rangle$-skew endomorphisms of $V$, that is, the color Lie algebra whose homogeneous elements are precisely the homogeneous $T\in\End(V)$ satisfying
\begin{equation*}\label{def::epsilon_orthogonal_algebra}
\langle Tv,w\rangle+\varepsilon(T,v)\langle v,Tw\rangle=0
\end{equation*}
for all homogeneous $v,w\in V$. It is a color Lie algebra with the canonical $\varepsilon$-bracket, called the \emph{$\varepsilon$-orthogonal algebra} of $(V,\langle\cdot,\cdot\rangle)$. Note that $\tr_{\varepsilon}(T)=0$ for any $T \in \mathfrak{so}_{\varepsilon}(V)$ whenever $V$ is finite-dimensional.
\item[(d)] Let $\calA$ be a color algebra. If $D,E\in\Der_{(\Gamma,\varepsilon)}(\calA)$ are homogeneous, then their $\varepsilon$-commutator
\begin{equation*}
[D,E]_{\Der}\coloneqq D\circ E-\varepsilon(D,E)\,E\circ D
\end{equation*}
is again an $\varepsilon$-derivation. Hence $\Der_{(\Gamma,\varepsilon)}(\calA)$ is a color Lie algebra.
\end{enumerate}
\end{example}

A color Lie algebra $\gg$ is called \emph{quadratic} if it is endowed with an invariant bilinear form $B$ such that $(\gg,B)$ is an $\varepsilon$-quadratic $\Gamma$-graded vector space. Here, invariance means that
\begin{equation}
B([x,y],z)=B(x,[y,z])
\end{equation}
for all homogeneous $x,y,z\in\gg$.

By a \emph{color (Lie) superalgebra} we mean a color (Lie) algebra $\gg=\bigoplus_{\gamma\in\Gamma}\gg_{\gamma}$ endowed with a compatible $\ZZ_{2}$-grading. We denote the associated parity of a homogeneous element $x\in\gg$ by $p(x)\in\ZZ_{2}$. Equivalently, after replacing $\Gamma$ by $\ZZ_{2}\times\Gamma$, this is a color (Lie) algebra with commutation factor $\varepsilon'(x,y)=(-1)^{p(x)p(y)}\varepsilon(x,y)$ for all homogeneous $x,y\in \gg$.

A \emph{representation of a color Lie algebra} $\gg$ is a $\Gamma$-graded vector space $V$ together with a homomorphism of color Lie algebras $\rho:\gg\to\mathfrak{gl}_{\varepsilon}(V)$. Equivalently, a \emph{$\gg$-module} is a $\Gamma$-graded vector space $V=\bigoplus_{\gamma\in\Gamma}V_\gamma$ together with a bilinear map $\gg\times V\to V$, $(x,v)\mapsto x\cdot v$, such that for all homogeneous $x,y\in\gg$ and $v\in V$ one has $x\cdot v\in V_{\vert x\vert +\vert v\vert }$ and
\begin{equation}
[x,y]\cdot v=x\cdot(y\cdot v)-\varepsilon(x,y)y\cdot(x\cdot v).
\end{equation}
A basic example is the adjoint module, where $\gg$ acts on itself by $\ad_x(y)\coloneqq[x,y]$. Standard notions such as submodules and simple modules are defined in the usual graded sense.

We now focus on color Lie algebras equipped with an additional $\ZZ$-grading. This includes important infinite-dimensional examples, in particular Kac–Moody superalgebras.

\subsubsection{\texorpdfstring{$\ZZ$}{}-graded Color Lie Algebras} Let $\gg=\bigoplus_{i\in\ZZ}\gg_{i}$ be a $\ZZ$-graded color Lie algebra, that is, a color Lie algebra with homogeneous components $\gg_{i}=\bigoplus_{\gamma\in\Gamma}\gg_{i,\gamma}$ satisfying $[\gg_{i},\gg_{j}]\subseteq\gg_{i+j}$. We assume that each $\gg_{i}$ is finite-dimensional. In particular, $\gg_{0}$ is a color Lie algebra. Thus $\gg$ is naturally $\ZZ\times\Gamma$-graded, and every homogeneous element $x\in\gg$ has a bidegree $(i,\vert x\vert)\in\ZZ\times\Gamma$. For simplicity, we write $\deg(x)\coloneqq i$ and call it the $\ZZ$-degree of $x$. Thus $\gg_{i}$ always refers to the $\ZZ$-grading, whereas $\gg_{\gamma}$ refers to the $\Gamma$-grading. More generally, whenever an object carries both a $\ZZ$-grading and a $\Gamma$-grading, integer subscripts refer to the $\ZZ$-grading, whereas Greek subscripts refer to the $\Gamma$-grading. In this setting, a \emph{$\ZZ$-graded $\gg$-module} is a $\gg$-module $V=\bigoplus_{j\in\ZZ}V_{j}$ such that $\gg_{i}\cdot V_{j}\subseteq V_{i+j}$ for all $i,j\in\ZZ$.

If $\gg$ is quadratic with non-degenerate $\varepsilon$-symmetric invariant bilinear form $B$, we assume in addition that $B(x,y)=0$ for homogeneous $x,y\in\gg$ unless $\vert x\vert +\vert y\vert = \ng$ and $\deg(x)+\deg(y)=0$.

Set $\gg_{+} \coloneqq \bigoplus_{i >0} \gg_{i}$ and $\gg_{-} \coloneqq \bigoplus_{i \leq 0} \gg_{i}$, and let $\pi_{\pm} : \gg \to \gg_{\pm}$ be the associated even projection maps, \emph{i.e.}, they preserve the $\ZZ\times\Gamma$-grading. By construction, both are color Lie subalgebras.

The $\ZZ\times\Gamma$\emph{-graded restricted dual} $\gg^{\ast}$ of $\gg$ is the direct sum over the duals of the $\Gamma$-graded vector spaces $\gg_{i}$ with $\ZZ\times\Gamma$-grading $(\gg^{\ast})_{(i,\gamma)} = (\gg_{(-i, -\gamma)})^{\ast}$. 

We denote by $\End(\gg)$ and $\End_{\gr}(\gg)$ the direct sum of the spaces $\End(\gg)_{i} \coloneqq \bigoplus_{r} \Hom(\gg_{r}, \gg_{r+i})$ and $\End_{\gr}(\gg)_{i} \coloneqq \bigoplus_{r} \Hom_{\gr}(\gg_{r}, \gg_{r+i})$, respectively. The spaces $\End(\gg)_{i}$ and $\End_{\gr}(\gg)_{i}$ consist of linear maps
of $\ZZ$-degree $i$ with only finitely many nonzero $r$-components. Such maps are called \emph{finitary}. We set
\begin{equation} \label{def::completion_End}
 \widehat{\End}(\gg)_{i} \coloneqq \prod_{r} \Hom(\gg_{r}, \gg_{r+i}), \qquad \widehat{\End}_{\gr}(\gg)_{i} \coloneqq \prod_{r} \Hom_{\gr}(\gg_{r}, \gg_{r+i}).
\end{equation}
Then the completions $\widehat{\End}(\gg)$ and $\widehat{\End}_{\gr}(\gg)$ are the direct sums of these spaces, respectively.

Since we consider the restricted dual $\gg^{\ast}$ and fix a nondegenerate bilinear form $B$, we have a natural identification of $\gg$ and $\gg^{\ast}$, given by the usual musical isomorphisms 
\begin{equation} \label{eq::musical_isomorphisms}
\begin{aligned}
&\flat \colon \gg \xrightarrow{\ \sim\ } \gg^{\ast},
\qquad x \longmapsto x^{\flat}, \quad
x^{\flat}(y) \coloneqq B(x,y),\\[2pt]
&\sharp \colon \gg^{\ast} \xrightarrow{\ \sim\ } \gg,
\qquad \alpha \longmapsto \alpha^{\sharp}, \quad
B(\alpha^{\sharp},y)=\alpha(y),
\end{aligned}
\end{equation}
for all $y \in \gg$. In what follows, we identify $\gg$ and $\gg^{\ast}$.

Let $(e_{a})$ be a homogeneous basis of $\gg$ with $B$-dual basis $(e^{a})$, that is, $B(e_{a},e^{b})=\delta_{ab}$. Then $\vert e_{a}\vert =-\vert e^{a}\vert$ and $\deg(e^{a})=-\deg(e_{a})$. Moreover, $\varepsilon(e_{a})=\varepsilon(e_{a},e^{a})$. Indeed,
\begin{equation}1=\varepsilon(e_{a},\ng)=\varepsilon(e_{a},e_{a}+(-e_{a}))=\varepsilon(e_{a},e_{a})\varepsilon(e_{a},-e_{a}),\end{equation}
hence $\varepsilon(e_{a},e^{a})=\varepsilon(e_{a})^{-1}$. Since $\varepsilon(e_{a})^{2}=1$, it follows that $\varepsilon(e_{a})=\varepsilon(e_{a},e^{a})$.

If $(e_{a}^{\ast})$ denotes a homogeneous basis of the restricted dual such that $e_{a}^{\ast}(e_{b})=\delta_{ab}$, then the identification~\eqref{eq::musical_isomorphisms} gives
\begin{equation}
e_{a}^{\ast}=\varepsilon(e_{a})B(e^{a},\cdot) \quad\Leftrightarrow\quad (e_{a}^{\ast})^{\sharp}=\varepsilon(e_{a})e^{a}.
\end{equation}
In particular, we have a natural identification
\begin{equation}\label{eq::identification_End(g)}
 \End(\gg) \cong \gg \otimes \gg^{\ast} \cong \gg \otimes \gg, \qquad f \mapsto \sum_{a}f(e_{a})\otimes e_{a}^{\ast} \mapsto \sum_{a}\varepsilon(e_{a})f(e_{a}) \otimes e^{a}.
\end{equation}
Note that the map is well-defined since $f \in \End(\gg)$ is finitary.

The following lemma is immediate and will be useful in what follows.

\begin{lemma} \label{lemm::basis_description} Let $(e_{a})$ be a homogeneous basis of $\gg$ with $B$-dual basis $(e^{a})$. 
\begin{enumerate}
 \item[a)] For any $x \in \gg$, one has 
 \[
 x = \sum_{a}B(x,e^{a})e_{a} = \sum_{a} \varepsilon(e_{a})B(x,e_{a})e^{a} = \sum_{a} B(e_{a},x)e^{a}.
 \]
 \item[b)] For any $x,y \in \gg$, one has
 \[
 B(x,y) = \sum_{a}B(x,e^{a})B(e_{a},y).
 \]
 \item[c)] Let $f:\gg\otimes \gg\to \gg$ be an $\varepsilon$-skew-symmetric bilinear map. Then
\[
f\!\left(\sum_a \varepsilon(e_{a}) e_a\otimes e^a\right)=0.
\]
Moreover, $p = \sum_{a}\varepsilon(e_{a})e_{a}\otimes e^{a} = \sum_{a}e^{a}\otimes e_{a}$ is $\ad$-invariant.
 \end{enumerate}
\end{lemma}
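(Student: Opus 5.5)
All three parts reduce to expanding in a homogeneous basis. Throughout, $B$ is non-degenerate on each finite-dimensional pairing $\gg_i\times\gg_{-i}$, so every finite expansion below is legitimate.

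For a), I write $x=\sum_a c_a e_a$; this is a finite sum, since $x$ lies in finitely many $\gg_i$. Pairing with $e^b$ gives $B(x,e^b)=\sum_a c_a B(e_a,e^b)$. Since $B(e_a,e^b)=\delta_{ab}$, this yields $c_b=B(x,e^b)$.

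For the second expression I need the dual of the dual basis. By $\varepsilon$-symmetry, $B(e^a,e_b)=\varepsilon(e^a,e_b)B(e_b,e^a)=\varepsilon(e^a,e_a)\delta_{ab}$. As computed just before the lemma, $\varepsilon(e^a,e_a)=\varepsilon(e_a,e^a)^{-1}=\varepsilon(e_a)$. So the $B$-dual basis of $(e^a)$ is $(\varepsilon(e_a)e_a)$. Applying the first formula to the basis $(e^a)$ then gives
\[
x=\sum_a \varepsilon(e_a)B(x,e_a)e^a .
\]
Finally, $\varepsilon(e_a)B(x,e_a)=\varepsilon(e_a)\varepsilon(x,e_a)B(e_a,x)$. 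Only $a$ with $|x|=-|e_a|$ contribute, and for those $\varepsilon(x,e_a)=\varepsilon(e_a)^{-1}$, so the coefficient is $B(e_a,x)$.

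For b), I substitute $y=\sum_a B(e_a,y)e^a$, which is a) applied to $y$, into $B(x,\cdot)$.

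For c), the first claim: the only inputs are $\varepsilon$-skew-symmetry and the basis-independence of the tensor $\sum_a\varepsilon(e_a)e_a\otimes e^a$. This tensor is the element corresponding to the identity under \eqref{eq::identification_End(g)}, restricted to each finite-dimensional block $\gg_i\otimes\gg_{-i}$, and it is independent of the chosen basis.

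I apply the symmetry $\tau$ to it. This gives $\sum_a\varepsilon(e_a)\varepsilon(e_a,e^a)e^a\otimes e_a=\sum_a e^a\otimes e_a$, because $\varepsilon(e_a)^2=1$. By the previous paragraph, $\sum_a e^a\otimes e_a$ is the same canonical tensor written in the dual basis pair $(e^a,\varepsilon(e_a)e_a)$. Hence $p=\sum_a\varepsilon(e_a)e_a\otimes e^a=\sum_a e^a\otimes e_a$, and $\tau(p)=p$. Skew-symmetry means $f\circ\tau=-f$, so $f(p)=-f(p)$ and therefore $f(p)=0$.

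Ad-invariance means
\[
\sum_a\bigl([z,e_a]\otimes e^a+\varepsilon(z,e_a)e_a\otimes[z,e^a]\bigr)=0
\]
for each homogeneous $z$ (the $\varepsilon(e_a)$ factors are suppressed here). I expand $[z,e^a]$ using a), which gives $[z,e^a]=\sum_b B(e_b,[z,e^a])e^b$, and I expand $[z,e_a]$ similarly. Invariance and $\varepsilon$-symmetry of $B$ then convert $B(e_b,[z,e^a])$ into $-\varepsilon(\cdot)B([z,e_b],e^a)$. After relabeling, the two sums cancel.

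The main obstacle is bookkeeping rather than ideas. One has to track $\varepsilon$-factors consistently: use $\varepsilon(e_a)=\varepsilon(e_a,e^a)$ and the fact that only indices of matching $\Gamma$-degree contribute. One also has to note that every sum involved is finite within each $\ZZ$-degree block, so no completion is needed.
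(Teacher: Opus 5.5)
Your proposal is correct and is exactly the direct basis-expansion verification the paper has in mind when it calls this lemma immediate; the paper writes out no proof. One caution on the first claim of c): $\tau$ maps the block $p_i\in\gg_i\otimes\gg_{-i}$ of $p$ to the block $p_{-i}\in\gg_{-i}\otimes\gg_i$, so your argument literally proves $f(p_i)=-f(p_{-i})$, and these terms need not vanish individually (for $f$ the bracket and $h\in\gg_0$ one has $B(h,f(p_i))=\etr(\ad_h|_{\gg_i})$, which for $i=1$ in a principally graded Kac--Moody algebra equals $\sum_j\alpha_j(h)$ and is not identically zero), so $f(p)=0$ must be read with the blocks $i$ and $-i$ paired, and your closing remark that every sum is finite in each $\ZZ$-degree does not apply to $f(p)$.
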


In what follows, $(\gg,B)$, or simply $\gg$, denotes a $\ZZ$-graded quadratic color Lie algebra, unless otherwise stated.

\subsection{Completion of \texorpdfstring{$\varepsilon$}{}-Exterior and \texorpdfstring{$\varepsilon$}{}-Clifford Algebras of \texorpdfstring{$\gg$}{}} \label{subsec::completion_exterior_clifford_superalgebra} Chen–Kang \cite{generalized_Clifford} and Meyer \cite{Meyer} introduced $\varepsilon$-exterior algebras and $\varepsilon$-Clifford algebras for finite-dimensional color Lie algebras. In this section, we adopt their notation and conventions in order to define these objects for the $\ZZ$-graded color Lie algebras $\gg$ introduced above. In particular, we also formulate the corresponding completions.

Consider the tensor algebra $\mathrm{T}(\gg)$, endowed with the $\ZZ\times \Gamma$-grading induced from $\gg$, where each generator in $\gg_{(i,\gamma)}$ is assigned bidegree $(i,\gamma)$. It is a unital associative algebra with identity element $1_{\mathrm{T}(\gg)}$, and moreover it is naturally equipped with an additional canonical $\ZZ$-grading: for each integer $k \geq 0$, the homogeneous component of tensor length $k$, denoted $\mathrm{T}^{k}(\gg)$, is the subspace spanned by all elements $x_{1}\otimes \cdots \otimes x_{k}$ with $x_{i} \in \gg$; by convention one sets $\mathrm{T}^{0}(\gg) = \CC$. This tensor-length grading is compatible with the $\ZZ\times\Gamma$-grading in the evident sense, that is, $
\mathrm{T}(\gg)=\bigoplus_{k\geq 0}\ \bigoplus_{(i,\gamma)\in\ZZ\times\Gamma}\mathrm{T}^{k}(\gg)_{(i,\gamma)}$. Moreover, reducing this grading modulo $2$ endows $\mathrm{T}(\gg)$ with the structure of a superalgebra, with even and odd parts
\begin{equation}
\mathrm{T}(\gg)_{\bar{0}} \coloneqq \bigoplus_{k \in 2\ZZ_{+}} \mathrm{T}^{k}(\gg), \qquad
\mathrm{T}(\gg)_{\bar{1}} \coloneqq \bigoplus_{k \in 2\ZZ_{+}+1} \mathrm{T}^{k}(\gg).
\end{equation}
In what follows, we consider $\mathrm{T}(\gg)$ naturally as a $(\ZZ\times\ZZ)$-graded color algebra.

\subsubsection{\texorpdfstring{
\texorpdfstring{$\varepsilon$}{}-Exterior Algebra and Completion}{Gamma-graded exterior algebra and completion}} \label{subsubsec::Exterior_Algebra_and Completion} Let $I_{\wedge}$ denote the ($\ZZ\times \Gamma$-graded) homogeneous ideal of $\mathrm{T}(\gg)$ generated by the elements
 \begin{equation}
x\otimes y+\varepsilon(x,y)y\otimes x,\qquad x,y\in\gg\ \text{homogeneous}.
\end{equation}
The quotient $\bigwedge_{\varepsilon}(\gg)\coloneqq \mathrm{T}(\gg)/I_{\wedge}$ is the $\varepsilon$-\emph{exterior algebra} of $\gg$, that is, the unital associative $\ZZ$-graded color algebra generated by $\gg$ with defining relations $x\wedge y+\varepsilon(x,y)y\wedge x=0$ for homogeneous $x,y\in\gg$. We call $\wedge$ the \emph{exterior multiplication} and we write $\bigwedge_{\varepsilon}(\gg)_{(i,\gamma)}$ for its homogeneous component of bidegree $(i,\gamma)\in\ZZ\times\Gamma$.

The canonical tensor-length $\ZZ$-grading of $\mathrm{T}(\gg)$ induces an additional $\ZZ$-grading on $\bigwedge_{\varepsilon}(\gg)$, giving the decomposition 
$\bigwedge_{\varepsilon}(\gg) = \bigoplus_{k \geq 0} \bigwedge^{k}_{\varepsilon}(\gg)$, 
where $\bigwedge^{k}_{\varepsilon}(\gg)$ is the subspace spanned by exterior products of $k$ elements of $\gg$. We denote the associated degree by $\vert \cdot\vert_{\wedge}$. By declaring 
$\bigwedge_{\varepsilon}(\gg)_{\bar{0}} \coloneqq \bigoplus_{k \in 2\ZZ_{+}} \bigwedge^{k}_{\varepsilon}(\gg)$ and 
$\bigwedge_{\varepsilon}(\gg)_{\bar{1}} \coloneqq \bigoplus_{k \in 2\ZZ_{+}+1} \bigwedge^{k}_{\varepsilon}(\gg)$, 
the algebra $\bigwedge_{\varepsilon}(\gg)$ is endowed with its natural structure of a superalgebra, compatible with its $\ZZ\times \Gamma$-grading. In what follows, we consider $\bigwedge_{\varepsilon}(\gg)$ as a $\ZZ \times \ZZ$-graded color algebra. Moreover, we write $\bigwedge_{\varepsilon}(\gg_{\pm})$ for the $\varepsilon$-exterior algebra of the $\ZZ$-graded color subalgebras $\gg_{\pm}$, and $\bigwedge_{\varepsilon}(\gg_{\pm})_{(i,\gamma)}$ for their homogeneous components of bidegree $(i,\gamma)$. Both are $\ZZ\times \ZZ$-graded color subalgebras of $\bigwedge_{\varepsilon}(\gg)$ such that $\bigwedge_{\varepsilon}(\gg)\cong \bigwedge_{\varepsilon}(\gg_{-})\otimes \bigwedge_{\varepsilon}(\gg_{+})$ as $\ZZ\times \ZZ$-graded objects in $\cat$.

On $\bigwedge_{\varepsilon}(\gg)$, there are three natural operations. For $v\in\gg$, let $\ell(v)$ denote left exterior multiplication on $\bigwedge_{\varepsilon}(\gg)$. Moreover, the operation
\begin{equation}\label{eq::contraction}
\iota_{v}(x_{1}\otimes\cdots\otimes x_{l})\coloneqq\sum_{k=1}^{l}(-1)^{k-1}\varepsilon(v,x_{1}+\cdots+x_{k-1})B(v,x_{k})x_{1}\otimes\cdots\otimes\widehat{x}_{k}\otimes\cdots\otimes x_{l}
\end{equation}
on $\mathrm{T}(\gg)$ leaves $I_{\wedge}$ invariant (\emph{cf.}~\cite{Meyer}) and therefore descends to an operator on $\bigwedge_{\varepsilon}(\gg)$, called \emph{contraction}. It satisfies the Leibniz rule
\begin{equation}\label{eq::Leibniz_rule_eins}\iota_{v}(\omega\wedge\eta)=\iota_{v}(\omega)\wedge\eta+(-1)^{\vert \omega\vert_{\wedge}}\varepsilon(v,\omega)\omega\wedge\iota_{v}(\eta)\end{equation}
for homogeneous $\omega,\eta\in\bigwedge_{\varepsilon}(\gg)$. Finally, for homogeneous $T\in\End(\gg)$, define
\begin{equation}\label{eq::Lie_derivative}
L_{T}(x_{1}\otimes\cdots\otimes x_{l})\coloneqq\sum_{k=1}^{l}\varepsilon(T,x_{1}+\cdots+x_{k-1})x_{1}\otimes\cdots\otimes T(x_{k})\otimes\cdots\otimes x_{l}
\end{equation}
for $x_{1},\dots,x_{l}\in\gg$. This operator preserves the ideal $I_{\wedge}$ and hence descends to an operator on $\bigwedge_{\varepsilon}(\gg)$, called the \emph{Lie derivative}. It is an $\varepsilon$-derivation $L_{T}\in \Der_{(\Gamma,\varepsilon)}(\bigwedge_{\varepsilon}(\gg))$, that is, 
\begin{equation} \label{eq::Leibniz_rule_zwei}
L_{T}(\omega\wedge\eta)=L_{T}(\omega)\wedge\eta+\varepsilon(T,\omega)\omega\wedge L_{T}(\eta)
\end{equation}
for homogeneous $\omega,\eta\in\bigwedge_{\varepsilon}(\gg)$.

To describe the relation between the Lie derivative and the contraction, let $T\in\End(\gg)$ be homogeneous. Since $B$ is nondegenerate, we can define its $B$-adjoint $T^{\ast}$ by
\begin{equation}
B(T^{\ast}x,y)=\varepsilon(T,x)B(x,Ty)
\end{equation}
for homogeneous $x,y\in\gg$. For example, by invariance of $B$, the adjoint of $\ad_{x}$ is $-\ad_{x}$ for every homogeneous $x\in\gg$, whereas $T \in \mathfrak{so}_{\varepsilon}(\gg)$, then $T^{\ast}=-T$.
\begin{lemma}\label{lemm::commutator_Lie_derivative_contraction}
For all homogeneous $T\in\End(\gg)$ with adjoint $T^{\ast}$ and all homogeneous $z\in\gg$, one has
\begin{equation*}[L_{T},\iota_{z}]_{\End}=L_{T}\iota_{z}-\varepsilon(T,z)\iota_{z}L_{T}=-\iota_{T^{\ast}(z)}.\end{equation*}
In particular, if $T\in\mathfrak{so}_{\varepsilon}(\gg)$, then
$[L_{T},\iota_{z}]_{\End}=\iota_{T(z)}$
for all $z\in\gg$.
\end{lemma}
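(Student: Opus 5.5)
Both $L_T$ and $\iota_z$ are defined on $\mathrm{T}(\gg)$ and descend to $\bigwedge_{\varepsilon}(\gg)$. It therefore suffices to verify the identity on generators of $\bigwedge_{\varepsilon}(\gg)$, using a derivation argument. The operator
\[
D\coloneqq[L_{T},\iota_{z}]_{\End}=L_T\iota_z-\varepsilon(T,z)\iota_zL_T
\]
is the graded commutator of an $\varepsilon$-derivation of even exterior parity, namely $L_T$ (see \eqref{eq::Leibniz_rule_zwei}), with an odd $\varepsilon'$-derivation, namely $\iota_z$ (see \eqref{eq::Leibniz_rule_eins}). Hence $D$ is again an odd $\varepsilon'$-derivation of degree $|T|+|z|$. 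Concretely, $D$ satisfies
\[
D(\omega\wedge\eta)=D(\omega)\wedge\eta+(-1)^{|\omega|_\wedge}\varepsilon(T+z,\omega)\,\omega\wedge D(\eta),
\]
by Example~\ref{ex::color_Lie_algebras}(d) applied with the combined commutation factor $\varepsilon'$. Likewise $-\iota_{T^\ast(z)}$ is an odd $\varepsilon'$-derivation, and since $|T^\ast(z)|=|T|+|z|$ it has the same degree and the same Leibniz rule. Two $\varepsilon'$-derivations with identical twisting that agree on generators agree everywhere. So the plan is to reduce the lemma to checking both sides on $1$ and on $x\in\gg$.

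On $1$ both sides vanish, since $\iota_z(1)=0$ and $L_T(1)=0$. For homogeneous $x\in\gg$ we have $\iota_z x=B(z,x)$, a scalar, so $L_T\iota_z x=0$. Using $L_Tx=T(x)$ we then get
\[
D(x)=-\varepsilon(T,z)\,\iota_z(Tx)=-\varepsilon(T,z)B(z,Tx).
\]
By the definition of the adjoint, $\varepsilon(T,z)B(z,Tx)=B(T^\ast z,x)=\iota_{T^\ast(z)}x$. Therefore $D(x)=-\iota_{T^\ast(z)}(x)$, as required.

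For $T\in\mathfrak{so}_\varepsilon(\gg)$, the skew condition $B(Tv,w)+\varepsilon(T,v)B(v,Tw)=0$ gives $T^\ast=-T$. This yields $[L_T,\iota_z]_{\End}=\iota_{T(z)}$.

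The main obstacle is bookkeeping the signs in the derivation step. One has to confirm that the commutator of an $\varepsilon$-derivation and an odd $\varepsilon'$-derivation is an $\varepsilon'$-derivation, and that the twisting factor is exactly $(-1)^{|\omega|_\wedge}\varepsilon(T+z,\omega)$, so that it matches the Leibniz rule of $\iota_{T^\ast(z)}$. I would do this by expanding both terms of $D(\omega\wedge\eta)$ with \eqref{eq::Leibniz_rule_eins} and \eqref{eq::Leibniz_rule_zwei}. The cross terms $\iota_z\omega\wedge L_T\eta$ and $L_T\omega\wedge\iota_z\eta$ cancel by bimultiplicativity of $\varepsilon$. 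The cancellation uses the identity $\varepsilon(T,\iota_z\omega)=\varepsilon(T,z)\varepsilon(T,\omega)$, which holds because $|\iota_z\omega|=|z|+|\omega|$.
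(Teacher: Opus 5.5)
Your proposal is correct and follows essentially the same route as the paper: the $\varepsilon'$-commutator of $L_T$ and $\iota_z$ is again an odd $\varepsilon'$-derivation with the same Leibniz rule as $\iota_{T^{\ast}(z)}$, so you reduce to generators and evaluate on $x\in\gg$ using $\iota_z(x)=B(z,x)$, $L_T(x)=T(x)$ and the defining identity of $T^{\ast}$. Your explicit cancellation of the cross terms via bimultiplicativity of $\varepsilon$ supplies the sign bookkeeping that the paper leaves implicit.
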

\begin{proof}
Since $L_{T}$ and $\iota_{z}$ are determined on $\bigwedge_{\varepsilon}(\gg)$ by their values on $\gg$ together with~\eqref{eq::Leibniz_rule_eins} and~\eqref{eq::Leibniz_rule_zwei}, the same holds for their $\varepsilon$-commutator. Hence it is enough to verify the identity on generators. Let $x\in\gg$ be homogeneous. Then $\iota_{z}(x)=B(z,x)$, $L_{T}(x)=T(x)$, and $L_{T}(B(z,x))=0$. Therefore
\begin{equation*}\begin{split}
[L_{T},\iota_{z}]_{\End}(x)&=L_{T}(\iota_{z}(x))-\varepsilon(T,z)\iota_{z}(L_{T}(x))=L_{T}(B(z,x))-\varepsilon(T,z)\iota_{z}(Tx)
\\&=-\varepsilon(T,z)B(z,Tx)
=-B(T^{\ast}z,x)
=-\iota_{T^{\ast}(z)}(x).
\end{split}\end{equation*}
If $T\in\mathfrak{so}_{\varepsilon}(\gg)$, then $T^{\ast}=-T$, and hence $[L_{T},\iota_{z}]_{\End}=\iota_{T(z)}$.
\end{proof}

In addition, $\bigwedge_{\varepsilon}(\gg)$ carries a bilinear form induced by $B$. For $k\geq 0$, define on $\mathrm{T}^{k}(\gg)$
\begin{equation}
\langle x_{1}\otimes\cdots\otimes x_{k},y_{1}\otimes\cdots\otimes y_{k}\rangle_{\mathrm{T}^{k}}\coloneqq\prod_{i=0}^{k-1}B(x_{k-i},y_{1+i}),
\end{equation}
and then set
\begin{equation}
\langle x_{1}\otimes\cdots\otimes x_{k},y_{1}\otimes\cdots\otimes y_{k}\rangle_{\wedge^{k}}\coloneqq\sum_{\sigma\in S_{k}}p(\sigma;x_{1},\ldots,x_{k})\langle x_{\sigma(1)}\otimes\cdots\otimes x_{\sigma(k)},y_{1}\otimes\cdots\otimes y_{k}\rangle_{\mathrm{T}^{k}},
\end{equation}
where $p(\sigma;x_{1},\ldots,x_{k})$ is the signed color permutation factor from Section~\ref{subsec::conventions}. This form descends to $\bigwedge^{k}_{\varepsilon}(\gg)$, and hence defines a bilinear form $\langle\cdot,\cdot\rangle_{\wedge}$ on $\bigwedge_{\varepsilon}(\gg)$. As in \cite[Section 5]{generalized_Clifford}, one proves the following lemma. 

\begin{lemma}\label{lemm::action_iota_inner_product_wedge}
The bilinear form $\langle\cdot,\cdot\rangle_{\wedge}$ is nondegenerate and $\varepsilon$-symmetric. Moreover, for all homogeneous $x\in\gg$ and all $y,z\in\bigwedge_{\varepsilon}(\gg)$,
\begin{equation*}
\langle \iota_{x}y,z\rangle_{\wedge}=\varepsilon(x,y)\langle y,x\wedge z\rangle_{\wedge}.
\end{equation*}
\end{lemma}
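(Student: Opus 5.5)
The plan is to prove the three assertions of Lemma~\ref{lemm::action_iota_inner_product_wedge} in the order: well-definedness, adjointness, nondegeneracy and $\varepsilon$-symmetry. Everything is reduced to tensor length $k$, where each computation involves only finitely many $\ZZ$-homogeneous pieces. Since $\iota_x$ lowers length by one and $x\wedge\cdot$ raises it by one, it suffices to compare $\langle\cdot,\cdot\rangle_{\wedge^{k-1}}$ and $\langle\cdot,\cdot\rangle_{\wedge^{k}}$.

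\emph{Well-definedness.} The defining sum over $S_k$ is the tensor form applied after the signed color antisymmetrizer $\mathrm{Alt}=\sum_\sigma \pi_{\sgn}(\sigma)$ in the first slot. Using the cocycle identity $p(\sigma\sigma';\ldots)=p(\sigma';v_{\sigma(\cdot)})p(\sigma;\ldots)$, one checks that $\mathrm{Alt}$ annihilates $I_\wedge\cap\mathrm{T}^k(\gg)$ in the first argument. For the second argument, I would show that the reversed pairing $\langle\cdot,\cdot\rangle_{\mathrm{T}^k}$ intertwines $\pi_{\sgn}(\sigma)$ on one side with $\pi_{\sgn}$ of the reversed permutation on the other side, up to the same color factor. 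This uses that $B$ is even and $\varepsilon$-symmetric, and it also shows $I_\wedge$ is killed in the second slot.

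\emph{Adjointness.} Expand $\langle y, x\wedge z\rangle_{\wedge}$ for decomposable $y=y_1\wedge\cdots\wedge y_k$ and $z=z_1\wedge\cdots\wedge z_{k-1}$. The reversed pairing places $x$, which is first in $x\wedge z$, against the last entry $y_{\sigma(k)}$. Grouping the permutations according to the index $j=\sigma(k)$ produces the factor $B(y_j,x)$ multiplied by a $\wedge^{k-1}$ pairing of $y$ with $y_j$ removed. Then compare this with \eqref{eq::contraction}, which contains $B(x,y_j)$ with sign $(-1)^{j-1}\varepsilon(x,y_1+\cdots+y_{j-1})$. The needed identity follows from moving $y_j$ to the end (sign $(-1)^{k-j}\varepsilon(y_j,y_{j+1}+\cdots+y_k)$), from $B(y_j,x)=\varepsilon(y_j,x)B(x,y_j)$, and from evenness $|x|=-|y_j|$. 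These combine into the overall factor $\varepsilon(x,y)$. I expect this sign bookkeeping to be the main obstacle. If the parities do not match directly, I would first verify the case $k=1,2$ by hand to fix the conventions and then induct on $k$ using the Leibniz rule \eqref{eq::Leibniz_rule_eins}.

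\emph{Nondegeneracy and $\varepsilon$-symmetry.} Nondegeneracy is checked on each finite-dimensional bigraded piece $\bigwedge^k_\varepsilon(\gg)_{(i,\gamma)}$, which pairs only with the piece of bidegree $(-i,-\gamma)$. Take a homogeneous basis $e_{a}$ and its dual basis $e^{a}$, ordered compatibly, and form ordered wedge monomials. Then $\langle e_{a_1}\wedge\cdots\wedge e_{a_k}, e^{b_k}\wedge\cdots\wedge e^{b_1}\rangle$ is a nonzero scalar, namely a product of color factors and possibly multiplicities when $\varepsilon(e_a)=-1$ allows repetition. The same pairing vanishes for distinct multisets, so the Gram matrix is triangular and invertible. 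For $\varepsilon$-symmetry, swap the roles of $x$ and $y$ in the defining formula. Reversal of order together with $\varepsilon$-symmetry of $B$ converts $\langle x,y\rangle$ into $\varepsilon(x,y)\langle y,x\rangle$, after reindexing $\sigma\mapsto\sigma^{-1}$ and using the cocycle formula.
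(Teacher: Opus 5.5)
The gap is in the adjointness step, at the claim that the signs ``combine into the overall factor $\varepsilon(x,y)$''. That claim is false, and so is the identity as displayed. Carry out exactly the bookkeeping you describe: grouping by $j=\sigma(k)$ and using the cocycle identity gives
\[
\langle y,x\wedge z\rangle_{\wedge}=\sum_{j=1}^{k}(-1)^{k-j}\varepsilon(y_j,y_{j+1}+\cdots+y_k)\,B(y_j,x)\,\langle y_1\wedge\cdots\wedge\widehat{y_j}\wedge\cdots\wedge y_k,z\rangle_{\wedge}.
\]
Multiply by $\varepsilon(x,y)$ and use $|x|=-|y_j|$. The color factors then agree term by term with those in \eqref{eq::contraction}. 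The signs, however, are $(-1)^{k-j}$ versus $(-1)^{j-1}$, which differ by $(-1)^{k-1}$ for every $j$. So your argument yields $\langle\iota_x y,z\rangle_{\wedge}=(-1)^{|y|_{\wedge}-1}\varepsilon(x,y)\langle y,x\wedge z\rangle_{\wedge}$.

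Your fallback of checking $k=1,2$ and inducting cannot remove this sign. With the nested pairing $\langle\cdot,\cdot\rangle_{\mathrm{T}^k}$ and the left contraction as defined in the paper, the displayed identity is false whenever $|y|_{\wedge}$ is even. For instance, take $\varepsilon\equiv1$ and $\gg=\gg_0$ two-dimensional abelian with $B$-orthonormal basis $e_1,e_2$. Then $\iota_{e_1}(e_1\wedge e_2)=e_2$, so with $y=e_1\wedge e_2$, $z=e_2$ the left side is $B(e_2,e_2)=1$. On the other hand, $\langle e_1\wedge e_2,e_1\wedge e_2\rangle_{\wedge}=B(e_2,e_1)B(e_1,e_2)-B(e_1,e_1)B(e_2,e_2)=-1$. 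In the ungraded case the nested pairing on $\bigwedge^k$ is $(-1)^{k(k-1)/2}$ times the determinant pairing, for which $\iota_x$ and $x\wedge\cdot$ really are adjoint. The ratio of these signs in degrees $k$ and $k-1$ is exactly the missing $(-1)^{k-1}$.

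What does hold in every degree is $\langle\iota_x y,z\rangle_{\wedge}=\varepsilon(x)\langle y,z\wedge x\rangle_{\wedge}$. It comes out of the same method if you group by $j=\sigma(1)$ instead, i.e.\ by the entry paired with the last factor of the second argument. Both sides then carry $(-1)^{j-1}$, and on the support $\varepsilon(y_1+\cdots+y_{j-1},y_j)B(y_j,x)=\varepsilon(x)\,\varepsilon(x,y_1+\cdots+y_{j-1})B(x,y_j)$. Writing $z\wedge x=(-1)^{|z|_{\wedge}}\varepsilon(z,x)\,x\wedge z$ and using $\varepsilon(x)\varepsilon(z,x)=\varepsilon(x,y)$ when $|x|+|y|+|z|=\ng$ recovers the corrected formula above.

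So what you can actually prove is the lemma with the extra factor $(-1)^{|y|_{\wedge}-1}$, or with right multiplication by $x$. The paper gives no argument of its own here, only a reference to Chen--Kang. The sign is not cosmetic: the lemma is applied in Lemma~\ref{lemm::properties_phi}(b) to $\iota_y\phi$, which has exterior degree $2$.

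The remaining parts of your plan are fine:
\begin{itemize}
\item $\mathrm{Alt}=\sum_{\sigma}\pi_{\sgn}(\sigma)$ kills $I_{\wedge}$.
\item The mirror relation $\langle\pi_{\sgn}(s_i)a,b\rangle_{\mathrm{T}^k}=\langle a,\pi_{\sgn}(s_{k-i})b\rangle_{\mathrm{T}^k}$ uses only evenness of $B$, and gives $\langle\mathrm{Alt}\,a,b\rangle_{\mathrm{T}^k}=\langle a,\mathrm{Alt}\,b\rangle_{\mathrm{T}^k}$.
\item $\varepsilon$-symmetry then follows from $\langle a,b\rangle_{\mathrm{T}^k}=\varepsilon(a,b)\langle b,a\rangle_{\mathrm{T}^k}$.
\end{itemize}

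There is one slip in the nondegeneracy step. $\bigwedge^{k}_{\varepsilon}(\gg)_{(i,\gamma)}$ is not finite-dimensional for $k\geq2$ if infinitely many $\gg_r$ are nonzero; in a loop algebra it contains $\gg_m\wedge\gg_{i-m}$ for all $m$. This is harmless, for either of two reasons:
\begin{itemize}
\item The dual monomials still pair diagonally with the monomials, and every element is a finite combination of monomials.
\item Alternatively, restrict to $\bigwedge_{\varepsilon}(W)$ for a finite-dimensional nondegenerate $W=\bigoplus_{r\in F\cup(-F)}\gg_r$, as in the proof of Lemma~\ref{lemm::form_epsilon_orthogonal}.
\end{itemize}
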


Recall that $\gg^{\ast}$ denotes the restricted dual of $\gg$. The natural pairing between $\bigwedge_{\varepsilon}(\gg)$ and $\bigwedge_{\varepsilon}(\gg^{\ast})$ identifies $\bigwedge_{\varepsilon}(\gg)_{(i,\gamma)}$ with a subspace of the linear maps $\bigwedge_{\varepsilon}(\gg^{\ast})_{(-i,-\gamma)} \to \CC$. The completion of $\bigwedge_{\varepsilon}(\gg)$ is defined as the space of all such linear maps. Equivalently, using $\bigwedge_{\varepsilon}(\gg) \cong \bigwedge_{\varepsilon}(\gg_{-})\otimes \bigwedge_{\varepsilon}(\gg_{+})$ and the identification $\gg \cong \gg^{\ast}$, one has the decomposition
$
\widehat{\bigwedge}_{\varepsilon}(\gg) = \bigoplus_{(i,\gamma)\in \ZZ\times \Gamma} \widehat{\bigwedge}_{\varepsilon}(\gg)_{(i,\gamma)},
$
where
\begin{equation}
\widehat{\bigwedge}_{\varepsilon}(\gg)_{(i,\gamma)}
\coloneqq
\prod_{r\geq 0}\ \bigoplus_{\substack{\alpha,\beta\in\Gamma\\ \alpha+\beta=\gamma}}
\bigwedge\nolimits_{\varepsilon}(\gg_{-})_{(i-r,\alpha)}\otimes
\bigwedge\nolimits_{\varepsilon}(\gg_{+})_{(r,\beta)}.
\end{equation}
The exterior multiplication extends componentwise to $\widehat{\bigwedge}_{\varepsilon}(\gg)$. Thus $\widehat{\bigwedge}_{\varepsilon}(\gg)$ is a $\ZZ$-graded color algebra. The contraction, the Lie derivative, and $\langle\cdot, \cdot\rangle_{\wedge}$ extend to the completion, denoted by the same symbol. Likewise, for each $k\geq 0$ one defines the completed exterior powers $\widehat{\bigwedge}_{\varepsilon}^{k}(\gg)$ such that $\widehat{\bigwedge}_{\varepsilon}(\gg)=\bigoplus_{k\geq 0}\widehat{\bigwedge}_{\varepsilon}^{k}(\gg)$. Thus $\widehat{\bigwedge}_{\varepsilon}(\gg)$ carries, in addition, a compatible $\ZZ$-grading induced by tensor length.

\subsubsection{\texorpdfstring{$\varepsilon$-Exterior Algebra and $\mathfrak{so}_{\varepsilon}(\gg)$}{}}\label{subsubsec::epsilon_exterior_and_so}
We consider the $\varepsilon$-orthogonal algebra $\mathfrak{so}_{\varepsilon}(\gg)$ of $(\gg,B)$ introduced in Example~\ref{ex::color_Lie_algebras}. It is a $\ZZ$-graded quadratic color Lie subalgebra of $\End(\gg)$, where the $\ZZ$-grading is induced by that of $\gg$.
\begin{lemma}\label{lemm::form_epsilon_orthogonal}
 $\mathfrak{so}_{\varepsilon}(\gg)$ is quadratic with $\varepsilon$-trace form \[
 (S,T) \mapsto \tr_{\varepsilon}(ST).
 \]
\end{lemma}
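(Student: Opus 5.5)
The plan is to reduce every assertion to a finite-dimensional $\varepsilon$-trace computation and then check the four defining properties of a quadratic structure in turn: well-definedness, evenness, $\varepsilon$-symmetry with invariance, and nondegeneracy.

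\emph{Well-definedness.} Elements of $\mathfrak{so}_{\varepsilon}(\gg)\subseteq\End(\gg)$ are finite sums of finitary maps. If $S\in\End(\gg)_i$ and $T\in\End(\gg)_j$ are homogeneous, then $ST$ has only finitely many nonzero components $\gg_r\to\gg_{r+i+j}$. Hence there is a finite set $R\subset\ZZ$ such that $V_R\coloneqq\bigoplus_{r\in R}\gg_r$ is finite-dimensional, contains the image of $T$ on all relevant components, and satisfies $ST=0$ off $V_R$.
\begin{itemize}
\item If $i+j\neq0$, then $ST$ has no diagonal blocks, so we set $\tr_{\varepsilon}(ST)=0$.
\item If $i+j=0$, we define $\tr_{\varepsilon}(ST)$ as the $\varepsilon$-trace on $V_R$. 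This is independent of $R$, because enlarging $R$ only adds zero blocks.
\end{itemize}
The same argument applies to products of three finitary maps.

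\emph{Evenness, $\varepsilon$-symmetry and invariance.} The $\varepsilon$-trace of a homogeneous endomorphism of nonzero $\Gamma$-degree vanishes, since $\mathcal E\circ ST$ then has no diagonal blocks with respect to the $\Gamma$-grading. Together with the $\ZZ$-degree observation above, this shows the form is even in the bigraded sense required in Section~\ref{subsec::conventions}. On a common finite-dimensional $V_R$ we can then apply the finite-dimensional $\varepsilon$-cyclicity $\tr_{\varepsilon}(ST)=\varepsilon(S,T)\tr_{\varepsilon}(TS)$. This gives $\varepsilon$-symmetry directly. Invariance, $\tr_{\varepsilon}([S,T]U)=\tr_{\varepsilon}(S[T,U])$, follows by expanding both brackets. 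The two terms $\tr_{\varepsilon}(STU)$ agree, and $\varepsilon(S,T)\tr_{\varepsilon}(TSU)=\varepsilon(T,U)\tr_{\varepsilon}(SUT)$ holds by cyclically moving $U$ (resp.\ $S$), using the bicharacter identities \eqref{eq::commutation_factor}.

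\emph{Nondegeneracy.} This is the main step. For homogeneous $x,y\in\gg$, I would use the elementary skew operators
\[
T_{x,y}(z)\coloneqq x\,B(y,z)-\varepsilon(x,y)\,y\,B(x,z).
\]
\begin{itemize}
\item First verify that $T_{x,y}\in\mathfrak{so}_{\varepsilon}(\gg)$ using $\varepsilon$-symmetry of $B$. It is finitary, having rank at most $2$ and nonzero only on $\gg_{-\deg x}\oplus\gg_{-\deg y}$.
\item Next compute, via a basis $(e_a)$ and Lemma~\ref{lemm::basis_description}(a), that $\tr_{\varepsilon}(S\,T_{x,y})$ equals a nonzero constant times $B(y,Sx)$ for $S\in\mathfrak{so}_{\varepsilon}(\gg)$. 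The two terms coincide rather than cancel precisely because $S$ is skew; the expected constant is $\pm2$ up to an $\varepsilon$-factor.
\item Given homogeneous $S\neq0$, pick homogeneous $x$ with $Sx\neq0$. Nondegeneracy of $B$, which pairs $\gg_{(k,\gamma)}$ perfectly with the finite-dimensional space $\gg_{(-k,-\gamma)}$, yields $y$ with $B(y,Sx)\neq0$. Then $\tr_{\varepsilon}(ST_{x,y})\neq0$.
\item Since the form is even, nondegeneracy on homogeneous components suffices.
\end{itemize}

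I expect the only real obstacle to be the sign bookkeeping in computing $\tr_{\varepsilon}(ST_{x,y})$, in particular confirming that the two contributions add up rather than cancel. To settle this, I would first carry out the computation in the super case $\Gamma=\ZZ_2$ as a sanity check.
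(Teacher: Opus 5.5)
Your proof is correct, but its key step, nondegeneracy, takes a different route from the paper's.

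The paper's argument goes as follows:
\begin{itemize}
\item It splits $S$ into $\ZZ$-homogeneous pieces and picks a nonzero piece $S^{(i)}$.
\item It restricts $S^{(i)}$ to a finite-dimensional $B$-nondegenerate subspace $W$ that $S^{(i)}$ preserves; $W$ is a sum of $\gg_r$ closed under $r\mapsto -r$.
\item It invokes Meyer's finite-dimensional nondegeneracy of the $\varepsilon$-trace form on $\mathfrak{so}_{\varepsilon}(W)$ to get a witness $T_0$, which it extends by zero on $W^{\perp}$.
\end{itemize}
Well-definedness, evenness, $\varepsilon$-symmetry and invariance are simply taken for granted there; you check them explicitly.

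You instead write down an explicit witness. Your $T_{x,y}$ is exactly $-\upmu(x,y)$, since $\varepsilon(y,z)=\varepsilon(x,y)$ whenever $B(x,z)\neq 0$. The sign bookkeeping you flagged does work out. Using $\tr_{\varepsilon}\bigl(z\mapsto u\,B(w,z)\bigr)=B(u,w)$ and the skewness of $S$, for bihomogeneous $S\in\mathfrak{so}_{\varepsilon}(\gg)$ one gets
\[
\tr_{\varepsilon}(S\,T_{x,y})=B(Sx,y)-\varepsilon(x,y)B(Sy,x)=2B(Sx,y)=2\varepsilon(y)B(y,Sx).
\]
So the two contributions add rather than cancel.

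Your argument is therefore self-contained: it needs no appeal to Meyer and no restriction and extension by zero, and it works with $\ZZ\times\Gamma$-homogeneous components from the start. It also yields the identity $-\tfrac12\tr_{\varepsilon}(S\,\upmu(x,y))=B(Sx,y)$ for finitary skew $S$. This is the finitary counterpart of the implicit characterization \eqref{eq::implicit_def_moment_map} of the moment map. The paper's version has the merit of reusing the same finite-dimensional reduction template it applies throughout the preliminaries.
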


\begin{proof}
It remains to prove nondegeneracy. Let $0\neq S\in\mathfrak{so}_{\varepsilon}(\gg)$, and write
\begin{equation*}
S=\sum_{i\in\ZZ}S^{(i)},\qquad S^{(i)}\in\End_{\gr}(\gg)_{i},
\end{equation*}
with only finitely many nonzero terms. Choose $i\in\ZZ$ such that $S^{(i)}\neq 0$. Since $\tr_{\varepsilon}$ vanishes on endomorphisms of nonzero $\ZZ$-degree, it suffices to find
$
T\in\mathfrak{so}_{\varepsilon}(\gg)\cap\End_{\gr}(\gg)_{-i}
$
such that $\tr_{\varepsilon}(S^{(i)}T)\neq 0$. Since $S^{(i)}$ is finitary, there exists a finite subset $F\subset\ZZ$ such that
\begin{equation*}
S^{(i)}=\sum_{r\in F}S^{(i)}_{r},\qquad S^{(i)}_{r}\colon\gg_{r}\to\gg_{r+i},
\end{equation*}
with $S^{(i)}_{r}\neq 0$ only for $r\in F$. Set
\begin{equation*}
W\coloneqq\bigoplus_{r\in F\cup(F+i)\cup(-F)\cup(-(F+i))}\gg_{r}.
\end{equation*}
Then $W$ is finite-dimensional and $B\vert_{W}$ is nondegenerate, since $B(\gg_{p},\gg_{q})=0$ unless $p+q=0$, and $W$ contains $\gg_{r}$ together with $\gg_{-r}$ for every $r$. Moreover, $S^{(i)}(W)\subseteq W$, hence
\begin{equation*}
S^{(i)}\vert_{W}\in\mathfrak{so}_{\varepsilon}(W,B\vert_{W})\cap\End_{\gr}(W)_{i},
\qquad
S^{(i)}\vert_{W}\neq 0.
\end{equation*}

The $\varepsilon$-trace pairing on the finite-dimensional algebra $\mathfrak{so}_{\varepsilon}(W,B\vert_{W})$ is nondegenerate (see~\cite{Meyer}). Hence there exists
$
T_{0}\in\mathfrak{so}_{\varepsilon}(W,B\vert_{W})\cap\End_{\gr}(W)_{-i}
$
such that
\begin{equation*}
\tr_{\varepsilon}\bigl(S^{(i)}\vert_{W}T_{0}\bigr)\neq 0.
\end{equation*}

Extend $T_{0}$ by $0$ on $W^{\perp}$. Since $W$ is nondegenerate, this yields
$
T\in\mathfrak{so}_{\varepsilon}(\gg)\cap\End_{\gr}(\gg)_{-i}.
$
For $j\neq i$, the endomorphism $S^{(j)}T$ has $\ZZ$-degree $j-i\neq 0$, hence
$
\tr_{\varepsilon}(S^{(j)}T)=0.
$
Therefore
\begin{equation*}
\tr_{\varepsilon}(ST)=\tr_{\varepsilon}(S^{(i)}T)=\tr_{\varepsilon}\bigl(S^{(i)}\vert_{W}T_{0}\bigr)\neq 0.
\end{equation*}
Thus the bilinear form $(S,T)\mapsto\tr_{\varepsilon}(ST)$ on $\mathfrak{so}_{\varepsilon}(\gg)$ is nondegenerate.
\end{proof}

We define the completion $\widehat{\mathfrak{so}}_{\varepsilon}(\gg)$ of $\mathfrak{so}_{\varepsilon}(\gg)$ as in~\eqref{def::completion_End}.

\begin{lemma}\label{lemm::ad_in_completion}
For every homogeneous $x\in\gg$, one has $\ad_{x}\in\widehat{\mathfrak{so}}_{\varepsilon}(\gg)$. Moreover, if $x\in\gg_{i}$, then $\ad_{x}$ has $\ZZ$-degree $i$, that is,
$
\ad_{x}(\gg_{j})\subseteq \gg_{i+j}
$
for all $j\in\ZZ$.
\end{lemma}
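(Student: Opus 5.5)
The statement has three parts: $\ad_x$ is a well-defined element of the completion, it is $B$-skew, and it shifts the $\ZZ$-degree by $\deg(x)$. We treat the degree claim first, since it is what places $\ad_x$ in the completion.

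\textbf{Degree.} Let $x\in\gg_{i}$ be homogeneous for the $\ZZ\times\Gamma$-grading. The compatibility $[\gg_i,\gg_j]\subseteq\gg_{i+j}$ immediately gives $\ad_x(\gg_j)\subseteq\gg_{i+j}$ for all $j\in\ZZ$. Let $\ad_x^{(j)}\colon \gg_j\to\gg_{i+j}$ denote the restriction of $\ad_x$ to $\gg_j$. Then $\ad_x$ is the family $(\ad_x^{(j)})_{j\in\ZZ}$, which is an element of
\[
\prod_{r}\Hom(\gg_r,\gg_{r+i})=\widehat{\End}(\gg)_i .
\]
Since $\ad_x$ is homogeneous of $\Gamma$-degree $\vert x\vert$, each component lies in the corresponding $\Gamma$-homogeneous part. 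This element is generally not finitary, because infinitely many components can be nonzero; for this reason we must pass to the completion.

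\textbf{Skewness.} We check that $\ad_x$ satisfies the defining identity of $\mathfrak{so}_\varepsilon(\gg)$, namely
\[
B([x,v],w)+\varepsilon(x,v)B(v,[x,w])=0
\]
for homogeneous $v,w$. By $\varepsilon$-skew-symmetry of the bracket, $[x,v]=-\varepsilon(x,v)[v,x]$. Invariance of $B$ then gives
\[
B([x,v],w)=-\varepsilon(x,v)B([v,x],w)=-\varepsilon(x,v)B(v,[x,w]),
\]
which is exactly the required identity. Equivalently, this is the statement $\ad_x^{\ast}=-\ad_x$ already noted before Lemma~\ref{lemm::commutator_Lie_derivative_contraction}.

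\textbf{Membership in $\widehat{\mathfrak{so}}_\varepsilon(\gg)$.} The completion $\widehat{\mathfrak{so}}_\varepsilon(\gg)$ is defined, as in \eqref{def::completion_End}, as the set of families in $\prod_r\Hom(\gg_r,\gg_{r+i})$ that satisfy the skewness condition. This condition is checked pairwise: for $v\in\gg_r$ and $w\in\gg_s$, the identity involves only finitely many components, in fact only the components at $r$ and $s$, with $B(\gg_{r+i},\gg_s)$ nonzero only when $s=-r-i$. So the pointwise identity above is exactly the condition to be verified, and it holds for each pair.

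\textbf{Expected obstacle.} The only subtle point is to make precise that $\widehat{\mathfrak{so}}_\varepsilon(\gg)$ is the subspace of $\widehat{\End}(\gg)$ cut out by the componentwise skewness condition, rather than a closure in some topology. Once that definition is fixed, the verification is the pointwise identity above.
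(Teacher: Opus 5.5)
Your proposal is correct and follows the paper's proof. The skewness identity, obtained from $\varepsilon$-skew-symmetry of the bracket and invariance of $B$, is exactly the paper's computation, and the degree claim comes directly from $[\gg_i,\gg_j]\subseteq\gg_{i+j}$. Your remark that $\widehat{\mathfrak{so}}_\varepsilon(\gg)$ is the subspace of $\widehat{\End}(\gg)$ cut out by the componentwise skewness condition, which pairs the components at $r$ and $-r-i$, makes explicit what the paper uses implicitly when it concludes $\ad_x\in\widehat{\mathfrak{so}}_\varepsilon(\gg)$.
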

\begin{proof}
Let $x,y,z\in\gg$ be homogeneous. Note that $\vert\ad_{x}\vert=\vert x\vert $. By $\varepsilon$-skew-symmetry of $[\cdot,\cdot]$, invariance, and $\varepsilon$-symmetry of $B$,
\begin{equation*}
\begin{split}
B(\ad_{x}y,z)&=B([x,y],z)=-\varepsilon(x,y)B([y,x],z)=-\varepsilon(x,y)B(y,[x,z])=-\varepsilon(x,y)B(y,\ad_{x}z).
\end{split}
\end{equation*}
Hence $\ad_{x}\in\widehat{\mathfrak{so}}_{\varepsilon}(\gg)$. If $x\in\gg_{i}$, then $[x,\gg_{j}]\subseteq\gg_{i+j}$ by the $\ZZ$-grading of $\gg$, so $\ad_{x}$ has $\ZZ$-degree $i$.
\end{proof}

For finite-dimensional $\gg$, one has $\ad_{x}\in\mathfrak{so}_{\varepsilon}(\gg)$ for every $x\in\gg$. Relative to the map
$
\ad\colon\gg\to\mathfrak{so}_{\varepsilon}(\gg),
$ the \emph{moment map} $\upmu\colon\gg\times\gg\to\mathfrak{so}_{\varepsilon}(\gg)
$ is defined implicitly by
\begin{equation}\label{eq::implicit_def_moment_map}
-\tfrac{1}{2}\tr_{\varepsilon}(\ad_{x}\circ\upmu(y,z))=B([x,y],z),\qquad x,y,z\in\gg,
\end{equation}
which is well-defined since $\mathfrak{so}_{\varepsilon}(\gg)$ is quadratic with respect to the bilinear form
$
(S,T)\mapsto-\tfrac{1}{2}\tr_{\varepsilon}(ST).
$
The map $\upmu$ is $\varepsilon$-skew-symmetric, homogeneous of degree $(0,\ng)$, and is given by \cite{Meyer}
\begin{equation}\label{eq::explicit_def_moment_map}
\upmu(x,y)(z)=\varepsilon(y,z)B(x,z)y-B(y,z)x,\qquad x,y,z\in\gg.
\end{equation}
In the present setting, $\ad_{x}$ need only lie in $\widehat{\mathfrak{so}}_{\varepsilon}(\gg)$, so~\eqref{eq::implicit_def_moment_map} is no longer available. We therefore take~\eqref{eq::explicit_def_moment_map} as the definition of the moment map. 

\begin{proposition} $\upmu:\bigwedge^{2}_{\varepsilon}(\gg)\to\mathfrak{so}_{\varepsilon}(\gg)$ is an isomorphism of $\ZZ\times\Gamma$-graded vector spaces, with inverse $\uplambda \coloneqq \upmu^{-1}:\mathfrak{so}_{\varepsilon}(\gg)\to\bigwedge^{2}_{\varepsilon}(\gg)$, given in a basis $(e_{a})$ of $\gg$, with $B$-dual basis $(e^{a})$, by
\begin{equation*} \label{eq::def_lambda}
 \uplambda(T) \coloneqq \upmu^{-1}(T)=-\tfrac{1}{2}\sum_{a}T(e^{a})\wedge e_{a}.
\end{equation*}
\end{proposition}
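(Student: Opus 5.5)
Our plan is to first check that $\upmu$ is well defined on $\bigwedge^{2}_{\varepsilon}(\gg)$ with values in $\mathfrak{so}_{\varepsilon}(\gg)$, and then to show directly that $\uplambda$ is a two-sided inverse. The first step consists of three routine verifications using the explicit formula \eqref{eq::explicit_def_moment_map}.
\begin{enumerate}
\item[(i)] $\upmu(x,y)$ is finitary: it has rank at most two.
\item[(ii)] $\upmu(x,y)$ is homogeneous of bidegree $(\deg x+\deg y,\vert x\vert+\vert y\vert)$, because $B$ pairs $\gg_{(i,\gamma)}$ only with $\gg_{(-i,-\gamma)}$.
\item[(iii)] $\upmu(y,x)=-\varepsilon(x,y)\upmu(x,y)$. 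This follows by comparing coefficients with $\varepsilon$-symmetry of $B$ and the identity $\varepsilon(y,z)\varepsilon(x,y)=\varepsilon(x,z)$ whenever $B(y,z)\neq 0$, since then $\vert z\vert=-\vert y\vert$.
\end{enumerate}
Together these show that $\upmu$ vanishes on the generators $x\otimes y+\varepsilon(x,y)y\otimes x$, so it descends to $\bigwedge^2_{\varepsilon}(\gg)$. Skewness, $B(\upmu(x,y)z,w)+\varepsilon(x+y,z)B(z,\upmu(x,y)w)=0$, is likewise a two-term check in which the $\varepsilon$-factors cancel using the bicharacter property and evenness of $B$.

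Next we verify that $\uplambda$ is well defined. For $T\in\mathfrak{so}_{\varepsilon}(\gg)$ finitary, $T(e^a)\neq0$ for only finitely many $a$, since $T$ vanishes outside finitely many $\gg_r$ and each $\gg_r$ is finite-dimensional. So the sum is finite and lies in $\bigwedge^2_{\varepsilon}(\gg)$, with the correct bidegree. Independence of the basis follows from the identification \eqref{eq::identification_End(g)}. Indeed, $\sum_a T(e^a)\otimes e_a$ is, up to the $\varepsilon(e_a)$ factors handled by Lemma~\ref{lemm::basis_description}(a), the canonical tensor of $T$.

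The core of the argument is the computation of $\upmu\circ\uplambda=\id$. Applying $\upmu(\uplambda(T))$ to $z$ gives
\[
-\tfrac12\sum_a\bigl(\varepsilon(e_a,z)B(T e^a,z)\,e_a-B(e_a,z)\,Te^a\bigr).
\]
For the second term, Lemma~\ref{lemm::basis_description}(a) gives $\sum_a B(e_a,z)e^a=z$, so by linearity this term contributes $\tfrac12 T(z)$. For the first term, skewness of $T$ lets us rewrite $B(Te^a,z)=-\varepsilon(T,e^a)B(e^a,Tz)$. Then $\varepsilon$-symmetry and Lemma~\ref{lemm::basis_description}(a) again reduce $\sum_a(\cdots)e_a$ to $-T(z)$ times a product of $\varepsilon$-factors. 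That product must be shown to equal $1$, using that only $a$ with $\vert e^a\vert=\vert Tz\vert$ contribute, so the first term also gives $\tfrac12 T(z)$ and the total is $T(z)$. This sign bookkeeping is the main obstacle; we would track degrees via $\vert e^a\vert=-\vert e_a\vert$ and $\varepsilon(e_a)=\varepsilon(e_a,e^a)$ as established before Lemma~\ref{lemm::basis_description}.

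Finally, we prove $\uplambda\circ\upmu=\id$ on generators $x\wedge y$. Expanding $-\tfrac12\sum_a \upmu(x,y)(e^a)\wedge e_a$ produces $-\tfrac12\sum_a \varepsilon(y,e^a)B(x,e^a)\,y\wedge e_a+\tfrac12\sum_a B(y,e^a)\,x\wedge e_a$. Collapsing with Lemma~\ref{lemm::basis_description}(a) gives $-\tfrac12\,\varepsilon(\cdot)\,y\wedge x+\tfrac12\,x\wedge y$, where by the $\varepsilon$-exterior relation the first term equals $\tfrac12 x\wedge y$. Alternatively, once $\upmu\circ\uplambda=\id$ is known, it suffices to show $\upmu$ is injective. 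For this we would restrict to the finite-dimensional nondegenerate subspace $W$ spanned by the relevant $\gg_{\pm r}$, as in the proof of Lemma~\ref{lemm::form_epsilon_orthogonal}, and there invoke the finite-dimensional statement of \cite{Meyer}. Either way, bijectivity together with the degree statements gives the claimed isomorphism of $\ZZ\times\Gamma$-graded spaces.
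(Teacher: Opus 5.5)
Your proof is correct, but it takes a different route from the paper. The paper computes nothing. It observes that a given $\nu\in\bigwedge^{2}_{\varepsilon}(\gg)$ or finitary $T\in\mathfrak{so}_{\varepsilon}(\gg)$ involves only finitely many components $\gg_r$, and places these in a finite-dimensional nondegenerate $\ZZ\times\Gamma$-graded subspace $W$. It then imports Meyer's finite-dimensional isomorphism $\bigwedge^{2}_{\varepsilon}(W)\cong\mathfrak{so}_{\varepsilon}(W)$, using only that $\upmu$ and $\uplambda$ preserve the grading.

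You instead verify $\upmu\circ\uplambda=\id$ and $\uplambda\circ\upmu=\id$ directly on $\gg$. This is legitimate because every sum that occurs has only finitely many nonzero terms: $\sum_a T(e^a)\wedge e_a$, $\sum_a B(x,e^a)e_a$, and $\sum_a B(Tz,e^a)e_a$. Both computations check out. The $\varepsilon$-factor you single out as the main obstacle is indeed trivial: for the contributing $a$ one has $\vert e_a\vert=\vert T\vert+\vert z\vert$, and then $\varepsilon(e_a,z)\varepsilon(T,e^a)\varepsilon(e^a,Tz)=\varepsilon(z,z)^{2}\varepsilon(T,T)^{2}=1$.

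What each route buys:
\begin{itemize}
\item Yours is self-contained; it is essentially Meyer's argument run directly in the restricted setting. It checks explicitly that $\upmu$ lands in finitary skew maps. It also gives basis-independence of $\uplambda$ for free, since $\upmu$ is injective.
\item The paper's is shorter. However, it leaves implicit that $\upmu$ and $\uplambda$ are compatible with restriction to $W$ and with extension by zero on $W^{\perp}$.
\end{itemize}

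There is one slip in your preliminary step (iii).
\begin{itemize}
\item The correct symmetry is $\upmu(y,x)=-\varepsilon(y,x)\upmu(x,y)$, equivalently $\upmu(x,y)=-\varepsilon(x,y)\upmu(y,x)$. This is exactly what makes $\upmu$ vanish on $x\otimes y+\varepsilon(x,y)y\otimes x$. With $\varepsilon(x,y)$ in place of $\varepsilon(y,x)$, as you wrote it, you would only get $(1-\varepsilon(x,y)^{2})\upmu(x,y)$.
\item The auxiliary identity should read: $\varepsilon(x,z)=\varepsilon(y,x)$ whenever $B(y,z)\neq 0$, and $\varepsilon(y,z)=\varepsilon(x,y)$ whenever $B(x,z)\neq 0$. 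Your version, $\varepsilon(y,z)\varepsilon(x,y)=\varepsilon(x,z)$, already fails for a Lie superalgebra with $x$ even and $y$ odd.
\end{itemize}
With these corrections the coefficient comparison goes through, and the rest of your argument stands.
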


\begin{proof}
Meyer proves the corresponding statement for finite-dimensional $\Gamma$-graded quadratic spaces \cite[Prop.~2.13]{Meyer}. Let $\nu\in\bigwedge^{2}_{\varepsilon}(\gg)$ or $T\in\mathfrak{so}_{\varepsilon}(\gg)$. Since $\nu$ is a finite sum of wedges and $T$ is finitary, only finitely many homogeneous components of $\gg$ occur. Hence there exists a finite-dimensional nondegenerate $\ZZ\times\Gamma$-graded subspace $W\subset\gg$ containing them. Applying Meyer's result to $W$ yields mutually inverse maps
$
\bigwedge^{2}_{\varepsilon}(W)\cong\mathfrak{so}_{\varepsilon}(W),
$
where the inverse is given by the stated formula. Since $\uplambda$ and $\upmu$ preserve the $\ZZ\times\Gamma$-grading, they induce mutually inverse maps
$
\bigwedge^{2}_{\varepsilon}(\gg)\cong\mathfrak{so}_{\varepsilon}(\gg)
$
of $\ZZ\times\Gamma$-graded vector spaces.
\end{proof}

\begin{lemma} \label{lemm::Lie_derivative_and_lambd}
 For any $T_{1},T_{2} \in \mathfrak{so}_{\varepsilon}(\gg)$, we have 
 \[
 L_{T_{1}} \uplambda(T_{2}) = \uplambda([T_{1},T_{2}]_{\mathfrak{so}_{\varepsilon}}).
 \]
\end{lemma}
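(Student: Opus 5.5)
The identity is an equivariance statement, and I would prove it by reducing to a single equivariance property of the moment map $\upmu$. Since $\upmu\colon\bigwedge^{2}_{\varepsilon}(\gg)\to\mathfrak{so}_{\varepsilon}(\gg)$ is an isomorphism with inverse $\uplambda$, the claim is equivalent to
\[
\upmu\bigl(L_{T_{1}}\nu\bigr)=[T_{1},\upmu(\nu)]_{\mathfrak{so}_{\varepsilon}}\qquad\text{for all }\nu\in\textstyle\bigwedge^{2}_{\varepsilon}(\gg).
\]
Applying $\uplambda$ to both sides with $\nu=\uplambda(T_{2})$ then gives the statement. This formulation has two advantages. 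It avoids infinite sums over the dual basis, and by linearity it suffices to check it on decomposable elements $\nu=x\wedge y$ with $x,y$ homogeneous.

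For $\nu=x\wedge y$ I would use the Leibniz rule~\eqref{eq::Leibniz_rule_zwei}:
\[
L_{T_{1}}(x\wedge y)=T_{1}x\wedge y+\varepsilon(T_{1},x)\,x\wedge T_{1}y .
\]
I then evaluate both sides on a homogeneous $z\in\gg$ using the explicit formula~\eqref{eq::explicit_def_moment_map}.

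On the right, expand
\[
[T_{1},\upmu(x,y)](z)=T_{1}\bigl(\upmu(x,y)z\bigr)-\varepsilon(T_{1},x+y)\,\upmu(x,y)(T_{1}z).
\]
The first term yields $\varepsilon(y,z)B(x,z)\,T_{1}y-B(y,z)\,T_{1}x$. In the second term, the scalars $B(x,T_{1}z)$ and $B(y,T_{1}z)$ appear. I would rewrite them with the skew-adjointness relation $\langle Tv,w\rangle=-\varepsilon(T,v)\langle v,Tw\rangle$ defining $\mathfrak{so}_{\varepsilon}(\gg)$, combined with $\varepsilon$-symmetry of $B$. This converts them into $-\varepsilon(\cdot,\cdot)B(T_{1}x,z)$ and the analogous expression for $y$. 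The four resulting terms should then match term by term with $\upmu(T_{1}x,y)(z)+\varepsilon(T_{1},x)\upmu(x,T_{1}y)(z)$. The main obstacle is the bookkeeping of commutation factors, for instance checking that $\varepsilon(y,z)$ combined with $\varepsilon(T_{1},x+y)$ and the factor from moving $T_{1}$ across $x$ reproduces $\varepsilon(T_{1}y,z)=\varepsilon(T_{1},z)\varepsilon(y,z)$. I expect this to follow from bimultiplicativity~\eqref{eq::commutation_factor}, together with $\varepsilon(T_{1},x+y+z)=1$ whenever the relevant pairing $B(\cdot,z)$ is nonzero, by evenness of $B$.

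Alternatively, one can argue by reduction to the finite-dimensional case, as in the proof that $\upmu$ is an isomorphism. Here $T_{1}$, $T_{2}$ and $\uplambda(T_{2})$ involve only finitely many graded pieces, so one may choose a finite-dimensional nondegenerate $\ZZ\times\Gamma$-graded subspace $W$ stable under $T_{1}$ and $T_{2}$, with $T_{i}$ vanishing on $W^{\perp}$. The identity is then Meyer's finite-dimensional result for $W$, transported by the grading-preserving inclusion. I would use the direct computation as the main proof and mention this reduction as a consistency check.
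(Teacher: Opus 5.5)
Your proposal is correct, but it argues differently from the paper. The paper works directly with the dual-basis formula $\uplambda(T)=-\tfrac12\sum_a T(e^a)\wedge e_a$. It uses that the canonical tensor $\sum_a e^a\otimes e_a$ is annihilated by $T_1\otimes 1+1\otimes T_1$, applies $T_2\otimes 1$, and passes to $\bigwedge^2_\varepsilon(\gg)$. It then expands $L_{T_1}\uplambda(T_2)$ by the Leibniz rule and regroups the result into $\uplambda(T_1T_2-\varepsilon(T_1,T_2)T_2T_1)$.

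You instead transfer the problem to the moment map. You prove $\upmu(L_{T_1}\nu)=[T_1,\upmu(\nu)]$ on decomposable $x\wedge y$ from the explicit formula~\eqref{eq::explicit_def_moment_map}, and then invert with $\uplambda=\upmu^{-1}$. The four-term comparison closes as you predict.

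Comparing the two:
\begin{itemize}
\item Your route needs the preceding proposition that $\upmu$ is bijective with inverse $\uplambda$; the paper's computation does not use this.
\item In exchange, yours is a purely local check on two vectors, and the equivariance step uses only $T_1\in\mathfrak{so}_\varepsilon(\gg)$.
\item It also yields the slightly stronger statement that $\upmu$ is an $\mathfrak{so}_\varepsilon(\gg)$-module map on all of $\bigwedge^2_\varepsilon(\gg)$.
\end{itemize}

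One correction to your bookkeeping. The evenness input you need is $\varepsilon(T_1,x+z)=1$ whenever $B(x,z)\neq0$, not $\varepsilon(T_1,x+y+z)=1$. This is what matches the coefficient $\varepsilon(T_1,x)\varepsilon(T_1y,z)$ of $B(x,z)\,T_1y$ on the right with $\varepsilon(y,z)$ on the left. The terms coming from $\upmu(x,y)(T_1z)$ simplify by bimultiplicativity and $\varepsilon(a,b)\varepsilon(b,a)=1$ alone. Also, the skew-adjointness relation gives $B(x,T_1z)=-\varepsilon(x,T_1)B(T_1x,z)$ directly, without $\varepsilon$-symmetry of $B$.

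Your finite-dimensional reduction is also sound as a consistency check.
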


\begin{proof}
Let $(e_{a})$ be a homogeneous basis of $\gg$ with $B$-dual basis $(e^{a})$. Then
\begin{equation*}
\uplambda(T) = -\tfrac{1}{2}\sum_{a} T(e^{a}) \wedge e_{a} = \tfrac{1}{2}\sum_{a} \varepsilon(T,e_{a})e^{a}\wedge T(e_{a}) = -\tfrac{1}{2}\sum_{a}\varepsilon(e_{a})T(e_{a})\wedge e^{a}, \qquad T \in \mathfrak{so}_{\varepsilon}(\gg)
\end{equation*}
using Lemma~\ref{lemm::basis_description}. Moreover, since $T_{1},T_{2}\in \mathfrak{so}_{\varepsilon}(\gg)$, we have in $\gg \otimes \gg$:
\[
0=(T_{1}\otimes 1 + 1 \otimes T_{1})(\sum_{a}e^{a} \otimes e_{a}) = \sum_{a}(T_{1}(e^{a})\otimes e_{a}+\varepsilon(T_{1},e^{a})e^{a}\otimes T_{1}(e_{a})).
\]
Applying $T_{2}\otimes 1$ and descending to the quotient $\bigwedge^{2}_{\varepsilon}(\gg)$ gives
\[
-\sum_{a}T_{2}(T_{1}(e^{a}))\wedge e_{a} = \sum_{a}\varepsilon(T_{1},e^{a})T_{2}(e^{a})\wedge T_{1}(e_{a}).
\]
Consequently, we obtain
\[
\begin{split}
L_{T_{1}}\uplambda(T_{2}) 
&= -\tfrac{1}{2}\sum_{a}\Bigl[T_{1}(T_{2}(e^{a})) \wedge e_{a} 
+ \varepsilon(T_{1},T_{2}+e^{a})T_{2}(e^{a}) \wedge T_{1}(e_{a})\Bigr] \\[2pt]
&=-\tfrac{1}{2}\sum_{a}T_{1}(T_{2}(e^{a}))\wedge e_{a}-\varepsilon(T_{1},T_{2})\tfrac{1}{2}\sum_{a}\varepsilon(T_{1},e^{a})T_{2}(e^{a})\wedge T_{1}(e_{a}) \\[2pt]
&=-\tfrac{1}{2}\sum_{a}T_{1}(T_{2}(e^{a}))\wedge e_{a} + \varepsilon(T_{1},T_{2})\tfrac{1}{2}\sum_{a}T_{2}T_{1}(e^{a})\wedge e_{a}
\\[2pt] &=-\tfrac{1}{2}\sum_{a}\Bigl[T_{1}(T_{2}(e^{a})) \wedge e_{a} - \varepsilon(T_{1},T_{2})T_{2}T_{1}(e^{a})\wedge e_{a}\Bigr] \\ &= \uplambda([T_{1},T_{2}]_{\mathfrak{so}_{\varepsilon}})
. \qedhere
\end{split} 
\]
\end{proof}

Under the identification $\gg \cong \gg^{\ast}$ via $B$, we can describe $\uplambda$ as follows. 
For any $T \in \mathfrak{so}_{\varepsilon}(\gg)$, define $\omega_{T}(x,y) \coloneqq B(Tx,y)$ for all $x,y \in \gg$. 
A direct calculation shows that this form is skew-$\varepsilon$-symmetric.

\begin{lemma}\label{lemm::identification_lambda_omega}
Under the identification $\gg \cong \gg^{\ast}$, we have 
\[
(\uplambda(T))^{\flat} = \omega_{T}, \qquad T \in \mathfrak{so}_{\varepsilon}(\gg).
\]
\end{lemma}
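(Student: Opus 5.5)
The plan is to make the identification explicit and then compute in a basis. Under $\gg\cong\gg^{\ast}$ via $\flat$, an element $\nu\in\bigwedge^{2}_{\varepsilon}(\gg)$ corresponds to the $\varepsilon$-skew bilinear form $\nu^{\flat}(x,y)\coloneqq\langle \nu, x\wedge y\rangle_{\wedge}$ (with whatever ordering convention the natural pairing between $\bigwedge_{\varepsilon}(\gg)$ and $\bigwedge_{\varepsilon}(\gg^{\ast})$ induces; I would fix it so that for $k=1$ it reduces to $x^{\flat}(y)=B(x,y)$). So the claim is $\langle\uplambda(T),x\wedge y\rangle_{\wedge}=B(Tx,y)$ for homogeneous $x,y\in\gg$.

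First I would reduce to finite dimensions, exactly as in the proof of the preceding proposition. Since $T$ is finitary and $x,y$ lie in finitely many $\ZZ$-components, choose a finite-dimensional nondegenerate $\ZZ\times\Gamma$-graded subspace $W=\bigoplus_{r\in F}\gg_r$ with $F=-F$, stable under $T$ and containing $x,y$. Choose the basis $(e_a)$ adapted to $W\oplus W^{\perp}$. Then only basis vectors in $W$ contribute to $\sum_a T(e^a)\wedge e_a$ paired against $x\wedge y$. Hence all sums below are finite.

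Next I would evaluate the pairing on a decomposable element using the definition of $\langle\cdot,\cdot\rangle_{\wedge^{2}}$. The identity permutation and the transposition give
\[
\langle u\wedge v,x\wedge y\rangle_{\wedge}=B(v,x)B(u,y)-\varepsilon(u,v)B(u,x)B(v,y).
\]
Substituting $u=T(e^a)$ and $v=e_a$, and summing with the factor $-\tfrac12$, I would collapse each term with Lemma~\ref{lemm::basis_description}\,a). The first term gives $\sum_a B(e_a,x)B(Te^a,y)=B(T(\sum_a B(e_a,x)e^a),y)=B(Tx,y)$. The second term gives $\sum_a\varepsilon(Te^a,e_a)B(Te^a,x)B(e_a,y)$. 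Moving $T$ across with the $\mathfrak{so}_{\varepsilon}$ condition, applying $\varepsilon$-symmetry of $B$, and again using Lemma~\ref{lemm::basis_description}, this should equal $-B(Tx,y)$ times the appropriate color sign, which cancels against the $\varepsilon(Te^a,e_a)$ factor. The total is then $-\tfrac12(B(Tx,y)-B(Tx,y)\cdot(-1))$ up to the overall sign convention, i.e.\ $\pm B(Tx,y)$.

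The main obstacle is purely bookkeeping of signs and $\varepsilon$-factors: matching the ordering convention in $\langle\cdot,\cdot\rangle_{\mathrm{T}^{k}}$ (which pairs $x_k$ with $y_1$) with the convention for $\flat$ on $\bigwedge^{2}$. I would first check the case where all degrees are trivial ($\varepsilon\equiv1$), where the formula is the classical $\uplambda(T)=-\tfrac12\sum T(e^a)\wedge e_a\leftrightarrow B(T\cdot,\cdot)$. This fixes the overall sign and the ordering convention. I would then track the color factors using $\varepsilon(e_a)=\varepsilon(e_a,e^a)$ and $|Te^a|=|T|-|e_a|$, so that the $\varepsilon$-factors produced by the swap cancel exactly. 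As a consistency check, both sides are $\varepsilon$-skew in $(x,y)$, by the remark preceding the lemma.
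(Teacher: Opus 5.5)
Your route differs from the paper's. The paper never expands $\uplambda(T)$ in a basis. It checks on decomposables that $B(\upmu(x,y)z,w)=\varepsilon(y,z)B(x,z)B(y,w)-B(y,z)B(x,w)=(x\wedge y)^{\flat}(z,w)$, i.e.\ $\omega_{\upmu(\nu)}=\nu^{\flat}$, and then substitutes $\nu=\uplambda(T)$ using $\upmu\circ\uplambda=\mathrm{id}$. A direct basis computation is a legitimate alternative, and your two collapsed sums are correct: $\sum_a B(e_a,x)B(Te^a,y)=B(Tx,y)$, and $\sum_a\varepsilon(Te^a,e_a)B(Te^a,x)B(e_a,y)=-B(Tx,y)$. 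In the second sum the $\varepsilon$-factors reduce to $\varepsilon(e_a)$, which $\varepsilon$-symmetry absorbs. However, as written the proposal does not prove the lemma. You leave the answer as ``$\pm B(Tx,y)$'' and propose to fix the sign by calibrating against $\varepsilon\equiv 1$. The sign is exactly what the lemma asserts, so choosing the identification that makes it come out right is not a proof.

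In fact, with the identification you wrote down, $\nu^{\flat}(x,y)=\langle\nu,x\wedge y\rangle_{\wedge}$, your own numbers give $\langle\uplambda(T),x\wedge y\rangle_{\wedge}=-\tfrac12\bigl(B(Tx,y)+B(Tx,y)\bigr)=-\omega_T(x,y)$, which is the wrong sign. The reason is that $\langle\cdot,\cdot\rangle_{\mathrm{T}^k}$ pairs $x_k$ with $y_1$. So in degree two, $\langle x\wedge y,z\wedge w\rangle_{\wedge}=B(y,z)B(x,w)-\varepsilon(x,y)B(x,z)B(y,w)$ is minus the usual evaluation, and the degree-one normalization $x^{\flat}(y)=B(x,y)$ cannot detect this.

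The identification meant in the statement, and used in the paper's proof, is the algebra map $x\wedge y\mapsto x^{\flat}\wedge y^{\flat}$ followed by Koszul evaluation: $(x\wedge y)^{\flat}(z,w)=\varepsilon(y,z)B(x,z)B(y,w)-B(y,z)B(x,w)=-\langle x\wedge y,z\wedge w\rangle_{\wedge}$, using $\varepsilon(y,z)=\varepsilon(x,y)$ whenever $B(x,z)\neq 0$. With this convention the two terms become $\sum_a\varepsilon(e_a,x)B(Te^a,x)B(e_a,y)=-B(Tx,y)$ (on the support, $\varepsilon(e_a,x)=\varepsilon(Te^a,e_a)$) and $-\sum_a B(e_a,x)B(Te^a,y)=-B(Tx,y)$. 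Hence $(\uplambda(T))^{\flat}(x,y)=-\tfrac12\bigl(-2B(Tx,y)\bigr)=B(Tx,y)$, which completes your argument. The reduction to a finite-dimensional $W$ is harmless but unnecessary, since $\uplambda(T)$ is already a finite sum and the collapsing identities of Lemma~\ref{lemm::basis_description} apply directly.
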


\begin{proof}
For homogeneous $x,y,z,w\in\gg$, one has
\begin{equation}
B(\upmu(x,y)z,w)=\varepsilon(y,z)B(x,z)B(y,w)-B(y,z)B(x,w).
\end{equation}
By definition of the induced map
$
\bigwedge^{2}_{\varepsilon}(\gg)\to \bigwedge^{2}_{\varepsilon}(\gg^{\ast})$, $\nu\mapsto \nu^{\flat}$, the right-hand side is precisely $(x\wedge y)^{\flat}(z,w)$. Thus
\begin{equation*}
\omega_{\upmu(x,y)}=(x\wedge y)^{\flat}.
\end{equation*}
Since $\uplambda=\upmu^{-1}$, it follows that for every $T\in\mathfrak{so}_{\varepsilon}(\gg)$, one has 
$
(\uplambda(T))^{\flat}=\omega_{T}.
$
\end{proof}

Since $\upmu$ is linear and preserves the $\ZZ\times\Gamma$-grading, it extends componentwise to the completions. The inverse $\uplambda$ extends correspondingly, and both are $\ZZ\times \Gamma$-graded isomorphisms in $\cat$. We denote the extensions by the same symbols.

\subsubsection{\texorpdfstring{$\varepsilon$}{}-Clifford Algebra and Completion} We now introduce the $\varepsilon$-Clifford algebra associated with the quadratic color Lie algebra $(\gg,B)$. Let $I_{\Cl}(\gg)$ be the two-sided ($\ZZ\times \Gamma$-graded) homogeneous ideal of $\mathrm{T}(\gg)$ generated by all elements
\begin{equation}
x\otimes y+\varepsilon(x,y)y\otimes x-2B(x,y)1_{\mathrm{T}(\gg)},\qquad x,y\in\gg \ \text{homogeneous}.
\end{equation}
The quotient
$
\Cl_{\varepsilon}(\gg)\coloneqq \mathrm{T}(\gg)/I_{\Cl}(\gg)
$
is a $\ZZ$-graded associative color algebra, called the $\varepsilon$-Clifford algebra. Identifying $\gg$ with its image in $\Cl_{\varepsilon}(\gg)$, it is generated by $\gg$ subject to the relations
\begin{equation}\label{eq::Clifford_relation}
xy+\varepsilon(x,y)yx=2B(x,y)1_{\Cl_{\varepsilon}(\gg)}
\end{equation}
for all $x,y\in\gg$ homogeneous. The operators~\eqref{eq::contraction} and~\eqref{eq::Lie_derivative} preserve $I_{\Cl}(\gg)$. Hence, for every $x\in\gg$ and $T\in\End(\gg)$, they descend to operators on $\Cl_{\varepsilon}(\gg)$, again denoted by $\iota_{x}$ and $L_{T}$. They satisfy the Leibniz rules~\eqref{eq::Leibniz_rule_eins} and~\eqref{eq::Leibniz_rule_zwei} with respect to the Clifford product.

The restriction of $B$ to $\gg_{\pm}$ yields a natural construction of Clifford algebras $\Cl_{\varepsilon}(\gg_{\pm})$, where $\Cl_{\varepsilon}(\gg_{+}) = \bigwedge_{\varepsilon}(\gg_{+})$ as $B$ restricts to zero on $\gg_{+}$. These are $\ZZ$-graded color subalgebras of $\Cl_{\varepsilon}(\gg)$ and the Clifford multiplication defines an isomorphism of $\ZZ\times\Gamma$-graded vector spaces $\Cl_{\varepsilon}(\gg) \cong \Cl_{\varepsilon}(\gg_{-}) \otimes \Cl_{\varepsilon}(\gg_{+})$. As for the $\ZZ$-graded $\varepsilon$-exterior algebra, we define the completion $\widehat{\Cl}_{\varepsilon}(\gg)$ of $\Cl_{\varepsilon}(\gg)$ as the direct sum of all
\begin{equation}
 \widehat{\Cl}_{\varepsilon}(\gg)_{(i,\gamma)} 
\coloneqq
\prod_{r\geq 0}\ \bigoplus_{\substack{\alpha,\beta\in\Gamma\\ \alpha+\beta=\gamma}}
\Cl_{\varepsilon}(\gg_{-})_{(i-r,\alpha)}
\otimes
\Cl_{\varepsilon}(\gg_{+})_{(r,\beta)}.
\end{equation}

\subsubsection{\texorpdfstring{$\widehat{\bigwedge}_{\varepsilon}(\gg), \widehat{\Cl}_{\varepsilon}(\gg)$}{} and Normal-Ordered Quantization}\label{subsubsec::normal_ordered_quantization}

We now relate $\widehat{\bigwedge}_{\varepsilon}(\gg)$ and $\widehat{\Cl}_{\varepsilon}(\gg)$. We define $\gamma : \gg \to \End(\bigwedge_{\varepsilon}(\gg))$ by $\gamma(x) \coloneqq \ell(x) + \iota_{x}$ where $\ell(x)$ denotes exterior multiplication by $x$ and $\iota_{x}$ denotes contraction. It satisfies $\gamma(x)\gamma(y) + \varepsilon(x,y)\gamma(y)\gamma(x) = 2B(x,y)$ for any homogeneous $x,y \in \gg$. Thus, by the universal property of the $\varepsilon$-Clifford algebra (see \cite[Proposition 3.1]{generalized_Clifford}), this realizes $\bigwedge_{\varepsilon}(\gg)$ as a $\Cl_{\varepsilon}(\gg)$-module, \emph{i.e.}, $\gamma : \Cl_{\varepsilon}(\gg) \to \End(\bigwedge_{\varepsilon}(\gg))$. 

\begin{theorem} \label{thm::quantization_map} The map $\eta : \Cl_{\varepsilon}(\gg) \to \bigwedge_{\varepsilon}(\gg)$ with $\eta(v) \coloneqq \gamma(v) 1_{\bigwedge_{\varepsilon}(\gg)}$ is an isomorphism of $\ZZ\times \Gamma$-graded vector spaces. Moreover, the inverse map is given by the quantization map $q \coloneqq \sum_{k} q_{k} : \bigwedge_{\varepsilon}(\gg) \to \Cl_{\varepsilon}(\gg)$ with 
\[
q_{k} (x_{1} \wedge \ldots \wedge x_{k}) \coloneqq \tfrac{1}{k!}\sum_{\sigma \in S_{k}} p(\sigma;x_{1}, \ldots, x_{k}) x_{\sigma(1)}\ldots x_{\sigma(k)},
\]
where
\[
p(\sigma; x_{1},\ldots, x_{k}) = \sgn(\sigma)\prod_{1 \leq i < j \leq k, \ \sigma^{-1}(i) > \sigma^{-1}(j)} \varepsilon(x_{i},x_{j}).
\]
\end{theorem}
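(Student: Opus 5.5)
The plan is to reduce to a finite-dimensional statement, in the same way as for the moment map $\upmu$. Any element of $\Cl_{\varepsilon}(\gg)$ or of $\bigwedge_{\varepsilon}(\gg)$ is a finite sum of products of finitely many homogeneous vectors. Hence it lies in $\Cl_{\varepsilon}(W)$, resp.\ $\bigwedge_{\varepsilon}(W)$, for a finite-dimensional $\ZZ\times\Gamma$-graded subspace $W\subset\gg$ of the form $W=\bigoplus_{r\in F\cup(-F)}\gg_{r}$. Such a $W$ is nondegenerate because $B(\gg_p,\gg_q)=0$ unless $p+q=0$.

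For nondegenerate $W$ we have $\gg=W\oplus W^{\perp}$. Moreover, $\iota_x$ and $\ell(x)$ for $x\in W$ preserve $\bigwedge_{\varepsilon}(W)\subset\bigwedge_{\varepsilon}(\gg)$, since the contraction only pairs $x$ with the factors present. So $\gamma$ restricts compatibly, and $\eta_{\gg}|_{\Cl_{\varepsilon}(W)}=\eta_W$. Likewise $q_{\gg}|_{\bigwedge_{\varepsilon}(W)}=q_W$. The inclusions $\Cl_{\varepsilon}(W)\to\Cl_{\varepsilon}(\gg)$ and $\bigwedge_{\varepsilon}(W)\to\bigwedge_{\varepsilon}(\gg)$ are injective; this follows from the tensor-product decomposition with respect to $W\oplus W^{\perp}$, or from a PBW-type basis argument.

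It therefore suffices to prove the theorem for finite-dimensional $W$. There I would proceed in three steps, each of which can alternatively be cited from Chen--Kang / Meyer.

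\textbf{Step 1: $q$ is well defined.} The signed symmetrization $x_1\otimes\dots\otimes x_k\mapsto\frac1{k!}\sum_\sigma p(\sigma)x_{\sigma(1)}\cdots x_{\sigma(k)}$ vanishes on $I_\wedge$ in degree $k$. This follows from the cocycle identity $p(\sigma\sigma')=p(\sigma';\dots)p(\sigma;\dots)$: precomposing with an adjacent transposition multiplies the result by $-\varepsilon(x_i,x_{i+1})$.

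\textbf{Step 2: $\eta\circ q=\id$.} This step I expect to be the main obstacle, namely bookkeeping of the $\varepsilon$-signs. Work with a homogeneous basis of $W$ adapted to $B$: pair $\gg_{r}$ with $\gg_{-r}$ via dual bases, and choose a $B$-adapted basis on $\gg_0$. For distinct basis vectors $x_1,\dots,x_k$ that are pairwise $B$-orthogonal, the Clifford relations give $x_{\sigma(1)}\cdots x_{\sigma(k)}=p(\sigma)x_1\cdots x_k$. Hence $q(x_1\wedge\dots\wedge x_k)=x_1\cdots x_k$. Moreover, $\gamma(x_1)\cdots\gamma(x_k)1=x_1\wedge\dots\wedge x_k$, because each contraction $\iota_{x_i}$ hits only factors orthogonal to $x_i$ and so vanishes.

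Elements $x_1\wedge\dots\wedge x_k$ built from such mutually orthogonal vectors do not span $\bigwedge_\varepsilon(W)$ when isotropic pairs $e_a,e^a$ occur together. To handle this, one can pass to an orthogonal-type basis of $\gg_{r}\oplus\gg_{-r}$ (of the form $e_a\pm e^a$, suitably normalized for $\varepsilon$-symmetry). Alternatively, one can run the standard filtration argument: $\eta$ respects the Clifford filtration, and its associated graded is the identity $\gr\Cl_\varepsilon(W)\cong\bigwedge_\varepsilon(W)$. Hence $\eta$ is bijective, and then a direct computation of $\eta q$ on monomials in an adapted basis, with repeated $\varepsilon$-skew cancellations, confirms that $\eta q=\id$.

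\textbf{Step 3: conclusion.} Both maps visibly preserve the $\ZZ\times\Gamma$-grading. Combined with the injectivity of $\eta$, Step 2 yields $q=\eta^{-1}$, and this passes back to $\gg$ by the reduction described above.
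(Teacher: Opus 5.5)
Your proposal follows the same route as the paper: you reduce to the finite-dimensional nondegenerate $\ZZ\times\Gamma$-graded subspace $W=\bigoplus_{r\in F\cup(-F)}\gg_{r}$ and invoke the finite-dimensional result of Chen--Kang, and your checks that $\gamma$, $\eta$ and $q$ restrict compatibly to $W$ and that the relevant inclusions are injective make this reduction more careful than the paper's one-line version. One caution about your optional Step~2 sketch: the ``orthogonal-type'' vectors $e_a\pm e^a$ are not $\Gamma$-homogeneous when $2\vert e_a\vert\neq\ng$, and on components with $\varepsilon(x)=-1$ every homogeneous vector is $B$-isotropic, so no such basis exists in the color setting; the finite-dimensional step should therefore rest on the citation, or on your filtration argument for bijectivity of $\eta$ together with a genuine verification that $\eta q=\id$.
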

\begin{proof} In \cite{generalized_Clifford}, the authors prove the case of finite-dimensional $\Gamma$-graded vector spaces. The same argument applies in the present $\ZZ$-graded setting, since all expressions involved depend on only finitely many homogeneous components of $\gg$. They therefore reduce to the corresponding statement on a finite-dimensional nondegenerate $\ZZ\times\Gamma$-graded subspace $W\subset \gg$, where the finite-dimensional proof applies verbatim.
\end{proof}

\begin{example}
\label{ex:isotropic_Q} \begin{enumerate}
 \item[a)]
If $x_{1},\dots,x_{n}$ span an isotropic subspace of $\gg$, then for any permutation $\sigma$ one obtains
\[
 x_{\sigma(1)} \cdots x_{\sigma(n)}
 = p(\sigma; x_{1},\dots,x_{n}) x_{1}\cdots x_{n},
\]
and, in particular,
\[
 q_{n}(x_{1}\wedge \dots \wedge x_{n}) = x_{1}\cdots x_{n}.
\]
\item[b)] If $(e_{a})$ denotes a homogeneous basis of $\gg$ with $B$-dual basis $(e^{a})$, then for any $T\in\mathfrak{so}_{\varepsilon}(\gg)$
\[
q_{2}(\uplambda(T))
=-\tfrac{1}{4}\sum_{a}\Bigl(T(e^{a})e_{a}
-\varepsilon(T+e^{a},e_{a})e_{a}T(e^{a})\Bigr) = -\tfrac{1}{2}\sum_{a}\Bigl(T(e^{a})e_{a}-B(T(e^{a}),e_{a})\Bigr).
\]

\end{enumerate}
\end{example}

As in \cite[Lemma 2.7]{Dirac_quadratic}, the quantization map intertwines contraction.

\begin{lemma}\label{lemm::quantization_map_non_completion_and_contraction}
 The quantization map $q$ intertwines contractions, that is
 \[
 \iota_{x} \circ q = q \circ \iota_{x}, \qquad \text{for all} \ x \in \gg.
 \]
\end{lemma}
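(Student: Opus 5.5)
The plan is to reduce the identity to the statement that the symbol map $\eta=q^{-1}$ intertwines contractions. Then we check that statement on Clifford monomials using the module structure $\gamma$. Since $\iota_x\circ q=q\circ\iota_x$ is equivalent to $\eta\circ\iota_x=\iota_x\circ\eta$ by Theorem~\ref{thm::quantization_map}, it suffices to show that $\iota_x(\gamma(v)1)=\gamma(\iota_x v)1$ for all $v\in\Cl_{\varepsilon}(\gg)$. This requires knowing that $\iota_x$ is defined on $\Cl_{\varepsilon}(\gg)$ and satisfies the Leibniz rule~\eqref{eq::Leibniz_rule_eins} there, which the paper states just before the normal-ordered quantization section.

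The key step is an operator identity on $\End(\bigwedge_{\varepsilon}(\gg))$. For homogeneous $x,y\in\gg$ we have
\[
\iota_x\gamma(y)+\varepsilon(x,y)\gamma(y)\iota_x=\iota_x\ell(y)+\varepsilon(x,y)\ell(y)\iota_x+\iota_x\iota_y+\varepsilon(x,y)\iota_y\iota_x=B(x,y).
\]
The first pair gives $B(x,y)$ by the Leibniz rule~\eqref{eq::Leibniz_rule_eins} applied to $y\wedge\eta$. The second pair vanishes; this follows from~\eqref{eq::contraction} by a direct check on generators, since $\iota_x\iota_y+\varepsilon(x,y)\iota_y\iota_x$ is an even $\varepsilon$-derivation killing $\gg$. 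In other words, $[\iota_x,\gamma(y)]=\gamma(\iota_x y)$, where $\gamma(\iota_x y)=B(x,y)$ is a scalar.

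Next we induct on the length of a monomial $v=y_1\cdots y_k\in\Cl_{\varepsilon}(\gg)$. Writing $v=y_1v'$, we get
\[
\iota_x\gamma(y_1)\gamma(v')1=B(x,y_1)\gamma(v')1-\varepsilon(x,y_1)\gamma(y_1)\iota_x\gamma(v')1 .
\]
By the induction hypothesis, $\iota_x\gamma(v')1=\gamma(\iota_xv')1$. The Clifford Leibniz rule gives $\iota_x(y_1v')=B(x,y_1)v'-\varepsilon(x,y_1)y_1\iota_x(v')$, with the sign $(-1)^{1}$ coming from the odd parity of $y_1$. The two expressions therefore agree. The base case $k=0$ is $\iota_x1=0$ on both sides.

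Finally, the infinite-dimensional $\ZZ$-graded setting causes no extra difficulty. Every element involves only finitely many homogeneous components, so one may restrict to a finite-dimensional nondegenerate $\ZZ\times\Gamma$-graded subspace $W\subset\gg$ containing $x$, as in the proof of Theorem~\ref{thm::quantization_map}. Alternatively, the argument above is already basis-free and finitary. The main obstacle I expect is sign bookkeeping. The combined factor $(-1)^{|\cdot|_\wedge}\varepsilon$ in~\eqref{eq::Leibniz_rule_eins} must match between the exterior and Clifford versions, and we must confirm that $\iota_x$ is well-defined on $\Cl_{\varepsilon}(\gg)$, i.e.\ that it preserves $I_{\Cl}(\gg)$. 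For the latter we check that $\iota_x$ sends each generator $y\otimes z+\varepsilon(y,z)z\otimes y-2B(y,z)$ to $B(x,y)z-\varepsilon(x,y)B(x,z)y+\varepsilon(y,z)\bigl(B(x,z)y-\varepsilon(x,z)B(x,y)z\bigr)$. This vanishes by evenness of $B$, since $B(x,y)\neq0$ forces $|x|=-|y|$ and then $\varepsilon(y,z)\varepsilon(x,z)=1$.
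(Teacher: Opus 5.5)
Your argument is correct and complete. The paper does not prove this lemma itself. It only says ``as in \cite[Lemma~2.7]{Dirac_quadratic}'', deferring to the finite-dimensional Lie superalgebra case, implicitly combined with the reduction to a finite-dimensional nondegenerate $\ZZ\times\Gamma$-graded subspace used for Theorem~\ref{thm::quantization_map}.

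You instead give a self-contained proof. You pass to the symbol map $\eta=q^{-1}$, whose definition $\eta(v)=\gamma(v)1$ is compatible with left multiplication. You then establish the anticommutator identity $\iota_x\gamma(y)+\varepsilon(x,y)\gamma(y)\iota_x=B(x,y)$ on $\bigwedge_{\varepsilon}(\gg)$. Finally, you induct on the length of Clifford monomials, using the Leibniz rule for $\iota_x$ on $\Cl_{\varepsilon}(\gg)$.

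This route has three advantages. First, the color signs $(-1)^{|\cdot|}\varepsilon(\cdot,\cdot)$ are actually checked, rather than transferred from the super case by analogy. Second, you avoid the symmetrization combinatorics with the factors $p(\sigma;x_1,\dots,x_k)$ that a direct check on $q_k$ would require. Third, every object involved is finitary, so the argument runs directly on the uncompleted algebras for infinite-dimensional $\gg$, with no reduction to a subspace $W$.

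The price is that your proof rests on Theorem~\ref{thm::quantization_map}, i.e.\ $q=\eta^{-1}$, which the paper itself only obtains by citation.

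One small completion: in your check that $\iota_x$ preserves $I_{\Cl}(\gg)$, you treat only the coefficient of $z$. The coefficient of $y$ is $B(x,z)\bigl(\varepsilon(y,z)-\varepsilon(x,y)\bigr)$. It vanishes because $B(x,z)\neq 0$ forces $|z|=-|x|$, and hence $\varepsilon(y,z)=\varepsilon(y,x)^{-1}=\varepsilon(x,y)$.
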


The quantization map $q : \bigwedge_{\varepsilon}(\gg) \to \Cl_{\varepsilon}(\gg)$ does not extend to completions, which leads to the \emph{normal-ordered quantization map}.

\begin{definition}\label{def::normal_ordered_quantization}
The \emph{normal-ordered quantization map} is the $\ZZ\times\Gamma$-graded linear map
\[
\widehat q:\widehat{\bigwedge}_{\varepsilon}(\gg)\to \widehat{\Cl}_{\varepsilon}(\gg)
\]
which, on each homogeneous component $(i,\gamma)\in \ZZ\times\Gamma$, is given by the direct product over $r\ge 0$ and the direct sum of all $\alpha,\beta\in \Gamma$ with $\alpha+\beta=\gamma$ of the maps
\[
q\otimes q:
\bigwedge\nolimits_{\varepsilon}(\gg_{-})_{(i-r,\alpha)}\otimes
\bigwedge\nolimits_{\varepsilon}(\gg_{+})_{(r,\beta)}
\to
\Cl_{\varepsilon}(\gg_{-})_{(i-r,\alpha)}\otimes
\Cl_{\varepsilon}(\gg_{+})_{(r,\beta)}.
\]
\end{definition}

\begin{example}[Normal-ordered quantization of $\widehat{\bigwedge}_{\varepsilon}^{2}(\gg)$ and $\widehat{\bigwedge}_{\varepsilon}^{3}(\gg)$] \label{ex::quantization_map} \ 
\begin{enumerate} \item[a)] Let $x,y \in \gg$. Then
\[
 \widehat q(x\wedge y)=q(x\wedge y)-B(\pi_+x,y)+\varepsilon(x,y)B(\pi_+y,x).
\]
Indeed, write $x=x_{-}+x_{+}$ and $y=y_{-}+y_{+}$ with $x_{-},y_{-}\in\gg_{-}$ and $x_{+},y_{+}\in\gg_{+}$. It suffices to consider the cases $x,y\in\gg_{-}$, $x,y\in\gg_{+}$, and $x\in\gg_{-}$, $y\in\gg_{+}$. In the first two, the formula is immediate. In the third case, let $x=x_{-}$, $y=y_{+}$. Then
\begin{equation*}
\begin{aligned}
q(x\wedge y)
&=\tfrac12\bigl(x_-y_+ - \varepsilon(x_-,y_+)y_+x_-\bigr)=x_-y_+ - B(x_-,y_+)=\widehat q(x\wedge y) - B(x,\pi_+y)
\end{aligned}
\end{equation*}
since $\widehat q(x_-\wedge y_+)=x_-y_+$ and by the Clifford relation~\eqref{eq::Clifford_relation}.
\item[b)] Set $C(x,y)\coloneqq -B(\pi_+x,y)+\varepsilon(x,y)B(\pi_+y,x)$ for any homogeneous $x,y \in \gg$. For any $x,y,z \in \gg$, a direct calculation as in a) shows 
\[
\widehat q(x\wedge y\wedge z)
=
q(x\wedge y\wedge z)
+
C(x,y)z
-\varepsilon(y,z)C(x,z)y
+\varepsilon(x,y+z)C(y,z)x.
\]
\end{enumerate}
\end{example}
\begin{remark}
One can derive a general formula for $\widehat{q}$ in terms of a sum over contractions (using $C$ to contract two elements of $\mathfrak{g}$), as a straightforward generalization of Wick's theorem on normal ordering to the color case.  
\end{remark}

\begin{proposition}\label{prop::normal_ordered_quantization_contraction}
\begin{enumerate}
 \item[a)] The normal-ordered quantization map $\widehat{q}$ intertwines contractions.
 \item[b)] The normal-ordered quantization map $\widehat{q}$ is an isomorphism of $\ZZ\times \Gamma$-graded vector spaces. 
\end{enumerate}
\end{proposition}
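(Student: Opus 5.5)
Both parts reduce to the tensor-factor description of $\widehat{q}$ in Definition~\ref{def::normal_ordered_quantization}. On each summand $\bigwedge_{\varepsilon}(\gg_{-})_{(i-r,\alpha)}\otimes\bigwedge_{\varepsilon}(\gg_{+})_{(r,\beta)}$ the map $\widehat{q}$ is $q\otimes q$, and the target is identified with a summand of $\widehat{\Cl}_{\varepsilon}(\gg)$ via Clifford multiplication $\Cl_{\varepsilon}(\gg_{-})\otimes\Cl_{\varepsilon}(\gg_{+})\cong\Cl_{\varepsilon}(\gg)$. Since products and sums are taken componentwise, it suffices to prove both statements on these summands (and, for a), to check compatibility with the product over $r$).

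For b), we use Theorem~\ref{thm::quantization_map} applied separately to $\gg_{-}$ and $\gg_{+}$. We regard each as a subspace carrying the restricted (possibly degenerate) form; the proof in \cite{generalized_Clifford} of $\eta\circ q=\id$ and $q\circ\eta=\id$ does not use nondegeneracy. For $\gg_{+}$ the form is zero, so $q$ is the identity $\bigwedge_{\varepsilon}(\gg_{+})=\Cl_{\varepsilon}(\gg_{+})$ by Example~\ref{ex:isotropic_Q}. In both cases $q$ is a $\ZZ\times\Gamma$-graded linear isomorphism, so $q\otimes q$ is a graded isomorphism on each summand. Composing with the multiplication isomorphism and taking $\prod_{r}\bigoplus_{\alpha+\beta=\gamma}$ gives a bijection in each bidegree $(i,\gamma)$, with inverse $\eta\otimes\eta$ assembled in the same way.

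For a), fix homogeneous $x\in\gg$. By linearity, $x$ lies in $\gg_{-}$ or in $\gg_{+}$. We first check that $\iota_{x}$ respects the tensor decomposition. By \eqref{eq::Leibniz_rule_eins}, for $\omega\in\bigwedge_{\varepsilon}(\gg_{-})$ and $\eta\in\bigwedge_{\varepsilon}(\gg_{+})$,
\[
\iota_{x}(\omega\wedge\eta)=\iota_{x}(\omega)\wedge\eta+(-1)^{|\omega|_{\wedge}}\varepsilon(x,\omega)\,\omega\wedge\iota_{x}(\eta),
\]
and the same Leibniz rule holds in $\Cl_{\varepsilon}(\gg)$ for $\omega\cdot\eta$. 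Here $\iota_{x}$ restricted to $\bigwedge_{\varepsilon}(\gg_{\pm})$ is the contraction with respect to the linear form $B(x,\cdot)|_{\gg_{\pm}}$. The key point is that Lemma~\ref{lemm::quantization_map_non_completion_and_contraction}, stated for $\gg$, holds for $\gg_{\pm}$ and contraction by an arbitrary linear functional. Its proof (as in \cite[Lemma 2.7]{Dirac_quadratic}) only uses that $\iota$ is the unique $\varepsilon$-derivation of odd parity extending the functional, and that $q$ intertwines it on the symmetrized products. Alternatively, one can embed a finite piece into a finite-dimensional nondegenerate $W\subset\gg$ containing $x$ and use $q_{W}$ restricted to $\gg_{\pm}\cap W$, noting that $q_{W}$ and $q_{\gg_{\pm}}$ agree on $\bigwedge_{\varepsilon}(\gg_{\pm}\cap W)$ because their defining formula only involves products inside that subalgebra. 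Then
\[
\widehat{q}(\iota_{x}(\omega\wedge\eta))=q(\iota_{x}\omega)\,q(\eta)\pm\varepsilon(x,\omega)\,q(\omega)\,q(\iota_{x}\eta)=\iota_{x}\bigl(q(\omega)q(\eta)\bigr)=\iota_{x}\widehat{q}(\omega\wedge\eta),
\]
using that $q$ preserves $\Gamma$-degree and the parity of tensor length, so the sign factors match.

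Finally we check that this passes to completions. The operator $\iota_{x}$ has fixed bidegree and shifts the index $r$ by at most a bounded amount ($\deg x$ on the $\gg_{+}$ factor, $0$ on the $\gg_{-}$ factor). Hence it acts componentwise and commutes with $\prod_{r}$, and the identity holds on $\widehat{\bigwedge}_{\varepsilon}(\gg)$.

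The main obstacle I expect is the justification that $q$ on the degenerate spaces $\gg_{\pm}$ intertwines contraction by $B(x,\cdot)$ when $x\notin\gg_{\pm}$. For example, when $x\in\gg_{+}$, contraction on $\bigwedge_{\varepsilon}(\gg_{-})$ pairs with the positive-degree part of $\gg_{-}$'s dual. I would handle this via the finite-dimensional nondegenerate subspace $W$ argument sketched above.
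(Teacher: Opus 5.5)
Your proposal is correct and follows the paper's proof. Both reduce to the factors $\bigwedge_{\varepsilon}(\gg_{\pm})\to\Cl_{\varepsilon}(\gg_{\pm})$, which contractions preserve, and then invoke Lemma~\ref{lemm::quantization_map_non_completion_and_contraction} for a) and Theorem~\ref{thm::quantization_map} for b). Your additional work only makes explicit what the paper leaves implicit: the treatment of the degenerate restrictions $B|_{\gg_{\pm}}$ via a finite-dimensional nondegenerate $W$, on which $q_{W}$ restricts to $q$ on $\bigwedge_{\varepsilon}(\gg_{\pm}\cap W)$, and the check that everything extends componentwise to the product over $r$.
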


\begin{proof}
 Note that contractions preserve $\bigwedge_{\varepsilon}(\gg_{\pm})$ and $\Cl_{\varepsilon}(\gg_{\pm})$. 
Hence, a) follows from Lemma~\ref{lemm::quantization_map_non_completion_and_contraction}, 
that is, $q \circ \iota_{x} = \iota_{x} \circ q$ for all $x \in \gg$, 
and b) is a direct consequence of Theorem~\ref{thm::quantization_map}.
\end{proof}

The Lie derivative $L_{T}$ for any $T \in \widehat{\mathfrak{so}}_{\varepsilon}(\gg)$ on $\widehat{\Cl}_{\varepsilon}(\gg)$ is naturally related to the quantization map. 
For any $T \in \widehat{\mathfrak{so}}_{\varepsilon}(\gg)$, define
\begin{equation}
\widehat{\gamma}(T) \coloneqq \widehat{q}\left(\uplambda(T)\right).
\end{equation}
Furthermore, let $[\cdot,\cdot]_{\widehat{\Cl}_{\varepsilon}}$ denote the natural Lie $\varepsilon$-bracket on $\widehat{\Cl}_{\varepsilon}(\gg)$, that is, $[v,w]_{\widehat{\Cl}_{\varepsilon}}=vw-\varepsilon(v,w)wv$ for any homogeneous $v,w \in \widehat{\Cl}_{\varepsilon}(\gg)$.

\begin{proposition} \label{prop::Lie_derivative_as_commutator}
For any homogeneous $T \in \widehat{\mathfrak{so}}_{\varepsilon}(\gg)$, one has
\[
-2L_{T} = [\widehat{\gamma}(T), \cdot]_{\widehat{\Cl}_{\varepsilon}}.
\]
\end{proposition}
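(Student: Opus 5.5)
Both sides of $-2L_{T}=[\widehat{\gamma}(T),\cdot]_{\widehat{\Cl}_{\varepsilon}}$ are $\varepsilon$-derivations of $\widehat{\Cl}_{\varepsilon}(\gg)$ of degree $\vert T\vert$. For $L_T$ this is the Leibniz rule~\eqref{eq::Leibniz_rule_zwei}; for the right-hand side it is the usual property of an $\varepsilon$-commutator in an associative color algebra. Moreover, $\widehat{\gamma}(T)=\widehat q(\uplambda(T))$ lies in the even (tensor-length two) part, so no additional parity sign enters. Both derivations are continuous for the product topology defining the completion, since they have bounded $\ZZ$-degree. It therefore suffices to check the identity on generators $z\in\gg$, that is, to show
\[
[\widehat{\gamma}(T),z]_{\widehat{\Cl}_{\varepsilon}}=-2T(z),\qquad z\in\gg \text{ homogeneous}.
\]

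The key step is to compare $\widehat{\gamma}(T)$ with the finite-dimensional quantization. Decompose $T=\sum_i T^{(i)}$ into $\ZZ$-homogeneous pieces; by linearity and continuity we may assume $T$ is $\ZZ$-homogeneous. By Example~\ref{ex::quantization_map}\,a), $\widehat q$ and $q$ differ on $\bigwedge^2$ only by the scalar $C(x,y)$. Formally, then, $\widehat{\gamma}(T)=q_2(\uplambda(T))+c_T\cdot 1$, where $c_T$ is a (possibly divergent) constant. A constant drops out of the commutator. To make this rigorous without infinite sums, I would instead compute the bracket directly. Write $\uplambda(T)=-\tfrac12\sum_a T(e^a)\wedge e_a$ using a basis adapted to $\gg=\gg_-\oplus\gg_+$. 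Then $\widehat q$ of each term is a normally ordered quadratic monomial $uv$, namely the Clifford product with the $\gg_-$-factor on the left. For such a monomial,
\[
[uv,z]=u\,[v,z]_+ \pm [u,z]_+\,v,
\]
where $[u,z]_+=uz+\varepsilon(u,z)zu=2B(u,z)$ is a scalar by the Clifford relation~\eqref{eq::Clifford_relation}. Consequently, for a fixed $z$ only finitely many terms of the infinite sum contribute, namely those with $B(e_a,z)\neq0$ or $B(T(e^a),z)\neq 0$, because $B$ pairs degree $k$ only with degree $-k$. The commutator is thus a finite, well-defined element of $\gg\subset\widehat{\Cl}_{\varepsilon}(\gg)$. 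It coincides with the commutator $[q_2(\uplambda(T)),z]$ computed in the finite-dimensional nondegenerate truncation $W\subset\gg$ containing $\gg_{\pm\deg z}$ and $\gg_{\pm(\deg z+\deg T)}$, since the normal-ordering correction is central.

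On $W$, the finite-dimensional statement $[q_2(\uplambda(T)),z]=-2T(z)$ is the color version of the classical identity due to Meyer/Chen--Kang. I would verify it directly from Example~\ref{ex:isotropic_Q}\,b), which gives $q_2(\uplambda(T))=-\tfrac12\sum_a T(e^a)e_a+\text{const}$. This yields
\[
[T(e^a)e_a,z]=2\varepsilon(e_a,z)B(T(e^a),z)\,e_a\cdot(\text{sign})\;-\;\ldots+2B(e_a,z)T(e^a).
\]
Lemma~\ref{lemm::basis_description}\,a) gives $\sum_a B(e_a,z)T(e^a)=T(z)$. The $\mathfrak{so}_\varepsilon$-skewness of $T$ turns the other sum into a second copy of $T(z)$, so the total is $-\tfrac12\cdot 2\cdot 2\,T(z)=-2T(z)$, as required.

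The main obstacle is the bookkeeping of $\varepsilon$-signs in this last computation, together with the justification that the bracket with an infinite normally ordered sum may be computed termwise. The latter holds because multiplication in $\widehat{\Cl}_{\varepsilon}(\gg)$ is defined componentwise and each component receives only finitely many contributions. Alternatively, for $z\in\gg_-$ and $z\in\gg_+$ one can argue separately using the factorization $\widehat{\Cl}_{\varepsilon}(\gg)\cong\widehat{\Cl_\varepsilon(\gg_-)\otimes\Cl_\varepsilon(\gg_+)}$.
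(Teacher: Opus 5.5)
Your proposal is correct and follows essentially the same route as the paper. Both arguments use that $\widehat{\gamma}(T)$ differs from $q(\uplambda(T))=-\tfrac12\sum_a T(e^a)e_a$ only by a scalar, reduce to generators via the Leibniz rule, and obtain $-\sum_a B(e_a,z)T(e^a)-\sum_a B(T(z),e^a)e_a=-2T(z)$ from the Clifford relation, Lemma~\ref{lemm::basis_description}, and the $\varepsilon$-skewness of $T$. Your termwise treatment of non-finitary $T$ is slightly more careful than the paper's direct reduction to $T\in\mathfrak{so}_{\varepsilon}(\gg)$; the detour through a finite truncation $W$ is unnecessary, and not quite well-posed since $T$ need not preserve $W$, because the termwise computation with finitely many contributing terms already suffices.
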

\begin{proof}
 It suffices to prove the statement for homogeneous $T\in\mathfrak{so}_{\varepsilon}(\gg)$ and $v\in\Cl_{\varepsilon}(\gg)$ since both sides are defined componentwise on the completion. We first express $\widehat{\gamma}(T)$ with respect to a homogeneous basis $(e_{a})$ of $\gg$ and its $B$-dual basis $(e^{a})$. By definition,
\[
\begin{aligned}
q(\uplambda(T))
&=-\tfrac14\sum_{a}\Bigl(T(e^{a})e_{a}-\varepsilon(T+e^{a},e_{a})e_{a}T(e^{a})\Bigr)=-\tfrac12\sum_{a}\Bigl(T(e^{a})e_{a}-B(T(e^{a}),e_{a})\Bigr)\\&=-\tfrac12\sum_{a}T(e^{a})e_{a},
\end{aligned}
\]
since
$
\sum_{a}B(T(e^{a}),e_{a})=\tr_{\varepsilon}(T)=0
$
for $T\in\mathfrak{so}_{\varepsilon}(\gg)$. Using Example~\ref{ex::quantization_map}, one has
\[
\widehat{\gamma}(T)= -\tfrac{1}{2}\sum_{a}\left(T(e^{a})e_{a}-B(\pi_{+}T(e^{a}),e_{a})+\varepsilon(T(e^{a}),e_{a})B(\pi_{+}e_{a},T(e^{a}))\right).
\]
 Moreover, a direct calculation shows that the $\varepsilon$-commutator $[\widehat{\gamma}(T),\cdot]_{\widehat{\Cl}_{\varepsilon}}$ satisfies the Leibniz rule, that is,
\begin{equation}\label{eq::Leibniz_rule_commuatator_bracket}
[\widehat{\gamma}(T),vw]_{\widehat{\Cl}_{\varepsilon}}=[\widehat{\gamma}(T),v]_{\widehat{\Cl}_{\varepsilon}}w+\varepsilon(\widehat{\gamma}(T),v)v[\widehat{\gamma}(T),w]_{\widehat{\Cl}_{\varepsilon}}
\end{equation}
for homogeneous $v,w\in\Cl_{\varepsilon}(\gg)$. It is therefore enough to verify the identity on generators $x\in\gg$. The general case then follows from the Leibniz rule for $L_{T}$ and $[\widehat{\gamma}(T),\cdot]_{\widehat{\Cl}_{\varepsilon}}$. 

One has for any $x\in \gg$:
\[
[\widehat{\gamma}(T),x]_{\widehat{\Cl}_{\varepsilon}}=-\tfrac12\sum_{a}[T(e^{a})e_{a},x]_{\widehat{\Cl}_{\varepsilon}}=-\tfrac12\sum_{a}\bigl(T(e^{a})e_{a}x-\varepsilon(T,x)xT(e^{a})e_{a}\bigr).
\]
Using the Clifford relations,
\[
\begin{aligned}
T(e^{a})e_{a}x&=-\varepsilon(e_{a},x)T(e^{a})xe_{a}+2B(e_{a},x)T(e^{a}),\\
\varepsilon(T,x)xT(e^{a})e_{a}&=-\varepsilon(T,x)\varepsilon(x,T(e^{a}))T(e^{a})xe_{a}+2\varepsilon(T,x)B(x,T(e^{a}))e_{a}.
\end{aligned}
\]
Since $\vert T(e^{a})\vert =\vert T\vert +\vert e^{a}\vert $, one has
\[
\varepsilon(T,x)\varepsilon(x,T(e^{a}))=\varepsilon(x,e^{a})=\varepsilon(e^{a},x)^{-1}=\varepsilon(e_{a},x).
\]
Hence the terms involving $T(e^{a})xe_{a}$ cancel, and
\[
[\widehat{\gamma}(T),x]_{\widehat{\Cl}_{\varepsilon}}=-\sum_{a}B(e_{a},x)T(e^{a})+\varepsilon(T,x)\sum_{a}B(x,T(e^{a}))e_{a}.
\]
Since $T$ is $\varepsilon$-orthogonal, one has $\varepsilon(T,x)B(x,T(e^{a}))=-B(T(x),e^{a})$. Therefore
\[
[\widehat{\gamma}(T),x]_{\widehat{\Cl}_{\varepsilon}}=-\sum_{a}B(e_{a},x)T(e^{a})-\sum_{a}B(T(x),e^{a})e_{a}=-T(x)-T(x)=-2L_{T}(x),
\]
where we used Lemma~\ref{lemm::basis_description}.
\end{proof}

\subsection{Generalized Kac--Peterson Class}\label{subsec::Kac_Peterson_Class} 

We construct an $\varepsilon$-symmetric analogue of the Kac--Peterson cocycle for the $\ZZ$-graded quadratic color Lie algebra $(\gg,B)$. In the symmetrizable Kac--Moody setting, the Kac--Peterson cocycle was introduced in \cite{Kac_Peterson} in order to formulate the generalized Casimir operator and the corresponding $\rho$-correction.

In Section~\ref{subsec::conventions} we introduced the $\varepsilon$-trace $\etr$ on $\End(V)$ for finite-dimensional $V\in\cat$. For homogeneous $S,T,U\in\End(V)$, it satisfies $\etr(ST)=\varepsilon(S,T)\etr(TS)$, $\etr([S,T]_{\End}U)=\etr(S[T,U]_{\End})$, and $\etr(ST)=0$ unless $\vert S\vert +\vert T\vert = \ng$. If $(V,\langle\cdot,\cdot\rangle)$ is quadratic and $(e_{a})$ is a homogeneous basis of $V$ with $\langle\cdot,\cdot\rangle$-dual basis $(e^{a})$, then for every $T\in\End(V)$,
\begin{equation}\label{eq::epsilon_trace}
\etr(T)=\sum_{a}\varepsilon(e_{a})\langle T(e_{a}),e^{a}\rangle=\sum_{a}\langle e^{a},T(e_{a})\rangle.
\end{equation}

In general, for $x\in\gg$, one has $\ad_{x}\in\widehat{\mathfrak{so}}_{\varepsilon}(\gg)\subset\widehat{\End}(\gg)$ by Lemma~\ref{lemm::ad_in_completion}. The operator $\ad_x$ need not be finitary. However, $\pi_{-}\ad_{x}\pi_{+}$ has finite rank. This makes the following definition well-defined.

\begin{definition}
The \emph{Kac--Peterson cocycle} on $\gg$ is the bilinear form
\[
 \KP(x,y) \coloneqq \tfrac{1}{2}\etr(\ad_{x}\pi_{-}\ad_{y}\pi_{+}) 
 -\tfrac{1}{2}\varepsilon(x,y)\etr(\ad_{y}\pi_{-}\ad_{x}\pi_{+}), 
 \qquad x,y \in \gg.
\]
\end{definition}

\begin{example}\label{ex::KP_class}
Let $\gg$ be a complex quadratic Lie superalgebra with non-degenerate invariant supersymmetric bilinear form $B$. This includes all basic classical Lie superalgebras, that is, the simple Lie algebras together with $A(m\vert n)$, $B(m\vert n)$, $C(n)$, $D(m\vert n)$, $F(4)$, $G(3)$, and $D(2,1;\alpha)$ for $\alpha \neq 0,-1$ \cite{Kac}. The loop superalgebra associated to $\gg$ is $L\gg\coloneqq\gg\otimes_{\CC}\CC[t,t^{-1}]$ with bracket
\begin{equation*}
[x\otimes t^m,y\otimes t^n]=[x,y]\otimes t^{m+n}.
\end{equation*}
It is naturally $\ZZ$-graded by declaring $(L\gg)_i=\{x\otimes t^i:x\in\gg\}$. One has
\begin{equation*}
(\ad_{x\otimes t^m}\pi_-\ad_{y\otimes t^n}\pi_+)(z\otimes t^k)=
\begin{cases}
[x,[y,z]]\otimes t^{m+n+k},&0<k<-n,\\
0,&\text{otherwise}.
\end{cases}
\end{equation*}
The supertrace of this expression is zero unless $m+n=0$, and hence
\begin{equation*}
\str(\ad_{x\otimes t^m}\pi_-\ad_{y\otimes t^n}\pi_+)
=\delta_{m+n,0}\max(-n,0)\str_{\gg}(\ad_x\circ\ad_y).
\end{equation*}
Analogously,
\begin{equation*}
\str(\ad_{y\otimes t^n}\pi_-\ad_{x\otimes t^m}\pi_+)
=\delta_{m+n,0}\max(-m,0)\str_{\gg}(\ad_y\circ\ad_x).
\end{equation*}
Thus
\begin{equation*}
\KP(x\otimes t^m,y\otimes t^n)
=m\delta_{m+n,0}\str_{\gg}(\ad_x\circ\ad_y).
\end{equation*}
Here $\str_{\gg}(\ad_x\circ\ad_y)$ is the super Killing form of $\gg$. It can vanish identically, for example for $A(n\vert n)$, $D(2,1;\alpha)$, and $D(n+1\vert n)$. In this case $\KP\equiv0$.
\end{example}

By the properties of the $\varepsilon$-trace $\etr$, the Kac--Peterson cocycle has the following properties.

\begin{lemma} \label{lemm::properties_KP}\begin{enumerate}
 \item[a)] $\KP(x,y) = 0$ unless $\deg(x)+ \deg(y)=0$ and $\vert x\vert +\vert y\vert = \ng$ for any homogeneous $x,y \in \gg$.
 \item[b)]
The Kac--Peterson cocycle $\KP$ is a color Lie algebra $2$-cocycle:
\begin{enumerate}
 \item[(i)] $\KP$ is skew-$\varepsilon$-symmetric: 
 \[
 \KP(x,y) \;=\; -\varepsilon(x,y)\KP(y,x).
 \]
 \item[(ii)] For all homogeneous $x,y,z \in \gg$, one has
 \[
 \varepsilon(x,z)\KP([x,y],z)
 \;+\;
 \varepsilon(y,x)\KP([y,z],x)
 \;+\;
 \varepsilon(z,y)\KP([z,x],y)
 \;=\; 0.
 \]
\end{enumerate}
In particular, $\KP \in \widehat{\bigwedge}_{\varepsilon}^{2}(\gg^{\ast})_{(0,\ng)}$.
\item[c)] If $\ad_{x}, \ad_{y} \in \End(\gg)$, then 
\[
\KP(x,y) = \tfrac{1}{2}\etr([\ad_{x},\ad_{y}]_{\End}\pi_{+}).
\]
\end{enumerate}
\end{lemma}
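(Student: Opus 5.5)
We prove the three parts in order, treating the $\varepsilon$-trace calculus for finite-rank operators as the basic tool. First we check that every trace in the definition of $\KP$ is well defined. For homogeneous $x\in\gg_{m}$ and $y\in\gg_{n}$, the operator $\ad_{x}\pi_{-}\ad_{y}\pi_{+}$ is nonzero only on the finitely many $\gg_{k}$ with $k>0$ and $k+n\le 0$. So it is a finite-rank operator supported on a finite-dimensional $\ZZ\times\Gamma$-graded subspace, and $\etr$ is defined there. Part a) then follows at once. This operator has $\ZZ$-degree $m+n$ and $\Gamma$-degree $\vert x\vert+\vert y\vert$, and $\etr$ of a homogeneous operator vanishes unless both degrees are trivial: the diagonal blocks vanish otherwise. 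Skew-$\varepsilon$-symmetry (i) is immediate from the definition: swapping $x,y$ and multiplying by $-\varepsilon(x,y)$ exchanges the two summands, using $\varepsilon(x,y)\varepsilon(y,x)=1$. Once (i) and (ii) hold, a) and the identification $\gg\cong\gg^{\ast}$ place $\KP$ in degree $(0,\ng)$ of $\widehat{\bigwedge}^{2}_{\varepsilon}(\gg^{\ast})$.

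For part c), assume $\ad_{x},\ad_{y}$ are finitary. Write $\pi_{-}=1-\pi_{+}$ in both summands. This gives
\[
\etr(\ad_{x}\ad_{y}\pi_{+})-\etr(\ad_{x}\pi_{+}\ad_{y}\pi_{+}),
\]
and the analogous expression with $x,y$ exchanged. We then show that
\[
\etr(\ad_{x}\pi_{+}\ad_{y}\pi_{+})=\varepsilon(x,y)\,\etr(\ad_{y}\pi_{+}\ad_{x}\pi_{+})
\]
by $\varepsilon$-cyclicity, which is legitimate because all operators involved are finitary. Hence these terms cancel, and we are left with
\[
\tfrac12\etr\bigl((\ad_{x}\ad_{y}-\varepsilon(x,y)\ad_{y}\ad_{x})\pi_{+}\bigr)=\tfrac12\etr([\ad_{x},\ad_{y}]_{\End}\pi_{+}).
\]
This computation is only formal in general, since $\ad_{x}\ad_{y}\pi_{+}$ need not have finite rank. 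That is exactly why c) carries the finitary hypothesis.

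The main obstacle is the cocycle identity (ii), where no finitary hypothesis is available. Our strategy is a truncation argument. By a), only the case $\deg x+\deg y+\deg z=0$ matters, and all three terms involve only finitely many graded pieces. Choose $N$ large and set $P_{N}=\sum_{\vert k\vert\le N}$ (projection onto $\gg_{k}$). Replace $\ad_{u}$ by the finitary truncations $P_{N}\ad_{u}P_{N}$, or equivalently compute traces on $\gg_{[-N,N]}$. For $N$ larger than $\vert\deg\vert$ of all arguments, each trace occurring in the three terms is unchanged. Indeed, the relevant finite-rank compositions only probe $\gg_{k}$ with $k$ between $0$ and $\max\vert\deg\vert$.

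For the truncated operators we apply c) in the form
\[
\KP(u,v)=\tfrac12\etr([\ad_{u},\ad_{v}]\pi_{+})=\tfrac12\etr(\ad_{[u,v]}\pi_{+}),
\]
corrected by boundary terms near $\pm N$. Then $\KP([x,y],z)$ becomes
\[
\tfrac12\etr\bigl([\ad_{[x,y]},\ad_{z}]\pi_{+}\bigr)=\tfrac12\etr\bigl(\ad_{[[x,y],z]}\pi_{+}\bigr),
\]
and the cyclic $\varepsilon$-sum vanishes by the Jacobi identity. The step to verify carefully is that the boundary contributions at $\pm N$ either vanish or cancel in the cyclic sum. If this becomes messy, we will instead expand directly. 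Starting from
\[
\etr\bigl(\ad_{[x,y]}\pi_{-}\ad_{z}\pi_{+}\bigr)=\etr\bigl([\ad_{x},\ad_{y}]\pi_{-}\ad_{z}\pi_{+}\bigr),
\]
we commute $\pi_{\pm}$ past $\ad$'s. This produces products of three finite-rank pieces of the shape $\pi_{\mp}\ad\pi_{\pm}$, which cancel pairwise in the cyclic sum by $\varepsilon$-cyclicity. This is the standard proof for the Kac--Peterson/Lie-algebra cocycle, with $\varepsilon$-signs tracked by the bimultiplicativity of $\varepsilon$.
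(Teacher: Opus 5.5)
Your arguments for a), (i) and c) are fine, and c) is exactly the paper's computation. The paper gives no argument for (ii) beyond saying it follows from the $\varepsilon$-trace identities.

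\textbf{The gap: your truncation route to (ii) is circular.} In the truncation, $[a_u,a_v]=a_{[u,v]}-R_N(u,v)$ with $R_N(u,v)=P_N\ad_u(1-P_N)\ad_vP_N-\varepsilon(u,v)P_N\ad_v(1-P_N)\ad_uP_N$. After multiplying by $\pi_+$, only the boundary at $+N$ survives. By $\varepsilon$-cyclicity, $-\tfrac12\etr(R_N(u,v)\pi_+)$ is, for large $N$, exactly the cocycle $\psi^{(N)}(u,v)$ obtained from the definition of $\KP$ by replacing $\pi_-,\pi_+$ with the projections onto degrees $\le N$ and $>N$. So your identity reads $\KP(u,v)=\rho_N([u,v])+\psi^{(N)}(u,v)$, with $\rho_N(w)=\tfrac12\etr(P_N\ad_wP_N\pi_+)$.

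The first term is a coboundary and does cancel in the cyclic sum by Jacobi. The second is again a Kac--Peterson cocycle, for a shifted polarization, and it does not go away as $N\to\infty$. For the loop superalgebra $L\gg$ with $\deg[u,v]=0$, one has $\rho_N([u,v])=\tfrac12\sum_{k=1}^{N}\str(\ad_{[x,y]})=0$, so $\psi^{(N)}=\KP$ for every $N$. Hence ``the boundary contributions vanish or cancel'' is not a bookkeeping check. It is statement (ii) itself for $\psi^{(N)}$, and the truncation proves nothing on its own.

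\textbf{The fix: your fallback is the correct proof and should replace the truncation, but its description needs correcting.} For fixed-degree operators $P$, write $P_{\pm\mp}=\pi_\pm P\pi_\mp$ and $P_{\pm\pm}=\pi_\pm P\pi_\pm$. For $P=\ad_x$ the off-diagonal blocks have finite rank; the diagonal ones do not. Put $c(P,R)\coloneqq\etr(P_{+-}R_{-+})-\varepsilon(P,R)\etr(R_{+-}P_{-+})$. Since the trace of $\ad_x\pi_-\ad_y\pi_+$ only sees its $++$ block, $2\KP(x,y)=c(S,T)$ for $S=\ad_x$, $T=\ad_y$. Moreover $\ad_{[x,y]}=[S,T]$; set also $U=\ad_z$.

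Expanding $[S,T]_{+-}$ and $[S,T]_{-+}$ blockwise, every term of $c([S,T],U)$ is the trace of two off-diagonal blocks and one diagonal block, not of three finite-rank pieces. Typical terms are $\etr(S_{++}T_{+-}U_{-+})$ and $\etr(T_{+-}S_{--}U_{-+})$. $\varepsilon$-cyclicity is legitimate because the product of the two off-diagonal blocks has finite rank. So each term can be rewritten as $\etr(D\,F)$, with $D$ its diagonal block and $F$ finite rank.

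The terms containing $S_{++}$ (resp.\ $S_{--}$) in $\varepsilon(x,z)\,c([S,T],U)$ then cancel those in $\varepsilon(z,y)\,c([U,S],T)$, and cyclically for $T$ and $U$. The $\varepsilon$-factors match once you use $\vert x\vert+\vert y\vert+\vert z\vert=\ng$, which a) lets you assume. Written out, this proves (ii) and supplies the argument the paper omits.
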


\begin{proof}
 We prove c); a) and b) follow directly from the defining formula and the $\varepsilon$-trace identities. If $\ad_{x}, \ad_{y} \in \End(\gg)$, then they have well-defined traces and using $\pi_{-} = \id_{\gg}-\pi_{+}$, we obtain
 \begin{equation*}
 \begin{aligned}
\KP(x,y)
&=\tfrac{1}{2}\etr(\ad_x\pi_{-}\ad_y\pi_{+})
-\tfrac{1}{2}\varepsilon(x,y)\etr(\ad_y\pi_{-}\ad_x\pi_{+})\\
&=\tfrac{1}{2}\tr_{\varepsilon}(\ad_x(\operatorname{id}-\pi_{+})\ad_y\pi_{+})
-\tfrac{1}{2}\varepsilon(x,y)\tr_{\varepsilon}(\ad_y(\operatorname{id}-\pi_{+})\ad_x\pi_{+})\\
&=\tfrac{1}{2}\tr_{\varepsilon}(\ad_x\ad_y\pi_{+})
-\tfrac{1}{2}\tr_{\varepsilon}(\ad_x\pi_{+}\ad_y\pi_{+})\\
&\qquad
-\tfrac{1}{2}\varepsilon(x,y)\tr_{\varepsilon}(\ad_y\ad_x\pi_{+})
+\tfrac{1}{2}\varepsilon(x,y)\tr_{\varepsilon}(\ad_y\pi_{+}\ad_x\pi_{+})\\
&=\tfrac{1}{2}\tr_{\varepsilon}\bigl((\ad_x\ad_y-\varepsilon(x,y)\ad_y\ad_x)\pi_{+}\bigr)\\
&\qquad
-\tfrac{1}{2}\Bigl(\tr_{\varepsilon}(\ad_x\pi_{+}\ad_y\pi_{+})
-\varepsilon(x,y)\tr_{\varepsilon}(\ad_y\pi_{+}\ad_x\pi_{+})\Bigr)\\
&=\tfrac{1}{2}\tr_{\varepsilon}\bigl([\ad_x,\ad_y]_{\End}\pi_{+}\bigr),
\end{aligned}
 \end{equation*}
where we used the $\varepsilon$-cyclicity of $\tr_{\varepsilon}$ in the last equality so that $\tr_{\varepsilon}(\ad_x\pi_{+}\ad_y\pi_{+})
=
\varepsilon(x,y)\tr_{\varepsilon}(\ad_y\pi_{+}\ad_x\pi_{+})$.
\end{proof}

In particular, $\KP$ defines a class in the color Lie algebra cohomology $\operatorname{H}^2(\gg;\CC)$, called the \emph{Kac--Peterson class}. For the definition of $\operatorname{H}^{\bullet}(\gg;\CC)$, we refer to \cite{Scheunert_cohomology}. If $\gg$ is the loop superalgebra of a basic classical Lie superalgebra with non-vanishing super Killing form, then the class of $\KP$ spans $\operatorname{H}^2(\gg;\CC)$ \cite{Iohara}. In this case, $\KP$ defines a central extension of $\gg$, as follows immediately from Lemma~\ref{lemm::properties_KP}.

The Kac–Peterson class vanishes exactly if there exists some $\rho \in (\gg^{\ast})_{(0,\ng)}$ such that $\KP = \operatorname{d}\!\rho$, where
\begin{equation}
 \operatorname{d}\!\rho(x,y) \coloneqq \rho([x,y]) = B(\rho^{\sharp},[x,y]), \qquad x,y \in \gg.
\end{equation}
In Section~\ref{subsubsec::epsilon_exterior_and_so} we identified $\widehat{\bigwedge}_{\varepsilon}^{2}(\gg)$ and $\widehat{\mathfrak{so}}_{\varepsilon}(\gg)$ as $\ZZ\times \Gamma$-graded vector spaces. As $\KP \in \widehat{\bigwedge}_{\varepsilon}^{2}(\gg^{\ast})$, the Kac–Peterson $2$-cocycle corresponds to an element $\Psi_{\operatorname{KP}} \in \widehat{\mathfrak{so}}_{\varepsilon}(\gg)$ via the relation
\begin{equation} \label{eq::relations_psi_Psi}
 \KP(x,y) = B(\Psi_{\operatorname{KP}}(x),y), \qquad x,y\in \gg.
\end{equation}

\begin{lemma} \label{lemm::properties_Psi}
\begin{enumerate}[label=\alph*)]
 \item $\Psi_{\operatorname{KP}}$ preserves the $\ZZ\times\Gamma$-grading of $\gg$.
 \item For $\rho \in (\gg^{\ast})_{(0, \ng)}$ one has $\KP = \operatorname{d}\!\rho$ if and only if 
 $
 \Psi_{\operatorname{KP}} = [\rho^{\sharp}, \cdot].
 $
\end{enumerate}
\end{lemma}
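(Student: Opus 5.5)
Part a) follows from Lemma~\ref{lemm::properties_KP}~a), and part b) from the nondegeneracy of $B$ applied to \eqref{eq::relations_psi_Psi}.

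For a), fix homogeneous $x\in\gg$ of bidegree $(i,\alpha)$. By \eqref{eq::relations_psi_Psi}, the component of $\Psi_{\operatorname{KP}}(x)$ in $\gg_{(j,\beta)}$ is detected by pairing with $y\in\gg_{(-j,-\beta)}$. This uses that $B$ pairs $\gg_{(j,\beta)}$ nondegenerately with $\gg_{(-j,-\beta)}$ and kills all other pairs of components. Lemma~\ref{lemm::properties_KP}~a) gives $\KP(x,y)=0$ unless $i-j=0$ and $\alpha-\beta=\ng$. Hence the only possibly nonzero component of $\Psi_{\operatorname{KP}}(x)$ lies in $\gg_{(i,\alpha)}$, which is exactly the statement that $\Psi_{\operatorname{KP}}$ preserves the $\ZZ\times\Gamma$-grading. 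It follows that $\Psi_{\operatorname{KP}}$ is a genuine map $\gg\to\gg$ acting componentwise, with each component a finite-dimensional piece $\gg_{(i,\alpha)}\to\gg_{(i,\alpha)}$, rather than merely an element of the completion.

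For b), take $\rho\in(\gg^\ast)_{(0,\ng)}$, so $\rho^\sharp\in\gg_{(0,\ng)}$ and $\varepsilon(\rho^\sharp,\cdot)=1$. Invariance of $B$ gives
\[
\operatorname{d}\!\rho(x,y)=B(\rho^\sharp,[x,y])=B([\rho^\sharp,x],y).
\]
Hence $\KP=\operatorname{d}\!\rho$ holds if and only if $B(\Psi_{\operatorname{KP}}(x)-[\rho^\sharp,x],y)=0$ for all homogeneous $x,y$. By nondegeneracy of $B$, this is equivalent to $\Psi_{\operatorname{KP}}(x)=[\rho^\sharp,x]$ for all $x$.

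Neither part presents a real obstacle. The only point needing care is the bookkeeping in a): one must make sure that nondegeneracy is applied degree by degree, via finite-dimensional pairings $\gg_{(j,\beta)}\times\gg_{(-j,-\beta)}\to\CC$, so that nothing about infinite sums is needed.
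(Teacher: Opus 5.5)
Your proposal is correct and follows the paper's proof: (a) is read off from the degree constraint in Lemma~\ref{lemm::properties_KP}~a), and (b) is the identity $\KP(x,y)=\rho([x,y])=B([\rho^{\sharp},x],y)$ obtained from invariance of $B$, together with nondegeneracy to conclude $\Psi_{\operatorname{KP}}=[\rho^{\sharp},\cdot]$. Your remark that nondegeneracy is applied through the finite-dimensional pairings $\gg_{(j,\beta)}\times\gg_{(-j,-\beta)}\to\CC$ only makes explicit a point the paper leaves implicit.
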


\begin{proof}
a) follows from Lemma~\ref{lemm::properties_KP}.
For b), let $\rho \in (\gg^{\ast})_{(0,\ng)}$. Assume $\KP = \mathrm{d}\rho$, \emph{i.e.}, $\KP$ is a coboundary. Then
\[
 \KP(x,y) = \rho([x,y]) = B(\rho^{\sharp},[x,y]) = B\bigl([\rho^{\sharp},x],y\bigr),
\]
by~\eqref{eq::musical_isomorphisms} and invariance of $B$. Conversely, if $\Psi_{\operatorname{KP}}$ is inner, the same identity shows that $\KP$ is a coboundary. 
\end{proof}

The musical isomorphism $\sharp$ in~\eqref{eq::musical_isomorphisms} allows us to identify $\widehat{\bigwedge}_{\varepsilon}^{2}(\gg^{\ast})$ with $\widehat{\bigwedge}_{\varepsilon}^{2}(\gg)$. We denote the image of $\KP$ under $\sharp$ by $\KPsharp$, that is, $\KPsharp$ is the unique element in $\widehat{\bigwedge}_{\varepsilon}^{2}(\gg)$ defined by the property
\begin{equation} \label{eq::definition_psi_sharp}
\KP(x,y)=B(\iota_{x}\KPsharp,y), \qquad x,y \in \gg.
\end{equation}
\begin{lemma} \label{lemm::komische_Beziehungen_eins}
 The following identity holds for all homogeneous $x \in \gg$:
 \[
\Psi_{\operatorname{KP}}(x) = \iota_{x}\widehat{q}(\KPsharp).
\]
\end{lemma}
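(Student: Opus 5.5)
The plan is to move the contraction through the normal-ordered quantization, so that the claim reduces to the defining property~\eqref{eq::definition_psi_sharp} of $\KPsharp$ together with the nondegeneracy of $B$.

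First, apply Proposition~\ref{prop::normal_ordered_quantization_contraction}~a), which says that $\widehat q$ intertwines contractions. This gives
\[
\iota_x\widehat q(\KPsharp)=\widehat q(\iota_x\KPsharp)
\]
for every homogeneous $x\in\gg$. Since $\KPsharp\in\widehat{\bigwedge}{}^{2}_{\varepsilon}(\gg)$, the element $\iota_x\KPsharp$ lies in $\widehat{\bigwedge}{}^{1}_{\varepsilon}(\gg)$. By Lemma~\ref{lemm::properties_KP}~a), $\KPsharp$ has bidegree $(0,\ng)$, so $\iota_x\KPsharp$ has a single bidegree determined by that of $x$. A homogeneous component of $\widehat{\bigwedge}{}^{1}_{\varepsilon}(\gg)$ of fixed bidegree is just the finite-dimensional space $\gg_{(i,\gamma)}$, so $\iota_x\KPsharp$ is an honest element of $\gg$. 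On $\bigwedge^{1}$, Definition~\ref{def::normal_ordered_quantization} acts componentwise by $q\otimes q$ with one factor equal to $1$. In addition, $q_1$ is the identity on $\gg$ by Theorem~\ref{thm::quantization_map}. Hence $\widehat q$ is the identity on degree-one elements, and
\[
\iota_x\widehat q(\KPsharp)=\iota_x\KPsharp\in\gg .
\]
No normal-ordering constants interfere here. Such constants arise only in $\widehat q$ of degree-two elements, as in Example~\ref{ex::quantization_map}, and they are killed by $\iota_x$ in any case.

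It then remains to show $\iota_x\KPsharp=\Psi_{\operatorname{KP}}(x)$. For every $y\in\gg$, the relations~\eqref{eq::definition_psi_sharp} and~\eqref{eq::relations_psi_Psi} give
\[
B(\iota_x\KPsharp,y)=\KP(x,y)=B(\Psi_{\operatorname{KP}}(x),y).
\]
Both $\iota_x\KPsharp$ and $\Psi_{\operatorname{KP}}(x)$ lie in $\gg_{\deg(x)}$; for the latter this is Lemma~\ref{lemm::properties_Psi}~a). The form $B$ pairs $\gg_{\deg(x)}$ nondegenerately with $\gg_{-\deg(x)}$, so the two elements agree.

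The only delicate point is the bookkeeping in the completion. One must check that contraction by a fixed homogeneous $x$ sends the relevant infinite product of components of $\widehat{\bigwedge}{}^{2}_{\varepsilon}(\gg)$ to a finite sum in $\gg$. This follows because $\iota_x$ pairs $x$ only with the finitely many components of $\ZZ$-degree $-\deg(x)$. One must also check that the intertwining statement of Proposition~\ref{prop::normal_ordered_quantization_contraction} holds componentwise on the completion. I expect this to be routine, since all maps involved are defined componentwise.
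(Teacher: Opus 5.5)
Your proposal is correct and follows the paper's proof. Both identify $\iota_x\KPsharp$ with $\Psi_{\operatorname{KP}}(x)$ via the two defining relations and nondegeneracy of $B$, use that $\widehat q$ restricts to the identity on $\gg$, and commute $\iota_x$ past $\widehat q$ by Proposition~\ref{prop::normal_ordered_quantization_contraction}. You only take these steps in the reverse order and add some bookkeeping about the completion.
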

\begin{proof}
 By~\eqref{eq::relations_psi_Psi} and~\eqref{eq::definition_psi_sharp}, one has
\begin{equation*}
B(\Psi_{\operatorname{KP}}(x),y)=\KP(x,y)=B(\iota_{x}\psi_{\operatorname{KP}}^{\sharp},y)
\end{equation*}
for all $y\in\gg$. Since $B$ is nondegenerate, it follows that $\Psi_{\operatorname{KP}}(x)=\iota_{x}\psi_{\operatorname{KP}}^{\sharp}.$
Applying $\widehat q$ and using that $\widehat q$ restricts to the identity on $\gg$, we obtain
$$
\Psi_{\operatorname{KP}}(x)=\widehat q(\iota_{x}\psi_{\operatorname{KP}}^{\sharp}).
$$
Since $\widehat q$ intertwines contractions,
$
\widehat q(\iota_{x}\psi_{\operatorname{KP}}^{\sharp})=\iota_{x}\widehat q(\psi_{\operatorname{KP}}^{\sharp}),
$
the claim follows.
\end{proof}

We denote again by $\KP$ the bilinear form on $\widehat{\mathfrak{so}}_{\varepsilon}(\gg)$ defined by
\begin{equation}
\KP(S,T)\coloneqq\tr_{\varepsilon}(S\pi_{-}T\pi_{+})-\varepsilon(S,T)\tr_{\varepsilon}(T\pi_{-}S\pi_{+})
\end{equation}
for homogeneous $S,T\in\widehat{\mathfrak{so}}_{\varepsilon}(\gg)$. For homogeneous $x,y\in\gg$, one has
$
\KP(\ad_{x},\ad_{y})=\KP(x,y),
$
so this extends the Kac--Peterson cocycle of $\gg$. Moreover, $\KP$ measures the failure of $\widehat q$ to be $\widehat{\mathfrak{so}}_{\varepsilon}(\gg)$-equivariant.

\begin{proposition} \label{prop::commutator_and_quantization} \begin{enumerate}
\item[a)] If $\nu \in \widehat{\bigwedge}_{\varepsilon}^{2}(\gg)$ and $T \in \widehat{\mathfrak{so}}_{\varepsilon}(\gg)$, then
\[
 L_{T}\widehat{q}(\nu) = \widehat{q}(L_{T}\nu) -2 \KP(T,\upmu(\nu)).
\]
\item[b)] For any $T_{1},T_{2}\in \widehat{\mathfrak{so}}_{\varepsilon}(\gg)$, with $\widehat{\gamma}(\cdot)\coloneqq \widehat{q}(\uplambda(\cdot))$, one has
\[
-\tfrac{1}{2}[\widehat{\gamma}(T_{1}),\widehat{\gamma}(T_{2})]_{\widehat{\Cl}_{\varepsilon}} = \widehat{\gamma}([T_{1},T_{2}]_{\widehat{\mathfrak{so}}_{\varepsilon}(\gg)}) -2 \KP(T_{1},T_{2}).
\]
\end{enumerate}
\end{proposition}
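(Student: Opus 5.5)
The plan is to prove a) by reducing to decomposable quadratic elements and computing the normal-ordering defect directly. Part b) will then follow from a) combined with results already established.

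\textbf{Part a).} Both sides of a) are linear in $\nu$ and defined componentwise on the completion. On each $(i,\gamma)$-component only finitely many $\ZZ$-degrees of $\gg$ interact with $T$ and $\nu$ (because $\upmu(\nu)$ has finite rank on each component). So it suffices to take $\nu=x\wedge y$ with $x,y$ homogeneous, and by bilinearity to treat separately the cases $x,y\in\gg_{-}$, $x,y\in\gg_{+}$, and $x\in\gg_{-}$, $y\in\gg_{+}$.

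In the two ``pure'' cases and the mixed case, Example~\ref{ex::quantization_map} a) gives $\widehat q(x\wedge y)=q(x\wedge y)+C(x,y)$. We have $L_T(C(x,y))=0$ since it is a scalar, and $L_T$ is an $\varepsilon$-derivation of the Clifford product. Hence
\[
L_T\widehat q(x\wedge y)=L_T(q(x\wedge y)) .
\]
Writing out $q(x\wedge y)=\tfrac12(xy-\varepsilon(x,y)yx)$ and applying the Leibniz rule gives $q(Tx\wedge y)+\varepsilon(T,x)\,q(x\wedge Ty)$, which is exactly $q(L_T(x\wedge y))$. This step does not use finitarity of $T$, only that $T$ is $\varepsilon$-orthogonal, so that the scalar terms produced by the Clifford relation cancel as in the proof of Proposition~\ref{prop::Lie_derivative_as_commutator}. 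Converting back with Example~\ref{ex::quantization_map} a) yields
\[
L_T\widehat q(x\wedge y)-\widehat q(L_T(x\wedge y))=-C(Tx,y)-\varepsilon(T,x)C(x,Ty).
\]

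It then remains to show that this scalar equals $-2\KP(T,\upmu(x,y))$. For this I will use the explicit rank-two formula \eqref{eq::explicit_def_moment_map} for $\upmu(x,y)$. With it, $T\pi_{-}\upmu(x,y)\pi_{+}$ and $\upmu(x,y)\pi_{-}T\pi_{+}$ are finite-rank operators, and their $\varepsilon$-traces can be evaluated via \eqref{eq::epsilon_trace}. They produce terms of the form $B(\pi_{\pm}Tx,\pi_{\mp}y)$ and their $\varepsilon$-permuted versions. Finally, $\varepsilon$-orthogonality of $T$ is used to move $T$ across $B$ and match the four terms of $C(Tx,y)+\varepsilon(T,x)C(x,Ty)$.

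\textbf{Part b).} Apply a) with $T=T_1$ and $\nu=\uplambda(T_2)$. Lemma~\ref{lemm::Lie_derivative_and_lambd}, extended componentwise to the completion, gives $L_{T_1}\uplambda(T_2)=\uplambda([T_1,T_2])$, and $\upmu(\uplambda(T_2))=T_2$. Therefore
\[
L_{T_1}\widehat\gamma(T_2)=\widehat\gamma([T_1,T_2])-2\KP(T_1,T_2).
\]
Proposition~\ref{prop::Lie_derivative_as_commutator} identifies the left side with $-\tfrac12[\widehat\gamma(T_1),\widehat\gamma(T_2)]_{\widehat{\Cl}_{\varepsilon}}$, which proves b).

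\textbf{Main obstacle.} I expect the hardest step to be the trace bookkeeping in a): keeping track of the $\varepsilon$-signs, and checking that the normalization of $\KP$ on $\widehat{\mathfrak{so}}_{\varepsilon}(\gg)$, which has no factor $\tfrac12$, produces exactly the factor $2$. I would first verify the mixed case $x\in\gg_{-}$, $y\in\gg_{+}$ in full, and then obtain the remaining cases by $\varepsilon$-skew-symmetry. A secondary point is justifying the componentwise passage to completed $T$ and $\nu$, which relies on the finite-rank property of $\pi_{-}S\pi_{+}$ noted before the definition of $\KP$.
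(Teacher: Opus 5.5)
Your skeleton is the paper's: reduce to finitary $T$ and $\nu=x\wedge y$, write $\widehat q(x\wedge y)=q(x\wedge y)+C(x,y)$, use $L_T\circ q=q\circ L_T$, and deduce b) from a) via Lemma~\ref{lemm::Lie_derivative_and_lambd} and Proposition~\ref{prop::Lie_derivative_as_commutator}. Part b) is handled exactly as in the paper.

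Where you differ is the last step of a). The paper avoids your four-term matching by noting that $C(x,y)=\tr_{\varepsilon}(\pi_{+}\upmu(x\wedge y))$. By $\upmu$-equivariance the defect is then $-\tr_\varepsilon(\pi_+\upmu(L_T\nu))=-\tr_\varepsilon([T,\upmu(\nu)]_{\End}\pi_+)$. The computation in Lemma~\ref{lemm::properties_KP} c) gives, for finitary $S,U$,
\[
\tr_\varepsilon([S,U]_{\End}\pi_+)=\tr_\varepsilon(S\pi_-U\pi_+)-\varepsilon(S,U)\tr_\varepsilon(U\pi_-S\pi_+).
\]
This treats all $x,y$ at once. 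Your plan to get the pure cases from the mixed one by skew-symmetry would not work: for $x,y\in\gg_-$ one has $C(x,y)=0$, but $C(Tx,y)$ need not vanish. No case split is needed anyway.

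The genuine problem is the step you flag as the main obstacle. With the normalization you adopt on $\widehat{\mathfrak{so}}_\varepsilon(\gg)$ (no factor $\tfrac12$), the display shows that your scalar $-C(Tx,y)-\varepsilon(T,x)C(x,Ty)$ equals $-\KP(T,\upmu(x\wedge y))$. So no trace bookkeeping can produce the factor $2$.

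Here is a concrete check. Take $\Gamma$ trivial and $\gg$ abelian with $\gg_{1}=\operatorname{span}(e_1,e_2)$, $\gg_{-1}=\operatorname{span}(f_1,f_2)$ and $B(e_i,f_j)=\delta_{ij}$. Let $Tf_1=e_2$, $Tf_2=-e_1$, $Te_i=0$, and $\nu=f_1\wedge f_2$. Then $U\coloneqq\upmu(\nu)$ sends $e_1\mapsto f_2$, $e_2\mapsto -f_1$, $f_i\mapsto 0$.
\begin{itemize}
\item On the Clifford side: $\widehat q(\nu)=f_1f_2$, $L_T\widehat q(\nu)=e_2f_2-f_1e_1$, and $\widehat q(L_T\nu)=-f_1e_1-f_2e_2$. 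The defect is $e_2f_2+f_2e_2=2$.
\item On the trace side: $\tr(T\pi_-U\pi_+)=-2$ and $\tr(U\pi_-T\pi_+)=0$. With your normalization $-2\KP(T,U)=4$, not $2$.
\end{itemize}

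The stated identity therefore holds only if $\KP$ on $\widehat{\mathfrak{so}}_\varepsilon(\gg)$ carries the same factors $\tfrac12$ as on $\gg$:
\[
\KP(S,U)=\tfrac12\tr_\varepsilon(S\pi_-U\pi_+)-\tfrac12\varepsilon(S,U)\tr_\varepsilon(U\pi_-S\pi_+).
\]
This is also the normalization needed for the paper's claim $\KP(\ad_x,\ad_y)=\KP(x,y)$. It is what the paper's proof implicitly uses when it turns $\tr_\varepsilon(\pi_+[T,\upmu(\nu)])$ into $2\KP(T,\upmu(\nu))$ via Lemma~\ref{lemm::properties_KP} c). State that convention explicitly and your argument, including b), goes through.

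One smaller point: your stated reason for the reduction is inaccurate. $\upmu(\nu)$ need not have finite rank; for example $\nu=\uplambda(\ad_x)$ generally does not. What makes the reduction work is that $\pi_-S\pi_+$ has finite rank for $S$ of fixed $\ZZ$-degree, as you note at the end.
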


\begin{proof}
a) It suffices to treat $T\in\mathfrak{so}_{\varepsilon}(\gg)$ and $\nu=x\wedge y\in\bigwedge^{2}_{\varepsilon}(\gg)$ with $x,y\in\gg$. By Example~\ref{ex::quantization_map},
\[
\widehat q(x\wedge y)=q(x\wedge y)-B(\pi_+x,y)+\varepsilon(x,y)B(\pi_+y,x).
\]
On the other hand, $\upmu(\nu)(z)=\varepsilon(y,z)B(x,z)y-B(y,z)x$ and $\pi_{+}\upmu(\nu)$ is the sum of two rank-one operators. One gets
\[
\tr_{\varepsilon}(\pi_{+}\upmu(\nu))
=-B(\pi_{+}x,y)+\varepsilon(y,x)B(\pi_{+}y,x).
\]
Thus $\widehat{q}(\nu)=q(\nu)+\tr_{\varepsilon}(\pi_{+}\upmu(\nu))$. By Lemma~\ref{lemm::properties_KP},
\[
L_{T}\widehat{q}(\nu)-\widehat{q}(L_{T}\nu)
=-\tr_{\varepsilon}(\pi_{+}\upmu({L_{T}\nu}))
=-\tr_{\varepsilon}(\pi_{+}[T,\upmu(\nu)]_{\mathfrak{so}_{\varepsilon}})
=-2\KP(T,\upmu(\nu)).
\]

b) It suffices to treat $T_{1}, T_{2} \in \mathfrak{so}_{\varepsilon}(\gg)$. Recall that $\upmu(\uplambda(T_{i})) = T_{i}$ for $i=1,2$. 
Using a), Lemma~\ref{lemm::Lie_derivative_and_lambd}, and Proposition~\ref{prop::Lie_derivative_as_commutator}, we obtain
\begin{equation}
\begin{split}
[\widehat{\gamma}(T_{1}), \widehat{\gamma}(T_{2})]_{\widehat{\Cl}_{\varepsilon}} 
&= -2L_{T_{1}}\widehat{\gamma}(T_{2}) 
= -2L_{T_{1}}\widehat{q}(\uplambda(T_{2}))
= -2\widehat{q}(L_{T_{1}}\uplambda(T_{2})) +4 \KP(T_{1}, \upmu(\uplambda(T_{2}))) 
\\ &= -2\widehat{q}(\uplambda([T_{1}, T_{2}]_{\mathfrak{so}_{\varepsilon}})) +4 \KP(T_{1}, T_{2}). \qedhere
\end{split}
\end{equation}
\end{proof}

\subsection{Completion of \texorpdfstring{$\varepsilon$}{}-Symmetric and Universal Enveloping Algebras of \texorpdfstring{$\gg$}{}} \label{subsec::completion_symmetric_universal_superalgebra} In this section, we introduce the completions of the $\varepsilon$-symmetric algebra and the universal enveloping algebra of $\gg$ and relate both by the PBW $\varepsilon$-symmetrization.

\subsubsection{\texorpdfstring{$\varepsilon$}{}-Symmetric Algebras and Completion}
Let $V \in \mathbf{Vec}_{\Gamma}^{\varepsilon}$. Assume in addition that $V$ is $\ZZ$-graded, that is, $V=\bigoplus_{n\in\ZZ}V^n$ with $V^n\in \mathbf{Vec}_{\Gamma}^{\varepsilon}$. Let $I_{S}$ denote the $(\ZZ\times \Gamma)$-graded homogeneous ideal of $\mathrm{T}(V)$ generated by
\begin{equation}
v\otimes w-\varepsilon(v,w)w\otimes v,\qquad v,w\in V\ \text{homogeneous}.
\end{equation}
The \emph{$\varepsilon$-symmetric algebra} of $V$ is the quotient
$
S_{\varepsilon}(V)\coloneqq \mathrm{T}(V)/I_{S}.
$
Equivalently, it is generated by $1_{S_{\varepsilon}(V)}$ and $V$, subject to
$
vw-\varepsilon(v,w)wv=0$ for homogeneous $v,w\in V.$ Since $I_{S}$ is $\ZZ\times\Gamma$-homogeneous, $S_{\varepsilon}(V)$ inherits the $\ZZ\times\Gamma$-grading of $\mathrm{T}(V)$ induced by $V$, and is a $\ZZ$-graded color algebra. Moreover, $S_{\varepsilon}(V)$ carries the canonical $\varepsilon$-symmetric power grading
$S_{\varepsilon}(V)=\bigoplus_{k\geq 0}S^{k}_{\varepsilon}(V)$,
induced by the tensor length grading
$\mathrm{T}(V)=\bigoplus_{k\geq 0}\mathrm{T}^{k}(V)$
of the tensor algebra. It is compatible with the $\ZZ\times \Gamma$-grading induced by $V$. In what follows, we view $S_{\varepsilon}(V)$ as a $\ZZ\times \ZZ$-graded color algebra.

We are interested in the case when $V$ is the $\ZZ$-graded color Lie algebra $\gg$. Recall that $\gg^{\ast}$ denotes the restricted dual and $\gg = \gg_{+}\oplus \gg_{-}$. The natural pairing between $S_{\varepsilon}(\gg)$ and $S_{\varepsilon}(\gg^{\ast})$ identifies $S_{\varepsilon}(\gg)_{(i, \gamma)}$ with a subspace of the linear maps
$
 S_{\varepsilon}(\gg^{\ast})_{(-i, -\gamma)}\;\longrightarrow\;\CC.
$
We define
$
 \widehat{S}_{\varepsilon}(\gg)_{(i, \gamma)}\coloneqq \Hom_{\CC}\!\bigl(S_{\varepsilon}(\gg^{\ast})_{(-i,-\gamma)},\CC\bigr).
$ Equivalently, using $S_{\varepsilon}(\gg)\cong S_{\varepsilon}(\gg_-)\otimes S_{\varepsilon}(\gg_+)$, one has
\begin{equation}
 \widehat{S}_{\varepsilon}(\gg)_{(i,\gamma)}
\cong
\prod_{r\ge 0}\;
\bigoplus_{\substack{\alpha,\beta\in\Gamma\\ \alpha+\beta=\gamma}}
S_{\varepsilon}(\gg_{-})_{(i-r,\alpha)}
\otimes
S_{\varepsilon}(\gg_{+})_{(r,\beta)}.
\end{equation}
The completion $\widehat{S}_{\varepsilon}(\gg)$ of $S_{\varepsilon}(\gg)$ is the direct sum of all $\widehat{S}_{\varepsilon}(\gg)_{(i, \gamma)}$.
The multiplication on $S_{\varepsilon}(\gg)$ extends to $\widehat{S}_{\varepsilon}(\gg)$, which is thus a $\ZZ$-graded color algebra.

For each $k\geq 0$, the homogeneous component $S^{k}_{\varepsilon}(\gg)$ admits an analogous completion $\widehat{S}_{\varepsilon}^{k}(\gg)\subseteq\widehat{S}_{\varepsilon}(\gg)$. Thus $\widehat{S}_{\varepsilon}(\gg)$ carries, in addition, a compatible $\ZZ$-grading induced by tensor length.

\subsubsection{\texorpdfstring{$\UE_{\varepsilon}(\gg)$}{} and Completion} Let $I_{U}$ be the two-sided homogeneous ideal of $\mathrm{T}(\gg)$ generated by
\begin{equation} x\otimes y-\varepsilon(x,y)y\otimes x-[x,y],
\qquad x,y\in \gg\ \text{homogeneous}.\end{equation}
The universal enveloping algebra is the quotient $\UE_{\varepsilon}(\gg) \coloneqq \mathrm{T}(\gg)/I_{U}.
$ Equivalently, $\UE_{\varepsilon}(\gg)$ is the associative color algebra generated by $\gg$ subject to the relations
$xy-\varepsilon(x,y)yx=[x,y]$ for all homogeneous $x,y\in \gg$. 

In \cite{Scheunert1979}, Scheunert proves a Poincar\'e--Birkhoff--Witt theorem for $\UE_{\varepsilon}(\gg)$. The tensor-degree filtration on $\mathrm{T}(\gg)$ defined by
$
F^{k}\mathrm{T}(\gg)\coloneqq\bigoplus_{j\leq k}\mathrm{T}^{j}(\gg)$, induces a filtration on $\UE_{\varepsilon}(\gg)$. Let $\operatorname{gr}\UE_{\varepsilon}(\gg)$ denote the associated graded color algebra. Then there is a canonical isomorphism of graded color algebras~\cite[Theorem~1]{Scheunert1979}
\begin{equation}
\operatorname{gr}\UE_{\varepsilon}(\gg)\cong S_{\varepsilon}(\gg).
\end{equation}
Since $I_U$ is $\ZZ\times\Gamma$-graded, this is in fact an isomorphism of $\ZZ$-graded color algebras. Here the $\ZZ$-grading refers to the one induced by $\gg$.

Let $\UE_{\varepsilon}(\gg_{\pm})$ denote the universal enveloping algebra of the $\ZZ$-graded color subalgebras $\gg_{\pm}$ of $\gg$. By the Poincaré–Birkhoff–Witt theorem, the multiplication map induces a $\ZZ \times \Gamma$-graded vector space isomorphism
$
\UE_{\varepsilon}(\gg)\;\cong\;\UE_{\varepsilon}(\gg_{-})\otimes \UE_{\varepsilon}(\gg_{+}).
$ We define a completion $\widehat{\UE}_{\varepsilon}(\gg)$ by declaring its $(i,\gamma)$-graded piece to be
\begin{equation}
\widehat{\UE}_{\varepsilon}(\gg)_{(i,\gamma)}
\coloneqq
\prod_{r\ge 0}\;
\bigoplus_{\substack{\alpha,\beta\in\Gamma\\ \alpha+\beta=\gamma}}
\UE_{\varepsilon}(\gg_{-})_{(i-r,\alpha)}
\otimes
\UE_{\varepsilon}(\gg_{+})_{(r,\beta)},
\end{equation}
and setting
$
 \widehat{\UE}_{\varepsilon}(\gg)\coloneqq \bigoplus_{(i, \gamma)\in\ZZ\times\Gamma}\widehat{\UE}_{\varepsilon}(\gg)_{(i, \gamma)}.
$
The multiplication on $\UE_{\varepsilon}(\gg)$ extends to $\widehat{\UE}_{\varepsilon}(\gg)$, making $\widehat{\UE}_{\varepsilon}(\gg)$ a $\ZZ$-graded color algebra.

\subsubsection{PBW \texorpdfstring{$\varepsilon$}{}-Symmetrization}
The natural inclusion $\gg \hookrightarrow \UE_{\varepsilon}(\gg)$ is a homomorphism in $\textbf{Vec}^{\varepsilon}_{\Gamma}$ of $\ZZ$-graded vector spaces. It extends canonically to a homomorphism of $\ZZ$-graded vector spaces $Q: S_{\varepsilon}(\gg) \to \UE_{\varepsilon}(\gg)$ by \emph{PBW $\varepsilon$-symmetrization}, that is, 
\begin{equation}
 Q(x_{1}\ldots x_{k}) \coloneqq \tfrac{1}{k!}\sum_{\sigma \in S_{k}} p'(\sigma;x_{1}, \ldots, x_{k}) x_{\sigma(1)}\ldots x_{\sigma(k)},
\end{equation}
where $p'(\sigma; x_{1},\ldots, x_{k})
$ is given by (\emph{cf.}~Section~\ref{subsec::conventions})
\begin{equation}
p'(\sigma; x_{1},\ldots, x_{k}) = \prod_{1 \leq i < j \leq k, \ \sigma^{-1}(i) > \sigma^{-1}(j)} \varepsilon(x_{i},x_{j}).
\end{equation}
This is the color analogue of the PBW symmetrization map, and it maps $S_{\varepsilon}(\gg_{\pm})$ to $\UE_{\varepsilon}(\gg_{\pm})$. 

\begin{theorem}\label{thm::PBW_supersymmetrization} The map $Q : S_{\varepsilon}(\gg) \to \UE_{\varepsilon}(\gg)$ is an isomorphism in $\textbf{Vec}^{\varepsilon}_{\Gamma}$ of $\ZZ$-graded vector spaces. 
\end{theorem}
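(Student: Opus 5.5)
The plan is to deduce the statement from Scheunert's PBW theorem, $\operatorname{gr}\UE_{\varepsilon}(\gg)\cong S_{\varepsilon}(\gg)$, by the usual filtration argument. The finiteness reductions used elsewhere in the paper are not needed here, since $S_{\varepsilon}(\gg)$ and $\UE_{\varepsilon}(\gg)$ are uncompleted and every element involves only finitely many generators.

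\emph{Compatibility with the gradings.} The map $Q$ sends a monomial $x_{1}\cdots x_{k}$ to a linear combination of products of the same elements $x_{\sigma(1)}\cdots x_{\sigma(k)}$. Each such product has the same $\ZZ$-degree $\sum\deg(x_i)$ and the same $\Gamma$-degree $\sum|x_i|$, so $Q$ preserves the $\ZZ\times\Gamma$-grading.

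\emph{Well-definedness.} We must check that the symmetrization descends from $\mathrm{T}(\gg)$ to $S_{\varepsilon}(\gg)$. Define $\widetilde Q_k$ on $\mathrm{T}^k(\gg)$ by the same formula; it equals $\frac{1}{k!}\sum_\sigma$ of the right action $\pi(\sigma)$, followed by multiplication in $\UE_{\varepsilon}(\gg)$. Using the cocycle identity for $p'$ (the color analogue of the one recorded for $p$), one gets $\widetilde Q_k\circ\pi(\tau)=\widetilde Q_k$ for every $\tau\in S_k$. Since $I_S\cap\mathrm{T}^k(\gg)$ is spanned by the elements $w-\pi(s_i)w$, $\widetilde Q_k$ vanishes on it, so $Q$ is well defined.

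\emph{Bijectivity.} Let $F^k\UE_{\varepsilon}(\gg)$ be the filtration induced by tensor length, and $\operatorname{gr}^k\UE_{\varepsilon}(\gg)\cong S^k_{\varepsilon}(\gg)$ the isomorphism of \cite[Theorem~1]{Scheunert1979}, which sends the class of $x_1\cdots x_k$ to the monomial $x_1\cdots x_k$.

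\begin{enumerate}
\item[(1)] $Q(S^k_{\varepsilon}(\gg))\subseteq F^k\UE_{\varepsilon}(\gg)$.
\item[(2)] Modulo $F^{k-1}\UE_{\varepsilon}(\gg)$, each reordered term satisfies $p'(\sigma;x)\,x_{\sigma(1)}\cdots x_{\sigma(k)}\equiv x_1\cdots x_k$. To see this, swap adjacent factors using $xy=\varepsilon(x,y)yx+[x,y]$; each swap introduces the correct $\varepsilon$-factor and an error in $F^{k-1}$. Hence the composite $S^k_{\varepsilon}(\gg)\xrightarrow{Q}F^k\to\operatorname{gr}^k$ is exactly Scheunert's isomorphism.
\item[(3)] A filtered map whose associated graded map is an isomorphism is an isomorphism, because the filtration on $S_{\varepsilon}(\gg)$ by $\bigoplus_{j\le k}S^j_{\varepsilon}(\gg)$ is exhaustive and bounded below.
\end{enumerate}

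For (3), injectivity comes from looking at the top component, and surjectivity from induction on $k$. Both arguments respect the $\ZZ\times\Gamma$-grading, so $Q$ is an isomorphism in $\textbf{Vec}^{\varepsilon}_{\Gamma}$ of $\ZZ$-graded vector spaces.

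The main obstacle is the bookkeeping of the color signs. In well-definedness we need $p'(\sigma\tau;x)=p'(\tau;x_{\sigma(\cdot)})\,p'(\sigma;x)$, and in step (2) we need the commutator rewriting to produce exactly $p'(\sigma;x)$. Both reduce to adjacent transpositions via the bimultiplicativity and $\varepsilon(\alpha,\beta)\varepsilon(\beta,\alpha)=1$ in \eqref{eq::commutation_factor}, which I would verify first.
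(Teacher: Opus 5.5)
Your proposal is correct and follows the paper's proof. Both arguments show that $Q$ is filtration-preserving and that each term $p'(\sigma;x)\,x_{\sigma(1)}\cdots x_{\sigma(k)}$ agrees with $x_1\cdots x_k$ modulo $F^{k-1}$, so the associated graded map is Scheunert's PBW isomorphism; both then conclude by the exhaustive-filtration argument. Your extra checks of grading compatibility and of well-definedness on $S_{\varepsilon}(\gg)$ (via $\widetilde Q_k\circ\pi(\tau)=\widetilde Q_k$) are details the paper leaves implicit.
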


\begin{proof}
 By definition, for every $k\geq 0$, one has
\begin{equation*}
Q\bigl(S_{\varepsilon}^{k}(\gg)\bigr)\subseteq F^{k}\UE_{\varepsilon}(\gg),
\end{equation*}
so $Q$ is filtration-preserving. Modulo $F^{k-1}\UE_{\varepsilon}(\gg)$, the defining relation
$
xy-\varepsilon(x,y)yx=[x,y]
$
for $x,y\in \gg$ homogeneous 
reduces to
$
xy=\varepsilon(x,y)yx,
$
since $[x,y]$ has lower filtration degree. Hence, in the associated graded color algebra, all summands in the symmetrization formula coincide. Indeed, every ordered product
$x_{\sigma(1)}\cdots x_{\sigma(k)}$
becomes, in $\operatorname{gr}\UE_{\varepsilon}(\gg)$, equal to
$p'(\sigma;x_{1},\dots,x_{k})^{-1}x_{1}\cdots x_{k}$. Therefore in the color symmetrization formula each summand contributes the same class:
\[p'(\sigma;x_{1},\dots,x_{k})x_{\sigma(1)}\cdots x_{\sigma(k)}
\equiv x_{1}\cdots x_{k}
\quad \text{in }F^{k}\UE_{\varepsilon}(\gg)/F^{k-1}\UE_{\varepsilon}(\gg).\]
It follows that the induced map
$
S_{\varepsilon}(\gg)\to\operatorname{gr}\UE_{\varepsilon}(\gg)
$
is the identity under the PBW isomorphism
$
\operatorname{gr}\UE_{\varepsilon}(\gg)\cong S_{\varepsilon}(\gg).$ Hence $Q$ is a vector space isomorphism since it is filtration preserving and the filtration is exhaustive.
\end{proof}

The PBW $\varepsilon$-symmetrization does not admit a direct extension to completions. We therefore introduce the following normal-ordered analogue.

\begin{definition}\label{def::normal_ordered_color_PBW_symmetrization}
 The normal-ordered PBW $\varepsilon$-symmetrization map
$
\widehat Q:\widehat S_{\varepsilon}(\gg)\to \widehat{\UE}_{\varepsilon}(\gg)
$
is the $\ZZ\times \Gamma$-graded linear map which, on each homogeneous component $(i,\gamma)\in \ZZ\times \Gamma$, is induced by the direct product over $r\ge 0$ and the direct sum of all $\alpha,\beta\in \Gamma$ with $\alpha+\beta=\gamma$ of the maps
\[
Q\otimes Q:
S_{\varepsilon}(\gg_{-})_{(i-r,\alpha)}\otimes
S_{\varepsilon}(\gg_{+})_{(r,\beta)}
\to
\UE_{\varepsilon}(\gg_{-})_{(i-r,\alpha)}\otimes
\UE_{\varepsilon}(\gg_{+})_{(r,\beta)}.
\]
\end{definition}

\begin{example} \label{ex::S2}
 Let $p = xy \in S^{2}_{\varepsilon}(\gg)$ with $x, y \in \gg$. One has
\[
\widehat{Q}(p) = Q(p) + \tfrac{1}{2}\left([x, \pi_{+}y] + \varepsilon(x,y)[y, \pi_{+}x]\right).
\]
The verification is immediate in each of the cases where $x, y \in \gg_{\pm}$, or $x \in \gg_{+}$ and $y \in \gg_{-}$.
\end{example}

From the construction of the normal-ordered PBW $\varepsilon$-symmetrization, together with Theorem~\ref{thm::PBW_supersymmetrization}, one obtains the following proposition.

\begin{proposition}
 $\widehat{Q}$ is an isomorphism in $\textbf{Vec}^{\varepsilon}_{\Gamma}$ of $\ZZ$-graded vector spaces.
\end{proposition}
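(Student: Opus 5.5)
The plan is to reduce the claim to Theorem~\ref{thm::PBW_supersymmetrization} applied separately to $\gg_-$ and $\gg_+$, and then check that the resulting componentwise isomorphisms are compatible with the products over $r\ge 0$ defining the completions. First, note that $\gg_\pm$ are $\ZZ$-graded color Lie subalgebras of $\gg$. Since $Q$ is defined by the same symmetrization formula, the restriction of $Q$ to $S_{\varepsilon}(\gg_\pm)$ coincides with the PBW $\varepsilon$-symmetrization map $Q_{\gg_\pm}\colon S_{\varepsilon}(\gg_\pm)\to \UE_{\varepsilon}(\gg_\pm)$ of the color Lie algebra $\gg_\pm$. Theorem~\ref{thm::PBW_supersymmetrization} only uses Scheunert's PBW theorem, which holds for an arbitrary color Lie algebra, so its proof applies verbatim to $\gg_\pm$. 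Hence each $Q_{\gg_\pm}$ is an isomorphism in $\textbf{Vec}^{\varepsilon}_{\Gamma}$. Moreover, it preserves the $\ZZ\times\Gamma$-bidegree, because $Q$ is built from products of homogeneous elements with scalar coefficients and every summand of $Q(x_1\cdots x_k)$ has the same bidegree as $x_1\cdots x_k$.

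Next, fix $(i,\gamma)\in\ZZ\times\Gamma$, $r\ge0$, and $\alpha+\beta=\gamma$. By the previous step, $Q$ restricts to linear isomorphisms
\[
S_{\varepsilon}(\gg_-)_{(i-r,\alpha)}\xrightarrow{\sim}\UE_{\varepsilon}(\gg_-)_{(i-r,\alpha)},\qquad S_{\varepsilon}(\gg_+)_{(r,\beta)}\xrightarrow{\sim}\UE_{\varepsilon}(\gg_+)_{(r,\beta)}.
\]
The tensor product of two linear isomorphisms is again a linear isomorphism, so $Q\otimes Q$ is an isomorphism on each summand. A finite or infinite direct sum of isomorphisms is an isomorphism, so taking the direct sum over $\alpha+\beta=\gamma$ gives an isomorphism. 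A direct product of isomorphisms is an isomorphism, with inverse given componentwise by the inverses, so taking the product over $r\ge0$ again gives an isomorphism.

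It follows that $\widehat Q$ restricts to an isomorphism $\widehat S_{\varepsilon}(\gg)_{(i,\gamma)}\to\widehat{\UE}_{\varepsilon}(\gg)_{(i,\gamma)}$ for every bidegree. Summing over all $(i,\gamma)$ yields a bijective map that preserves the $\ZZ\times\Gamma$-grading, which is exactly an isomorphism in $\textbf{Vec}^{\varepsilon}_{\Gamma}$ of $\ZZ$-graded vector spaces. The inverse is the map $\widehat{Q}^{-1}$ assembled componentwise from $Q^{-1}\otimes Q^{-1}$.

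The only point that needs care is the identification underlying the completions. On the symmetric side, the identification $S_{\varepsilon}(\gg)\cong S_{\varepsilon}(\gg_-)\otimes S_{\varepsilon}(\gg_+)$ is via multiplication. On the enveloping side, the identification $\UE_{\varepsilon}(\gg)\cong \UE_{\varepsilon}(\gg_-)\otimes\UE_{\varepsilon}(\gg_+)$ is the PBW multiplication isomorphism. The map $\widehat Q$ is defined directly on the tensor factors, so it is independent of how $Q$ interacts with multiplication. Because of this, no compatibility between $Q$ and the product needs to be checked, and the argument is purely componentwise. I therefore do not expect any serious obstacle beyond confirming that the graded pieces match bidegree by bidegree.
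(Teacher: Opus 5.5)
Your proposal is correct and follows the paper's argument, which the paper states in one line: $\widehat{Q}$ is built componentwise from $Q\otimes Q$, each factor is an isomorphism by Theorem~\ref{thm::PBW_supersymmetrization}, and bijectivity survives tensor products, the direct sum over $\alpha+\beta=\gamma$, and the product over $r\ge 0$. Your remark that the PBW statement has to be applied to $\gg_{\pm}$ themselves fills in a detail the paper leaves implicit; this is legitimate because $\gg_{+}$ is not quadratic, but the proof of that theorem never uses $B$.
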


We rewrite Example~\ref{ex::S2} in an appropriate way. For this purpose, we introduce some notation. Fix a homogeneous basis $(e_{a})$ of $\gg$ with $B$-dual basis $(e^{a})$, and identify $\gg$ and $\gg^{\ast}$ using the musical isomorphisms~\eqref{eq::musical_isomorphisms}. Recall the identification of $\End(\gg)$ with $\gg \otimes \gg$ given in~\eqref{eq::identification_End(g)} 
\begin{equation}
 \End(\gg) \cong \gg \otimes \gg^{\ast} \cong \gg \otimes \gg, \qquad T \mapsto \sum_{a}T(e_{a})\otimes e_{a}^{\ast} \mapsto \sum_{a}\varepsilon(e_{a})T(e_{a}) \otimes e^{a}.
\end{equation}
In what follows, we write $\mathcal{I} : \End(\gg) \to \gg\otimes \gg$ for this $\ZZ$-graded isomorphism in $\textbf{Vec}^{\varepsilon}_{\Gamma}$. Its inverse is on homogeneous $x,y,z\in \gg$ given by 
\begin{equation}
 \mathcal{I}^{-1}(x\otimes y)(z) =\varepsilon(y)B(z,y)x
\end{equation}
and then extended linearly.

We view $\End(\gg)$ as a $\gg$-module via $\ad$, and $\gg\otimes\gg$ as a $\gg$-module via
\begin{equation}
L_{X}(x\otimes y)\coloneqq [X,x]\otimes y+\varepsilon(X,x)x\otimes [X,y],\qquad X,x,y\in\gg.
\end{equation}
Then $\mathcal I$ is an isomorphism of $\gg$-modules, with inverse $\mathcal I^{-1}$.

\begin{lemma}\label{lemm::equivariance_I} One has for any homogeneous $X \in \gg$, $T \in \End(\gg)$ and $\eta \in \gg\otimes \gg$
\[
\mathcal{I}([\ad_{X},T]_{\End})=L_{X}\mathcal{I}(T), \qquad \mathcal{I}^{-1}(L_{X}\eta)=[\ad_{X},\mathcal{I}^{-1}(\eta)]_{\End}.
\]
\end{lemma}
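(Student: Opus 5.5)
Since $\mathcal I$ is a linear isomorphism, the two identities are equivalent: applying $\mathcal I^{-1}$ to the first with $T=\mathcal I^{-1}(\eta)$ gives the second. I will therefore prove only the first. Both sides are linear in $T$, and every finitary $T$ is a finite sum of rank-one homogeneous maps. It thus suffices to check the identity on $T=\mathcal I^{-1}(x\otimes y)$ for homogeneous $x,y\in\gg$, that is, on $T(z)=\varepsilon(y)B(z,y)x$. Equivalently, I will show
\[
[\ad_X,\mathcal I^{-1}(x\otimes y)]_{\End}=\mathcal I^{-1}\bigl([X,x]\otimes y+\varepsilon(X,x)\,x\otimes[X,y]\bigr),
\]
which is exactly the second identity on generators. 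Since $\mathcal I$ is bijective, this also yields the first identity.

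The computation evaluates both sides on a homogeneous $z$.

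\emph{Left-hand side.} We have
\[
[\ad_X,T](z)=\varepsilon(y)B(z,y)[X,x]-\varepsilon(X,x+y)\varepsilon(y)B([X,z],y)\,x,
\]
using $|T|=|x|+|y|$.

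\emph{Right-hand side.} The first term gives $\varepsilon(y)B(z,y)[X,x]$, which matches. The second term gives
\[
\varepsilon(X,x)\,\varepsilon([X,y])\,B(z,[X,y])\,x .
\]

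It remains to match the terms proportional to $x$. Invariance and $\varepsilon$-skew-symmetry give $B([X,z],y)=-\varepsilon(X,z)B(z,[X,y])$, by the same manipulation as in Lemma~\ref{lemm::ad_in_completion}. We may assume the pairing is nonzero, so $|z|=-(|X|+|y|)$. Then
\[
\varepsilon(X,z)=\varepsilon(X,X)^{-1}\varepsilon(X,y)^{-1}.
\]
The multiplicativity rules~\eqref{eq::commutation_factor} also give
\[
\varepsilon([X,y])=\varepsilon(X)\,\varepsilon(y)\,\varepsilon(X,y)\,\varepsilon(y,X)=\varepsilon(X)\,\varepsilon(y).
\]
Substituting these, both coefficients reduce to $\varepsilon(X,x)\,\varepsilon(X)\,\varepsilon(y)\,B(z,[X,y])$, using $\varepsilon(X)^{2}=\varepsilon(y)^{2}=1$.

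The only real obstacle is this sign bookkeeping, in particular $\varepsilon([X,y])=\varepsilon(X)\varepsilon(y)$ and the degree constraint on $z$; everything else is formal.

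Finiteness is automatic: $[\ad_X,T]$ is finitary when $T$ is, because $T$ has finite rank on finitely many components and $\ad_X$ shifts degree by $\deg X$. Hence all expressions lie in $\End(\gg)$, and $\mathcal I$ applies.
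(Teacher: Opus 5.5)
Your proof is correct, but it is organized differently from the paper's.

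The paper expands a general finitary $T$ in a basis, $\mathcal I(T)=\sum_a\varepsilon(e_a)T(e_a)\otimes e^a$, and computes both sides of the first identity. It reduces their difference to the $\gg$-invariance of the canonical tensor $\sum_a\varepsilon(e_a)e_a\otimes e^a$ (Lemma~\ref{lemm::basis_description}), to which it applies $T\otimes\id_{\gg}$. The second identity is only asserted to be ``analogous''.

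You instead reduce to rank-one generators $\eta=x\otimes y$ and prove the second identity there by evaluating at a homogeneous $z$. You then deduce the first identity from the bijectivity of $\mathcal I$, together with the fact that $[\ad_X,\cdot]_{\End}$ preserves finitary maps. Your only inputs are:
\begin{itemize}
\item the invariance of $B$ in the form $B([X,z],y)=-\varepsilon(X,z)B(z,[X,y])$;
\item the degree constraint $\vert z\vert=-(\vert X\vert+\vert y\vert)$;
\item the identity $\varepsilon([X,y])=\varepsilon(X)\varepsilon(y)$.
\end{itemize}
Your sign bookkeeping checks out.

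What your route buys is that everything stays a finite expression. The paper applies $T\otimes\id_{\gg}$ to an identity between infinite sums, which is harmless because $T$ is finitary but is left implicit. Your route also makes the equivalence of the two identities precise rather than asserting it.

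What the paper's route buys is that it treats an arbitrary $T$ in one stroke. It also uses the same canonical-tensor mechanism as the proof of Lemma~\ref{lemm::Lie_derivative_and_lambd}, which keeps the arguments of that section uniform.
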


\begin{proof}
 We prove only that $\mathcal I([\ad_{X},T]_{\End})=L_{X}\mathcal I(T)$, since the proof of the other identity is analogous. One computes
\begin{equation*}
\begin{aligned}
\mathcal I([\ad_{X},T]_{\End})
&=\sum_{a}\varepsilon(e_{a})\Bigl([X,T(e_{a})]-\varepsilon(X,T)T([X,e_{a}])\Bigr)\otimes e^{a},\\
L_{X}(\mathcal I(T))
&=\sum_{a}\varepsilon(e_{a})\Bigl([X,T(e_{a})]\otimes e^{a}+\varepsilon(X,T(e_{a}))T(e_{a})\otimes [X,e^{a}]\Bigr).
\end{aligned}
\end{equation*}
Thus it remains to show that
\begin{equation*}
-\sum_{a}\varepsilon(e_{a})\varepsilon(X,T)T([X,e_{a}])\otimes e^{a}=\sum_{a}\varepsilon(e_{a})\varepsilon(X,T(e_{a}))T(e_{a})\otimes [X,e^{a}].
\end{equation*}
Multiplying by $\varepsilon(T,X)$, this is equivalent to
\begin{equation*}
-\sum_{a}\varepsilon(e_{a})T([X,e_{a}])\otimes e^{a}=\sum_{a}\varepsilon(e_{a})\varepsilon(X,e_{a})T(e_{a})\otimes [X,e^{a}].
\end{equation*}
Now the canonical tensor $\sum_{a}\varepsilon(e_{a})e_{a}\otimes e^{a}$ is $\gg$-invariant by Lemma~\ref{lemm::basis_description}, that is,
\begin{equation*}
\sum_{a}\varepsilon(e_{a})\Bigl([X,e_{a}]\otimes e^{a}+\varepsilon(X,e_{a})e_{a}\otimes [X,e^{a}]\Bigr)=0.
\end{equation*}
Applying $T\otimes\id_{\gg}$ yields the required identity.
\end{proof}

We define the braiding $\br : \End(\gg) \to \gg$ to be the linear map given by the identification $\End(\gg) \cong \gg \otimes \gg$ followed by the $\varepsilon$-bracket, that is, 
\begin{equation} \br(T) = \sum_{a}\varepsilon(e_{a})[T(e_{a}),e^{a}].
\end{equation}

\begin{lemma} \label{lemm::br_and_str}
 The following assertions hold:
 \begin{enumerate} \item[a)] For any homogeneous $T \in \End(\gg)$ and $x\in \gg$, one has 
 \[
 B(\br(T),x) = \varepsilon(x,T)\tr_{\varepsilon}(\ad_{x}\circ T) = \tr_{\varepsilon}(T\circ \ad_{x}).
 \]
 \item[b)] The map $\br : \End(\gg) \to \gg$ is $\gg$-equivariant, that is, 
 \[
 L_{x}\br(T) = 
 \br([\ad_{x},T]_{\End}), \qquad \forall x \in \gg, \ T \in \End(\gg),
 \]
 where we equip $\End(\gg)$ with the natural $\varepsilon$-bracket. 
 \end{enumerate}
\end{lemma}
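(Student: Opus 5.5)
Both assertions are checked directly from the definition $\br(T)=\sum_a\varepsilon(e_a)[T(e_a),e^a]$, which is well defined because $T\in\End(\gg)$ is finitary. Every identity below involves only finitely many homogeneous components of $\gg$. We may therefore always pass to a finite-dimensional nondegenerate $\ZZ\times\Gamma$-graded subspace $W$, as in the proof of Lemma~\ref{lemm::form_epsilon_orthogonal}, on which all traces are honest finite traces.

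\emph{Part a).} We pair with $x$ and use invariance of $B$:
\[
B(\br(T),x)=\sum_a\varepsilon(e_a)B([T(e_a),e^a],x)=\sum_a\varepsilon(e_a)B(T(e_a),[e^a,x]).
\]
Next we rewrite $[e^a,x]=-\varepsilon(e^a,x)[x,e^a]$ and move $\ad_x$ across $B$ using the skew-adjointness from Lemma~\ref{lemm::ad_in_completion}. This gives $B(T(e_a),[x,e^a])=-\varepsilon(x,T(e_a))B([x,T(e_a)],e^a)$. The result is $\sum_a \varepsilon(e_a)\,c_a\,B(\ad_xT(e_a),e^a)$ with a product of commutation factors $c_a$. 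Since $T(e_a)$ has degree $|T|+|e_a|$ and $|e^a|=-|e_a|$, the $a$-dependent factors cancel and $c_a=\varepsilon(x,T)$. By \eqref{eq::epsilon_trace} the sum is then $\varepsilon(x,T)\tr_\varepsilon(\ad_x\circ T)$. The trace is finite because $\ad_x T$ is finitary after composing with the relevant projections. The second equality follows from $\varepsilon$-cyclicity, $\tr_\varepsilon(\ad_xT)=\varepsilon(x,T)\tr_\varepsilon(T\ad_x)$, together with $\varepsilon(x,T)^2=1$. This last identity is only needed in degree $|x|+|T|=\ng$, since otherwise both sides vanish.

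\emph{Part b).} The cleanest route is via Lemma~\ref{lemm::equivariance_I}. We have $\br=[\cdot,\cdot]\circ\mathcal I$, where $[\cdot,\cdot]:\gg\otimes\gg\to\gg$ is the bracket. The Jacobi identity says precisely that the bracket map intertwines $L_X$ on $\gg\otimes\gg$ with $\ad_X$ on $\gg$:
\[
[X,[x,y]]=[[X,x],y]+\varepsilon(X,x)[x,[X,y]].
\]
Combining this with $\mathcal I([\ad_X,T]_{\End})=L_X\mathcal I(T)$ gives $L_X\br(T)=\br([\ad_X,T]_{\End})$.

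An alternative check of b) goes through a): pair both sides with $y$ and use $\tr_\varepsilon([\ad_X,T]\ad_y)=\tr_\varepsilon(T[\ad_y,\ad_X])$, up to signs, together with $\ad$ being a homomorphism. The main obstacle in either route is bookkeeping of the $\varepsilon$-factors in part a), specifically verifying that the $a$-dependent factors collapse to $\varepsilon(x,T)$. Part b) is formal once the intertwining property of $\mathcal I$ is available.
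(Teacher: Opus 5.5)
Your argument is the paper's: part a) is the same direct basis computation closed off by \eqref{eq::epsilon_trace} and $\varepsilon$-cyclicity, and part b) is exactly the factorization $\br=[\cdot,\cdot]\circ\mathcal I$ combined with Lemma~\ref{lemm::equivariance_I} and the $\varepsilon$-Jacobi identity. The only fix needed is in a): skew-adjointness of $\ad_x$ gives $B(T(e_a),[x,e^a])=-\varepsilon(T(e_a),x)\,B([x,T(e_a)],e^a)$, not $-\varepsilon(x,T(e_a))\,B([x,T(e_a)],e^a)$. With your factor a residual $\varepsilon(x,e_a)^2$ survives, which need not equal $1$ for a general commutation factor; with the correct one, $\varepsilon(e^a,x)\varepsilon(e_a,x)=1$ leaves $\varepsilon(T,x)$, which equals $\varepsilon(x,T)$ in the only relevant degree $\vert x\vert+\vert T\vert=\ng$ (the paper avoids this bookkeeping by applying $\varepsilon$-symmetry of $B$ before invariance, so the factor $\varepsilon(T,x)$ is $a$-independent from the start).
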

\begin{proof} a)
 In a homogeneous basis $(e_{a})$ of $\gg$ with $B$-dual basis $(e^{a})$, we identify $T$ with $\sum_{a} \varepsilon(e_{a})T(e_{a})\otimes e^{a}$, and $\br(T) = \sum_{a}\varepsilon(e_{a})[T(e_{a}),e^{a}]$. Note that $T$ is finitary. Let $x \in \gg$ be homogeneous. Then one computes using invariance of $B$, $\vert e_{a}\vert =-\vert e^{a}\vert$, and skew-$\varepsilon$-symmetry of $[\cdot,\cdot]$:
 \[
 \begin{aligned}
 B(\br(T),x) &= \sum_{a}\varepsilon(e_{a})B([T(e_{a}),e^{a}],x) \\&= \sum_{a}\varepsilon(e_{a})\varepsilon(T+e_{a}+e^{a},x)B(x,[T(e_{a}),e^{a}]) \\ &= \varepsilon(x)\sum_{a}\varepsilon(e_{a})B((\ad_{x}\circ T)(e_{a}),e^{a}) \\ &= \varepsilon(x)\tr_{\varepsilon}(\ad_{x}\circ T),
 \end{aligned}
 \]
 where the last equality follows by~\eqref{eq::epsilon_trace}. Moreover, since $B(\br(T),x)$ is zero unless $\vert T\vert + \vert x \vert = \ng$, it follows that $\varepsilon(x)=\varepsilon(x,T)=\varepsilon(T,x)$ by~\eqref{eq::commutation_factor}. This proves a) together with the $\varepsilon$-symmetry of $B$.

b) Since $\operatorname{br}=[\cdot,\cdot]\circ\mathcal I$, and both $[\cdot,\cdot]\colon\gg\otimes\gg\to\gg$ and $\mathcal I\colon\End(\gg)\to\gg\otimes\gg$ are $\gg$-equivariant for the diagonal adjoint action on $\gg\otimes\gg$, the map $\operatorname{br}$ is $\gg$-equivariant. Hence
\begin{equation}
L_{x}\operatorname{br}(T)=\operatorname{br}([L_{x},T]_{\End}).\qedhere
\end{equation}
\end{proof}

We define a natural linear map $A : S^{2}_{\varepsilon}(\gg) \to \End(\gg)$ by
\begin{equation} \label{eq::definition_A}A_{xy}\coloneqq\mathcal{I}^{-1}(x\otimes y+\varepsilon(x,y)y\otimes x), \qquad A_{xy}(z)=\varepsilon(y)B(z,y)x+\varepsilon(x)\varepsilon(x,y)B(z,x)y
\end{equation}
for homogeneous $x,y\in \gg$. This is well-defined since it is compatible with the defining relation $xy=\varepsilon(x,y)yx$ in $S^2_{\varepsilon}(\gg)$. It extends degreewise to a linear map $A : \widehat{S}_{\varepsilon}^{2}(\gg) \to \widehat{\End}(\gg)$, denoted by the same symbol.

\begin{lemma} \label{lemm::bracket_and_br} The following assertions hold: \begin{enumerate}
 \item[a)] The map $A : \widehat{S}_{\varepsilon}^{2}(\gg) \to \widehat{\End}(\gg)$ satisfies
 \[
 A_{L_{X}p} \;=\;[\ad_X,A_{p}]_{\widehat{\End}}\;=\;\ad_X\circ A_{p}\;-\;\varepsilon(X,A_{p})A_{p}\circ\ad_X
 \]
 for all homogeneous $X \in \gg$ and $p \in \widehat{S}_{\varepsilon}^{2}(\gg)$.
 \item[b)] For all homogeneous $p=xy \in S^{2}_{\varepsilon}(\gg)$, one has
\begin{equation*}
 \operatorname{br}(\pi_{+}A_{xy})=[\pi_{+}x,y]+\varepsilon(x,y)[\pi_{+}y,x].
\end{equation*}
\end{enumerate}
\end{lemma}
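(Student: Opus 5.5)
For a), I reduce to the uncompleted case. Both sides are linear in $p$ and are defined degreewise: $A$ on $\widehat S^{2}_{\varepsilon}(\gg)$ is the degreewise extension, and $[\ad_X,\cdot]_{\widehat{\End}}$ shifts the $\ZZ$-degree by $\deg(X)$. Moreover, only finitely many components of $p$ contribute to any fixed matrix block $\Hom(\gg_r,\gg_{r+i})$. Hence it suffices to treat $p=xy\in S^{2}_{\varepsilon}(\gg)$ with $x,y$ homogeneous, so that $A_{xy}\in\End(\gg)$ is finitary.

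By definition $A_{xy}=\mathcal I^{-1}(x\otimes y+\varepsilon(x,y)y\otimes x)$. The symmetrization map $S^{2}_{\varepsilon}(\gg)\to\gg\otimes\gg$, $xy\mapsto x\otimes y+\varepsilon(x,y)y\otimes x$, intertwines the Lie derivative $L_X$ on $S_{\varepsilon}(\gg)$ (an $\varepsilon$-derivation extending $\ad_X$) with the diagonal action $L_X$ on $\gg\otimes\gg$. This is a two-term check using $\varepsilon(X,y)\varepsilon(x,[X,y])\cdots$-type factor identities from~\eqref{eq::commutation_factor}. Composing with the second identity of Lemma~\ref{lemm::equivariance_I}, $\mathcal I^{-1}(L_X\eta)=[\ad_X,\mathcal I^{-1}(\eta)]_{\End}$, gives $A_{L_Xp}=[\ad_X,A_p]$. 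The second equality in a) is just the definition of the bracket. The only care needed is that $\varepsilon(X,A_p)=\varepsilon(X,p)$, since $\mathcal I$ is even.

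For b), I use $\mathcal I^{-1}$ to compute. One has $\pi_+A_{xy}=\pi_+\circ\mathcal I^{-1}(x\otimes y+\varepsilon(x,y)y\otimes x)$. From $\mathcal I^{-1}(u\otimes v)(z)=\varepsilon(v)B(z,v)u$ we get $\pi_+\mathcal I^{-1}(u\otimes v)=\mathcal I^{-1}(\pi_+u\otimes v)$, because $\pi_+$ is even and acts only on the first tensor factor. Since $\br=[\cdot,\cdot]\circ\mathcal I$, it follows that
\[
\br(\pi_+A_{xy})=[\pi_+x,y]+\varepsilon(x,y)[\pi_+y,x],
\]
which is the claim. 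Before relying on this, I would check directly in a basis, via $\br(T)=\sum_a\varepsilon(e_a)[T(e_a),e^a]$ and Lemma~\ref{lemm::basis_description} a), that $\sum_a\varepsilon(e_a)\varepsilon(v)B(e_a,v)e^a=v$. This confirms $\mathcal I\circ\mathcal I^{-1}=\id$ with the stated sign conventions.

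I expect the main obstacle to be this sign bookkeeping: confirming that the formula given for $\mathcal I^{-1}$ really inverts $\mathcal I$ (the factors $\varepsilon(e_a)$ versus $\varepsilon(v)$), and the factor identities in a). Everything else is formal.
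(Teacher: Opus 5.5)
Your proposal is correct and follows the paper's proof. For a) you reduce to $p=xy$, observe that symmetrization $xy\mapsto x\otimes y+\varepsilon(x,y)y\otimes x$ intertwines $L_X$, and then apply the equivariance of $\mathcal I^{-1}$ from Lemma~\ref{lemm::equivariance_I}. For b) you use $\mathcal I(\pi_+A_{xy})=\pi_+x\otimes y+\varepsilon(x,y)\pi_+y\otimes x$ together with $\br=[\cdot,\cdot]\circ\mathcal I$, exactly as the paper does. Your justification of the reduction from the completion, and your check that the stated formula for $\mathcal I^{-1}$ inverts $\mathcal I$, are sound details that the paper leaves implicit.
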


\begin{proof} It suffices to prove the statement for $p=xy\in S_{\varepsilon}^{2}(\gg)$ homogeneous. For these, one has
$L_Xp=[X,x]y+\varepsilon(X,x)x[X,y]$, and consequently $A_{L_Xp}
=
A_{[X,x]y}+\varepsilon(X,x)A_{x[X,y]}.$ Using the definition of $A$, we obtain by $\gg$-equivariance of $\mathcal{I}^{-1}$:
\[
\begin{aligned}
A_{L_Xp}
&=
\mathcal I^{-1}\bigl([X,x]\otimes y+\varepsilon(X,x)x\otimes [X,y]\bigr)
+\varepsilon(x,y)\mathcal I^{-1}\bigl([X,y]\otimes x+\varepsilon(X,y)y\otimes [X,x]\bigr) \\ &=\mathcal{I}^{-1}(L_{X}(x\otimes y))+\varepsilon(x,y)\mathcal{I}^{-1}(L_{X}(y\otimes x)) \\ &= \mathcal{I}^{-1}(L_{X}(x\otimes y +\varepsilon(x,y) y\otimes x)) \\ &= [\ad_{X}, \mathcal{I}^{-1}((x\otimes y +\varepsilon(x,y) y\otimes x))]_{\End} \\ &= [\ad_{X},A_{xy}]_{\End}.
\end{aligned}
\]

For b), we have by definition
\begin{equation}
\mathcal I(\pi_{+}A_{xy})=\pi_{+}x\otimes y+\varepsilon(x,y)\pi_{+}y\otimes x,
\end{equation}
and applying $\operatorname{br}$ yields the statement.
\end{proof}

\begin{proposition} \label{prop::Lie_derivative_and_PBW_quantization} For any homogeneous $x \in \gg$ and any $p \in \widehat{S}_{\varepsilon}^{2}(\gg)$, we have 
 \[
 L_{x}(\widehat{Q}(p)) - \widehat{Q}(L_{x}(p)) = \tfrac{1}{2}\br((\pi_{+}\ad_{x}\pi_{-} - \pi_{-}\ad_{x}\pi_{+})A_{p}).
 \]
\end{proposition}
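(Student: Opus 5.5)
Both sides are linear in $p$ and compatible with the degreewise (product) structure of the completions. Also, for fixed homogeneous $x$ the operators $L_x$ and $\widehat Q$ only shift finitely many $\ZZ$-degrees. So I would reduce to $p=yz\in S^2_{\varepsilon}(\gg)$ with $y,z$ homogeneous. The plan is to rewrite $\widehat Q$ on quadratic elements by Example~\ref{ex::S2}, which combined with Lemma~\ref{lemm::bracket_and_br}~b) gives
\[
\widehat Q(p)=Q(p)+\tfrac12\br(\pi_{+}A_{p}),\qquad p\in S^{2}_{\varepsilon}(\gg).
\]
This is the analogue of the identity $\widehat q(\nu)=q(\nu)+\tr_{\varepsilon}(\pi_{+}\upmu(\nu))$ used in the proof of Proposition~\ref{prop::commutator_and_quantization}. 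Since $\pi_+A_p$ is finitary (it has finite rank), this expression makes sense. The ordinary PBW symmetrization $Q$ is $\gg$-equivariant, i.e.\ $L_xQ(p)=Q(L_xp)$, because the adjoint action on $\UE_{\varepsilon}(\gg)$ is by $\varepsilon$-derivations and $Q$ is built from $\varepsilon$-symmetrized products.

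Applying $L_x$ to the displayed identity and subtracting the same identity for $L_xp$, the $Q$-terms cancel and I expect
\[
L_x\widehat Q(p)-\widehat Q(L_xp)=\tfrac12\Bigl(L_x\br(\pi_+A_p)-\br(\pi_+A_{L_xp})\Bigr).
\]
On the right I will use Lemma~\ref{lemm::br_and_str}~b) to write $L_x\br(\pi_+A_p)=\br([\ad_x,\pi_+A_p]_{\End})$. For the second term I will use Lemma~\ref{lemm::bracket_and_br}~a), $A_{L_xp}=[\ad_x,A_p]$. Since $\pi_+$ is even, the difference becomes
\[
\tfrac12\br\bigl(\ad_x\pi_+A_p-\pi_+\ad_xA_p\bigr).
\]
Here $\varepsilon(x,A_p)A_p\ad_x$ cancels between the two terms. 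Inserting $\id=\pi_++\pi_-$ in front of $A_p$ gives
\[
\ad_x\pi_+-\pi_+\ad_x=(\pi_++\pi_-)\ad_x\pi_+-\pi_+\ad_x(\pi_++\pi_-)=\pi_-\ad_x\pi_+-\pi_+\ad_x\pi_-.
\]
This is the claimed operator up to an overall sign, so I will have to check the sign convention carefully. Tracing back through Example~\ref{ex::S2}, the order is $[x,\pi_+y]$ there versus $[\pi_+x,y]$ in Lemma~\ref{lemm::bracket_and_br}~b). Since $[x,\pi_+y]+\varepsilon(x,y)[y,\pi_+x]=-\bigl([\pi_+x,y]+\varepsilon(x,y)[\pi_+y,x]\bigr)$ by $\varepsilon$-skew symmetry, the first step should actually read $\widehat Q(p)=Q(p)-\tfrac12\br(\pi_+A_p)$. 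With this sign the result becomes exactly $\tfrac12\br((\pi_+\ad_x\pi_--\pi_-\ad_x\pi_+)A_p)$.

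The main obstacle is the well-definedness issue: $\ad_x$ and $A_p$ lie only in $\widehat{\End}(\gg)$, so the splitting of $\br$ of a commutator into separate terms must be justified. I would handle this by checking that each operator appearing, namely $\ad_x\pi_+A_p$ and $\pi_+\ad_xA_p$ on the relevant graded pieces, as well as $\pi_\mp\ad_x\pi_\pm A_p$, is finitary for $p=yz$. This holds because $A_{yz}$ has rank at most two and $\pi_\mp\ad_x\pi_\pm$ has finite rank. Lemmas~\ref{lemm::br_and_str} and~\ref{lemm::bracket_and_br} can then be applied to these finitary operators, for instance by restricting to a finite-dimensional nondegenerate graded subspace $W$ as in earlier proofs. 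The remaining sign bookkeeping with $\varepsilon(x,A_p)$ is routine.
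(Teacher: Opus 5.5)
Your proposal is correct and follows essentially the same route as the paper: write $\widehat Q(p)=Q(p)-\tfrac12\br(\pi_+A_p)$ for $p=yz$, using the minus sign you reach after your skew-symmetry check (the same sign the paper uses). Then apply the $\gg$-equivariance of $Q$, $\br$ and $A$ together with $[\ad_x,\pi_+]_{\End}=\pi_-\ad_x\pi_+-\pi_+\ad_x\pi_-$. Your remark that every operator involved is finitary for $p=yz$, because $A_{yz}$ has rank at most two, is a useful justification that the paper leaves implicit.
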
 

\begin{proof}
 It suffices to prove the statement for $p=xy \in S^{2}_{\varepsilon}(\gg)$. By Example~\ref{ex::S2} and Lemma~\ref{lemm::bracket_and_br}, we have \[
\widehat{Q}(p) = Q(p) + \tfrac{1}{2}\left([x, \pi_{+}y] + \varepsilon(x,y)[y, \pi_{+}x]\right) = Q(p) - \tfrac{1}{2}\br(\pi_{+}A_{p})
\]
Using $\gg$-equivariance of $Q$ and $\br$, one has
\[
\begin{aligned}
L_x\widehat Q(p)-\widehat Q(L_xp)
&=\tfrac12\bigl(-L_x\operatorname{br}(\pi_+A_p)+\operatorname{br}(\pi_+A_{L_xp})\bigr)\\
&=\tfrac12\operatorname{br}\bigl(-[L_x,\pi_+A_p]_{\widehat{\End}}+\pi_+[L_x,A_p]_{\widehat{\End}}\bigr)\\
&=-\tfrac12\operatorname{br}([L_x,\pi_+]_{\widehat{\End}}A_p)\\
&=\tfrac12\operatorname{br}\bigl((\pi_+\ad_x\pi_--\pi_-\ad_x\pi_+)A_p\bigr),
\end{aligned}
\]
where we use $[L_x,\pi_+]_{\widehat{\End}}
=
\pi_-\ad_x\pi_+ - \pi_+\ad_x\pi_-$ in the last equality.
\end{proof}

\subsection{Completion of the Weil and Quantum Weil Algebra of \texorpdfstring{$\gg$}{}} \label{subsec::completion_Weil_quantum_Weil_superalgebra}

We introduce the Weil algebra and the quantum Weil algebra, describe their completions, and formulate their relation via quantization. Throughout, $\gg^{\ast}$ is identified with $\gg$ by means of the form $B$.

\subsubsection{Weil algebra of \texorpdfstring{$\gg$}{} and Completion} \label{subsubsec::Weil_algebra} 
 The \emph{Weil algebra} of $\gg$ is the $\Gamma$-graded tensor product
\begin{equation}
W_{\varepsilon}(\gg)\coloneqq S_{\varepsilon}(\gg)\otimes\bigwedge\nolimits_{\varepsilon}(\gg).
\end{equation}
It is a $\ZZ$-graded associative color algebra, with $\ZZ$-grading induced by that of $\gg$. We denote the $(i,\gamma)$-graded component by $W_{\varepsilon}(\gg)_{(i,\gamma)}$. It carries in addition the $\ZZ$-grading
\begin{equation} \label{eq::Z_grading_Weil}
W_{\varepsilon}^{k}(\gg)\coloneqq\bigoplus_{2r+s=k}S_{\varepsilon}^{r}(\gg)\otimes \bigwedge\nolimits_{\varepsilon}^{s}(\gg).
\end{equation}
Thus the generators $x\otimes 1$ have degree $2$, whereas the generators $1\otimes x$ have degree $1$. Declaring $x\otimes 1$ to be even and $1\otimes x$ to be odd endows $W_{\varepsilon}(\gg)$ with a compatible $\ZZ_{2}$-grading, and we consider $W_{\varepsilon}(\gg)$ with respect to this $\ZZ_{2}$-grading as a superalgebra, that is, $W_{\varepsilon}(\gg)=W_{\varepsilon}(\gg)_{\bar{0}}\oplus W_{\varepsilon}(\gg)_{\bar{1}}$ is viewed as a $\ZZ$-graded color superalgebra. We emphasize here the additional compatible $\ZZ$- and $\ZZ_{2}$-gradings, since the former is relevant for the completion and the latter for the constructions below. We denote by $p(\cdot)$ the parity with respect to this $\ZZ_{2}$-grading so that $p(x\otimes 1)=\bar{0}$ and $p(1\otimes x)=\bar{1}$. This color superalgebra is $\varepsilon$-supercommutative, that is,
\begin{equation}
 AB = (-1)^{p(A)p(B)}\varepsilon(A,B)BA,
\end{equation}
 with $A,B\in W_{\varepsilon}(\gg)$ homogeneous; or equivalently, with $\varepsilon'(A,B)=(-1)^{p(A)p(B)}\varepsilon(A,B)$, $\varepsilon'$-commutative. Note that $W_{\varepsilon}(\gg)$ is the free graded $\varepsilon$-supersymmetric algebra freely generated by $x\otimes 1$ and $1\otimes x$ with $x \in \gg$.

On $W_{\varepsilon}(\gg)$ there are two natural operations, called \emph{contraction} and \emph{Lie derivative}. For homogeneous $x\in\gg$, the contraction $\iota_{x}$ is defined on generators by
\begin{equation}
\iota_{x}(1\otimes y)=B(x,y)(1\otimes 1),\qquad \iota_{x}(y\otimes 1)=0.
\end{equation}
Further, $x$ acts on $S_{\varepsilon}(\gg)$ and $\bigwedge_{\varepsilon}(\gg)$ by the Lie derivatives $L_{x}^{S}$ and $L_{x}^{\wedge}$, where the superscripts serve here only to indicate the ambient algebra. The \emph{Lie derivative} on $W_{\varepsilon}(\gg)$ is then defined by
\begin{equation}
L_{x}(y\otimes z)\coloneqq L_{x}^{S}(y)\otimes z+\varepsilon(x,y)\,y\otimes L_{x}^{\wedge}(z)
\end{equation}
for any homogeneous $x\in \gg$ and $y\otimes z\in W_{\varepsilon}(\gg)$. The Lie derivative and contraction are even and odd $\varepsilon'$-derivations, respectively, and both are compatible:

\begin{lemma}
For all homogeneous $x,y\in\gg$, one has
\begin{equation*}
\begin{gathered}
[\iota_{x},\iota_{y}]_{\Der}=0,\qquad [L_{x},\iota_{y}]_{\Der}=\iota_{[x,y]},\qquad [L_{x},L_{y}]_{\Der}=L_{[x,y]}.
\end{gathered}
\end{equation*}
\end{lemma}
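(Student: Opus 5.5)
The plan is to reduce all three identities to checks on generators, using the standard principle that an $\varepsilon'$-commutator of two $\varepsilon'$-derivations is again an $\varepsilon'$-derivation (Example~\ref{ex::color_Lie_algebras}(d), applied with the combined commutation factor $\varepsilon'$). Here $\iota_x$ is an odd $\varepsilon'$-derivation of $\Gamma$-degree $\vert x\vert$, and $L_x$ is an even one of the same $\Gamma$-degree. So $[\iota_x,\iota_y]_{\Der}=\iota_x\iota_y+\varepsilon(x,y)\iota_y\iota_x$, $[L_x,\iota_y]_{\Der}$ and $[L_x,L_y]_{\Der}$ are $\varepsilon'$-derivations of $W_{\varepsilon}(\gg)$, as are $\iota_{[x,y]}$ and $L_{[x,y]}$. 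Since $W_{\varepsilon}(\gg)$ is generated as an algebra by the elements $y\otimes 1$ and $1\otimes y$, two $\varepsilon'$-derivations agreeing on these generators coincide. Before doing this, I would verify once that $\iota_x$ and $L_x$ as defined are indeed $\varepsilon'$-derivations. For $L_x$ this follows from $L^S_x$ and $L^\wedge_x$ being $\varepsilon$-derivations together with the tensor product rule. For $\iota_x$ this follows from freeness of $W_{\varepsilon}(\gg)$ as an $\varepsilon'$-supercommutative algebra, which lets us extend a map on generators uniquely provided it respects the relations.

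Next I would check the generators. For $[\iota_x,\iota_y]_{\Der}$: each $\iota$ sends a generator to a scalar multiple of $1$ or to $0$, and then the second $\iota$ kills the scalar. So both compositions vanish on generators, and the commutator is $0$. For $[L_x,\iota_y]_{\Der}$: on $z\otimes 1$ both sides vanish, because $L_x(z\otimes 1)=[x,z]\otimes 1$ is annihilated by $\iota_y$. On $1\otimes z$ we get $L_x(B(y,z))-\varepsilon(x,y)\iota_y(1\otimes[x,z])=-\varepsilon(x,y)B(y,[x,z])$. This is exactly the computation of Lemma~\ref{lemm::commutator_Lie_derivative_contraction} with $T=\ad_x\in\widehat{\mathfrak{so}}_{\varepsilon}(\gg)$ (Lemma~\ref{lemm::ad_in_completion}), so it equals $B([x,y],z)=\iota_{[x,y]}(1\otimes z)$. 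For $[L_x,L_y]_{\Der}$: on either type of generator the identity reduces to $[x,[y,z]]-\varepsilon(x,y)[y,[x,z]]=[[x,y],z]$, which is the color Jacobi identity.

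Finally, I would observe that everything passes to the completion $\widehat W_{\varepsilon}(\gg)$ componentwise, since all operators preserve or shift the $\ZZ$-degree by a fixed amount; the lemma is stated on $W_{\varepsilon}(\gg)$, however, so this is only a remark. The only step that needs care is the sign bookkeeping: confirming that the $\varepsilon'$-commutator of the two odd operators $\iota_x,\iota_y$ is the $+$ combination above, and that the factor $(-1)^{p(\cdot)p(\cdot)}$ appears correctly when the derivation property is propagated through products. I expect this sign bookkeeping to be the main (albeit routine) obstacle.
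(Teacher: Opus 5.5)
Your proposal is correct and is essentially the paper's argument. The paper also proves the lemma by a direct computation on generators: it writes out only $[L_{x},\iota_{y}]_{\Der}(1\otimes z)=-\varepsilon(x,y)B(y,[x,z])=\iota_{[x,y]}(1\otimes z)$ and says the remaining identities are analogous, whereas you make the implicit reduction explicit (both sides are $\varepsilon'$-derivations of the same parity and $\Gamma$-degree, hence determined on the generators $z\otimes 1$ and $1\otimes z$) and carry out the other generator checks, including the Jacobi identity for $[L_x,L_y]_{\Der}$.
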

\begin{proof}
This is a direct computation. For example, for $x,y,z\in\gg$,
\[
\begin{aligned}
[L_{x},\iota_{y}]_{\Der}(z)&=L_{x}\iota_{y}(z)-\varepsilon(x,y)\iota_{y}L_{x}(z)\\
&=L_{x}(B(y,z)(1\otimes 1))-\varepsilon(x,y)B(y,[x,z])\\
&=-\varepsilon(x,y)B(y,[x,z])\\
&=\varepsilon(x,y)B([y,x],z)\\
&=\varepsilon(x,y)\varepsilon(y,x)\iota_{[x,y]}(z)\\
&=\iota_{[x,y]}(z).
\end{aligned}
\]
The remaining identities are verified similarly.
\end{proof}

Moreover, these operations are compatible with a natural differential on $W_{\varepsilon}(\gg)$, which yields a universal property described in Section~\ref{subsec::CS_element}. 

Similarly, we define the Weil algebras of the $\ZZ$-graded color subalgebras $\gg_{\pm}$. Using $W_{\varepsilon}(\gg)\cong W_{\varepsilon}(\gg_{-})\otimes W_{\varepsilon}(\gg_{+})$, we define the completion $\widehat{W}_{\varepsilon}(\gg)$ as the sum over all $(i,\gamma)\in \ZZ\times\Gamma$ of \begin{equation}
\widehat{W}_{\varepsilon}(\gg)_{(i,\gamma)}
\coloneqq
\prod_{r\ge 0}\;
\bigoplus_{\substack{\alpha,\beta\in\Gamma\\ \alpha+\beta=\gamma}}
W_{\varepsilon}(\gg_-)_{(i-r,\alpha)}
\otimes
W_{\varepsilon}(\gg_+)_{(r,\beta)}.
\end{equation}
The operators $\iota_x$ and $L_x$ extend degreewise to $\widehat{W}_{\varepsilon}(\gg)$, and we denote these extensions by the same symbols. Moreover, since multiplication extends degreewise, we view $\widehat{W}_{\varepsilon}(\gg)$ as a $\ZZ$-graded color superalgebra. 

\subsubsection{Quantum Weil Algebra of \texorpdfstring{$\gg$}{} and Completion} The \emph{quantum Weil algebra}, or \emph{noncommutative Weil algebra}, is defined to be the quantized version of the Weil algebra, which replaces the $\varepsilon$-commutative color algebras $S_{\varepsilon}(\gg)$ and $\bigwedge_{\varepsilon}(\gg)$ by their noncommutative analogues $\UE_{\varepsilon}(\gg)$ and $\Cl_{\varepsilon}(\gg)$, respectively. It was introduced by Alekseev and Meinrenken
\cite{Alekseev_Meinrenken} for Lie algebras.

\begin{definition}
 The \emph{quantum Weil algebra} associated to $\gg$ is the $\Gamma$-graded tensor product
\[
\Weil \coloneqq \UE_{\varepsilon}(\gg) \otimes \Cl_{\varepsilon}(\gg).
\]
\end{definition}

$\Weil$ is a $\ZZ$-graded color algebra with $\ZZ$-grading induced by that of $\gg$. We denote the $(i,\gamma)$-graded component by $\Weil_{(i,\gamma)}$. We denote the generators of $\Weil$ by the elements $x\otimes 1$ and $1 \otimes x$ for $x \in \gg$. By declaring that the generators $1\otimes x$ are odd and the generators $x\otimes 1$ are even, we endow $\Weil$ with an additional $\ZZ_{2}$-grading so that $\Weil = \Weil_{\bar{0}} \oplus \Weil_{\bar{1}}$ is a $\ZZ$-graded color superalgebra. We denote the associated parity by $p(\cdot)$. This color superalgebra is non-$\varepsilon$-supercommutative, and becomes a color Lie superalgebra with bracket 
\begin{equation}\label{eq::bracket_quantum_Weil}
[A,B]_{\WW}\coloneqq AB-\varepsilon'(A,B)\,BA,\qquad \varepsilon'(A,B)\coloneqq(-1)^{p(A)p(B)}\varepsilon(A,B)
\end{equation}
for any homogeneous $A,B \in \Weil$.

On $\Weil$ there are two canonical operations. The \emph{contraction} $\iota_{x}$ is defined on generators by
\begin{equation}
\iota_{x}(1\otimes y)=B(x,y)(1\otimes 1),\qquad \iota_{x}(y\otimes 1)=0,
\end{equation}
for $x,y\in\gg$. The second is the \emph{Lie derivative}. It is the $\Gamma$-grading-preserving combination of the Lie derivatives $L^{U}$ on $\UE_{\varepsilon}(\gg)$ and $L^{C}$ on $\Cl_{\varepsilon}(\gg)$, and is given by
\begin{equation}
L_{x}(y\otimes z)\coloneqq L_{x}^{U}(y)\otimes z+\varepsilon(x,y)y\otimes L_{x}^{C}(z)
\end{equation}
for homogeneous $x\in\gg$, $y\in\UE_{\varepsilon}(\gg)$, and $z\in\Cl_{\varepsilon}(\gg)$. As for $W_{\varepsilon}(\gg)$, the Lie derivative and contraction are compatible:

\begin{lemma}
For all homogeneous $x,y\in\gg$, one has
\begin{equation*}
\begin{gathered}
[\iota_{x},\iota_{y}]_{\Der}=0,\qquad [L_{x},\iota_{y}]_{\Der}=\iota_{[x,y]},\qquad [L_{x},L_{y}]_{\Der}=L_{[x,y]}.
\end{gathered}
\end{equation*}
\end{lemma}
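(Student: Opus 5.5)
The plan is to argue exactly as in the proof of the corresponding lemma for $W_{\varepsilon}(\gg)$. Each of the three brackets is an $\varepsilon'$-commutator of $\varepsilon'$-derivations of $\Weil$, so each is again an $\varepsilon'$-derivation. Both sides of each identity are therefore determined by their values on the generators $1\otimes z$ and $z\otimes 1$, $z\in\gg$. This uses that $\Weil$ is generated as an algebra by these elements, together with Example~\ref{ex::color_Lie_algebras}~(d) applied to $\varepsilon'$. Before anything else, I will record that $\iota_x$ and $L_x$ really are well-defined $\varepsilon'$-derivations of $\Weil$ of parities $\bar1$ and $\bar0$. For $L_x$ this holds because $L_x^U$ and $L_x^C$ preserve the defining ideals $I_U$ and $I_{\Cl}(\gg)$. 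For $\iota_x$ it holds because $\iota_x$ acts as $\id\otimes\iota_x$, and contraction preserves $I_{\Cl}(\gg)$ by the discussion following~\eqref{eq::Clifford_relation}.

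I then check the identities on generators. For $[\iota_x,\iota_y]_{\Der}$, both contractions kill $z\otimes 1$ and scalars. On $1\otimes z$ each term lands in the scalars and is then killed by the second contraction, so the bracket vanishes on all generators.

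For $[L_x,\iota_y]_{\Der}$, both sides vanish on $z\otimes 1$: $\iota_y$ kills $z\otimes 1$, and $L_x(z\otimes 1)=[x,z]\otimes 1$ is again killed. On $1\otimes z$ I repeat the computation displayed in the proof of the Weil-algebra lemma verbatim. It uses only $L_x(1\otimes z)=1\otimes[x,z]$, $L_x(1)=0$, invariance of $B$ and skew-$\varepsilon$-symmetry, and produces $\iota_{[x,y]}(1\otimes z)$. Alternatively, it is Lemma~\ref{lemm::commutator_Lie_derivative_contraction} with $T=\ad_x$, noting $\ad_x^{\ast}=-\ad_x$.

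For $[L_x,L_y]_{\Der}=L_{[x,y]}$, evaluate on $z\otimes 1$ and on $1\otimes z$. In both cases this reduces to the Jacobi identity
\[
[x,[y,z]]-\varepsilon(x,y)[y,[x,z]]=[[x,y],z],
\]
which is axiom c) of the definition of a color Lie algebra.

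The only step requiring care is sign bookkeeping. The derivation brackets must be taken with respect to $\varepsilon'$, which accounts for the parity of $\iota_x$, while the tensor-product Lie derivative carries the factor $\varepsilon(x,y)$. One must check that the $\varepsilon'$-derivation property of $L_x$ on $\UE_{\varepsilon}(\gg)\otimes\Cl_{\varepsilon}(\gg)$ is consistent with the twisted multiplication $(a_1\otimes b_1)(a_2\otimes b_2)=\varepsilon(b_1,a_2)a_1a_2\otimes b_1b_2$. This holds because $L_x$ is even and has $\Gamma$-degree $|x|$, so the factors $\varepsilon(x,\cdot)$ combine multiplicatively by~\eqref{eq::commutation_factor}. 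After that, everything is the routine generator check above.
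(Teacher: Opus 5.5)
Your proposal is correct and follows the paper's approach. The paper also reduces to a direct check on the generators $z\otimes 1$ and $1\otimes z$: it writes out the $[L_x,\iota_y]_{\Der}$ case for $W_{\varepsilon}(\gg)$ and asserts that the quantum Weil version holds in the same way, and your preliminary verification that $L_x$ and $\iota_x$ are well-defined $\varepsilon'$-derivations compatible with the twisted tensor-product multiplication makes explicit a step the paper leaves implicit.
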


Combining quantization $q : \bigwedge_{\varepsilon}(\gg) \to \Cl_{\varepsilon}(\gg)$ and PBW $\varepsilon$-symmetrization $Q: S_{\varepsilon}(\gg) \to \UE_{\varepsilon}(\gg)$, we obtain the following proposition.
\begin{proposition}\label{prop::quantization_Weil}
The map
$
\mathcal{Q} \coloneqq Q \otimes q : W_{\varepsilon}(\gg) \longrightarrow \Weil
$
is an isomorphism in $\cat$ of $\ZZ\times \ZZ_{2}$-graded vector spaces. Moreover, $\mathcal{Q}$ intertwines the contractions and the Lie derivatives:
\[
L_{x} \circ \calQ = \calQ \circ L_{x}, \qquad 
\iota_{x} \circ \calQ = \calQ \circ \iota_{x}, \qquad x \in \gg.
\]
\end{proposition}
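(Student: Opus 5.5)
The statement has two parts: $\calQ=Q\otimes q$ is an isomorphism of graded vector spaces, and it intertwines contractions and Lie derivatives. Both will be reduced to the corresponding facts for the tensor factors.

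\textbf{Isomorphism.} By Theorem~\ref{thm::PBW_supersymmetrization}, $Q\colon S_{\varepsilon}(\gg)\to\UE_{\varepsilon}(\gg)$ is an isomorphism of $\ZZ\times\Gamma$-graded vector spaces. By Theorem~\ref{thm::quantization_map}, $q\colon\bigwedge_{\varepsilon}(\gg)\to\Cl_{\varepsilon}(\gg)$ is likewise an isomorphism. Their $\Gamma$-graded tensor product is therefore a grading-preserving isomorphism $W_{\varepsilon}(\gg)\to\Weil$. Compatibility with the $\ZZ_{2}$-grading follows because $q_{k}$ maps $\bigwedge^{k}_{\varepsilon}(\gg)$ into the span of products of $k$ generators $1\otimes x$. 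Such products have parity $k \bmod 2$, since the Clifford relations are homogeneous modulo $2$ in tensor length. The factor $Q$ lands in the even part.

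\textbf{Contractions.} On $W_{\varepsilon}(\gg)$ the contraction $\iota_{x}$ is $0\otimes\iota_{x}$ up to the sign convention. Concretely, $\iota_{x}(y\otimes z)=\varepsilon(x,y)\,y\otimes\iota_{x}z$, since $\iota_{x}$ kills the generators $y\otimes 1$ and elements of $S_{\varepsilon}(\gg)$ have even parity. The same formula holds on $\Weil$. Hence
\[
\iota_{x}\calQ(y\otimes z)=\varepsilon(x,y)\,Q(y)\otimes\iota_{x}q(z)=\varepsilon(x,y)\,Q(y)\otimes q(\iota_{x}z)=\calQ(\iota_{x}(y\otimes z)),
\]
using $|Q(y)|=|y|$ and Lemma~\ref{lemm::quantization_map_non_completion_and_contraction}.

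\textbf{Lie derivatives.} By the defining formula $L_{x}(y\otimes z)=L^{S}_{x}y\otimes z+\varepsilon(x,y)\,y\otimes L^{\wedge}_{x}z$ and its analogue on $\Weil$, it suffices to prove $L^{U}_{x}\circ Q=Q\circ L^{S}_{x}$ and $L^{C}_{x}\circ q=q\circ L^{\wedge}_{x}$ separately. For $Q$, the key point is that $L^{U}_{x}$ acts on a monomial $x_{\sigma(1)}\cdots x_{\sigma(k)}$ as a derivation, replacing one factor $x_{j}$ by $[x,x_{j}]$ with the sign $\varepsilon(x,\cdot)$ from the preceding factors. Applied termwise to the symmetrization formula, this gives $Q(L^{S}_{x}(x_{1}\cdots x_{k}))$. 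One must check that the factor $p'(\sigma;\dots)$ is unchanged when $x_{j}$ is replaced by $[x,x_{j}]$, up to the compensating $\varepsilon(x,\cdot)$ reordering signs. This follows from bimultiplicativity of $\varepsilon$. The argument for $q$ is identical with the signed factors $p(\sigma;\dots)$; here $L_{x}=L_{\ad_{x}}$ with $\ad_{x}\in\mathfrak{so}_{\varepsilon}(\gg)$, which is finitary on the finitely many components involved. Alternatively, one can check equivariance of $q$ directly: $q$ factors through $\mathrm{T}(\gg)$ via antisymmetrization, which commutes with $L_{T}$ on $\mathrm{T}(\gg)$, and $L_{T}$ preserves both ideals.

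\textbf{Main obstacle.} The only real work is the sign bookkeeping in the equivariance of the color symmetrization maps. I would handle it by showing that the symmetrizers $\frac{1}{k!}\sum_{\sigma}\pi(\sigma)$ and $\frac{1}{k!}\sum_{\sigma}\pi_{\sgn}(\sigma)$ on $\mathrm{T}^{k}(\gg)$ commute with the derivation~\eqref{eq::Lie_derivative}. Since $L_{T}$ is a morphism in $\cat$ tensored appropriately, it commutes with the braiding $\tau$ and hence with each $\pi(\sigma)$. One then descends to the quotients $\UE_{\varepsilon}(\gg)$ and $\Cl_{\varepsilon}(\gg)$, whose defining ideals are $L_{x}$-stable by the Jacobi identity and invariance of $B$, respectively.
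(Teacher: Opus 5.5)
Your proposal is correct and is essentially the argument the paper leaves implicit: you tensor the isomorphisms of Theorems~\ref{thm::PBW_supersymmetrization} and~\ref{thm::quantization_map}, use Lemma~\ref{lemm::quantization_map_non_completion_and_contraction} for contractions, and use $\gg$-equivariance of $Q$ and $q$ for Lie derivatives. You justify that equivariance more explicitly than the paper does, via the (anti)symmetrizers on $\mathrm{T}(\gg)$ commuting with the derivation extension of $\ad_x$.

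One small point: for $\vert x\vert\neq\ng$ the map $\ad_x$ is not a morphism in $\cat$, so you should not invoke naturality of the braiding. Instead, check directly on adjacent transpositions that $\ad_x$ commutes with the braiding; bimultiplicativity of $\varepsilon$ makes this go through.
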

\noindent In what follows, we simply call $\calQ$ the quantization map of $W_{\varepsilon}(\gg)$.

Similarly, we define $\mathcal{W}_{\varepsilon}(\gg_{\pm})$ such that $\Weil \cong \WW_{\varepsilon}(\gg_{-})\otimes \WW_{\varepsilon}(\gg_{+})$. We define the \emph{completed quantum Weil algebra} $\widehat{\mathcal{W}}_{\varepsilon}(\gg)$ as the direct sum over all $(i, \gamma) \in \ZZ\times \Gamma$ of the components
\begin{equation}
\widehat{\mathcal W}_{\varepsilon}(\gg)_{(i,\gamma)}
\coloneqq
\prod_{r\ge 0}\;
\bigoplus_{\substack{\alpha,\beta\in\Gamma\\ \alpha+\beta=\gamma}}
\mathcal W_{\varepsilon}(\gg_-)_{(i-r,\alpha)}\otimes \mathcal W_{\varepsilon}(\gg_+)_{(r,\beta)}.
\end{equation}
The Lie derivative $L_{x}$ and the contraction $\iota_{x}$ extend degreewise to $\widehat{\WW}_{\varepsilon}(\gg)$, and we denote these extensions by the same symbols. Moreover, we can view $\widehat{\WW}_{\varepsilon}(\gg)$ as a color Lie superalgebra with bracket $[\cdot,\cdot]_{\widew}$ extending~\eqref{eq::bracket_quantum_Weil}. 

In what follows, we define for any $x \in \gg$
\begin{equation}
 \gamma^{\WW}(x) \coloneqq x\otimes 1 -\tfrac{1}{2}(1\otimes \widehat{\gamma}(x))\in \Weil, \qquad \widehat{\gamma}(x) := \widehat{\gamma}(\ad_{x}).
\end{equation}
The following lemma will be used later. It is proved by a direct computation, as in \cite{Schmidt_perturbations}. 

\begin{lemma} \label{lemm::commutation_relations_qWeil}
 One has for any $x,y \in \gg$:
 \[ \begin{aligned}\begin{gathered}
 [1\otimes x,1\otimes y]_{\widehat{\WW}} = 2B(x,y)(1\otimes 1), \quad [\gamma^{\WW}(x),1\otimes y]_{\widehat{\WW}} = 1\otimes [x,y]_{\gg},\\ [\gamma^{\WW}(x),\gamma^{\WW}(y)]_{\widehat{\WW}} = \gamma^{\WW}([x,y]_{\gg})+\KP(x,y).
 \end{gathered}
 \end{aligned}
 \]
\end{lemma}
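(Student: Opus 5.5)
The plan is to verify the three identities directly from the definitions, after two preliminary observations about the color superalgebra structure of $\widehat{\WW}_{\varepsilon}(\gg)$. First, since $\widehat{\WW}_{\varepsilon}(\gg)$ is a $\Gamma$-graded tensor product, elements of the form $u\otimes 1$ and $1\otimes c$ satisfy $(u\otimes1)(1\otimes c)=u\otimes c$ and $(1\otimes c)(u\otimes 1)=\varepsilon(c,u)\,u\otimes c$. Second, $x\otimes 1$ is even while $1\otimes y$ is odd, and $\widehat{\gamma}(x)=\widehat q(\uplambda(\ad_x))$ lies in the completed image of $\bigwedge^2$, so $1\otimes\widehat{\gamma}(x)$ is even. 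Hence $\varepsilon'=\varepsilon$ in every bracket involving $x\otimes1$ or $1\otimes\widehat\gamma(x)$, whereas $\varepsilon'(1\otimes x,1\otimes y)=-\varepsilon(x,y)$. All computations take place degreewise, so it suffices to work with homogeneous components, where products are finite sums. The needed operations, namely $\widehat\gamma$ of elements of $\widehat{\mathfrak{so}}_{\varepsilon}(\gg)$, the Clifford commutator, and $\KP$ on $\widehat{\mathfrak{so}}_{\varepsilon}(\gg)$, are already defined on the completion by the preceding results.

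\emph{First identity.} We have $[1\otimes x,1\otimes y]_{\widehat\WW}=1\otimes(xy+\varepsilon(x,y)yx)$, which equals $2B(x,y)(1\otimes 1)$ by the Clifford relation~\eqref{eq::Clifford_relation}.

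\emph{Second identity.} The bracket $[x\otimes 1,1\otimes y]_{\widehat\WW}=x\otimes y-\varepsilon(x,y)\varepsilon(y,x)x\otimes y$ vanishes. We are then left with
\[
-\tfrac12\bigl(1\otimes[\widehat\gamma(\ad_x),y]_{\widehat{\Cl}_\varepsilon}\bigr)=1\otimes L_{\ad_x}y=1\otimes[x,y],
\]
using Proposition~\ref{prop::Lie_derivative_as_commutator} together with $\ad_x\in\widehat{\mathfrak{so}}_\varepsilon(\gg)$, which holds by Lemma~\ref{lemm::ad_in_completion}.

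\emph{Third identity.} We expand bilinearly into four terms.
\begin{itemize}
\item The term $[x\otimes1,y\otimes1]$ equals $[x,y]\otimes 1$ by the defining relation of $\UE_\varepsilon(\gg)$.
\item The two cross terms $[x\otimes 1,1\otimes\widehat\gamma(y)]$ and $[1\otimes\widehat\gamma(x),y\otimes 1]$ vanish by the same tensor-factor commutation used above, with $\varepsilon'=\varepsilon$ since both factors are even.
\item The remaining term is $\tfrac14\bigl(1\otimes[\widehat\gamma(\ad_x),\widehat\gamma(\ad_y)]_{\widehat{\Cl}_\varepsilon}\bigr)$. By Proposition~\ref{prop::commutator_and_quantization}\,b) it equals
\[
-\tfrac12\bigl(1\otimes\widehat\gamma([\ad_x,\ad_y]_{\End})\bigr)+\KP(\ad_x,\ad_y).
\]
\end{itemize}
Since $\ad$ is a homomorphism of color Lie algebras, $[\ad_x,\ad_y]=\ad_{[x,y]}$. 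Together with $\KP(\ad_x,\ad_y)=\KP(x,y)$, as recorded after the definition of $\KP$ on $\widehat{\mathfrak{so}}_\varepsilon(\gg)$, the four terms sum to $\gamma^{\WW}([x,y])+\KP(x,y)$.

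The main obstacle is the bookkeeping rather than any conceptual step. One point is that the normalization of $\KP$ on $\widehat{\mathfrak{so}}_\varepsilon(\gg)$ (without the factor $\tfrac12$) must match the stated identification with $\KP(x,y)$. The other is checking that the factors $-\tfrac12$ and $\tfrac14$ combine with the $-2$ and the $-2\KP$ from Proposition~\ref{prop::commutator_and_quantization}\,b) exactly as claimed. If the constant came out off by a factor of two, I would recheck this normalization first, by comparing with the trace formula obtained in the proof of Proposition~\ref{prop::commutator_and_quantization}\,a) in the loop algebra case of Example~\ref{ex::KP_class}.
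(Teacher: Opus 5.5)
Your proof is correct and is exactly the direct computation the paper asserts without writing out. The first identity is the Clifford relation, the second follows from Proposition~\ref{prop::Lie_derivative_as_commutator} applied to $\ad_x$, and the third follows from Proposition~\ref{prop::commutator_and_quantization}\,b) together with $[\ad_x,\ad_y]_{\End}=\ad_{[x,y]}$; your parity bookkeeping ($x\otimes 1$ and $1\otimes\widehat{\gamma}(x)$ even) is also right.

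Your normalization concern is justified but harmless. The displayed form of $\KP$ on $\widehat{\mathfrak{so}}_{\varepsilon}(\gg)$ should carry the factor $\tfrac12$, which is what both the proof of Proposition~\ref{prop::commutator_and_quantization}\,a) (via Lemma~\ref{lemm::properties_KP}\,c)) and the identity $\KP(\ad_x,\ad_y)=\KP(x,y)$ actually use. With that factor, both results you cite hold as stated and your constant $\KP(x,y)$ is correct. Do not use the value printed in Example~\ref{ex::KP_class} as a cross-check, however: it drops the same $\tfrac12$, and the correct value is $\tfrac{m}{2}\delta_{m+n,0}\str_{\gg}(\ad_x\circ\ad_y)$.
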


Moreover, we obtain a reformulation of the Lie derivative and contraction in $\cWeil$.

\begin{lemma} \label{lemm::contraction_and_Lie_derivative_in_Weil}
For $x,y\in\gg$ one has:
\begin{enumerate}
\item[a)] $\iota_{x}=\tfrac12[1\otimes x,\,\cdot\,]_{\widehat{\WW}}$.
\item[b)] $L_{x}=[\gamma^{\WW}(x),\cdot\,]_{\widehat{\WW}}$.
\end{enumerate}
\end{lemma}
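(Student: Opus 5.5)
The strategy is to compare both sides as $\varepsilon'$-derivations of $\widehat{\WW}_{\varepsilon}(\gg)$ and then reduce to the generators $x\otimes 1$ and $1\otimes y$.

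First, for homogeneous $A\in\widehat{\WW}_{\varepsilon}(\gg)$, the bracket $[A,\cdot]_{\widehat{\WW}}$ is an $\varepsilon'$-derivation of parity $p(A)$. This is the same Leibniz computation as in \eqref{eq::Leibniz_rule_commuatator_bracket}, now with the combined factor $\varepsilon'$. Hence $\tfrac12[1\otimes x,\cdot]_{\widehat{\WW}}$ is an odd $\varepsilon'$-derivation of $\Gamma$-degree $\vert x\vert$, and $[\gamma^{\WW}(x),\cdot]_{\widehat{\WW}}$ is an even one. Note that $\gamma^{\WW}(x)$ lies in the completion: it equals $x\otimes1$ plus the normally ordered element $\widehat{\gamma}(\ad_x)$, which is a well-defined element of $\widehat{\Cl}_{\varepsilon}(\gg)$. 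By definition, $\iota_x$ is an odd and $L_x$ an even $\varepsilon'$-derivation with the same degrees. Two such derivations agree on $\WW_{\varepsilon}(\gg)$ once they agree on the generators $y\otimes 1$ and $1\otimes y$, because both sides are determined by the Leibniz rule. Both sides are also defined componentwise on the completion and are continuous for the product topology in each bidegree. So agreement on $\WW_{\varepsilon}(\gg)$ gives agreement on $\widehat{\WW}_{\varepsilon}(\gg)$. The small point to check is that multiplication by the infinite sum $\widehat{\gamma}(x)$ respects the componentwise structure; this holds because in each $\ZZ$-degree only finitely many terms contribute after normal ordering.

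For a), on $1\otimes y$ the Clifford relation gives $[1\otimes x,1\otimes y]_{\widehat{\WW}}=2B(x,y)$, which is the first identity of Lemma~\ref{lemm::commutation_relations_qWeil}. So $\tfrac12[1\otimes x,1\otimes y]=B(x,y)=\iota_x(1\otimes y)$. On $y\otimes 1$, the two tensor factors $\varepsilon$-commute in the graded tensor product. The signs cancel: $(1\otimes x)(y\otimes 1)=\varepsilon(x,y)\,y\otimes x$ and $\varepsilon'(1\otimes x,y\otimes 1)=\varepsilon(x,y)$ since $p(y\otimes 1)=\bar0$. So the bracket vanishes, matching $\iota_x(y\otimes1)=0$.

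For b), on $1\otimes y$ Lemma~\ref{lemm::commutation_relations_qWeil} gives $[\gamma^{\WW}(x),1\otimes y]=1\otimes[x,y]=L_x(1\otimes y)$. Alternatively, this follows directly from Proposition~\ref{prop::Lie_derivative_as_commutator}, since $-\tfrac12[\widehat{\gamma}(\ad_x),y]_{\widehat{\Cl}_{\varepsilon}}=L_{\ad_x}y=[x,y]$. On $y\otimes1$, the Clifford part $1\otimes\widehat{\gamma}(x)$ is even and $\varepsilon$-commutes with $y\otimes 1$, so only $[x\otimes1,y\otimes1]=[x,y]\otimes1=L_x(y\otimes 1)$ survives, using the defining relation of $\UE_{\varepsilon}(\gg)$.

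The main obstacle is the completion step. We must justify that the bracket with the infinite element $\widehat{\gamma}(x)$ is still a derivation agreeing with the degreewise extension of $L_x$. I would handle this by restricting to a fixed bidegree and to a finite-dimensional nondegenerate $W\subset\gg$ containing the relevant components, as in earlier proofs. There $\widehat{\gamma}(x)$ differs from $q(\uplambda(\ad_x|_W))$ only by a scalar, and scalars do not affect the bracket.
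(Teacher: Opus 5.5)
Your proposal is correct and follows the paper's route: both sides are $\varepsilon'$-derivations of the same parity and $\Gamma$-degree, so it suffices to compare them on generators, where Lemma~\ref{lemm::commutation_relations_qWeil} and Proposition~\ref{prop::Lie_derivative_as_commutator} do the work. You are in fact more thorough than the paper, which checks only $1\otimes y$ in b) and leaves the generator $y\otimes 1$ and the passage to the completion implicit.

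One caveat concerns your closing truncation remark, which is inaccurate. The operator $\ad_x$ is not finitary, and for $\deg x\neq 0$ it generally preserves no finite-dimensional nondegenerate $W$ containing prescribed components. Hence $\widehat{\gamma}(x)$ differs from any finite truncation by an infinite quadratic tail, not by a scalar; the scalar discrepancy between $\widehat q$ and $q$ holds only for finitary $T$.

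The remark is also unnecessary, and you can simply drop it. Proposition~\ref{prop::Lie_derivative_as_commutator} is already stated for $T=\ad_x\in\widehat{\mathfrak{so}}_{\varepsilon}(\gg)$. The Leibniz rule for $[\gamma^{\WW}(x),\cdot\,]_{\widehat{\WW}}$ follows directly from associativity of $\widehat{\WW}_{\varepsilon}(\gg)$.
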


\begin{proof}
a) This is immediate from the definition of $\iota_x$ and the commutator $[1\otimes x,1\otimes y]_{\widehat{\WW}}=2B(x,y)(1\otimes 1)$.
b) It suffices to check the identity on the generators of componentwise generators. For $y\in\gg$,
\[
\begin{split}
L_x(1\otimes y)
=1\otimes L^{C}_x y
=1\otimes\Bigl(-\tfrac12[\widehat{\gamma}(x),y]_{\widehat{\Cl}_{\varepsilon}}\Bigr)
=[\gamma^{\WW}(x),1\otimes y]_{\widehat{\WW}},
\end{split}
\]
using $[x\otimes 1,1\otimes y]_{\widehat{\WW}}=0$ and Proposition~\ref{prop::Lie_derivative_as_commutator}. Since the generators generate componentwise $\cWeil$, this
implies $L_x=[\gamma^{\WW}(x),\cdot]_{\widehat{\WW}}$.
\end{proof}

Finally, we introduce the \emph{normal-ordered quantization}, namely the map
$
\widehat{\mathcal Q}:\widehat{W}_{\varepsilon}(\gg)\longrightarrow \widehat{\mathcal W}_{\varepsilon}(\gg),
$
which, on each homogeneous component $(i,\gamma)\in \ZZ\times \Gamma$, is defined by taking the direct product over $r\geq 0$ and the direct sum over all $\alpha,\beta\in \Gamma$ with $\alpha+\beta=\gamma$ of the maps
\begin{equation}
\mathcal Q\otimes \mathcal Q:
W_{\varepsilon}(\gg_{-})_{(i-r,\alpha)}\otimes W_{\varepsilon}(\gg_{+})_{(r,\beta)}
\longrightarrow
\mathcal W_{\varepsilon}(\gg_{-})_{(i-r,\alpha)}\otimes \mathcal W_{\varepsilon}(\gg_{+})_{(r,\beta)}.
\end{equation}
Since $\mathcal Q$ is an isomorphism of $\ZZ\times \Gamma$-graded super vector spaces on each factor, $\widehat{\mathcal Q}$ is an isomorphism of $\ZZ\times \Gamma$ super vector spaces.

\section{Cubic Dirac Operators for Infinite-Dimensional Quadratic Color Lie Algebras}
This section extends the cubic Dirac operator $\Dirac$ to quadratic (possibly infinite-dimensional) color Lie algebras with trivial Kac--Peterson class and studies the resulting action. As in the finite-dimensional quadratic Lie algebra case, the square of $\Dirac$ is expressed as a sum of normally ordered Casimir operators and a constant. Along the way, we extend the Chern--Weil map to general quadratic $\gg$-differential algebras and identify the element whose quantization yields the cubic Dirac operator with the Chern--Simons element in the completed Weil algebra associated with the quadratic polynomial defined by $B$ and the universal connection. We begin by recalling the necessary background.

\subsection{Normal-ordered Casimir Element} \label{subsec::normal_ordered_Casimir} In analogy with the finite-dimensional case, we define a normal-ordered Casimir element for general infinite-dimensional $\ZZ$-graded color Lie algebras. In general, this element need not be central.

If $\gg$ is finite-dimensional, let $\{e_{a}\}$ be a homogeneous basis with $B$-dual basis $\{e^{a}\}$, and set
$
p\coloneqq\sum_{a}\varepsilon(e_{a})\,e_{a}e^{a}\in S^{2}_{\varepsilon}(\gg).
$
Then $p$ is $\gg$-invariant, and the Casimir element is $Q(p)\in\UE_{\varepsilon}(\gg)$ (\emph{cf.}~\cite{generalized_Clifford}). We now extend this construction to the infinite-dimensional $\ZZ$-graded color Lie algebras.

We fix a homogeneous basis $(e_{a})$ of $\gg$ with $B$-dual basis $(e^{a})$, and define the analogue 
\begin{equation}
p \coloneqq \sum_{a}\varepsilon(e_{a})e_{a}e^{a} = \sum_{a}e^{a}e_{a} \in \widehat{S}_{\varepsilon}^{2}(\gg).
\end{equation}

\begin{lemma}\label{lemm::form_Ap} The following assertions hold: 
\begin{enumerate}
 \item[a)] The element $p \in \widehat{S}_{\varepsilon}^{2}(\gg)$ is $\gg$-invariant.
 \item[b)] One has $A_{p} = 2 \operatorname{id}_{\gg}$.
\end{enumerate}
\end{lemma}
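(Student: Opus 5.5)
The plan is to reduce both assertions to identities for the canonical tensor $\sum_a \varepsilon(e_a)e_a\otimes e^a=\sum_a e^a\otimes e_a$, working degreewise. Since $B$ pairs $\gg_i$ only with $\gg_{-i}$, I will choose the basis $(e_a)$ compatibly with the $\ZZ\times\Gamma$-grading. Then $p$ decomposes as a sum over $i\in\ZZ$ of the finite pieces $p_i\coloneqq\sum_{e_a\in\gg_i}\varepsilon(e_a)e_ae^a$, and each homogeneous component of $p$ in $\widehat S^2_\varepsilon(\gg)$ collects finitely many such terms in each product slot. Hence it suffices to check every identity degreewise, on finite partial sums.

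For a), the Lie derivative $L_X$ with $X\in\gg_j$ homogeneous is an $\varepsilon$-derivation of $S_\varepsilon(\gg)$. It therefore acts on the image of $\sum_a\varepsilon(e_a)e_a\otimes e^a$ under the multiplication map $\gg\otimes\gg\to S^2_\varepsilon(\gg)$ through the diagonal action $L_X$ on $\gg\otimes\gg$. By Lemma~\ref{lemm::basis_description}~c) the canonical tensor is $\ad$-invariant, that is,
\[
\sum_a\varepsilon(e_a)\bigl([X,e_a]\otimes e^a+\varepsilon(X,e_a)e_a\otimes[X,e^a]\bigr)=0 .
\]
In the infinite-dimensional setting this must be read componentwise. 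Fix the target bidegree: $[X,e_a]\otimes e^a$ lands in $\gg_{i+j}\otimes\gg_{-i}$ and $e_a\otimes[X,e^a]$ in $\gg_i\otimes\gg_{j-i}$. So for each fixed $(r,s)$-component only finitely many $a$ contribute. The invariance identity in that component follows from the finite-dimensional computation: one expands $[X,e_a]=\sum_b B([X,e_a],e^b)e_b$ and uses invariance of $B$. Applying multiplication into $\widehat S^2_\varepsilon(\gg)$ then gives $L_Xp=0$ in every homogeneous component.

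For b), I will use the definition \eqref{eq::definition_A}. Each summand gives
\[
A_{e_ae^a}(z)=\varepsilon(e^a)B(z,e^a)e_a+\varepsilon(e_a)\varepsilon(e_a,e^a)B(z,e_a)e^a .
\]
Multiplying by $\varepsilon(e_a)$ and using $\varepsilon(e_a)=\varepsilon(e^a)=\varepsilon(e_a,e^a)$ with $\varepsilon(e_a)^2=1$, this becomes
\[
\varepsilon(e_a)A_{e_ae^a}(z)=B(z,e^a)e_a+\varepsilon(e_a)B(z,e_a)e^a .
\]
For fixed homogeneous $z$, only finitely many $a$ give nonzero terms. Summing over $a$ and using Lemma~\ref{lemm::basis_description}~a) turns the two sums into $z$ and $z$: the first directly, and the second after rewriting via $\varepsilon$-symmetry of $B$. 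Hence $A_p=2\operatorname{id}_\gg$, as an element of $\widehat{\End}(\gg)_0$.

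The main obstacle is bookkeeping rather than ideas. It consists of tracking the $\varepsilon$-factors, in particular the identity $\varepsilon(e_a)B(z,e_a)=\varepsilon(e_a,z)B(e_a,z)$ together with the degree constraint $|z|=-|e_a|$. It also requires justifying that the degreewise extension of $A$ and $L_X$ to the completion commutes with these infinite sums. Both are handled by the componentwise finiteness noted above.
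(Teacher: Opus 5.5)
Your proof is correct and follows the paper's argument: a) is the $\ad$-invariance of the canonical tensor from Lemma~\ref{lemm::basis_description}~c), and b) is direct substitution into \eqref{eq::definition_A} followed by Lemma~\ref{lemm::basis_description}~a). You use the representative $\sum_a\varepsilon(e_a)e_ae^a$ instead of $\sum_a e^ae_a$, which is immaterial, and you add a degreewise-finiteness justification that the paper leaves implicit. One small slip: the identity you flag at the end should read $\varepsilon(e_a)B(z,e_a)=B(e_a,z)$ for $\vert z\vert=-\vert e_a\vert$; your version with $\varepsilon(e_a,z)$ is off by a factor $\varepsilon(e_a)$. This does not affect the argument, since the second form in Lemma~\ref{lemm::basis_description}~a) already gives $\sum_a\varepsilon(e_a)B(z,e_a)e^a=z$ directly.
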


\begin{proof} a) follows from Lemma~\ref{lemm::basis_description}. We prove~b). Recall that $A_{xy}(z)=\varepsilon(y)B(z,y)x+\varepsilon(x)\varepsilon(x,y)B(z,x)y$ (\emph{cf.}~\eqref{eq::definition_A}). Substituting $p=\sum_{a}e^{a}e_{a}$ and using Lemma~\ref{lemm::basis_description} gives for the associated completion map 
\[
\begin{aligned}
A_{p}(z)
&=
\sum_{a}\Bigl(\varepsilon(e_{a})B(z,e_{a})e^{a}
+\varepsilon(e^{a},e_{a})\varepsilon(e^{a})B(z,e^{a})e_{a}\Bigr)\\
&=
\sum_{a}\Bigl(\varepsilon(e_{a})B(z,e_{a})e^{a}+B(z,e^{a})e_{a}\Bigr)\\
&=
z+z
=
2z,
\end{aligned}
\]
where we used $\varepsilon(e^{a},e_{a})=\varepsilon(e^{a})$ and $\varepsilon(e^{a})^{2}=1$. Hence $A_{p}=2\operatorname{id}_{\gg}$.
\end{proof}

This motivates the following definition.

\begin{definition}
 The \emph{normal-ordered Casimir element} is
\begin{equation*}
 \Omega_{\gg}' \coloneqq \widehat{Q}(p) \in \widehat{\UE}_{\varepsilon}(\gg).
\end{equation*}
\end{definition}

However, $\Omega_{\gg}'$ need not lie in the center of $\widehat{\UE}_{\varepsilon}(\gg)$; its failure to be central is measured by $\Psi_{\operatorname{KP}}\in\widehat{\mathfrak{so}}_{\varepsilon}(\gg)$, the unique element corresponding to $\KP\in\widehat{\bigwedge}_{\varepsilon}^{2}(\gg^{\ast})$ (\emph{cf.}~\eqref{eq::relations_psi_Psi}).

\begin{theorem} \label{thm::Lie_derivative_normal_ordered_Casimir}
 For any $x \in \gg$, we have 
 \[
 L_{x}\Omega_{\gg}' = 2\Psi_{\operatorname{KP}}(x).
 \]
\end{theorem}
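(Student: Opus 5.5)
We combine Proposition~\ref{prop::Lie_derivative_and_PBW_quantization} with Lemma~\ref{lemm::form_Ap}. Since $p$ is $\gg$-invariant by Lemma~\ref{lemm::form_Ap}~a), we have $L_x p=0$, and hence $\widehat Q(L_x p)=0$. Proposition~\ref{prop::Lie_derivative_and_PBW_quantization} then gives
\[
L_x\Omega'_{\gg}=\tfrac12\br\bigl((\pi_+\ad_x\pi_--\pi_-\ad_x\pi_+)A_p\bigr).
\]
By Lemma~\ref{lemm::form_Ap}~b), $A_p=2\id_{\gg}$, so this becomes
\[
L_x\Omega'_{\gg}=\br(\pi_+\ad_x\pi_-)-\br(\pi_-\ad_x\pi_+).
\]
The operator $\pi_-\ad_x\pi_+$ has finite rank, so its braiding is a genuine element of $\gg$. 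We will have to check that $\br(\pi_+\ad_x\pi_-)$ also makes sense, or else rewrite it. The natural way is to pair with an arbitrary $y$ via Lemma~\ref{lemm::br_and_str}~a). This reduces everything to $\varepsilon$-traces of finite-rank operators, and then nondegeneracy of $B$ identifies the element.

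Concretely, we pair both sides with a homogeneous $y\in\gg$ and use $B(\br(T),y)=\etr(T\ad_y)$:
\[
B(L_x\Omega'_{\gg},y)=\etr(\pi_+\ad_x\pi_-\ad_y)-\etr(\pi_-\ad_x\pi_+\ad_y).
\]
Only the components with $\deg x+\deg y=0$ contribute, and in that case each composite is finitary on the relevant finite range of degrees. Inserting projectors via $\pi_\pm^2=\pi_\pm$ and using $\varepsilon$-cyclicity, we get
\[
\etr(\pi_+\ad_x\pi_-\ad_y)=\etr(\ad_x\pi_-\ad_y\pi_+),\qquad
\etr(\pi_-\ad_x\pi_+\ad_y)=\varepsilon(x,y)\etr(\ad_y\pi_-\ad_x\pi_+).
\]
The second identity uses that $\pi_-\ad_x\pi_+\ad_y$ has trace equal to that of $\pi_-\ad_x\pi_+\ad_y\pi_-$, and $\varepsilon$-cyclicity moves $\ad_y\pi_-$ to the front. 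Combining, the difference is $\etr(\ad_x\pi_-\ad_y\pi_+)-\varepsilon(x,y)\etr(\ad_y\pi_-\ad_x\pi_+)=2\KP(x,y)=2B(\Psi_{\KP}(x),y)$. Nondegeneracy of $B$ then yields $L_x\Omega'_{\gg}=2\Psi_{\KP}(x)$.

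The main obstacle is justifying these trace manipulations in the infinite-dimensional setting. The operator $\pi_+\ad_x\pi_-\ad_y$ is not finite rank in general. For $\deg x=-\deg y=m$, however, $\pi_+\ad_x\pi_-$ is nonzero only on the finitely many degrees $-m<k\le 0$, so after composing it is supported on a finite-dimensional subspace and its trace is defined. We would handle this by restricting to the finite-dimensional nondegenerate subspace $W$ spanned by the relevant $\gg_k$, as in the proof of Lemma~\ref{lemm::form_epsilon_orthogonal}, and applying $\varepsilon$-cyclicity there. One must also track the sign conventions between $\varepsilon(x,T)$ in Lemma~\ref{lemm::br_and_str} and the definition of $\KP$. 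Finally, the identity should first be checked for $p$ truncated to finitely many degrees, and then extended degreewise to the completion.
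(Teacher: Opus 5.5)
Your proposal is correct and matches the paper's proof: apply Proposition~\ref{prop::Lie_derivative_and_PBW_quantization} with $L_xp=0$ and $A_p=2\id_{\gg}$ from Lemma~\ref{lemm::form_Ap}, pair with $y$ via Lemma~\ref{lemm::br_and_str}~a), and use $\varepsilon$-cyclicity to recognise $2\KP(x,y)=2B(\Psi_{\operatorname{KP}}(x),y)$. Your worry about $\br(\pi_+\ad_x\pi_-)$ is unnecessary, since $\pi_+\ad_x\pi_-$ has finite rank for the same reason as $\pi_-\ad_x\pi_+$, so the trace manipulations reduce to finite-dimensional linear algebra as you indicate.
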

\begin{proof}
 It suffices to prove that $B(L_{x}\Omega_{\gg}',y) = B(2\Psi_{\operatorname{KP}}(x),y)$ for all $x,y \in \gg$. We have by Proposition~\ref{prop::Lie_derivative_and_PBW_quantization}
 \[
 B(L_{x}\Omega_{\gg}',y) = B(\widehat{Q}(L_{x}(p))+\tfrac{1}{2}\br((\pi_{+}\ad_{x}\pi_{-} - \pi_{-}\ad_{x}\pi_{+})A_{p},y).
 \]
 However, $p$ is $\gg$-invariant forcing $L_{x}(p) =0$ and $A_{p} = 2\operatorname{id}_{\gg}$ by Lemma~\ref{lemm::form_Ap}, such that
 \[ 
B(L_{x}\Omega_{\gg}',y) = B(\br(\pi_{+}\ad_{x}\pi_{-}-\pi_{-}\ad_{x}\pi_{+}),y). 
 \]
 On the other hand, invoking Lemma~\ref{lemm::br_and_str} and the $\varepsilon$-cyclicity of $\etr$, we have
 \[
\begin{aligned}B(\br(\pi_{+}\ad_{x}\pi_{-} - \pi_{-}\ad_{x}\pi_{+}),y) &= \varepsilon(x,y)(\etr(\ad_{y}\pi_{+}\ad_{x}\pi_{-})-\etr(\ad_{y}\pi_{-}\ad_{x}\pi_{+})) \\ &= \etr(\ad_{x}\pi_{-}\ad_{y}\pi_{+})-\varepsilon(x,y)\etr(\ad_{y}\pi_{-}\ad_{x}\pi_{+}) \\ &= 2\KP(x,y) \\ &= B(2\Psi_{\operatorname{KP}}(x),y),
\end{aligned}
 \]
 where we used the definition of the Kac–Peterson 2-cocycle and~\eqref{eq::relations_psi_Psi}. 
\end{proof}

The theorem indicates that the normal-ordered Casimir element $\Omega_{\gg}'$ admits a correction to a central element in $\widehat{\UE}_{\varepsilon}(\gg)$ precisely when the Kac–Peterson class vanishes, that is, $\KP = \operatorname{d}\!\rho$ for some $\rho \in (\gg^{\ast})_{(0,\ng)}$. Recall the definition of the musical isomorphism 
$\sharp \colon \gg^{\ast} \xrightarrow{\ \sim\ } \gg$ 
introduced in~\eqref{eq::musical_isomorphisms}, and denote by $\rho^{\sharp}$ the image of any $\rho \in \gg^{\ast}$ under this map.

\begin{corollary} \label{cor::Casimir_and_KP_class}
 For $\rho \in (\gg^{\ast})_{(0,\ng)}$ the following two assertions are equivalent:
 \begin{enumerate}
 \item[a)] $\Omega_{\gg} \coloneqq \Omega_{\gg}' + 2\rho^{\sharp}$ lies in the center of $\widehat{\UE}_{\varepsilon}(\gg)$.
 \item[b)] $\KP = \operatorname{d}\!\rho$.
 \end{enumerate}
\end{corollary}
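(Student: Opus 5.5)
The plan is to compute the Lie derivative of $\Omega_{\gg}=\Omega_{\gg}'+2\rho^{\sharp}$ and then compare it with Theorem~\ref{thm::Lie_derivative_normal_ordered_Casimir}. First, centrality in $\widehat{\UE}_{\varepsilon}(\gg)$ needs to be rephrased as the vanishing of $L_x\Omega_{\gg}$ for every homogeneous $x\in\gg$. Here $L_x$ on $\widehat{\UE}_{\varepsilon}(\gg)$ is the extension of $L_x^{U}(u)=xu-\varepsilon(x,u)ux$, i.e.\ the $\varepsilon$-commutator with $x$. Since $\gg$ generates $\UE_{\varepsilon}(\gg)$ and multiplication on the completion is defined componentwise, an element commutes with everything in $\widehat{\UE}_{\varepsilon}(\gg)$ if and only if its $\varepsilon$-commutator with every generator $x$ vanishes. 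One should also note that $\Omega_{\gg}$ has degree $(0,\ng)$, because $\rho\in(\gg^{\ast})_{(0,\ng)}$ gives $\rho^{\sharp}\in\gg_{(0,\ng)}$. Consequently $\varepsilon(x,\Omega_{\gg})=1$, and the $\varepsilon$-commutator is just the ordinary commutator.

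Next, Theorem~\ref{thm::Lie_derivative_normal_ordered_Casimir} gives $L_x\Omega_{\gg}'=2\Psi_{\operatorname{KP}}(x)$. For the correction term, $L_x\rho^{\sharp}=[x,\rho^{\sharp}]=-\varepsilon(x,\rho^{\sharp})[\rho^{\sharp},x]=-[\rho^{\sharp},x]$, using $|\rho^{\sharp}|=\ng$. Combining the two,
\[
L_x\Omega_{\gg}=2\bigl(\Psi_{\operatorname{KP}}(x)-[\rho^{\sharp},x]\bigr),\qquad x\in\gg .
\]
So $\Omega_{\gg}$ is central if and only if $\Psi_{\operatorname{KP}}=[\rho^{\sharp},\cdot\,]$. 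By Lemma~\ref{lemm::properties_Psi}\,b), this is equivalent to $\KP=\operatorname{d}\!\rho$. That settles both directions at once.

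The step I expect to need the most care is the first one: justifying that centrality in the completion is detected by commutators with generators. Products in $\widehat{\UE}_{\varepsilon}(\gg)$ are infinite sums, so I would argue degreewise. For fixed $(i,\gamma)$, each component of a product involves only finitely many terms of $\Omega_{\gg}$. The $\varepsilon$-commutator with $\Omega_{\gg}$ is a derivation that vanishes on generators, so it vanishes on $\UE_{\varepsilon}(\gg)$, and it extends continuously to the completion. A secondary point is the identification of $\gg\subset\widehat{\UE}_{\varepsilon}(\gg)$ with the image used in Theorem~\ref{thm::Lie_derivative_normal_ordered_Casimir}, where $\Psi_{\operatorname{KP}}(x)\in\gg$ is viewed inside $\widehat{\UE}_{\varepsilon}(\gg)$. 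This is immediate, since $\widehat Q$ restricts to the identity on $\gg$.
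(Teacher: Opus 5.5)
Your proposal is correct and follows the paper's proof: compute $L_x\Omega_{\gg}=2\Psi_{\operatorname{KP}}(x)+2[x,\rho^{\sharp}]$ from Theorem~\ref{thm::Lie_derivative_normal_ordered_Casimir}, using that $\rho^{\sharp}$ has degree $(0,\ng)$, and conclude with Lemma~\ref{lemm::properties_Psi}\,b). Your extra justification, that centrality in $\widehat{\UE}_{\varepsilon}(\gg)$ is detected by $\varepsilon$-commutators with the generators $x\in\gg$, makes explicit a step the paper leaves implicit.
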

\begin{proof}
By Theorem~\ref{thm::Lie_derivative_normal_ordered_Casimir}, one has
\[
L_{x}\Omega_{\gg}
= L_{x}\Omega_{\gg}' + 2L_{x}\rho^{\sharp}
= 2\Psi_{\operatorname{KP}}(x) + 2[x,\rho^{\sharp}].
\]
According to Lemma~\ref{lemm::properties_Psi}, the condition 
$\KP = \operatorname{d}\!\rho$ is equivalent to 
$\Psi_{\operatorname{KP}}(x) = [\rho^{\sharp}, x] = -[x, \rho^{\sharp}] = -L_{x}\rho^{\sharp}$ 
for every $x \in \gg$, since $\rho$ is even, and consequently $\rho^{\sharp}$ is even. The statement follows.
\end{proof}

\subsection{Structure Constants Tensor} \label{subsec::structure_constants_tensor}
The structure constants tensor $\phi$ is the tensorial incarnation of the Lie bracket after identification of $\gg$ with $\gg^{\ast}$ via $B$. It therefore records the full color Lie algebra structure of $\gg$ as a canonical cubic element of $\widehat{\bigwedge}^{3}_{\varepsilon}(\gg)$.

Concretely, the \emph{structure constants tensor} is the unique cubic element $\phi \in \widehat{\bigwedge}^{3}_{\varepsilon}(\gg)_{(0,\ng)}$ such that 
\begin{equation}
 \langle\phi,x\wedge y\wedge z\rangle_{\wedge}=-\tfrac{1}{2}B([x,y],z)
\end{equation}
for all homogeneous $x,y,z\in \gg$. Its existence follows from the $\varepsilon$-skew-symmetry of the bracket together with the invariance of $B$, which imply that $(x,y,z)\mapsto -\tfrac12 B([x,y],z)$ is an $\varepsilon$-alternating trilinear form. It packages the structure constants $f_{abc}\coloneqq B([e_{a},e_{b}],e_{c})$ into a single canonical cubic tensor.

\begin{lemma}\label{lemm::properties_phi}
For all $x,y\in\gg$, one has:
\begin{enumerate}
\item[a)] $2\iota_{x}\phi=-\uplambda(\ad_{x})$.
\item[b)] $\iota_{x}\iota_{y}\phi=\tfrac12B([x,y],\cdot)$. In particular, under the identification $\gg\cong\gg^{\ast}$, one has $\iota_{x}\iota_{y}\phi=\tfrac12[x,y]$.
\item[c)] In a homogeneous basis,
\[
\phi=-\tfrac{1}{12}\sum_{a,b,c}\varepsilon(e_{a},e_{b})\varepsilon(e_{c})f_{abc}\,e^{a}\wedge e^{b}\wedge e^{c}.
\]
\item[d)] $\phi$ is $\gg$-invariant, that is, $L_{x}\phi=0$ for all $x\in\gg$.
\end{enumerate}
\end{lemma}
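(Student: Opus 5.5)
The plan is to prove b) and a) directly from the defining property $\langle\phi,x\wedge y\wedge z\rangle_{\wedge}=-\tfrac12B([x,y],z)$ and the adjointness of contraction in Lemma~\ref{lemm::action_iota_inner_product_wedge}. Then c) follows by pairing a basis expression against basis triples, and d) by equivariance of the bracket. Every statement involves only finitely many homogeneous components once it is paired against a finite element. So, as in the earlier proofs of this section, each identity can be checked componentwise on a finite-dimensional nondegenerate $\ZZ\times\Gamma$-graded subspace $W\subset\gg$.

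For b), I would compute $B(\iota_x\iota_y\phi,z)$ by repeatedly applying $\langle\iota_x\omega,\eta\rangle_\wedge=\varepsilon(x,\omega)\langle\omega,x\wedge\eta\rangle_\wedge$. This gives $\langle\iota_x\iota_y\phi,z\rangle_\wedge=\varepsilon(\cdot)\langle\phi,y\wedge x\wedge z\rangle_\wedge$, with a color sign that I then have to track. Since $\phi$ has degree $(0,\ng)$, the factors $\varepsilon(x,\iota_y\phi)$ and $\varepsilon(y,\phi)$ simplify. Using $\varepsilon$-skew-symmetry of the bracket to swap $y\wedge x$ back to $x\wedge y$ should produce exactly $\tfrac12B([x,y],z)$. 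On degree one, $\langle\cdot,\cdot\rangle_\wedge$ is just $B$, which gives the identification with $\tfrac12[x,y]$.

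For a), I would contract once more. By Lemma~\ref{lemm::identification_lambda_omega}, $\uplambda(\ad_x)^\flat=\omega_{\ad_x}$, so $\iota_y\iota_z$ of $\uplambda(\ad_x)$ is essentially $B([x,y],z)$ with a sign, and $\iota_y\iota_z(2\iota_x\phi)$ is computed from b). Since an element of $\widehat{\bigwedge}^2_\varepsilon$ is determined by its double contractions, comparing the two gives $2\iota_x\phi=-\uplambda(\ad_x)$. For c), I would pair the proposed basis sum against $e_d\wedge e_e\wedge e_f$, expand using the permutation sum in $\langle\cdot,\cdot\rangle_{\wedge^3}$, and use the total $\varepsilon$-antisymmetry of $f_{abc}$ so that all six permutations contribute equally. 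This yields the factor $6\cdot\tfrac{1}{12}=\tfrac12$. Alternatively, I would check c) against b) by contracting twice with basis vectors.

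For d), the approach is to show $\iota_y\iota_z L_x\phi=0$ for all $y,z$. Using $[L_x,\iota_y]=\iota_{[x,y]}$ (Lemma~\ref{lemm::commutator_Lie_derivative_contraction} with $\ad_x\in\widehat{\mathfrak{so}}_\varepsilon$), the commutators reduce this to $L_x\tfrac12[y,z]-\tfrac12[[x,y],z]-\varepsilon(x,y)\tfrac12[y,[x,z]]$, up to signs. That expression vanishes by the Jacobi identity, and since an element of degree three with all double contractions zero is zero (one more contraction gives scalars), this proves d). The main obstacle I expect is the sign bookkeeping, both in b) and in the $\tfrac1{12}$ normalization of c) with the factors $\varepsilon(e_a,e_b)\varepsilon(e_c)$. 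For this I would first verify the supercase $\Gamma=\ZZ_2$ as a check before writing general $\varepsilon$-signs.
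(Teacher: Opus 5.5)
Your arguments for a), b) and d) are sound. Part b) is exactly the paper's computation. Deriving a) from b) by comparing double contractions, and d) from $[L_x,\iota_y]=\iota_{[x,y]}$ together with b) and the $\varepsilon$-Jacobi identity, are valid variants of the paper's direct comparison of forms.

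The gap is in c), at the step where total $\varepsilon$-antisymmetry of $f_{abc}$ is supposed to make all six orderings contribute the same $\tfrac1{12}$ share. For the summand as printed, reordering the indices is not a symmetry. Use $f_{bac}=-\varepsilon(e_b,e_a)f_{abc}$ and $e^b\wedge e^a=-\varepsilon(e_b,e_a)\,e^a\wedge e^b$. Then the summand with $a$ and $b$ interchanged equals $\varepsilon(e_b,e_a)^3\varepsilon(e_c)f_{abc}\,e^a\wedge e^b\wedge e^c$. This differs from the original term $\varepsilon(e_a,e_b)\varepsilon(e_c)f_{abc}\,e^a\wedge e^b\wedge e^c$ by the factor $\varepsilon(e_b,e_a)^4$.

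Your fallback check against b) makes the discrepancy explicit. For homogeneous $x,y,z$ one gets
\[
\iota_z\iota_x\iota_y\Bigl(-\tfrac{1}{12}\sum_{a,b,c}\varepsilon(e_a,e_b)\varepsilon(e_c)f_{abc}\,e^a\wedge e^b\wedge e^c\Bigr)
=\tfrac12\bigl(\varepsilon(x,y)^2+\varepsilon(y,x)^2\bigr)\cdot\tfrac12B(z,[x,y]),
\]
whereas b) forces $\iota_z\iota_x\iota_y\phi=\tfrac12B(z,[x,y])$.

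So c) as printed, and the paper's one-line ``direct computation'', is correct only when $\varepsilon$ is $\pm1$-valued on the relevant degrees. This holds automatically for $\Gamma=\ZZ_2$, $\ZZ_2\times\ZZ_2$ or any elementary abelian $2$-group, since $\varepsilon(\alpha,\beta)^2=\varepsilon(2\alpha,\beta)=1$. Hence the super setting of the applications is unaffected. The formula fails for a general commutation factor, e.g.\ $\varepsilon(\alpha,\beta)=q^{\alpha_1\beta_2-\alpha_2\beta_1}$ on $\ZZ^2$ with $q^2\neq1$, as soon as such degrees carry nonzero structure constants.

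The repair is to put $\varepsilon(e_b,e_a)$ in place of $\varepsilon(e_a,e_b)$. That summand is invariant under all reorderings of $(a,b,c)$. Each of the six contraction patterns then contributes exactly $\tfrac1{12}B(z,[x,y])$, and your argument goes through as planned.

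A second, independent caution concerns the pairing route to c). Take $\langle\cdot,\cdot\rangle_{\wedge}$ exactly as written, with the reversal $B(x_{k-i},y_{1+i})$. Already for an ordinary Lie algebra, $\langle\cdot,\cdot\rangle_{\wedge^k}=(-1)^{k(k-1)/2}\det\bigl(B(x_i,y_j)\bigr)$. Two consequences follow:
\begin{enumerate}
\item[(i)] Lemma~\ref{lemm::action_iota_inner_product_wedge} holds only up to the factor $(-1)^{|y|_{\wedge}-1}$. For orthonormal $e_1,e_2$ one has $\langle\iota_{e_1}(e_1\wedge e_2),e_2\rangle_{\wedge}=1$ but $\langle e_1\wedge e_2,e_1\wedge e_2\rangle_{\wedge}=-1$.
\item[(ii)] Pairing the basis sum against $e_d\wedge e_e\wedge e_f$ gives $+\tfrac12 f_{def}$ instead of the required $-\tfrac12 f_{def}$.
\end{enumerate}
For an ordinary Lie algebra, a)--c) are consistent with the unreversed normalization $\det\bigl(B(x_i,y_j)\bigr)$. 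This is also the normalization for which the adjointness lemma, and hence both your and the paper's proof of b), is valid. So either fix that normalization before doing the permutation count, or anchor c) to b) by contractions, which uses only the algebraic $\iota_x$.
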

\begin{proof}
For a), let $x,y,z\in\gg$ be homogeneous. By Lemma~\ref{lemm::identification_lambda_omega},
\[
(\iota_{x}\phi)(y\wedge z)=\phi(x,y,z)=-\tfrac12B([x,y],z)=-\tfrac12\omega_{\ad_{x}}(y,z)=-\tfrac12(\uplambda(\ad_{x}))^{\flat}(y\wedge z).
\]
Hence $2(\iota_{x}\phi)^{\flat}=-(\uplambda(\ad_{x}))^{\flat}$, and therefore $2\iota_{x}\phi=-\uplambda(\ad_{x})$.

For b), by definition and Lemma~\ref{lemm::action_iota_inner_product_wedge},
\[
\begin{aligned}
\iota_{x}\iota_{y}\phi(z)&=(\iota_{x}\iota_{y}\phi,z)_{\wedge}=\varepsilon(x,y)(\iota_{y}\phi,x\wedge z)_{\wedge}=\varepsilon(x,y)(\phi,y\wedge x\wedge z)_{\wedge}\\
&=-\tfrac12\varepsilon(x,y)B([y,x],z)=\tfrac12B([x,y],z).
\end{aligned}
\]

Assertion c) is a direct computation.

For d), let $w,x,y,z\in\gg$ be homogeneous. Since $\phi(x,y,z)=-\tfrac12B([x,y],z)$, one has
\[
(L_{w}^{\wedge}\phi)(x,y,z)=\phi([w,x],y,z)+\varepsilon(w,x)\phi(x,[w,y],z)+\varepsilon(w,x+y)\phi(x,y,[w,z]).
\]
Substituting the defining formula for $\phi$, the claim follows from the invariance of $B$ and the $\varepsilon$-Jacobi identity.
\end{proof}

The \emph{normal-ordered quantization} of $\phi$ is
\begin{equation}
\phi'\coloneqq \widehat{q}(\phi)\in\widehat{\Cl}_{\varepsilon}(\gg).
\end{equation}
It is odd with respect to the canonical $\ZZ_{2}$-grading. For $r\in\ZZ$, let $\phi'_{r}$ denote the component of $\phi'$ in $\bigwedge_{\varepsilon}(\gg_{-})_{-r}\otimes \bigwedge_{\varepsilon}(\gg_{+})_{r}$. Then $\phi'_{0}$ is the structure constants tensor of the finite-dimensional color Lie algebra $\gg_{0}$.

\begin{theorem}[{\cite[Theorem 1.1]{Dirac_quadratic}}]\label{thm::phi_square_in_null}
The square of $\phi'_{0}$ is the constant
\begin{equation}
(\phi'_{0})^{2}=\tfrac{1}{24}\etr(\ad_{\gg_{0}}(\Omega_{\gg_{0}})).
\end{equation}
\end{theorem}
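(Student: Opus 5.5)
Since the statement is the finite-dimensional result of Chen--Kang, I would reduce it to the color Lie algebra $\gg_0$ and then rerun the finite-dimensional argument with $\varepsilon$-signs.

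\emph{Reduction to $\gg_0$.} Since $B(\gg_i,\gg_j)=0$ unless $i+j=0$, the restriction $B\vert_{\gg_0}$ is nondegenerate. Hence $\Cl_{\varepsilon}(\gg_0)\subset\Cl_{\varepsilon}(\gg_-)\subset\widehat{\Cl}_{\varepsilon}(\gg)$ is a genuine Clifford subalgebra. Expanding $\phi$ in a basis adapted to $\gg=\bigoplus_i\gg_i$ with Lemma~\ref{lemm::properties_phi}\,c), the terms with all three legs in $\gg_0$ are exactly the structure constants tensor $\phi_{\gg_0}$ of $(\gg_0,B\vert_{\gg_0})$. Since $\gg_0\subset\gg_-$, the normal-ordered quantization $\widehat q$ acts on these terms by the ordinary $q$ of Theorem~\ref{thm::quantization_map}. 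So I would identify $\phi'_0=q(\phi_{\gg_0})\in\Cl_{\varepsilon}(\gg_0)$, and from here on work with the finite-dimensional quadratic color Lie algebra $\gg_0$. There $q$ is $\gg_0$-equivariant, so no Kac--Peterson correction appears.

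\emph{The square is a scalar.} Write $c\coloneqq\phi'_0$, which is odd. For $x\in\gg_0$, I would use that $\iota_x$ is an odd derivation of $\Cl_{\varepsilon}(\gg_0)$ to get $\iota_x(c^2)=[\iota_x c,c]$, a graded commutator. By Lemma~\ref{lemm::quantization_map_non_completion_and_contraction} and Lemma~\ref{lemm::properties_phi}\,a), $\iota_x c=q(\iota_x\phi)=-\tfrac12\gamma(\ad_x)$. Proposition~\ref{prop::Lie_derivative_as_commutator} (finite-dimensional version, with $\gamma=q\circ\uplambda$) then gives $\iota_x(c^2)=L_x c=q(L_x\phi)$, which vanishes by Lemma~\ref{lemm::properties_phi}\,d). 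An element of $\Cl_{\varepsilon}(\gg_0)$ annihilated by all contractions corresponds under $\eta$ to an element of $\bigwedge_{\varepsilon}(\gg_0)$ with the same property. By nondegeneracy of $B$ on $\gg_0$, such an element lies in $\bigwedge^0$, so $c^2\in\CC$.

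\emph{Identifying the constant.} The scalar is the degree-$0$ component of $\eta(c\cdot c)$. Using $\gamma(x)=\ell(x)+\iota_x$, only full contractions of $c$ against $\phi_{\gg_0}$ survive, so this component equals $\pm\langle\phi_{\gg_0},\phi_{\gg_0}\rangle_{\wedge}$ up to a combinatorial factor. Inserting Lemma~\ref{lemm::properties_phi}\,c) expresses it as a multiple of $\sum\varepsilon(\cdot)f_{abc}f^{abc}$. Via $f_{abc}=B([e_a,e_b],e_c)$ and \eqref{eq::epsilon_trace}, I would rewrite this as a multiple of $\sum_a\etr(\ad_{e^a}\ad_{e_a})=\etr(\ad_{\gg_0}(\Omega_{\gg_0}))$.

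The main obstacle is this last step: tracking the $\varepsilon$-factors and the normalizations $1/12$ and $-1/2$ so that the coefficient comes out exactly $1/24$. Before attempting the general sign bookkeeping, I would calibrate it by checking the super case ($\Gamma=\ZZ_2$) against Chen--Kang.
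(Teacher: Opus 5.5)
Your reduction to $\gg_0$ is correct, and it is exactly what the paper relies on. Since $B(\gg_i,\gg_j)=0$ unless $i+j=0$, the $r=0$ component of $\widehat q(\phi)$ is $q(\phi_{\gg_0})\in\Cl_{\varepsilon}(\gg_0)$. The paper then simply quotes the finite-dimensional theorem for $(\gg_0,B\vert_{\gg_0})$ and gives no further argument. Your proof that $(\phi'_0)^2$ is a scalar is also sound. It is the finite-dimensional version of the argument in the proof of Proposition~\ref{prop::phi_squared}, and no $\KPsharp$-term appears because $q$ is genuinely $\mathfrak{so}_{\varepsilon}(\gg_0)$-equivariant.

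The gap is the last step, which is the actual content of the statement. Saying the scalar is \emph{some} multiple of $\etr(\ad_{\gg_0}(\Omega_{\gg_0}))$, with the $\varepsilon$-factors and the coefficient left open, proves only $(\phi'_0)^2\in\CC$. Calibrating against the super case does not close this. A single check fixes a coefficient only once you know that the $\varepsilon$-factors assemble into exactly \eqref{eq::epsilon_trace} for arbitrary $(\Gamma,\varepsilon)$, and this is where errors can hide. For example, take the coefficients of $\phi_{\gg_0}$ from Lemma~\ref{lemm::properties_phi}\,c) as printed and the triple contractions from part b). You then get $\tfrac1{24}\sum_{a,b}\varepsilon(e_a,e_b)^2B([e^b,e^a],[e_a,e_b])$. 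Its spurious factors $\varepsilon(e_a,e_b)^2$ disappear precisely when $\varepsilon^2\equiv1$, e.g.\ in the super case, so your calibration would not detect them.

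To close the gap, either cite the finite-dimensional theorem for $\gg_0$, as the paper does, or derive the expansion from b) itself. The Euler identity $\sum_a e^a\wedge\iota_{e_a}\omega=k\,\omega$ on $\bigwedge^k_{\varepsilon}(\gg_0)$, applied twice, gives $\phi_{\gg_0}=\tfrac1{12}\sum_{a,b}e^a\wedge e^b\wedge[e_b,e_a]=-\tfrac1{12}\sum_{a,b,c}\varepsilon(e_b,e_a)\varepsilon(e_c)f_{abc}\,e^a\wedge e^b\wedge e^c$. Note the factor $\varepsilon(e_b,e_a)$ where c) has $\varepsilon(e_a,e_b)$. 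The degree-$0$ part of $\gamma(q(x\wedge y\wedge z))\phi_{\gg_0}$ is $\iota_x\iota_y\iota_z\phi_{\gg_0}$, and by b) one has $\iota_{e^a}\iota_{e^b}\iota_{e^c}\phi_{\gg_0}=\tfrac12B(e^a,[e^b,e^c])$. Summing over $c$ with Lemma~\ref{lemm::basis_description} then yields $(\phi'_0)^2=-\tfrac1{24}\sum_{a,b}\varepsilon(e_b,e_a)B([e^a,e^b],[e_a,e_b])=\tfrac1{24}\sum_{a,b}B([e^b,e^a],[e_a,e_b])$. Invariance together with \eqref{eq::epsilon_trace} turns this into $\tfrac1{24}\sum_a\etr(\ad_{e^a}\ad_{e_a})=\tfrac1{24}\etr(\ad_{\gg_0}(\Omega_{\gg_0}))$.
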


In general, $\phi'$ is not $\gg$-invariant. Recall that $\KPsharp\in\widehat{\bigwedge}_{\varepsilon}^{2}(\gg)$ is the image of $\KP\in\widehat{\bigwedge}_{\varepsilon}^{2}(\gg^{\ast})$ under the musical isomorphism $\sharp$ in~\eqref{eq::musical_isomorphisms}.

\begin{lemma}\label{lemm::komische_Beziehungen}
For all homogeneous $x\in\gg$, one has
\begin{equation}
L_{x}\phi'=\Psi_{\operatorname{KP}}(x)=\iota_{x}\widehat{q}(\KPsharp).
\end{equation}
\end{lemma}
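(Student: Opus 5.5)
The second equality $\Psi_{\operatorname{KP}}(x)=\iota_{x}\widehat{q}(\KPsharp)$ is Lemma~\ref{lemm::komische_Beziehungen_eins}. So the work is the first equality $L_x\phi'=\Psi_{\operatorname{KP}}(x)$.

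Since $L_x\phi'$ lies in $\widehat{\Cl}_{\varepsilon}(\gg)$ and has odd parity, while $\Psi_{\operatorname{KP}}(x)\in\gg$ has Clifford degree one, I plan to compare the two sides by contracting with an arbitrary $y\in\gg$. Both sides are odd elements with nonzero components only in filtration degrees $\le 3$. An element $c$ of $\widehat{\Cl}_{\varepsilon}(\gg)$ with $\iota_y c$ a constant for all $y$ lies in $\gg\oplus\CC$ by the symbol isomorphism $\widehat q$, and its odd part lies in $\gg$. It therefore suffices to show
\[
\iota_y L_x\phi' = B(\Psi_{\operatorname{KP}}(x),y)=\KP(x,y)\qquad\text{for all homogeneous }y\in\gg .
\]

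The main computation moves $\iota_y$ past $L_x$ and reduces everything to the quadratic element $\widehat{q}(\iota\phi)$, where Proposition~\ref{prop::commutator_and_quantization} applies. First I use $[L_x,\iota_y]=\iota_{[x,y]}$, which holds on the Clifford algebra by Lemma~\ref{lemm::commutator_Lie_derivative_contraction} with $T=\ad_x\in\widehat{\mathfrak{so}}_\varepsilon(\gg)$. This gives
\[
\iota_y L_x\phi'=\varepsilon(y,x)\bigl(L_x\iota_y\phi'-\iota_{[x,y]}\phi'\bigr).
\]
Next, by Proposition~\ref{prop::normal_ordered_quantization_contraction}~a) and Lemma~\ref{lemm::properties_phi}~a),
\[
\iota_y\phi'=\widehat q(\iota_y\phi)=-\tfrac12\widehat{q}(\uplambda(\ad_y))=-\tfrac12\widehat\gamma(\ad_y).
\]
Proposition~\ref{prop::commutator_and_quantization}~a) with $\nu=\uplambda(\ad_y)$ then gives
\[
L_x\widehat q(\uplambda(\ad_y))=\widehat q(L_x\uplambda(\ad_y))-2\KP(\ad_x,\ad_y).
\]
By Lemma~\ref{lemm::Lie_derivative_and_lambd}, $L_x\uplambda(\ad_y)=\uplambda([\ad_x,\ad_y])=\uplambda(\ad_{[x,y]})$. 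Hence $L_x\iota_y\phi'=-\tfrac12\widehat\gamma(\ad_{[x,y]})+\KP(\ad_x,\ad_y)$. The second term $\iota_{[x,y]}\phi'=-\tfrac12\widehat\gamma(\ad_{[x,y]})$ cancels the first, leaving
\[
\iota_yL_x\phi'=\varepsilon(y,x)\KP(\ad_x,\ad_y).
\]

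The step I expect to be the real obstacle is normalization. The form $\KP$ on $\widehat{\mathfrak{so}}_\varepsilon$ is defined without the factor $\tfrac12$, so $\KP(\ad_x,\ad_y)=2\KP(x,y)$ despite the paper's remark that the two coincide. There is also a sign $\varepsilon(y,x)$, which is $1$ when the pairing is nonzero since $|x|+|y|=\ng$, and an ordering convention in $\iota_y\iota_x$ versus $B(\cdot,y)$. I would carefully recheck the normalization conventions in Proposition~\ref{prop::commutator_and_quantization}~a) and in Lemma~\ref{lemm::properties_phi}~a) so that the factors of $2$ combine to exactly $\KP(x,y)$. I would also use the skew-symmetry in Lemma~\ref{lemm::properties_KP} to match the order of arguments with $B(\Psi_{\operatorname{KP}}(x),y)$.

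Finally, the even-constant ambiguity in the reduction step disappears by parity: $L_x\phi'$ is odd, so its constant part vanishes. This concludes $L_x\phi'=\Psi_{\operatorname{KP}}(x)$.
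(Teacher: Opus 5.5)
Your argument is the paper's proof with $x$ and $y$ interchanged. You contract, commute $\iota$ past $L$, write $\iota\phi'=-\tfrac12\widehat q(\uplambda(\ad\,\cdot))$, apply Proposition~\ref{prop::commutator_and_quantization}~a) and Lemma~\ref{lemm::Lie_derivative_and_lambd}, and let the $\widehat\gamma(\ad_{[x,y]})$ terms cancel. Two bookkeeping points need fixing, but neither changes the structure.

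\emph{The sign.} The target is not $B(\Psi_{\operatorname{KP}}(x),y)$. For $c\in\gg$ one has $\iota_y c=B(y,c)$, and $\Psi_{\operatorname{KP}}$ preserves the $\Gamma$-degree. What you must show is therefore
\[
\iota_yL_x\phi'=\iota_y\Psi_{\operatorname{KP}}(x)=B(y,\Psi_{\operatorname{KP}}(x))=\varepsilon(y,x)\KP(x,y)=-\KP(y,x).
\]
This is exactly what your computation produces, and it is the paper's identity $\iota_x\Psi_{\operatorname{KP}}(y)=-\KP(x,y)$ after renaming. Your claim that $\varepsilon(y,x)=1$ whenever $|x|+|y|=\ng$ is false. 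In that case $\varepsilon(y,x)=\varepsilon(y)=\pm1$, for example $-1$ for odd $y$ in the super case. With the correct target you do not need this claim, so drop it.

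\emph{The factor $2$.} Do not combine the coefficient $-2$ from Proposition~\ref{prop::commutator_and_quantization}~a) with $\KP(\ad_x,\ad_y)=2\KP(x,y)$; that would give $-4\KP(x,y)$. What the proof of that proposition actually shows, for finitary $T$, is
\[
L_T\widehat q(\nu)-\widehat q(L_T\nu)=-\tr_{\varepsilon}(\pi_+[T,\upmu(\nu)]).
\]
By $\varepsilon$-cyclicity, as in Lemma~\ref{lemm::properties_KP}~c),
\[
\tr_{\varepsilon}(\pi_+[T,S])=\tr_{\varepsilon}(T\pi_-S\pi_+)-\varepsilon(T,S)\tr_{\varepsilon}(S\pi_-T\pi_+).
\]
This expression extends componentwise to $\widehat{\mathfrak{so}}_{\varepsilon}(\gg)$ and equals $2\KP(x,y)$ at $T=\ad_x$, $S=\ad_y$. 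Hence the correction term is $-2\KP(x,y)$, which gives
\[
L_x\iota_y\phi'-\iota_{[x,y]}\phi'=\KP(x,y)
\qquad\text{and}\qquad
\iota_yL_x\phi'=\varepsilon(y,x)\KP(x,y),
\]
as required. So the paper's intended convention is $\KP(\ad_x,\ad_y)=\KP(x,y)$. The displayed definition of $\KP$ on $\widehat{\mathfrak{so}}_{\varepsilon}(\gg)$ should carry a factor $\tfrac12$, which the proof of Proposition~\ref{prop::commutator_and_quantization}~a) implicitly assumes.
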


\begin{proof}
We embed $\gg$ canonically into $\bigwedge_{\varepsilon}(\gg)$. Then it is enough to show that $\iota_{x}(L_{y}\phi'-\Psi_{\operatorname{KP}}(y))=0$ for all homogeneous $x,y\in\gg$. First, by~\eqref{eq::relations_psi_Psi} and the skew-$\varepsilon$-symmetry of $\KP$,
\[
\iota_{x}\Psi_{\operatorname{KP}}(y)=B(x,\Psi_{\operatorname{KP}}(y))=\varepsilon(x,y)B(\Psi_{\operatorname{KP}}(y),x)=\varepsilon(x,y)\KP(y,x)=-\KP(x,y).
\]
On the other hand, using $[\iota_{x},L_{y}]_{\Der}=\iota_{[x,y]}$, Lemma~\ref{lemm::properties_phi}, Proposition~\ref{prop::normal_ordered_quantization_contraction}, and Proposition~\ref{prop::commutator_and_quantization}, one obtains
\[
\begin{aligned}
\iota_{x}L_{y}\phi'&=\varepsilon(x,y)L_{y}\iota_{x}\phi'+\iota_{[x,y]}\phi'\\
&=-\tfrac12\varepsilon(x,y)L_{y}\widehat q(\uplambda(\ad_{x}))-\tfrac12\widehat q(\uplambda(\ad_{[x,y]}))\\
&=-\tfrac12\varepsilon(x,y)\widehat q(L_{y}\uplambda(\ad_{x}))+\varepsilon(x,y)\KP(y,x)-\tfrac12\widehat q(\uplambda(\ad_{[x,y]}))\\
&=-\KP(x,y)+\tfrac12\bigl(\varepsilon(x,y)\widehat q(\uplambda(\ad_{[y,x]}))-\widehat q(\uplambda(\ad_{[x,y]}))\bigr)\\
&=-\KP(x,y).
\end{aligned}
\]
Hence
\begin{equation*}
\iota_{x}(L_{y}\phi'-\Psi_{\operatorname{KP}}(y))=-\KP(x,y)+\KP(x,y)=0.\qedhere
\end{equation*}
\end{proof}

This motivates the definition of a modified structure constants tensor, which is $\gg$-invariant. 

\begin{corollary}
 Suppose $\KP = \operatorname{d}\!\rho$ for some $\rho \in (\gg^{\ast})_{(0,\ng)}$. Then $\upphi \coloneqq \phi' + \rho^{\sharp}$ is $\gg$-invariant.
\end{corollary}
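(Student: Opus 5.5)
The plan is to compute $L_x\upphi$ for homogeneous $x\in\gg$ directly, splitting it into the contribution of $\phi'$ and that of $\rho^{\sharp}$. Here $L_x$ is the Lie derivative on $\widehat{\Cl}_{\varepsilon}(\gg)$, which by Proposition~\ref{prop::Lie_derivative_as_commutator} is the $\varepsilon$-derivation extending $\ad_x$ on generators. By linearity,
\[
L_x\upphi=L_x\phi'+L_x\rho^{\sharp}.
\]

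For the first term I invoke Lemma~\ref{lemm::komische_Beziehungen}, which gives $L_x\phi'=\Psi_{\operatorname{KP}}(x)$. For the second term, $\rho^{\sharp}\in\gg$ is a generator of degree one in the Clifford algebra, so $L_x\rho^{\sharp}=[x,\rho^{\sharp}]$.

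Next I use the hypothesis $\KP=\operatorname{d}\!\rho$. By Lemma~\ref{lemm::properties_Psi}~b), it is equivalent to $\Psi_{\operatorname{KP}}(x)=[\rho^{\sharp},x]$ for all $x$. Since $\rho\in(\gg^{\ast})_{(0,\ng)}$, the element $\rho^{\sharp}$ has $\Gamma$-degree $\ng$, so $\varepsilon(\rho^{\sharp},x)=1$. Skew-$\varepsilon$-symmetry then gives $[\rho^{\sharp},x]=-[x,\rho^{\sharp}]$. This is the same manipulation as in the proof of Corollary~\ref{cor::Casimir_and_KP_class}. Adding the two contributions,
\[
L_x\upphi=\Psi_{\operatorname{KP}}(x)+[x,\rho^{\sharp}]=[\rho^{\sharp},x]+[x,\rho^{\sharp}]=0 .
\]

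The only point needing care is the sign and parity bookkeeping. Inside $\widehat{\Cl}_{\varepsilon}(\gg)$, $\rho^{\sharp}$ is odd with respect to the $\ZZ_2$-grading, but $L_x$ is an even derivation, so no extra $(-1)$ factor enters. One must also check that $\Psi_{\operatorname{KP}}(x)$ in Lemma~\ref{lemm::komische_Beziehungen} is read as an element of $\gg\subset\widehat{\Cl}_{\varepsilon}(\gg)$, compatibly with the embedding of $\rho^{\sharp}$. Both facts follow directly from the definitions, so I expect no real obstacle.
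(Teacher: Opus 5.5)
Your proposal is correct and is essentially the paper's argument. The paper states this corollary right after Lemma~\ref{lemm::komische_Beziehungen} without a separate proof; the intended reasoning is $L_x\phi'=\Psi_{\operatorname{KP}}(x)=[\rho^{\sharp},x]=-[x,\rho^{\sharp}]=-L_x\rho^{\sharp}$ via Lemma~\ref{lemm::properties_Psi}~b), the same manipulation the paper uses in Corollaries~\ref{cor::Casimir_and_KP_class} and~\ref{cor::D_square}.
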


\begin{proposition} \label{prop::phi_squared} The square of $\phi'$ is given by the formula
 \[
 (\phi')^{2} = \widehat{q}(\KPsharp) + \tfrac{1}{24} \etr (\ad_{\gg_{0}}(\Omega_{\gg_{0}})).
 \]
\end{proposition}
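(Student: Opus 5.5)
We prove the identity by splitting $(\phi')^{2}$ into a part of positive Clifford degree, which we detect through contractions, and a scalar part, which we compute in the finite-dimensional algebra $\Cl_{\varepsilon}(\gg_{0})$.

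\textbf{Step 1: the contractions agree.} Since $\phi'$ is odd, $(\phi')^{2}=\tfrac12[\phi',\phi']_{\widehat{\Cl}_{\varepsilon}}$ in the super-color sense. We compute $\iota_{x}((\phi')^{2})$ for homogeneous $x\in\gg$ using Lemma~\ref{lemm::contraction_and_Lie_derivative_in_Weil}a), that is, $\iota_{x}=\tfrac12[x,\cdot]$. The Leibniz rule gives $\iota_{x}(\phi'\phi')=(\iota_x\phi')\phi'-\varepsilon(x,\phi')\phi'(\iota_x\phi')$, and $\phi'$ has $\Gamma$-degree $\ng$. By Proposition~\ref{prop::normal_ordered_quantization_contraction}a) and Lemma~\ref{lemm::properties_phi}a), $\iota_x\phi'=\widehat q(\iota_x\phi)=-\tfrac12\widehat\gamma(\ad_x)$. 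Hence
\[
\iota_x(\phi')^{2}=-\tfrac12[\widehat\gamma(\ad_x),\phi']_{\widehat{\Cl}_{\varepsilon}}=L_x\phi'
\]
by Proposition~\ref{prop::Lie_derivative_as_commutator}. Lemma~\ref{lemm::komische_Beziehungen} then gives $\iota_x(\phi')^{2}=\iota_x\widehat q(\KPsharp)$ for all $x$. Consequently $(\phi')^{2}-\widehat q(\KPsharp)$ is annihilated by every contraction.

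\textbf{Step 2: an element killed by all contractions is a scalar.} Transport the element to $\widehat{\bigwedge}_{\varepsilon}(\gg)$ via $\widehat q^{-1}$, which intertwines contractions. On each finite-dimensional component $\bigwedge_\varepsilon(\gg_-)_{(-r)}\otimes\bigwedge_\varepsilon(\gg_+)_{(r)}$ only finitely many degrees occur, so we can argue inside a finite-dimensional nondegenerate $W\subset\gg$. There, the nondegenerate pairing of Lemma~\ref{lemm::action_iota_inner_product_wedge} shows that an element annihilated by all $\iota_x$ lies in $\bigwedge^0$. It follows that $(\phi')^{2}=\widehat q(\KPsharp)+c$ with $c\in\CC$, and $c$ is the degree-zero Clifford component of $(\phi')^2$.

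\textbf{Step 3: computing $c$ (main obstacle).} We extract $c$ as the projection of $(\phi')^2$ onto $\bigwedge^{0}$, which is also how we obtain the constant term of $\widehat q(\KPsharp)$. Write $\phi'=\sum_r\phi'_r$ with $\phi'_r\in\Cl(\gg_-)_{-r}\otimes\Cl(\gg_+)_r$, where $r\ge0$ because of normal ordering.

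\emph{Mixed terms vanish.} For $r>0$, each product $\phi'_r\phi'_s$ or $\phi'_s\phi'_r$ is normally ordered except for reorderings that pair $\gg_+$ factors with $\gg_-$ factors. To produce a scalar, all three $\gg_{\pm}$ legs of each cubic factor must contract, and every $\gg_+$ factor must pair with a $\gg_-$ factor of opposite $\ZZ$-degree. The Wick-type expansion shows that such full contractions either vanish or have already been absorbed into the scalar part of $\widehat q(\KPsharp)$. Since $\KPsharp$ sits in $\bigwedge^2$, its quantization also has a scalar part by Example~\ref{ex::quantization_map}, and this is the delicate bookkeeping point. I expect it is cleaner to show that $\widehat q(\KPsharp)$ has no scalar part because $\KP(x,y)=0$ unless $\deg x+\deg y=0$, and then to check the analogous cancellation for $(\phi')^2$.

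\emph{The $r=0$ term.} The remaining contribution comes from $(\phi'_0)^2$, where $\phi'_0$ is the structure constants tensor of $\gg_0$. Theorem~\ref{thm::phi_square_in_null} gives $(\phi'_0)^{2}=\tfrac1{24}\etr(\ad_{\gg_0}(\Omega_{\gg_0}))$.

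The most error-prone part is showing that no extra scalar arises from $\phi'_r$ with $r\neq0$. I would try first to reduce this to Theorem~\ref{thm::phi_square_in_null} applied to a finite-dimensional truncation $W=\bigoplus_{|i|\le N}\gg_i$, comparing $\widehat q$ with $q$ through Example~\ref{ex::quantization_map}b).
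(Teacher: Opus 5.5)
Your Steps 1 and 2 are exactly the paper's argument. The paper asserts Step 2 in one line, and your reduction via $\widehat q^{-1}$ to a finite-dimensional nondegenerate $W$ justifies it properly. The gap is Step 3: you never establish the value of $c$.

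Your paragraph on mixed terms states the conclusion (``either vanish or have already been absorbed'') without an argument. The bookkeeping problem you raise comes from mixing up $q$ and $\widehat q$.
\begin{itemize}
\item The only well-defined scalar part on $\widehat{\Cl}_{\varepsilon}(\gg)$ is the $\widehat{\bigwedge}^{0}_{\varepsilon}$-component after applying $\widehat q^{-1}$, which is the one you use in Step 2. For it, $\widehat q(\KPsharp)$ has no scalar part, trivially, since $\widehat q^{-1}\widehat q(\KPsharp)=\KPsharp\in\widehat{\bigwedge}^{2}_{\varepsilon}(\gg)$.
\item The constants in Example~\ref{ex::quantization_map} measure $\widehat q$ against $q$, and $q$ does not exist on the completion. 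Summed over $\KPsharp$ they form an infinite series that diverges in general: for a loop superalgebra the pair $\gg\otimes t^{m},\gg\otimes t^{-m}$ enters with weight proportional to $m$.
\item The condition $\deg x+\deg y=0$ is precisely when these constants are nonzero, so it cannot be what removes them.
\item Your fallback does not work as stated. $\bigoplus_{|i|\le N}\gg_{i}$ is not a subalgebra, so Theorem~\ref{thm::phi_square_in_null} does not apply to it, and comparing with $q$ brings back the divergent constants.
\end{itemize}

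The missing idea is the one the paper uses: detect the constant modulo the right ideal $\widehat{\Cl}_{\varepsilon}(\gg)\,\gg_{+}$. Equivalently, evaluate on the vacuum $1$ of $\Cl_{\varepsilon}(\gg)\otimes_{\bigwedge_{\varepsilon}(\gg_{+})}\CC$, or note that the $\widehat{\bigwedge}^{0}_{\varepsilon}$-component sees only the $r=0$ summand, which lies in $\Cl_{\varepsilon}(\gg_{0})$.
\begin{itemize}
\item The ideal annihilates the vacuum, so it contains no nonzero scalar.
\item It contains $\widehat q(\KPsharp)$. The summands with $r>0$ end in $\bigwedge_{\varepsilon}(\gg_{+})_{r}$. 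The $r=0$ summand is the quantization of the $\bigwedge^{2}_{\varepsilon}(\gg_{0})$-part of $\KPsharp$, which vanishes because $\KP|_{\gg_{0}\times\gg_{0}}=0$ (for $y\in\gg_{0}$ one has $\pi_{-}\ad_{y}\pi_{+}=0$).
\item In $(\phi')^{2}=\sum_{r,s}\phi'_{r}\phi'_{s}$, the terms with $s>0$ end in $\bigwedge_{\varepsilon}(\gg_{+})_{s}$.
\item For $r>0$, $s=0$, move the $\bigwedge_{\varepsilon}(\gg_{+})_{r}$-factor of $\phi'_{r}$ to the right past $\phi'_{0}\in\Cl_{\varepsilon}(\gg_{0})$. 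This costs only a color sign, because $B(\gg_{+},\gg_{0})=0$.
\end{itemize}
Hence $c\equiv(\phi'_{0})^{2}$ modulo the ideal, so $c=(\phi'_{0})^{2}$, and Theorem~\ref{thm::phi_square_in_null} gives its value. No analysis of full contractions is needed: the negative-degree legs of $\phi'_{r}$ already stand on the left and are never contracted.
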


\begin{proof}
The difference $(\phi')^{2}-\widehat{q}(\KPsharp)$ is constant iff $\iota_{x}((\phi')^{2}-\widehat{q}(\KPsharp))=0$ for all $x\in\gg$. By Proposition~\ref{prop::Lie_derivative_as_commutator} and Lemma~\ref{lemm::komische_Beziehungen_eins}, we have
\[
\iota_{x}((\phi')^{2}-\widehat{q}(\KPsharp))
=-\tfrac{1}{2}[\widehat{\gamma}(x),\phi']_{\widehat{\Cl}_{\varepsilon}}-\iota_{x}\widehat{q}(\KPsharp)
=L_{x}\phi'-\iota_{x}\widehat{q}(\KPsharp)=0.
\]
Here, we use $\iota_{x}((\phi')^{2})=[\iota_{x}\phi',\phi']_{\widehat{\Cl}_{\varepsilon}}$ since $\phi'$ is odd in $\widehat{\Cl}_{\varepsilon}(\gg)$, and $2\iota_{x} \phi' = 2\widehat{q}(\iota_{x}\phi)= -\widehat{q}(\uplambda(\ad_{x}))=-\widehat{\gamma}(x)$ by Lemma~\ref{lemm::properties_phi}.
The constant is $(\phi_{0}')^{2}$, since for all $r>0$ the commutator of $\phi'$ with $q(\phi_{r})$ lies in the right ideal generated by $\gg_{+}$. The theorem follows from Theorem~\ref{thm::phi_square_in_null}.
\end{proof}

\subsection{Chern--Simons Element Perspective}\label{subsec::CS_element}

We introduced the $\gg$-invariant elements $p=\sum_{a}e^{a}\otimes e_{a}$ and $1\otimes\phi$ in the $\ZZ$-graded color superalgebra $\widehat{W}_{\varepsilon}(\gg)$. Their sum $\calD$ is central to this paper, since its quantization is the cubic Dirac operator of $\gg$. In this section, we study $\calD$ and show that it is the canonical Chern--Simons element associated with the quadratic form $B$ via the universal property of the completed Weil algebra $\widehat{W}_{\varepsilon}(\gg)$. This also explains its appearance in physics as the boundary operator of one-dimensional Chern--Simons theory.

\subsubsection{The Universal Property of the Weil Algebra} The Weil algebra $W(\gg)$ of a Lie algebra $\gg$ is characterized by the following universal property \cite[Chapter 6]{Meinrenken_book}: it is the initial $\gg$-differential algebra equipped with a connection. Accordingly, for every $\gg$-differential algebra $\mathcal A$ with connection $A$, there exists a unique morphism of $\gg$-differential algebras
$
c_{A}\colon W(\gg)\to\mathcal A
$
sending the universal connection $A_{\univ}$ of $W(\gg)$ to $A$, and hence the universal curvature $F_{\univ}$ to the curvature of $A$. We study its completed version for general $\ZZ$-graded color Lie algebras $\gg$.

The $\ZZ$-graded color Lie superalgebra $(\widehat{W}_{\varepsilon}(\gg),[\cdot,\cdot]_{\widehat{W}})$ is componentwise generated by the elements $x\otimes 1$ and $1\otimes x$, with $x\in\gg$, and carries Lie derivative $L_{x}$ and contraction $\iota_{x}$. We regard these generators as $\ZZ\times\Gamma$-homogeneous maps
$
A_{\univ}\colon \gg\to \widehat{W}_{\varepsilon}(\gg)_{\bar1}$ and $F_{\univ}\colon \gg\to \widehat{W}_{\varepsilon}(\gg)_{\bar0}$
defined by
\begin{equation}
A_{\univ}(x)\coloneqq 1\otimes x,\qquad F_{\univ}(x)\coloneqq x\otimes 1.
\end{equation}
With respect to the $\ZZ$-grading \eqref{eq::Z_grading_Weil}, one has, in general,
$
A_{\univ}(x)\in\widehat{W}^{1}_{\varepsilon}(\gg)$ and $F_{\univ}(x)\in\widehat{W}^{2}_{\varepsilon}(\gg).
$

To define a differential in terms of $\Auniv$ and $F_{\univ}$, we first introduce notation and define a bracket $[A,B]$ for $\ZZ\times \Gamma$-homogeneous maps $A,B\colon\gg\to\widehat{W}_{\varepsilon}(\gg)$. We identify $\gg$ with its restricted dual $\gg^{\ast}$ throughout. We define the completion $\widehat{\gg \otimes \gg}$ as the direct sum of the spaces $(\widehat{\gg\otimes \gg})_{(i,\gamma)}$, where
\begin{equation}
 \widehat{\gg\otimes\gg}_{(i,\gamma)}
=
\prod_{r\geq 0}
\bigoplus_{\alpha+\beta=\gamma}
((\gg\otimes\gg)_{-})_{(i-r,\alpha)}
\otimes
((\gg\otimes\gg)_{+})_{(r,\beta)}.
\end{equation}
Here $(\gg\otimes\gg)_{\pm}$ are defined with respect to the total $\ZZ$-grading induced by $\gg$: the symbol $+$ denotes the positive-degree part, and $-$ denotes the non-positive-degree part. This contains $\widehat{\bigwedge}_{\varepsilon}^{2}(\gg)$ canonically. The \emph{dual of the color Lie bracket}, that is, $[\cdot,\cdot]^{\ast} : \gg \to \widehat{\bigwedge}^{2}_{\varepsilon}(\gg)$ so that $\langle [\cdot,\cdot]^{\ast}(z),\,x\wedge y\rangle_{\wedge}
=
\langle z,\,[x,y]\rangle_{\wedge}$, is given by 
\begin{equation}
 [\cdot,\cdot]^{\ast}(z) = -\tfrac{1}{2}\sum_{a,b}\varepsilon(e_{a})\varepsilon(e_{b})\varepsilon(e_{b},e_{a})B(z,[e_{a},e_{b}])e^{a}\wedge e^{b}.
\end{equation}
Likewise, using $W_{\varepsilon}(\gg)\otimes W_{\varepsilon}(\gg)\cong (W_{\varepsilon}(\gg_{-})\otimes W_{\varepsilon}(\gg_{-})\otimes (W_{\varepsilon}(\gg_{+})\otimes W_{\varepsilon}(\gg_{+}))$ as $\ZZ$-graded $\varepsilon$-supersymmetric color superalgebras, we define the completion $\widehat{W_{\varepsilon}(\gg)\otimes W_{\varepsilon}(\gg)}$ as the direct sum of all spaces $\widehat{W_{\varepsilon}(\gg)\otimes W_{\varepsilon}(\gg)}_{(i,\gamma)}$, where 
\begin{equation}
 \widehat{W_{\varepsilon}(\gg)\otimes W_{\varepsilon}(\gg)}_{(i,\gamma)} = \prod_{r\geq 0} \bigoplus_{\alpha+\beta=\gamma} ((W_{\varepsilon}(\gg)\otimes W_{\varepsilon}(\gg))_{-})_{(i-r,\alpha)} \otimes ((W_{\varepsilon}(\gg)\otimes W_{\varepsilon}(\gg))_{+})_{(r,\beta)}.
\end{equation}
Here $(\Weil \otimes \Weil)_{\pm}$ are defined with respect to the total $\ZZ$-grading induced by $\gg$. For $\ZZ\times \Gamma$-homogeneous maps $A,B\colon\gg\to\widehat{W}_{\varepsilon}(\gg)$, the map $A\otimes B$ extends componentwise to a linear map $A\otimes B\colon \widehat{\gg\otimes\gg}\to\widehat{W_{\varepsilon}(\gg)\otimes W_{\varepsilon}(\gg)}.
$ Indeed, since $A$ and $B$ have fixed bidegree, $A\otimes B$ preserves the product decomposition up to a fixed shift. We then define $[A,B]$ as the composition
\begin{equation}\label{eq::def_bracket}
\gg\xrightarrow{[\cdot,\cdot]^{\ast}}\widehat{\bigwedge}^{2}_{\varepsilon}(\gg)\hookrightarrow \widehat{\gg \otimes \gg}\xrightarrow{A \otimes B} \widehat{W_{\varepsilon}(\gg) \otimes W_{\varepsilon}(\gg)}\xrightarrow{\text{mult}} \widehat{W}_{\varepsilon}(\gg).
\end{equation}

Altogether, we define the \emph{Weil differential} on generators by
\begin{equation}\label{eq::def_Weil_differential}
\begin{aligned}
\dw A_{\univ}(x)
&\coloneqq
F_{\univ}(x)-\tfrac12[A_{\univ},A_{\univ}](x)\\
&=
x\otimes 1
+\tfrac14\sum_{a,b}\varepsilon(e_{a})\varepsilon(e_{b})\varepsilon(e_b,e_a)B(x,[e_a,e_b])\,
1\otimes e^{a}\wedge e^{b},
\\
\dw F_{\univ}(x)
&\coloneqq
-[A_{\univ},F_{\univ}](x)\\
&=\tfrac12\sum_{a,b}\varepsilon(e_{a})\varepsilon(e_{b})\varepsilon(e_{b},e_{a})B(x,[e_a,e_b])\,
e^{b}\otimes e^{a},
\end{aligned}
\end{equation}
and extend it uniquely to an odd $\varepsilon$-derivation (\emph{cf.}~\eqref{eq::parity_epsilon_derivations}). These formulas are the color analogues of the corresponding formulas in the finite-dimensional case, now in the infinite-dimensional setting and with infinite sums. The following lemma is proved analogously to the finite-dimensional case by direct computation.

\begin{lemma}
One has
\begin{equation*}
(\dw)^{2}=0,\qquad [\dw,\iota_{x}]_{\Der}=L_{x},\qquad [\dw,L_{x}]_{\Der}=0.
\end{equation*}
\end{lemma}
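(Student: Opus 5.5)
The three identities will be proved by the standard strategy for derivations on a free $\varepsilon$-supercommutative algebra: reduce each identity to a check on generators. All operators involved, namely $\dw$, $\iota_x$, $L_x$ and their $\varepsilon'$-commutators, are $\varepsilon'$-derivations of $\widehat{W}_{\varepsilon}(\gg)$ of definite parity and $\ZZ\times\Gamma$-degree. Hence each is determined by its values on the componentwise generators $1\otimes y=A_{\univ}(y)$ and $y\otimes 1=F_{\univ}(y)$. Moreover, every such value involves only finitely many homogeneous components of $\gg$ in each fixed bidegree. So each check can be carried out inside a product component of the completion, where the sums in \eqref{eq::def_Weil_differential} are locally finite.

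\textbf{Step 1.} The graded commutator of two odd $\varepsilon'$-derivations is again an $\varepsilon'$-derivation, and $(\dw)^2=\tfrac12[\dw,\dw]_{\Der}$. So it suffices to compute $(\dw)^2$ on generators. Write $\dw A_{\univ}=F_{\univ}-\tfrac12[A_{\univ},A_{\univ}]$. Then
\[
(\dw)^2A_{\univ}=\dw F_{\univ}-\tfrac12\dw[A_{\univ},A_{\univ}]=-[A_{\univ},F_{\univ}]-[\dw A_{\univ},A_{\univ}].
\]
Here we use that the bracket \eqref{eq::def_bracket} satisfies the Leibniz rule with respect to $\dw$ and the graded symmetry $[A_{\univ},\dw A_{\univ}]=[\dw A_{\univ},A_{\univ}]$ for odd $A_{\univ}$. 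Substituting $\dw A_{\univ}$ gives
\[
-[A_{\univ},F_{\univ}]-[F_{\univ},A_{\univ}]+\tfrac12[[A_{\univ},A_{\univ}],A_{\univ}].
\]
The first two terms cancel by the graded symmetry of the bracket. The last term vanishes by the $\varepsilon$-Jacobi identity for $\gg$, transferred to the cobracket $[\cdot,\cdot]^\ast$ through the invariance of $B$, exactly as in the finite-dimensional color case \cite{Meyer}.

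For the other generator,
\[
(\dw)^2F_{\univ}=-[\dw A_{\univ},F_{\univ}]+[A_{\univ},\dw F_{\univ}]=-[F_{\univ},F_{\univ}]+\tfrac12[[A_{\univ},A_{\univ}],F_{\univ}]-[A_{\univ},[A_{\univ},F_{\univ}]].
\]
The term $[F_{\univ},F_{\univ}]$ vanishes because the $F$'s are even and $\varepsilon$-commute while $[\cdot,\cdot]^\ast$ lands in $\bigwedge^2$. The remaining two terms cancel by Jacobi.

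\textbf{Step 2.} For $[\dw,\iota_x]_{\Der}$, we evaluate on generators. On $A_{\univ}(y)$ we get
\[
\iota_x\dw A_{\univ}(y)+\dw(B(x,y))=\iota_x\bigl(y\otimes1-\tfrac12[A_{\univ},A_{\univ}](y)\bigr).
\]
The contraction of the quadratic term is computed from the explicit formula for $[\cdot,\cdot]^\ast$ using Lemma~\ref{lemm::basis_description}, and gives $1\otimes[x,y]=L_x(1\otimes y)$. On $F_{\univ}(y)$, $\iota_x$ kills $y\otimes1$, and $\iota_x(-[A_{\univ},F_{\univ}](y))$ contracts the $A$-slot, yielding $[x,y]\otimes1=L_x(y\otimes 1)$.

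\textbf{Step 3.} The identity $[\dw,L_x]_{\Der}=0$ follows formally. By Step 2, $L_x=[\dw,\iota_x]_{\Der}$, and by the graded Jacobi identity for derivations together with $(\dw)^2=0$ we get $[\dw,[\dw,\iota_x]]=[(\dw)^2,\iota_x]=0$. Alternatively, it suffices to check that $A_{\univ}$ and $F_{\univ}$ are $\gg$-equivariant maps and that $[\cdot,\cdot]^\ast$ is $\gg$-equivariant, by Lemma~\ref{lemm::properties_phi}~d) and invariance of $B$.

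\textbf{Main obstacle.} I expect the hardest part to be the sign bookkeeping in Step 1. One must verify that the bracket \eqref{eq::def_bracket} on the completed maps is graded-symmetric and Jacobi with the correct $\varepsilon'$ factors, including the factors $\varepsilon(e_a)\varepsilon(e_b)\varepsilon(e_b,e_a)$. One must also confirm that reordering in the infinite sums is legitimate. The latter holds because in each bidegree only finitely many terms contribute, so the finite-dimensional identities of \cite{Meyer} apply to the truncation to a finite nondegenerate subspace $W\subset\gg$, as in the proof of Lemma~\ref{lemm::form_epsilon_orthogonal}.
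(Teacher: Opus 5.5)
Your route is the one the paper has in mind; the paper only says the lemma follows by direct computation as in the finite-dimensional case. You check each identity on the generators $A_{\univ}(y)$, $F_{\univ}(y)$, dispose of $(\dw)^2$ via graded antisymmetry and the Jacobi identity, and obtain $[\dw,L_x]_{\Der}=0$ either by equivariance or from $[\dw,[\dw,\iota_x]_{\Der}]_{\Der}=[(\dw)^2,\iota_x]_{\Der}$. Steps 1 and 3 are fine, apart from one sign slip. For homogeneous maps $A,C\colon\gg\to\widehat{W}_{\varepsilon}(\gg)$ the bracket satisfies $[C,A]=-(-1)^{p(A)p(C)}[A,C]$. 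Hence for odd $A_{\univ}$ one has $[A_{\univ},\dw A_{\univ}]=-[\dw A_{\univ},A_{\univ}]$, not $+$. Combined with $\dw[A,A]=[\dw A,A]-[A,\dw A]$, this gives the identity $\dw[A_{\univ},A_{\univ}]=2[\dw A_{\univ},A_{\univ}]$ that you actually use. It is also the same rule you use to cancel $[A_{\univ},F_{\univ}]+[F_{\univ},A_{\univ}]$.

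The real problem is Step 2. It is the only step where the normalization of the bracket matters, and you assert its outcome instead of computing it. Take the bracket exactly as defined in the paper, $[A,C](x)=-\tfrac12\sum_{a,b}\varepsilon(e_a)\varepsilon(e_b)\varepsilon(e_b,e_a)B(x,[e_a,e_b])A(e^a)C(e^b)$, equivalently the coefficients $\tfrac14$ and $\tfrac12$ in \eqref{eq::def_Weil_differential}. Then the contraction gives $\iota_x[A_{\univ},A_{\univ}](y)=A_{\univ}([x,y])$ and $\iota_x[A_{\univ},F_{\univ}](y)=\tfrac12F_{\univ}([x,y])$. Hence $[\dw,\iota_x]_{\Der}=-\tfrac12L_x$ on both generators, not $L_x$. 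Already for an ordinary Lie algebra, \eqref{eq::def_Weil_differential} gives $\iota_y\dw(1\otimes x)=\tfrac14\cdot 2\sum_b B(x,[y,e_b])\,1\otimes e^b=-\tfrac12\,(1\otimes[y,x])$, while $\dw\iota_y(1\otimes x)=0$. So the middle identity cannot be proved with the definitions as written.

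To fix this, renormalize the bracket to $[A,C](x)=\sum_{a,b}\varepsilon(e_a)\varepsilon(e_b)\varepsilon(e_b,e_a)B(x,[e_a,e_b])A(e^a)C(e^b)$, i.e.\ multiply the quadratic terms in \eqref{eq::def_Weil_differential} by $-2$. This is the normalization the paper itself silently uses in the proof of Lemma~\ref{lemm::properties_connection}~a), where it finds $\iota_x[A,A](y)=-2A([x,y])$. With it, your Step 2 goes through: $-\tfrac12\iota_x[A_{\univ},A_{\univ}](y)=A_{\univ}([x,y])$ and $-\iota_x[A_{\univ},F_{\univ}](y)=F_{\univ}([x,y])$. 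The other two identities are unaffected. Each cancellation in your Step 1 works verbatim for any nonzero multiple of the bracket. Step 3 only needs $[\dw,\iota_x]_{\Der}$ to be a nonzero multiple of $L_x$, or no Step 2 input at all in the equivariance version.
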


This motivates the following notion of $\gg$-differential algebra, in the spirit of Meinrenken and Alekseev \cite{Lie_Theory_Alekseev_Meinrenken}. Let $\calA$ be a $\ZZ$-graded color superalgebra. Assume $\calA$ is $\varepsilon'$-commutative, that is,
\begin{equation}
xy = \varepsilon'(x,y)yx, \qquad \varepsilon'(x,y) = (-1)^{p(x)p(y)}\varepsilon(x,y)
\end{equation}
for all homogeneous $x,y\in \calA$. Moreover, assume that $\calA$ is quadratic, that is, equipped with a nondegenerate $\varepsilon'$-symmetric bilinear form $\langle\cdot,\cdot\rangle$ such that $\langle x,y\rangle=0$ unless $\vert x\vert +\vert y\vert = \ng$, $p(x)=p(y)$, and $\deg(x)+\deg(y)=0$.

We assume that $\calA$ admits a polarization $\calA$ with respect to the $\ZZ$-grading such that as a $\ZZ\times\Gamma\times\ZZ_2$-graded vector space $\calA\cong\calA_-\otimes\calA_+$, where
\begin{equation}
\calA_+\coloneqq\bigoplus_{i>0}\calA_i,\qquad
\calA_-\coloneqq\bigoplus_{i\leq 0}\calA_i .
\end{equation} Using the this decomposition and the bilinear form $\langle\cdot,\cdot\rangle$, one defines $\widehat{\calA}$ as the direct sum of the spaces $\widehat{\calA}_{(i,\gamma)}$, where
\begin{equation}
\widehat{\calA}_{(i,\gamma)}\coloneqq\prod_{r\geq 0}\bigoplus_{\alpha+\beta=\gamma}(\calA_{-})_{(i-r,\alpha)}\otimes(\calA_{+})_{(r,\beta)}.
\end{equation}
The $\ZZ_2$-grading of $\calA$ induces a $\ZZ_2$-grading on $\widehat{\calA}$.

\begin{definition}\label{def::completed_g_differential_algebra}
A \emph{completed $\gg$-differential algebra} is a tuple $(\widehat{\calA},\diff,L,\iota)$ consisting of the completion $\widehat{\calA}$ of an $\varepsilon'$-commutative quadratic $\ZZ$-graded color superalgebra $\calA$ admitting a polarization $\calA=\calA_{-}\otimes \calA_{+}$, $\diff\in\Der_{(\Gamma,\varepsilon')}^{\bar{1}}(\widehat{\calA})$, and linear maps $L\colon\gg\to\Der_{(\Gamma,\varepsilon')}^{\bar{0}}(\widehat{\calA})$ and $\iota\colon\gg\to\Der_{(\Gamma,\varepsilon')}^{\bar{1}}(\widehat{\calA})$ such that for all $x,y\in\gg$,
\begin{equation*}
\begin{gathered}
\diff^{2}=0,\qquad [\iota_{x},\iota_{y}]_{\Der}=0,\qquad [\diff,\iota_{x}]_{\Der}=L_{x},\\
[L_{x},\iota_{y}]_{\Der}=\iota_{[x,y]},\qquad [L_{x},L_{y}]_{\Der}=L_{[x,y]},\qquad [\diff,L_{x}]_{\Der}=0.
\end{gathered}
\end{equation*}
A \emph{homomorphism of completed $\gg$-differential algebras}
$\phi:(\widehat{\calA},d^{\widehat{\calA}},L^{\widehat{\calA}},\iota^{\widehat{\calA}})\to (\widehat{{\calB}},d^{\widehat{\calB}},L^{\widehat{\calB}},\iota^{\widehat{\calB}})$
is a homomorphism of $\ZZ$-graded color superalgebras
$\phi:\widehat{\calA}\to \widehat{\mathcal B}$
such that for every $x\in \gg$,
\[
\phi\circ d^{\widehat{\calA}}=d^{\widehat{\mathcal B}}\circ \phi,\qquad
\phi\circ L_x^{\widehat{\calA}}=L_x^{\widehat{\mathcal B}}\circ \phi,\qquad
\phi\circ \iota_x^{\widehat{\calA}}=\iota_x^{\widehat{\mathcal B}}\circ \phi.
\]
\end{definition}

Extracting the roles of $A_{\univ}$ and $F_{\univ}$, the definition of $\dw$, and the identities $\iota_{x}A_{\univ}(y)=B(x,y)$ and $L_{x}A_{\univ}(y)=1\otimes [x,y]=A_{\univ}([x,y])$, one is led to the notions of \emph{connection} and \emph{curvature}. For $\ZZ\times\Gamma$-homogeneous maps $A,B\colon\gg\to\widehat{\calA}$, we define the bracket $[A,B]$ exactly as in~\eqref{eq::def_bracket}. Concretely, if $(e_{a})$ is a homogeneous basis of $\gg$ with $B$-dual basis $(e^{a})$, then
\begin{equation}
[A,B](x)\coloneqq-\tfrac12\sum_{a,b}\varepsilon(e_{a})\varepsilon(e_{b})\varepsilon(e_{b},e_{a})B(x,[e_{a},e_{b}])\,A(e^{a})B(e^{b}).
\end{equation}

\begin{definition}
Let $(\widehat{\calA},\diff,\iota,L)$ be a completed $\gg$-differential algebra. A \emph{connection} is a $\ZZ\times\Gamma$-homogeneous map $A\colon \gg\to \widehat{\calA}_{\bar1}$ such that
\[
\iota_{x}(A(y))=B(x,y),\qquad L_{x}(A(y))=A([x,y])
\]
for all $x,y\in\gg$. The curvature of a connection $A$ is
\[
F^{A}\coloneqq \diff A+\tfrac12[A,A].
\]
\end{definition} 

\begin{remark} \begin{enumerate} \item If the superalgebra structure of $\widehat{\calA}$ is induced by a compatible $\ZZ$-grading $\widehat{\calA}^{k}$, then one requires
$
A:\gg\to\widehat{\calA}^{1}$ and $
F^{A}:\gg\to\widehat{\calA}^{2}.
$ \item
For any $\ZZ\times\Gamma$-homogeneous map $A\colon\gg\to\widehat{\calA}$, the Lie derivative and contraction induce maps
\[
(L_{x}A)(y)\coloneqq L_{x}(A(y))-A([x,y]),\qquad (\iota_{x}A)(y)\coloneqq \iota_{x}(A(y))
\]
for $x,y\in\gg$. Thus $A$ is a connection if and only if
\[
L_{x}A=0,\qquad \iota_{x}A=B(x,\cdot).
\]
\end{enumerate}
\end{remark}

The following lemma collects useful identities.

\begin{lemma}\label{lemm::properties_connection}
Let $\widehat{\calA}$ be a completed $\gg$-differential algebra with connection $A$ and curvature $F^{A}$.
\begin{enumerate}
\item[a)] One has $\iota_{x}F^{A}(y)=0$ and $L_{x}F^{A}(y)=F^{A}([x,y])$ for all $x,y\in\gg$.
\item[b)] $[A,[A,A]]=0$.
\item[c)] $\diff[A,A]=[\diff A,A]-[A,\diff A]$.
\item[d)] $\diff F^{A}+[A,F^{A}]=0$.
\end{enumerate}
\end{lemma}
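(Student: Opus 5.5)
Each of the four identities in Lemma~\ref{lemm::properties_connection} can be checked directly on the defining data. Every step may be carried out componentwise on the completion, because all maps involved have fixed bidegree. The tools are the bracket formula for $[A,B](x)$, the defining properties $\iota_xA(y)=B(x,y)$ and $L_xA(y)=A([x,y])$, the $\varepsilon'$-derivation rules for $\diff,\iota_x,L_x$, and the relation $[\diff,\iota_x]_{\Der}=L_x$. I would prove b) and c) first, since they are purely algebraic, and then use them to get a) and d).

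For b), write $[A,[A,A]](x)$ as a double sum. Expand it via $[\cdot,\cdot]^{\ast}$ applied twice, which gives a sum over triples of the tensor $B(x,[e_a,[e_b,e_c]])\,A(e^a)A(e^b)A(e^c)$ with color signs. Since the $A(e^a)$ are odd and $\widehat{\calA}$ is $\varepsilon'$-commutative, the product $A(e^a)A(e^b)A(e^c)$ is totally $\varepsilon$-symmetric in the indices. Symmetrizing the coefficient over cyclic permutations therefore produces exactly the $\varepsilon$-Jacobi identity, and the sum vanishes. For c), apply the odd derivation $\diff$ to $A(e^a)B(e^b)$ with $B=A$: Leibniz gives $\diff A(e^a)\,A(e^b)-A(e^a)\,\diff A(e^b)$, which is $[\diff A,A]-[A,\diff A]$ after reinserting the coefficients. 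The sign comes from passing $\diff$ past the odd $A(e^a)$, and the coefficients are constants killed by $\diff$.

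For a), use $[\diff,\iota_x]=L_x$ to get $\iota_x\diff A(y)=L_xA(y)-\diff\iota_xA(y)=A([x,y])-\diff B(x,y)=A([x,y])$. Next, $\iota_x$ is an odd derivation, so $\iota_x\tfrac12[A,A](y)$ contracts one factor. This gives $-\tfrac12\sum(\dots)B(x,[e_a,e_b])$-weighted terms, i.e.\ $\tfrac12\cdot 2\cdot(-A([x,y]))$. Lemma~\ref{lemm::basis_description} converts the contraction against the dual bracket into $A([x,y])$ up to a sign, and that sign I will fix using the explicit Weil case, where $F_{\univ}$ has $\iota_xF_{\univ}=0$, as a consistency check. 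Equivariance $L_xF^A(y)=F^A([x,y])$ follows because $L_x$ commutes with $\diff$, and $[A,A]$ is $\gg$-equivariant since $[\cdot,\cdot]^{\ast}$ is equivariant.

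For d), compute $\diff F^A=\diff^2A+\tfrac12\diff[A,A]$. By c) this equals $\tfrac12([\diff A,A]-[A,\diff A])$. Odd/odd $\varepsilon'$-symmetry of the bracket gives $[\diff A,A]=-[A,\diff A]$, hence $\diff F^A=-[A,\diff A]$. Substituting $\diff A=F^A-\tfrac12[A,A]$ yields $-[A,F^A]+\tfrac12[A,[A,A]]$, and b) kills the last term. The main obstacle is the sign bookkeeping with $\varepsilon$ and parity in b) and in the symmetry $[\diff A,A]=-[A,\diff A]$. I would handle it by verifying the symmetry of the coefficient tensor $\varepsilon(e_a)\varepsilon(e_b)\varepsilon(e_b,e_a)B(x,[e_a,e_b])$ under $a\leftrightarrow b$ once, and then reusing it throughout.
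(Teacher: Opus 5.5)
Your outline matches the paper's proof step for step. For a) you use $[\diff,\iota_x]_{\Der}=L_x$, a contraction of $[A,A]$, and equivariance. For b) you use cyclic relabelling and the $\varepsilon$-Jacobi identity, for c) the Leibniz rule, and for d) parts b), c) and $\diff A=F^A-\tfrac12[A,A]$.

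The gap is that the one computation carrying the content of $\iota_xF^A=0$, namely $\tfrac12\iota_x[A,A](y)=-A([x,y])$, is never done, and what is undetermined is not just a sign. The value depends on the normalization of $[A,B]$. Your ``$-\tfrac12\sum(\dots)$-weighted terms'' suggest you are using the prefactor $-\tfrac12$ of the displayed formula for $[A,B](x)$. With that prefactor the contraction gives $\iota_x[A,A](y)=+A([x,y])$, hence $\iota_xF^A(y)=\tfrac32A([x,y])\neq0$. The paper's computation in fact starts, without that prefactor, from
\[
[A,A](y)=\sum_{a,b}c_{ab}(y)\,A(e^a)A(e^b),\qquad c_{ab}(y)\coloneqq\varepsilon(e_a)\varepsilon(e_b)\varepsilon(e_b,e_a)B(y,[e_a,e_b]),
\]
and then
\[
\iota_x[A,A](y)=\sum_{a,b}c_{ab}(y)\bigl(B(x,e^a)A(e^b)-\varepsilon(x,e^a)B(x,e^b)A(e^a)\bigr).
\]
Using $[e_a,x]=-\varepsilon(e_a,x)[x,e_a]$ and $\varepsilon(x,e_a)\varepsilon(x,e^a)=1$, both pieces equal $\varepsilon(x)\sum_b\varepsilon(e_b)\varepsilon(e_b,x)B([y,x],e_b)A(e^b)$. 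On the nonzero terms $\varepsilon(e_b,x)=\varepsilon(-x-y,x)$, so Lemma~\ref{lemm::basis_description}~a) turns the total into $2\varepsilon(x)\varepsilon(-x-y,x)A([y,x])=-2A([x,y])$. You need to fix this normalization and carry out this computation.

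Your fallback, pinning the sign by comparison with $\widehat W_{\varepsilon}(\gg)$, does not replace the computation. There $F^{A_{\univ}}=F_{\univ}$ by the definition of $\dw$. So $\iota_xF_{\univ}=0$ is equivalent to $\iota_x\dw A_{\univ}(y)=A_{\univ}([x,y])$, i.e.\ to $[\dw,\iota_x]_{\Der}=L_x$ on the generators $A_{\univ}(y)$. Checking that is the very same contraction. With the explicit $\tfrac14$-expansion of $\dw A_{\univ}$ it would actually give $-\tfrac12L_x$. So the fallback relocates the computation rather than checking it.

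Two smaller slips:
\begin{itemize}
\item In d), $\diff A$ is even, so $[\diff A,A]=-[A,\diff A]$ is an even/odd identity. The minus sign comes from the antisymmetry $c_{ba}(y)\varepsilon(e^b,e^a)=-c_{ab}(y)$; moving $\diff A(e^a)$ past $A(e^b)$ contributes only a color factor. For two odd maps the bracket would be symmetric.
\item In b), products of three odd $A(e^a)$ are $\varepsilon$-skew, not $\varepsilon$-symmetric, under transpositions. Only cyclic permutations are used, and these carry just the factor $\varepsilon(e^a,e^b+e^c)$, so your argument survives.
\end{itemize}
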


\begin{proof} a) We first prove $\iota_{x}F^{A}=0$ for all $x\in \gg$. By definition 
\[
\begin{aligned}
\iota_{x}F^{A}(y)&=\iota_{x}\diff A(y) + \tfrac{1}{2}\iota_{x}[A,A](y) = L_{x}A(y)-\diff \iota_{x}A(y) +\tfrac{1}{2}\iota_{x}[A,A](y) \\ &= A([x,y])-\diff B(x,y) + \tfrac{1}{2}\iota_{x}[A,A](y) \\ &= A([x,y])+\tfrac{1}{2}\iota_{x}[A,A](y)
\end{aligned}
\]
and the statement follows if we prove $\tfrac{1}{2}\iota_{x}[A,A](y) = -A([x,y])$. Indeed, using skew-$\varepsilon$-symmetry of the bracket, Lemma~\ref{lemm::basis_description} and $B(x,y)=0$ unless $\vert x\vert +\vert y\vert = \ng$ one obtains for all homogeneous $x,y\in \gg$
\[
\begin{aligned}
 \iota_{x}[A,A](y) &= \sum_{a,b}\varepsilon(e_{a})\varepsilon(e_{b})\varepsilon(e_{b},e_{a})B(y,[e_{a},e_{b}])((\iota_{x}A(e^{a}))A(e^{b})-\varepsilon(x,e^{a})A(e^{a})\iota_{x}(A(e^{b}))) \\ &=\sum_{a,b}\varepsilon(e_{a})\varepsilon(e_{b})\varepsilon(e_{b},e_{a})B(y,[e_{a},e_{b}])(B(x,e^{a})A(e^{b})-\varepsilon(x,e^{a})B(x,e^{b})A(e^{a})) \\ &= \sum_{b}\varepsilon(x)\varepsilon(e_{b})\varepsilon(e_{b},x)B(y,[x,e_{b}])A(e^{b}) +\sum_{a,b}\varepsilon(e_{a})\varepsilon(e_{b})\varepsilon(x,e^{a})B(y,[e_{b},e_{a}])B(x,e^{b})A(e^{a}) \\ &= 2\sum_{a,b}\varepsilon(x)\varepsilon(e_{b})\varepsilon(e_{b},x)B(y,[x,e_{b}])A(e^{b}) \\&=2\varepsilon(x)\varepsilon(-x-y,x) A(\sum_{b}\varepsilon(e_{b})B([y,x],e_{b})e^{b}) \\ &=2\varepsilon(x)\varepsilon(-x-y,x)A([y,x]) \\ &= -2(\varepsilon(x)\varepsilon(-x-y,x)\varepsilon(y,x)A([x,y]) \\ &= -2A([x,y])
\end{aligned}
\]
where we used $\varepsilon(x,e^{a})=\varepsilon(x,e_{a})^{-1}=\varepsilon(e_{a},x)$ in the fourth equality and change summation indices. 

Further, since $A$ is a connection,
\[
L_{x}F^{A}=L_{x}\diff A+\tfrac12L_{x}[A,A]=\tfrac12L_{x}[A,A].
\]
It therefore suffices to show that $L_{x}[A,A]=0$. By definition of $[A,A]$, since $L_{x}$ is an even $\varepsilon$-derivation and $L_{x}A=0$, one has
\[
L_{x}[A,A](y)=[L_{x}A,A](y)+\varepsilon(x,A)[A,L_{x}A](y)+[A,A](L_{x}y)=[A,A]([x,y]).
\]
Hence
\[
(L_{x}[A,A])(y)=L_{x}([A,A](y))-[A,A]([x,y])=0.
\]

b) This follows from the $\varepsilon$-Jacobi identity. Indeed,
\[
[A,[A,A]](x)=\sum_{a,b,c}C_{abc}B(x,[e_{a},[e_{b},e_{c}]])A(e^{a})A(e^{b})A(e^{c}),
\]
where $C_{abc}=\varepsilon(e_{a})\varepsilon(e_{b})\varepsilon(e_{c})\varepsilon(e_{b}+e_{c},e_{a})\varepsilon(e_{c},e_{b})$. Since $\calA$ is $\varepsilon$-supersymmetric and $A$ is odd,
\[
C_{bca}A(e^{b})A(e^{c})A(e^{a})=\varepsilon(e_{a},e_{b}+e_{c})C_{abc}A(e^{a})A(e^{b})A(e^{c}),
\]
\[
C_{cab}A(e^{c})A(e^{a})A(e^{b})=\varepsilon(e_{c},e_{a}+e_{b})C_{abc}A(e^{a})A(e^{b})A(e^{c}).
\]
Hence
\[
\begin{aligned}
[A,[A,A]](x)&=\tfrac13\sum_{a,b,c}C_{abc}\Bigl(B(x,[e_{a},[e_{b},e_{c}]])+\varepsilon(e_{a},e_{b}+e_{c})B(x,[e_{b},[e_{c},e_{a}]])\\
&\qquad\qquad\qquad\qquad\qquad\qquad+\varepsilon(e_{c},e_{a}+e_{b})B(x,[e_{c},[e_{a},e_{b}]])\Bigr)A(e^{a})A(e^{b})A(e^{c})=0.
\end{aligned}
\]

c) This follows since $A$ and $\diff$ are odd and $\diff$ is an $\varepsilon$-derivation.

d) By b) and c),
\begin{equation*}
\begin{aligned}
\diff F^{A}&=\diff\Bigl(\diff A+\tfrac12[A,A]\Bigr)=\tfrac12\diff[A,A]=-[A,\diff A]\\
&=-[A,F^{A}-\tfrac12[A,A]]=-[A,F^{A}]+\tfrac12[A,[A,A]]=-[A,F^{A}].
\end{aligned} \qedhere
\end{equation*}
\end{proof}

The completed Weil algebra $\widehat{W}_{\varepsilon}(\gg)$ is the universal completed $\gg$-differential algebra equipped with a connection. Its universal connection is $\Auniv$, with universal curvature $F_{\univ}$, and every connection $A$ in a completed $\gg$-differential algebra $\widehat{\calA}$ arises uniquely as the image of $\Auniv$ under a morphism of completed $\gg$-differential algebras. Indeed, for any completed $\gg$-differential algebra $\widehat{\calA}$ with connection $A$ define on generators
\begin{equation}
 c_{A}(A_{\univ})\coloneqq A, \qquad c_{A}(F_{\univ})\coloneqq F^{A}.
\end{equation}
Since $\widehat{\calA}$ is $\varepsilon'$-commutative and $A_{\univ},F_{\univ}$ generate $\widehat{W}_{\varepsilon}(\gg)$ componentwise, the homogeneity of $A$ and $F^{A}$ implies that the assignments $A_{\univ}\mapsto A$ and $F_{\univ}\mapsto F^{A}$ define a unique (continuous) homomorphism $c_A:\widehat{W}_{\varepsilon}(\gg)\to\widehat{\calA}$. We call $c_{A}$ the \emph{Chern--Weil map} for completed $\gg$-differential algebras of $\ZZ$-graded color Lie algebras.

\begin{theorem}\label{thm::Chern_Weil_map}
 Let $(\widehat{\calA},d,L,\iota)$ be a completed $\gg$-differential algebra. Fix a connection $A: \gg \to \widehat{\calA}_{\bar{1}}$. Then there exists a unique completed $\gg$-differential algebra homomorphism $c_{A}:\widehat{W}_{\varepsilon}(\gg) \to \widehat{\calA}$ such that 
 \[
 c_{A}(\Auniv)=A, \qquad c_{A}(F_{\univ})=F^{A}.
 \]
\end{theorem}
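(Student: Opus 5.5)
The proof splits into existence and uniqueness of $c_A$, and then compatibility of $c_A$ with $d$, $L_x$ and $\iota_x$.

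\emph{Existence and uniqueness as an algebra map.} The Weil algebra $W_{\varepsilon}(\gg)$ is the free $\varepsilon'$-commutative algebra on the generators $1\otimes x$ (odd) and $x\otimes 1$ (even). Since $\widehat{\calA}$ is $\varepsilon'$-commutative, the assignments $1\otimes x\mapsto A(x)$ and $x\otimes 1\mapsto F^{A}(x)$ extend uniquely to an algebra homomorphism $W_{\varepsilon}(\gg)\to\widehat{\calA}$. Both $A$ and $F^{A}$ are $\ZZ\times\Gamma$-homogeneous of degree zero and respect parity, so this homomorphism preserves $\ZZ$-degree. It therefore extends componentwise, i.e.\ continuously, to the products defining $\widehat{W}_{\varepsilon}(\gg)_{(i,\gamma)}\to\widehat{\calA}_{(i,\gamma)}$. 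One has to check that a monomial in $W_{\varepsilon}(\gg_-)\otimes W_{\varepsilon}(\gg_+)$ lands in the correct factor of the product over $r$, so that infinite sums are sent to well-defined elements. This is where the polarization $\calA\cong\calA_-\otimes\calA_+$ enters. Uniqueness among continuous homomorphisms is then clear, because $\Auniv$ and $F_{\univ}$ generate $\widehat{W}_{\varepsilon}(\gg)$ componentwise. Moreover, the requirement $c_A\circ\dw=d\circ c_A$ together with $\dw\Auniv=F_{\univ}-\tfrac12[\Auniv,\Auniv]$ forces $c_A(F_{\univ})=F^{A}$, so uniqueness also holds if only $c_A(\Auniv)=A$ is prescribed.

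\emph{Compatibility.} On each side, $c_A\circ D$ and $D\circ c_A$ (for $D\in\{\dw,L_x,\iota_x\}$ with the corresponding operator on $\widehat{\calA}$) are $c_A$-derivations of the same parity. Hence they agree once they agree on generators, and continuity takes care of the completion. On generators the checks are as follows.
\begin{itemize}
\item For $\iota_x$: $c_A(\iota_x\Auniv(y))=B(x,y)=\iota_xA(y)$, and $c_A(\iota_xF_{\univ}(y))=0=\iota_xF^{A}(y)$ by Lemma~\ref{lemm::properties_connection}~a).
\item For $L_x$: $L_x\Auniv(y)=\Auniv([x,y])$ maps to $A([x,y])=L_xA(y)$, and $L_xF_{\univ}(y)=F_{\univ}([x,y])$ maps to $F^{A}([x,y])=L_xF^{A}(y)$, again by Lemma~\ref{lemm::properties_connection}~a).
\item For the differential: $c_A(\dw\Auniv)=F^{A}-\tfrac12[A,A]=dA$ by the definition of $F^{A}$. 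Here I use that $c_A$ commutes with the bracket construction~\eqref{eq::def_bracket}, i.e.\ $c_A([\Auniv,\Auniv])=[A,A]$ and $c_A([\Auniv,F_{\univ}])=[A,F^{A}]$, which holds because $c_A$ is multiplicative and continuous. Finally, $c_A(\dw F_{\univ})=-[A,F^{A}]=dF^{A}$ by Lemma~\ref{lemm::properties_connection}~d).
\end{itemize}

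\emph{Main obstacle.} The delicate step is the completion. The bracket $[A,A](x)$ is an infinite sum over dual basis pairs, and one must show that it, and $c_A$ applied to elements of $\widehat{W}_{\varepsilon}(\gg)$ in general, converges in $\widehat{\calA}$. One must also show that the derivation-extension argument is legitimate for infinite sums. I would handle this by fixing a bidegree $(i,\gamma)$ and $r$ and verifying that only finitely many basis terms contribute to each factor $(\calA_-)_{(i-r,\alpha)}\otimes(\calA_+)_{(r,\beta)}$. This uses $\deg e^a=-\deg e_a$, the finite-dimensionality of the components $\gg_i$, and the fact that the maps $A$ and $F^{A}$ preserve the $\ZZ$-degree. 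With this in hand, every identity reduces to the finite-dimensional computation on each component.
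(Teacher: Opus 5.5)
Your proposal is correct and follows essentially the same route as the paper. You define $c_A$ on the componentwise generators $\Auniv, F_{\univ}$ using $\varepsilon'$-commutativity and homogeneity, reduce compatibility with $\dw$, $L_x$, $\iota_x$ to generators, and check it there via the connection axioms and Lemma~\ref{lemm::properties_connection}~a),~d). Your two additions are correct and go beyond what the paper makes explicit: you observe that $c_A(\Auniv)=A$ alone already forces $c_A(F_{\univ})=F^{A}$, and you do the convergence bookkeeping in the completion, which the paper leaves implicit.
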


\begin{proof}
It remains to verify compatibility of $c_{A}$ with Lie derivatives, contractions, and differentials. Since $c_{A}$ is defined on the componentwise generators $A_{\univ}$ and $F_{\univ}$, it suffices to check this there. This is immediate from the defining properties of a connection and Lemma~\ref{lemm::properties_connection}. For instance,
\[
L_{x}c_{A}(A_{\univ})(y)=L_{x}A(y)=A([x,y])=c_{A}(A_{\univ})([x,y])=c_{A}(L_{x}A_{\univ})(y),
\]
and
\[
L_{x}c_{A}(F_{\univ})(y)=L_{x}F^{A}(y)=F^{A}([x,y])=c_{A}(F_{\univ})([x,y])=c_{A}(L_{x}F_{\univ})(y).
\]
For the differentials,
\[
c_{A}(\diff^{W}A_{\univ})=c_{A}\Bigl(F_{\univ}-\tfrac12[A_{\univ},A_{\univ}]\Bigr)=F^{A}-\tfrac12[A,A]=\diff A=\diff c_{A}(A_{\univ}),
\]
and
\[
c_{A}(\diff^{W}F_{\univ})=-c_{A}([A_{\univ},F_{\univ}])=-[c_{A}(A_{\univ}),c_{A}(F_{\univ})]=-[A,F^{A}]=\diff F^{A}=\diff c_{A}(F_{\univ}).
\]
The compatibility with contractions is verified similarly.
\end{proof}

\subsubsection{Chern--Simons Elements and the Quadratic Case} 
Let $\widehat{\calA}$ be a completed $\gg$-differential algebra, let $A$ be a connection on $\widehat{\calA}$, and let $F^{A}$ be its curvature. For $m\geq 1$, let $\widehat S^{m}(\gg^{\ast})^{\gg}$ denote the completed space of invariant symmetric $m$-linear forms on $\gg$. For $P\in\widehat S^{m}(\gg^{\ast})^{\gg}$, the element
$
P(F^{A})\in\widehat{\calA}_{\bar{0}}
$
is called the \emph{characteristic element} associated with $P$ and $A$. Explicitly, if $(e_{a})$ is a homogeneous basis of $\gg$ with $B$-dual basis $(e^{a})$, then
\begin{equation}\label{eq::characteristic_element}
P(F^{A})=\sum_{a_{1},\ldots,a_{m}}P(e_{a_{1}},\ldots,e_{a_{m}})\,F^{A}(e^{a_{1}})\cdots F^{A}(e^{a_{m}}),
\end{equation}
where the sum is taken in the completed algebra $\widehat{\calA}$.

\begin{lemma}
The characteristic element $P(F^{A})$ satisfies:
\begin{enumerate}
\item[a)] $\iota_{x}(P(F^{A}))=0$ and $L_{x}P(F^{A})=0$ for all $x\in\gg$.
\item[b)] $\diff P(F^{A})=0$.
\end{enumerate}
\end{lemma}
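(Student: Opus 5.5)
The plan is to reduce everything to the behaviour of the curvature map $F^{A}$ established in Lemma~\ref{lemm::properties_connection}, and then use that $\iota_x$, $L_x$ are derivations and $P$ is invariant. By~\eqref{eq::characteristic_element}, $P(F^{A})$ is a (componentwise convergent) sum of products $F^{A}(e^{a_1})\cdots F^{A}(e^{a_m})$. Each factor is even, and $\iota_x$, $L_x$, $\diff$ act degreewise on $\widehat{\calA}$. It therefore suffices to apply them factor by factor via the Leibniz rule~\eqref{eq::parity_epsilon_derivations} in each fixed bidegree; only finitely many terms contribute to a given component once the $r$-index is fixed, so no convergence issue arises.

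For a), the contraction statement is immediate. Since $\iota_x$ is an odd $\varepsilon'$-derivation and $\iota_x F^{A}(y)=0$ by Lemma~\ref{lemm::properties_connection}~a), every term of the Leibniz expansion vanishes.

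For the Lie derivative, I would use $L_x F^{A}(e^{a})=F^{A}([x,e^{a}])$ from Lemma~\ref{lemm::properties_connection}~a). This gives
\[
L_x P(F^{A})=\sum_{k}\sum_{a_1,\dots,a_m}\varepsilon(x,e^{a_1}+\cdots+e^{a_{k-1}})P(e_{a_1},\dots,e_{a_m})F^{A}(e^{a_1})\cdots F^{A}([x,e^{a_k}])\cdots F^{A}(e^{a_m}).
\]
In the $k$-th slot, I would rewrite $\sum_a P(\dots,e_a,\dots)\,[x,e^a]$ using the invariance of $\sum_a \varepsilon(e_a)e_a\otimes e^a$ (Lemma~\ref{lemm::basis_description}~c)). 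This transfers the action of $x$ onto the corresponding argument of $P$, with $[x,e^a]$ replaced by $-\varepsilon(x,e_a)[x,e_a]$ paired with $e^a$. The total then becomes $-\sum_{a}(L_xP)(e_{a_1},\dots,e_{a_m})F^{A}(e^{a_1})\cdots F^{A}(e^{a_m})$, which is zero because $P\in\widehat S^{m}(\gg^{\ast})^{\gg}$. I expect the sign bookkeeping here to be the main, though routine, obstacle. I would handle it cleanly by viewing the map $\gg^{\otimes m}\to\widehat{\calA}$, $x_1\otimes\cdots\otimes x_m\mapsto F^{A}(x_1)\cdots F^{A}(x_m)$, as $\gg$-equivariant, and $P(F^{A})$ as its contraction with the invariant tensor dual to $P$. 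Equivariance of contraction against an invariant tensor then gives $L_xP(F^{A})=0$ without explicit signs.

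For b), I would use Lemma~\ref{lemm::properties_connection}~d), which gives $\diff F^{A}=-[A,F^{A}]$. Expanding $\diff P(F^{A})$ by Leibniz yields $-\sum_k(\cdots)[A,F^{A}](e^{a_k})(\cdots)$. Inserting the explicit formula for the bracket, and moving the odd factor $A(e^{b})$ to the front via $\varepsilon'$-commutativity (the $F^{A}$ are even, so only $\varepsilon$-factors appear), this becomes $-\sum_b \varepsilon(\cdots)A(e^{b})\cdot\bigl(\text{the expression }L_{e_b}\text{ applied formally to the }F\text{-slots}\bigr)$. This is exactly the same sum as in a), with $x$ replaced by $e_b$ and weighted by $A(e^{b})$. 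It therefore vanishes by the same invariance of $P$. Alternatively, and more quickly, I would argue that $\diff P(F^{A})$ is again $\iota$-horizontal and that for any basic element $\iota_x\diff\omega=L_x\omega-\diff\iota_x\omega=0$. That alone only shows horizontality, so I would rely on the explicit computation above, with the equivariance formalism absorbing the signs.
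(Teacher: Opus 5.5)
Your proposal is correct and follows the same route as the paper. Part a) comes from $\iota_xF^{A}=0$, $L_xF^{A}(y)=F^{A}([x,y])$ and the $\gg$-invariance of $P$, and part b) comes from $\diff F^{A}=-[A,F^{A}]$ (Lemma~\ref{lemm::properties_connection}~d)) together with invariance of $P$. What you add is only the bookkeeping the paper leaves implicit. After writing $[A,F^{A}](y)$ as a constant multiple of $\sum_b A(e^{b})F^{A}([e_b,y])$ and moving the odd factor $A(e^{b})$ to the front, the $\varepsilon$-signs are exactly those of the Lie-derivative expansion in a) with $x=e_b$, so each $b$-term vanishes by invariance.
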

\begin{proof}
For a), $\iota_{x}(P(F^{A}))=0$ since $\iota_{x}F^{A}=0$ for all $x\in\gg$, and $L_{x}P(F^{A})=0$ since $P$ is $\gg$-invariant. For b), one uses $\diff F^{A}+[A,F^{A}]=0$ from Lemma~\ref{lemm::properties_connection} and the invariance of $P$.
\end{proof}

\begin{definition}
Let $P\in\widehat{S}^{m}(\gg^{\ast})^{\gg}$, and let $A$ be a connection on a completed $\gg$-differential algebra $\widehat{\calA}$. A \emph{Chern--Simons element} for $P$ and $A$ is an element $\operatorname{CS}_{P}(A)\in\widehat{\calA}_{\bar1}$ such that
\begin{equation}\label{eq:general-CS-definition}
\diff\operatorname{CS}_{P}(A)=P(F^{A}).
\end{equation}
\end{definition}

\begin{remark}
The Chern--Simons element $\CS_{P}(A)$ is in general not unique: if $\operatorname{CS}_{P}(A)$ and $\operatorname{CS}'_{P}(A)$ both satisfy~\eqref{eq:general-CS-definition}, then
\[
\diff\bigl(\operatorname{CS}_{P}(A)-\operatorname{CS}'_{P}(A)\bigr)=0.
\]
\end{remark}

\begin{proposition}
Let $A$ be a connection on a completed $\gg$-differential algebra $\widehat{\calA}$, and let
$
c_A:\widehat{W}_{\varepsilon}(\gg)\longrightarrow \widehat{\calA}
$
be the associated Chern--Weil homomorphism. Let $P\in \widehat S^{m}(\gg^{\ast})^{\gg}$. Assume that there exists a Chern--Simons element for $P$ and $\Auniv$ in $\widehat{W}_{\varepsilon}(\gg)$. Then
\[
\operatorname{CS}_{P}(A)
\coloneqq
c_A\bigl(\operatorname{CS}_{P}(A_{\univ})\bigr)
\]
is a Chern--Simons element in $\widehat{\calA}$ for $P$ and $A$.
\end{proposition}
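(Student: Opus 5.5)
The argument is formal and uses only that $c_A$ is a homomorphism of completed $\gg$-differential algebras (Theorem~\ref{thm::Chern_Weil_map}). I would proceed in three steps.

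\emph{Step 1: parity.} First check that $\operatorname{CS}_{P}(A)\coloneqq c_A(\operatorname{CS}_P(A_{\univ}))$ lies in $\widehat{\calA}_{\bar1}$. This holds because $c_A$ is a homomorphism of $\ZZ$-graded color superalgebras. It sends the odd generators $A_{\univ}(x)$ to the odd elements $A(x)$, and the even generators $F_{\univ}(x)$ to $F^{A}(x)$. These are even, since $\diff$ is odd, $A$ is odd, and $[A,A]$ is a product of two odd elements. Since $c_A$ preserves parity on componentwise generators and is a continuous algebra map, it preserves parity on all of $\widehat{W}_{\varepsilon}(\gg)$.

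\emph{Step 2: intertwining the differentials.} By Theorem~\ref{thm::Chern_Weil_map}, $c_A\circ\dw=\diff\circ c_A$. Hence
\[
\diff\,\operatorname{CS}_P(A)=c_A\bigl(\dw\operatorname{CS}_P(A_{\univ})\bigr)=c_A\bigl(P(F^{A_{\univ}})\bigr).
\]

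\emph{Step 3: characteristic elements.} It remains to show $c_A(P(F^{A_{\univ}}))=P(F^A)$. First note that the curvature of $A_{\univ}$ in $\widehat{W}_{\varepsilon}(\gg)$ is $F_{\univ}$: by \eqref{eq::def_Weil_differential}, $\dw A_{\univ}+\tfrac12[A_{\univ},A_{\univ}]=F_{\univ}$. By the explicit formula \eqref{eq::characteristic_element},
\[
P(F_{\univ})=\sum P(e_{a_1},\dots,e_{a_m})\,F_{\univ}(e^{a_1})\cdots F_{\univ}(e^{a_m}).
\]
Applying $c_A$ termwise and using multiplicativity together with $c_A(F_{\univ})=F^A$ yields exactly $P(F^A)$.

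The main obstacle is justifying the termwise application in Step 3. The sum is infinite and only converges in the completion, so one needs $c_A$ to be continuous, that is, compatible with the product decomposition $\prod_{r\ge0}$ in each bidegree. I would argue this from the construction of $c_A$. It is defined degreewise on componentwise generators of fixed $\ZZ\times\Gamma$-bidegree, so it maps each $(i,\gamma)$-component to the $(i,\gamma)$-component, and each $r$-factor to finitely many factors of $\widehat{\calA}$ in a way compatible with the product. The sum over $a_1,\dots,a_m$ then splits into finite pieces in each bidegree factor, and $c_A$ commutes with these. With this continuity granted, as asserted where $c_A$ is constructed, the remaining computation is immediate.
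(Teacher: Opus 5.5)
Your proposal is correct and is essentially the paper's proof. The paper's argument consists exactly of your Steps 2 and 3, using $\diff\circ c_A=c_A\circ\dw$ and $c_A(F_{\univ})=F^{A}$, and leaves the parity check, the identity $F^{A_{\univ}}=F_{\univ}$, and the continuity of $c_A$ implicit.

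One small correction to your continuity heuristic: $c_A$ need not map each $r$-factor into finitely many factors of $\widehat{\calA}$, since already $F^{A}(x)$ can be spread over infinitely many factors through $\tfrac12[A,A](x)$. What termwise application actually requires is that each factor of $\widehat{\calA}$ receives contributions from only finitely many $r$-factors. This does hold, because homogeneity of $A$ and $F^{A}$ sends the $r$-th factor into factors of index at least $r$.
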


\begin{proof}
Since $c_A$ is a morphism of differential algebras, one has
$
\diff c_A=c_A \dw.
$
Therefore
\[
\begin{aligned}
\diff \operatorname{CS}_{P}(A)
=
\diff c_A\bigl(\operatorname{CS}_{P}(A_{\univ})\bigr)=
c_A\bigl(\dw\operatorname{CS}_{P}(A_{\univ})\bigr)=
c_A\bigl(P(F_{\univ})\bigr).
\end{aligned}
\]
By the defining property of the Chern--Weil homomorphism,
$
c_A(F_{\univ})=F^A.
$
Hence
$
c_A\bigl(P(F_{\univ})\bigr)=P(F^A)
$ and the statement follows.
\end{proof}

Since $\gg$ is quadratic the invariant $\varepsilon$-symmetric form $B$ induces a polynomial $P_{B}\in \widehat{S}^{2}_{\varepsilon}(\gg^{\ast})$. We show that the Chern--Simons element for $P_{B}$ and $\Auniv$ is 
\begin{equation}
 \calD \coloneqq \sum_{a}e^{a}\otimes e_{a}+1\otimes \phi \in \widehat{W}_{\varepsilon}(\gg).
\end{equation}
This element is central in our article since its quantization yields the cubic Dirac operator. The following properties are direct consequences of Lemma~\ref{lemm::form_Ap} and Lemma~\ref{lemm::properties_phi}.

\begin{lemma}
\begin{enumerate}
\item[a)] $\calD$ is odd, that is, $p(\calD)=\bar{1}$, and it has $\Gamma$-degree $\ng$.
 \item[b)] $\calD$ is $\gg$-invariant, that is, $L_{x}\calD=0$ for all $x \in \gg$.
 \item[c)] $\iota_{x}\calD=x\otimes 1-\tfrac{1}{2}(1\otimes \uplambda(\ad_{x}))$.
 \item[d)] It squares to the trivial element, that is, $\calD^{2}=0$. 
\end{enumerate}
\end{lemma}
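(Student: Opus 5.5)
Parts a) through d) can each be reduced to an identity that is already in hand. Throughout, $\calD = p + 1\otimes\phi$ with $p=\sum_a e^a\otimes e_a$. All sums are understood componentwise in $\widehat{W}_{\varepsilon}(\gg)$; each graded component involves only finitely many nonzero terms, so the manipulations below are legitimate degreewise.

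For a), observe that $e^a\otimes 1$ is even and $1\otimes e_a$ is odd. Hence every summand $e^a\otimes e_a=(e^a\otimes 1)(1\otimes e_a)$ is odd, and so is $1\otimes\phi$, since $\phi\in\widehat{\bigwedge}^3_\varepsilon(\gg)$. Moreover $\vert e^a\vert+\vert e_a\vert=\ng$, and $\phi$ has degree $(0,\ng)$.

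For b), use that $L_x$ is an even $\varepsilon'$-derivation acting diagonally. The element $\sum_a e^a\otimes e_a$ is the image of the $\ad$-invariant canonical tensor of Lemma~\ref{lemm::basis_description} c). It is therefore killed by $L_x$, by the same computation that proves Lemma~\ref{lemm::form_Ap} a). The cubic term is killed by Lemma~\ref{lemm::properties_phi} d).

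For c), recall that $\iota_x$ is an odd derivation that vanishes on $S_\varepsilon(\gg)\otimes 1$. It gives
\[
\iota_x(e^a\otimes e_a)=\varepsilon(x,e^a)B(x,e_a)\,e^a\otimes 1 .
\]
Since only terms with $\vert e_a\vert=-\vert x\vert$ survive, $\varepsilon(x,e^a)=\varepsilon(x)$. By Lemma~\ref{lemm::basis_description} a), the sum is therefore $\sum_a\varepsilon(e_a)B(x,e_a)e^a\otimes 1=x\otimes 1$, after checking that the sign bookkeeping matches. For the cubic term, Lemma~\ref{lemm::properties_phi} a) gives $\iota_x\phi=-\tfrac12\uplambda(\ad_x)$.

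For d), the most conceptual route uses that $\calD$ is odd and $\widehat{W}_{\varepsilon}(\gg)$ is $\varepsilon'$-commutative. Then $\calD^2=\tfrac12[\calD,\calD]$, and for a homogeneous odd element of degree $\ng$ the relation $\calD\calD=\varepsilon'(\calD,\calD)\calD\calD=-\calD^2$ forces $\calD^2=0$. Here $\varepsilon'(\calD,\calD)=(-1)\cdot\varepsilon(\ng,\ng)=-1$.

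The only subtle point is that $\calD$ is an infinite sum, so the product has to be formed in the completion. I will therefore check that the product $\widehat{W}\times\widehat{W}\to\widehat{W}$ is still $\varepsilon'$-commutative; this holds because it is defined degreewise from the $\varepsilon'$-commutative product on $W_\varepsilon(\gg_-)\otimes W_\varepsilon(\gg_+)$. If one prefers to avoid this, an alternative is to contract: $\iota_x(\calD^2)=[\iota_x\calD,\calD]$, and $\iota_x\calD$ is even, so this reduces to computing $L$-type brackets. The commutativity argument is cleaner, so I would use it.

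The main obstacle I expect is the sign computation in c). There one must confirm that the $\varepsilon$-factors coming from moving $\iota_x$ past $e^a\otimes 1$ combine with Lemma~\ref{lemm::basis_description} a) to give exactly $x\otimes 1$, with no stray factor $\varepsilon(x)$.
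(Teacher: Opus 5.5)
Your proposal is correct and follows the paper's (largely implicit) argument. Parts a) and b) come from the gradings, the $\ad$-invariance of $\sum_a e^a\otimes e_a$ and Lemma~\ref{lemm::properties_phi}~d). Part c) is the same contraction computation the paper carries out for $\iota_x\Dirac'$ in Lemma~\ref{lemm::properties_Dirac'}~c), combined with Lemma~\ref{lemm::properties_phi}~a). Part d) follows from the $\varepsilon'$-commutativity of the degreewise-defined product on $\widehat{W}_{\varepsilon}(\gg)$.

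The sign issue you flag in c) is harmless. On the surviving terms $\vert x\vert=\vert e^{a}\vert$, so $\varepsilon(x,e^{a})=\varepsilon(e^{a})=\varepsilon(e_{a})$, which is exactly the factor in Lemma~\ref{lemm::basis_description}~a).
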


To prove that $\calD$ is a Chern--Simons element, we need the following lemma which follows by definition and the fact that $B$ is a nondegenerate $\varepsilon$-symmetric invariant form on $\gg$.

\begin{lemma}
 One has for homogeneous $A,B,C : \gg \to \calA$
 \[
 P_{B}([A,B],C)=P_{B}(A,[B,C]).
 \]
\end{lemma}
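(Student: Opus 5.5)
The plan is to reduce both sides to one and the same triple sum. In that sum the factors $A(e^{a})B(e^{b})C(e^{c})$ appear in the fixed order $A,B,C$, and they are weighted by the structure constants $f_{abc}=B([e_{a},e_{b}],e_{c})$. Because the order of the three factors never changes, no use of $\varepsilon'$-commutativity of $\calA$ is needed. The whole proof then rests on invariance of $B$ and on the basis identities of Lemma~\ref{lemm::basis_description}.

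First, I make $P_{B}$ on maps explicit, in the same spirit as~\eqref{eq::characteristic_element}. For homogeneous $X,Y\colon\gg\to\calA$ I set
\[
P_{B}(X,Y)=\sum_{a,b}P_{B}(e_{a},e_{b})X(e^{a})Y(e^{b})=\sum_{a}\varepsilon(e_{a})X(e_{a})Y(e^{a}),
\]
using $B(e_{a},e_{b})$ and Lemma~\ref{lemm::basis_description}\,a). Next I substitute the explicit formula for $[A,B]$:
\[
P_{B}([A,B],C)=-\tfrac12\sum_{a,b,c}\varepsilon(e_{c})\varepsilon(e_{a})\varepsilon(e_{b})\varepsilon(e_{b},e_{a})\,B(e_{c},[e_{a},e_{b}])\,A(e^{a})B(e^{b})C(e^{c}).
\]
Here the $c$-sum is resummed via Lemma~\ref{lemm::basis_description}\,a), if convenient, into $C$ applied to a single vector. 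In the same way,
\[
P_{B}(A,[B,C])=-\tfrac12\sum_{a,b,c}\varepsilon(e_{a})\varepsilon(e_{b})\varepsilon(e_{c})\varepsilon(e_{c},e_{b})\,B(e_{a},[e_{b},e_{c}])\,A(e^{a})B(e^{b})C(e^{c}),
\]
after relabelling indices so that the monomial $A(e^{a})B(e^{b})C(e^{c})$ is literally the same in both expressions.

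The second step is to compare the coefficients term by term. On the left, $\varepsilon$-symmetry of $B$ turns $B(e_{c},[e_{a},e_{b}])$ into $B([e_{a},e_{b}],e_{c})$. On the right, invariance $B(e_{a},[e_{b},e_{c}])=B([e_{a},e_{b}],e_{c})$ gives the same structure constant. Since these terms vanish unless $|e_{a}|+|e_{b}|+|e_{c}|=\ng$, I can use the commutation factor identities~\eqref{eq::commutation_factor} to rewrite the leftover sign factors. In particular $\varepsilon(e_{c},e_{a}+e_{b})=\varepsilon(e_{c})$ on the support, and $\varepsilon(e_{a})=\varepsilon(e_{a},e^{a})$. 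With these, the prefactors $\varepsilon(e_{c})\varepsilon(e_{b},e_{a})$ on the left and $\varepsilon(e_{a})\varepsilon(e_{c},e_{b})$ on the right should become equal.

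Finally, I address well-definedness in the completion. Since $A,B,C$ have fixed bidegree and $f_{abc}=0$ unless $\deg e_{a}+\deg e_{b}+\deg e_{c}=0$, each $\ZZ\times\Gamma$ component of either side involves only finitely many terms after the $\prod_{r}$ decomposition. The identity therefore holds componentwise, exactly as for $\calD$ and $\phi$.

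I expect the main obstacle to be the sign bookkeeping in the second step. The dual basis flips $\Gamma$-degrees, so relabelling indices and passing between $e_{c}$ and $e^{c}$ produces factors $\varepsilon(e_{c})$ that must cancel exactly. If the direct comparison becomes messy, I would instead specialise to $x$-evaluated forms: I would first prove the trilinear identity $B([x,y],z)=B(x,[y,z])$ and then transport it through the definition~\eqref{eq::def_bracket} of the bracket. This would amount to checking that the composite $(\mathrm{id}\otimes[\cdot,\cdot]^{\ast})\circ p$ equals $([\cdot,\cdot]^{\ast}\otimes\mathrm{id})\circ p$ as elements of $\widehat{\gg^{\otimes 3}}$, where $p$ is the canonical tensor. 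This is a sign-free statement about the invariant $3$-tensor $\phi$ of Lemma~\ref{lemm::properties_phi}.
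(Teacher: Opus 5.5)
Your strategy is exactly what the paper's one-line justification (``by definition and invariance of $B$'') has in mind. You expand both sides into one triple sum of monomials $A(e^{a})B(e^{b})C(e^{c})$ in fixed order, weighted by $f_{abc}=B([e_{a},e_{b}],e_{c})$, and then use invariance and $\varepsilon$-symmetry. But the whole content of the lemma is the sign bookkeeping, and as written yours does not close.

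\textbf{The relabelling step.} Passing from $P_{B}(A,[B,C])=\sum_{a}\varepsilon(e_{a})A(e_{a})[B,C](e^{a})$ to a sum with $A(e^{a})$ is not a relabelling. Exchanging the roles of $e_{a}$ and $e^{a}$ uses Lemma~\ref{lemm::basis_description}\,c), $\sum_{a}\varepsilon(e_{a})e_{a}\otimes e^{a}=\sum_{a}e^{a}\otimes e_{a}$, and this absorbs the factor $\varepsilon(e_{a})$. The correct expansion is
\[
P_{B}(A,[B,C])=\sum_{a}A(e^{a})[B,C](e_{a})=-\tfrac12\sum_{a,b,c}\varepsilon(e_{b})\varepsilon(e_{c})\varepsilon(e_{c},e_{b})\,B(e_{a},[e_{b},e_{c}])\,A(e^{a})B(e^{b})C(e^{c}).
\]
Your version carries an extra $\varepsilon(e_{a})$. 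After $B(e_{c},[e_{a},e_{b}])=\varepsilon(e_{c})f_{abc}$ and $B(e_{a},[e_{b},e_{c}])=f_{abc}$, your two coefficients differ by exactly $\varepsilon(e_{a})$ on the support $|e_{a}|+|e_{b}|+|e_{c}|=\ng$. That is $-1$ for odd $e_{a}$ in the super case, so the comparison you announce as ``should become equal'' fails for the formulas you actually wrote. The identity you name, $\varepsilon(e_{c})\varepsilon(e_{b},e_{a})=\varepsilon(e_{a})\varepsilon(e_{c},e_{b})$, does hold on the support. It is precisely what the corrected formulas require, but not what your displayed ones require.

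\textbf{The convention for $P_{B}$.} This choice is equally non-cosmetic. If you literally insert $P_{B}(e_{a},e_{b})=B(e_{a},e_{b})$ into the double sum, Lemma~\ref{lemm::basis_description}\,a) yields $\sum_{a}\varepsilon(e_{a})X(e^{a})Y(e_{a})=\sum_{a}X(e_{a})Y(e^{a})$, not your formula. With that convention the coefficients of the two sides differ by $\varepsilon(e_{b})$. The formula you wrote, $P_{B}(X,Y)=\sum_{a}\varepsilon(e_{a})X(e_{a})Y(e^{a})=\sum_{a}X(e^{a})Y(e_{a})$, is the right one. It is the one giving $P_{B}(\Auniv,F_{\univ})=\sum_{a}e^{a}\otimes e_{a}$, as needed in Proposition~\ref{prop::D_is_CS_element}, so fix it as a definition rather than derive it.

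\textbf{The corrected check.} Set $c_{ab}\coloneqq\varepsilon(e_{a})\varepsilon(e_{b})\varepsilon(e_{b},e_{a})$. The left side has coefficient $-\tfrac12\varepsilon(e_{c})c_{ab}B(e_{c},[e_{a},e_{b}])=-\tfrac12c_{ab}f_{abc}$, and the right side has $-\tfrac12c_{bc}f_{abc}$. On the support, $\varepsilon(e_{c})=\varepsilon(e_{a})\varepsilon(e_{b})$ and $\varepsilon(e_{c},e_{b})=\varepsilon(e_{b},e_{a})\varepsilon(e_{b})$, which give $c_{bc}=c_{ab}$.

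Your fallback via the canonical tensor is the same identity in different clothing. It is not sign-free in the color setting, so it does not let you skip this check.
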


\begin{proposition} \label{prop::D_is_CS_element}
 A Chern--Simons element for the quadratic polynomials $P_{B}$ induced by the quadratic form $B$ and the universal connection $A_{\univ}$ of the Weil algebra is $\calD$, that is
 $$
 \CS_{P_{B}}(\Auniv)=\calD.
 $$
\end{proposition}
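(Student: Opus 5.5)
We must show $\dw\calD=P_B(F_{\univ})$. Here $P_B(F_{\univ})=\sum_a F_{\univ}(e^a)F_{\univ}(e_a)$ (up to the normalization of $P_B$ fixed by \eqref{eq::characteristic_element}), i.e. the element $p=\sum_a e^ae_a\otimes 1$ in $\widehat{S}^{2}_{\varepsilon}(\gg)\otimes 1$. The plan is to compute $\dw$ separately on the two summands $\sum_a e^a\otimes e_a=\sum_a F_{\univ}(e^a)A_{\univ}(e_a)$ and $1\otimes\phi$, using the odd $\varepsilon'$-derivation property of $\dw$ together with the defining formulas \eqref{eq::def_Weil_differential}.

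\textbf{Linear term.} By the Leibniz rule,
\[
\dw\Bigl(\sum_a F_{\univ}(e^a)A_{\univ}(e_a)\Bigr)=\sum_a \dw F_{\univ}(e^a)\,A_{\univ}(e_a)+\sum_a F_{\univ}(e^a)\,\dw A_{\univ}(e_a),
\]
with no sign in the second term since $F_{\univ}$ is even and $\Gamma$-signs cancel against the $B$-duality pairing. Inserting $\dw A_{\univ}=F_{\univ}-\tfrac12[A_{\univ},A_{\univ}]$ gives $P_B(F_{\univ})$ plus a term $-\tfrac12\sum_a F_{\univ}(e^a)[A_{\univ},A_{\univ}](e_a)$. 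Inserting $\dw F_{\univ}=-[A_{\univ},F_{\univ}]$ gives $-\sum_a[A_{\univ},F_{\univ}](e^a)A_{\univ}(e_a)$. Both correction terms are of type $S^1\otimes\bigwedge^2$ and involve the structure constants $f_{abc}$. Using the invariance identity $P_B([A,B],C)=P_B(A,[B,C])$ from the preceding lemma and $\varepsilon'$-commutativity, I expect them to combine into $-P_B(F_{\univ},[A_{\univ},A_{\univ}])$, up to a factor to be tracked carefully.

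\textbf{Cubic term.} Since $\dw$ is a derivation, $\dw(1\otimes\phi)$ is obtained by substituting $\dw A_{\univ}$ into each factor of $\phi$. By Lemma~\ref{lemm::properties_phi}\,c), $\phi$ is (a multiple of) the trilinear expression $P_B(A_{\univ},[A_{\univ},A_{\univ}])$. Substituting $F_{\univ}$ into one slot produces a term of type $S^1\otimes\bigwedge^2$. By cyclic symmetry of $(x,y,z)\mapsto B([x,y],z)$, the three slots contribute equally. With the normalization $-\tfrac1{12}$ from Lemma~\ref{lemm::properties_phi}\,c) against $[\cdot,\cdot]^{\ast}$ from the definition of the bracket, this should exactly cancel the correction from the linear term. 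Substituting $-\tfrac12[A_{\univ},A_{\univ}]$ instead produces a quartic term in $1\otimes\bigwedge^4_{\varepsilon}(\gg)$ proportional to $P_B([A_{\univ},A_{\univ}],[A_{\univ},A_{\univ}])=P_B(A_{\univ},[A_{\univ},[A_{\univ},A_{\univ}]])$, which vanishes by Lemma~\ref{lemm::properties_connection}\,b) (the Jacobi identity). A slicker route to the same cancellation is to check $\iota_x(\dw\calD-P_B(F_{\univ}))=0$ for all $x$. This uses $[\dw,\iota_x]=L_x$, $L_x\calD=0$, $\iota_xP_B(F_{\univ})=0$ and the explicit formula for $\iota_x\calD$. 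It shows that the difference lies in the $\iota$-kernel, the basic part $S_\varepsilon(\gg)\otimes 1$, and degree considerations then leave only the $S^2$ part, which is $p$.

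\textbf{Main obstacle.} I expect the hardest step to be the sign and normalization bookkeeping: the $\varepsilon$-factors and the coefficients $\tfrac12$, $\tfrac14$, $-\tfrac1{12}$ must combine so that the $S^1\otimes\bigwedge^2$ terms cancel exactly. To control this, I would first verify the identity in the completion componentwise. Each homogeneous component of $\dw\calD$ involves only finitely many $\gg_r$, so everything reduces to a finite-dimensional nondegenerate graded subspace, where the computation is the color analogue of the classical one. Completeness causes no issue, because $\dw$ acts degreewise on each $(i,\gamma)$-component and no normal-ordering corrections arise on the commutative Weil side.
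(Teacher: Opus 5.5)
Your main route is the paper's: write $\calD=P_B(A_{\univ},F_{\univ})-\tfrac16P_B(A_{\univ},[A_{\univ},A_{\univ}])$, apply the Leibniz rule for $\dw$, use invariance of $P_B$, and kill the quartic term with $[A_{\univ},[A_{\univ},A_{\univ}]]=0$. The bookkeeping you left open works out as follows. The two corrections from the linear part combine, via $P_B(A_{\univ},[A_{\univ},F_{\univ}])=P_B([A_{\univ},A_{\univ}],F_{\univ})$, to $+\tfrac12P_B([A_{\univ},A_{\univ}],F_{\univ})$ rather than the $-P_B(F_{\univ},[A_{\univ},A_{\univ}])$ you guessed. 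This is cancelled exactly by the $-\tfrac16\cdot 3\,P_B(F_{\univ},[A_{\univ},A_{\univ}])$ coming from the cubic part.

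Your alternative contraction argument is also sound. Its real input, however, is $\dw(\iota_x\calD)=0$, which amounts to the identity $\iota_x\calD=\dw A_{\univ}(x)$, equivalently $1\otimes\uplambda(\ad_x)=[A_{\univ},A_{\univ}](x)$. That is where the same normalization check reappears.
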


\begin{proof}
It is enough to prove that
$
\dw\calD=P_{B}(F_{\univ}).
$
Using the definitions of
$
P_{B}(A,[A,A])
$
and
$
P_{B}(A,F^{A}),
$
we write
\[
\calD=P_{B}(A_{\univ},F_{\univ})-\tfrac16P_{B}(A_{\univ},[A_{\univ},A_{\univ}]).
\]
First,
\[
\begin{aligned}
\dw P_{B}(A_{\univ},F_{\univ})
&=P_{B}(\dw A_{\univ},F_{\univ})-P_{B}(A_{\univ},\dw F_{\univ})\\
&=P_{B}(F_{\univ},F_{\univ})-\tfrac12P_{B}([A_{\univ},A_{\univ}],F_{\univ})+P_{B}(A_{\univ},[A_{\univ},F_{\univ}])\\
&=P_{B}(F_{\univ},F_{\univ})+\tfrac12P_{B}([A_{\univ},A_{\univ}],F_{\univ}),
\end{aligned}
\]
where invariance of $B$ gives
$
P_{B}(A_{\univ},[A_{\univ},F_{\univ}])=P_{B}([A_{\univ},A_{\univ}],F_{\univ}).
$
Further,
\[
\begin{aligned}
\dw P_{B}(A_{\univ},[A_{\univ},A_{\univ}])
&=P_{B}(\dw A_{\univ},[A_{\univ},A_{\univ}])-P_{B}(A_{\univ},\dw[A_{\univ},A_{\univ}])\\
&=P_{B}(\dw A_{\univ},[A_{\univ},A_{\univ}])-P_{B}(A_{\univ},[\dw A_{\univ},A_{\univ}])\\ & \quad +P_{B}(A_{\univ},[A_{\univ},\dw A_{\univ}])\\
&=3P_{B}(\dw A_{\univ},[A_{\univ},A_{\univ}])\\
&=3P_{B}(F_{\univ},[A_{\univ},A_{\univ}])-\tfrac32P_{B}([A_{\univ},A_{\univ}],[A_{\univ},A_{\univ}])\\
&=3P_{B}(F_{\univ},[A_{\univ},A_{\univ}])-\tfrac32P_{B}(A_{\univ},[A_{\univ},[A_{\univ},A_{\univ}]])\\
&=3P_{B}(F_{\univ},[A_{\univ},A_{\univ}]),
\end{aligned}
\]
since $[A_{\univ},[A_{\univ},A_{\univ}]]=0$ by Lemma~\ref{lemm::properties_connection}. Therefore
\[
\dw\calD=P_{B}(F_{\univ},F_{\univ})=P_{B}(F_{\univ}).\qedhere
\]
\end{proof}

\subsection{The Cubic Dirac Operator} 
The \emph{cubic Dirac operator} is a distinguished element of the completed quantum Weil algebra $\widehat{\mathcal{W}}_{\varepsilon}(\gg)$ characterized by a Parthasarathy–type formula \cite{parthasarathy1972dirac}. We keep the conventions of Section~\ref{subsec::conventions}. Let $(e_a)$ be a homogeneous basis of $\gg$ with $B$-dual basis $(e^a)$, so that $B(e_a,e^b)=\delta_{ab}$. Following the ideas of Meinrenken \cite{Meinrenken}, Kostant \cite{Kostant_cubic_Dirac}, and Kang–Chen \cite{Dirac_quadratic}, we define the cubic Dirac operator as the quantization (\emph{cf.}~Proposition~\ref{prop::quantization_Weil}) of $\calD = \sum_{a}e^{a}\otimes e_{a}+1\otimes\phi$, where $\phi$ is the structure constants tensor from Section~\ref{subsec::structure_constants_tensor}.

\begin{definition}
 The \emph{cubic Dirac operator} for $\gg$ is 
 \[
 \Dirac'_{\gg} \coloneqq \calQ(\calD) = \sum_{a} e^{a}\otimes e_{a} + 1 \otimes \phi' \in \widehat{\mathcal{W}}_{\varepsilon}(\gg).
 \]
 When no ambiguity arises, it will be denoted simply by $\Dirac'$.
\end{definition}

\begin{remark} \label{rmk::Dirac_basis_independent}
The definition of the cubic Dirac operator $\Dirac'$ is independent of the choice of a basis $(e_{a})$ with $B$-dual basis $(e^{a})$. 
Let $(f_{b})$ be another homogeneous basis with $B$-dual basis $(f^{b})$. 
Then, by Lemma~\ref{lemm::basis_description},
\[
\begin{aligned}
\sum_{a} e^{a}\otimes e_{a}
 &= \sum_{a,b,c} B(f_{b},e^{a})B(e_{a},f^{c}) f^{b}\otimes f_{c}
 = \sum_{b,c} B\!\left(\sum_{a} B(f_{b},e^{a})e_{a}, f^{c}\right) f^{b}\otimes f_{c}
 \\&= \sum_{b,c} B(f_{b},f^{c}) f^{b}\otimes f_{c}
 = \sum_{b} f^{b}\otimes f_{b},
 \end{aligned}
\]
since $B(f_{b},f^{c})=\delta_{b,c}$.
\end{remark}

\begin{lemma} \label{lemm::properties_Dirac'} 
The cubic Dirac operator $\Dirac'$ satisfies the following properties:
\begin{enumerate}
\item[a)] $\Dirac'\in \widehat{\WW}_{\varepsilon}(\gg)$ is odd and has $\ZZ\times \Gamma$-degree $(0,\ng)$. In particular,
\[
(\Dirac')^{2}=\tfrac{1}{2}[\Dirac',\Dirac']_{\widehat{\WW}}.
\]
\item[b)] $L_{x}\Dirac'=1\otimes \iota_{x}\widehat{q}(\KPsharp) =1\otimes\Psi_{\operatorname{KP}}(x)$.
\item[c)] $\iota_{x}\Dirac'=\gamma^{\WW}(x)$, where $\gamma^{\WW}(x)=x\otimes 1-\tfrac{1}{2}(1\otimes \widehat{\gamma}(x))$.
\item[d)] $\iota_{x}((\Dirac')^{2}) = [\iota_{x}\Dirac',\Dirac']_{\widehat{\WW}}$.
\end{enumerate}
\end{lemma}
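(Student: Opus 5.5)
The plan is to prove the four assertions separately, working componentwise in $\widehat{\WW}_{\varepsilon}(\gg)$ and relying on the structural results already established.

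\emph{Part a).} Each summand $e^{a}\otimes e_{a}$ has $\ZZ$-degree $\deg(e^a)+\deg(e_a)=0$ and $\Gamma$-degree $\ng$, and its Clifford factor is odd. By Lemma~\ref{lemm::properties_phi}, $\phi\in\widehat{\bigwedge}^{3}_{\varepsilon}(\gg)_{(0,\ng)}$. Since $\widehat q$ preserves the $\ZZ\times\Gamma$-grading, $1\otimes\phi'$ is odd of bidegree $(0,\ng)$. For such an element $\varepsilon'(\Dirac',\Dirac')=-\varepsilon(\ng,\ng)=-1$. Hence $[\Dirac',\Dirac']_{\widehat{\WW}}=2(\Dirac')^2$.

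\emph{Part b).} I split $L_x\Dirac'$ into its two summands.
\begin{itemize}
\item The first summand is $\sum_a e^a\otimes e_a=\widehat{\calQ}(p')$, where $p'$ is the invariant element $\sum_a e^a\otimes e_a$ of bidegree $(2,1)$ in $W$. On factors of the form $S^1\otimes\bigwedge^1$ no reordering occurs, so $Q\otimes q$ acts as the identity and normal ordering introduces no correction. Its Lie derivative therefore equals $\sum_a([x,e^a]\otimes e_a+\varepsilon(x,e^a)e^a\otimes[x,e_a])$. This vanishes by the invariance of the canonical tensor in Lemma~\ref{lemm::basis_description}c). The one point to watch is that $L_x$ on $\widehat{\WW}$ acts factorwise and commutes with the identification on these linear generators.
\item The second summand gives $1\otimes L_x\phi'$. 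By Lemma~\ref{lemm::komische_Beziehungen}, $L_x\phi'=\Psi_{\operatorname{KP}}(x)=\iota_x\widehat q(\KPsharp)$.
\end{itemize}

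\emph{Part c).} Contraction kills $\UE$-factors and acts on the Clifford factor as the derivation, so
\[
\iota_x\Big(\sum_a e^a\otimes e_a\Big)=\sum_a\varepsilon(x,e^a)B(x,e_a)\,e^a\otimes 1 .
\]
The coefficient is nonzero only when $|e^a|=|x|$, and then $\varepsilon(x,e^a)=\varepsilon(x)$. By Lemma~\ref{lemm::basis_description}a) the sum collapses to $x\otimes 1$. For the cubic term, Proposition~\ref{prop::normal_ordered_quantization_contraction} and Lemma~\ref{lemm::properties_phi}a) give
\[
\iota_x\phi'=\widehat q(\iota_x\phi)=-\tfrac12\widehat q(\uplambda(\ad_x))=-\tfrac12\widehat\gamma(x).
\]
The sign $(-1)^{p}\varepsilon$ when moving $\iota_x$ past the factor $1$ is trivial. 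Adding the two contributions yields $\gamma^{\WW}(x)$.

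\emph{Part d).} Since $\iota_x$ is an odd $\varepsilon'$-derivation,
\[
\iota_x(\Dirac'\Dirac')=(\iota_x\Dirac')\Dirac'-\varepsilon(x,\Dirac')\Dirac'(\iota_x\Dirac').
\]
Because $\Dirac'$ has $\Gamma$-degree $\ng$ we have $\varepsilon(x,\Dirac')=1$, and $\iota_x\Dirac'$ is even. The right-hand side is therefore exactly $[\iota_x\Dirac',\Dirac']_{\widehat{\WW}}$.

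The main obstacle is the sign bookkeeping in b) and c), in particular confirming that normal ordering of the bilinear term produces no scalar correction. If this is unclear, I would fall back on Lemma~\ref{lemm::contraction_and_Lie_derivative_in_Weil}: computing $[\gamma^{\WW}(x),\cdot]$ directly via Lemma~\ref{lemm::commutation_relations_qWeil} cross-checks b).
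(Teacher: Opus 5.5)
Your proposal is correct and follows essentially the same route as the paper: a) from the grading, b) from invariance of $\sum_a e^a\otimes e_a$ together with Lemma~\ref{lemm::komische_Beziehungen}, c) by the same contraction computation using $\varepsilon(x,e^a)=\varepsilon(e_a)$ and $2\iota_x\phi=-\uplambda(\ad_x)$, and d) from $\iota_x$ being an odd $\varepsilon'$-derivation. Your extra check that normal ordering produces no correction on the bilinear term is a detail the paper builds into the definition $\Dirac'=\sum_a e^a\otimes e_a+1\otimes\phi'$.
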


\begin{proof} a) follows by definition and b) follows from Lemma~\ref{lemm::komische_Beziehungen} since $\sum_{a}e^{a}\otimes e_{a}$ is $\gg$-invariant by Lemma~\ref{lemm::form_Ap}. For~c) we compute
\[
\begin{aligned}
\iota_{x}\Dirac' &= \sum_{a}\varepsilon(x,e^{a})B(x,e_{a})e^{a}\otimes 1 + 1\otimes \iota_{x}\phi' = \sum_{a}\varepsilon(e_{a})B(x,e_{a})e^{a}\otimes1 + \widehat{q}(\iota_{x}\phi) \\ &= x\otimes1 - 1\otimes \tfrac{1}{2}\widehat{q}(\uplambda(\ad_{x})) = x\otimes 1-\tfrac{1}{2}(1\otimes \widehat{\gamma}(x)),
\end{aligned}
\]
where we used $2\iota_{x}\phi=-\uplambda(\ad_{x})$ (Lemma~\ref{lemm::properties_phi}), and $B(x,e_{a})$ is zero unless $\vert x\vert +\vert e_{a}\vert=\ng$ so that $\varepsilon(x,e^{a})=\varepsilon(-e_{a},e^{a})=\varepsilon(e^{a})=\varepsilon(e_{a})$. This proves~c). Finally, d) follows since $\Dirac'$ has bidegree $(0,\ng)$ and $\iota_{x}$ is an odd $\varepsilon$-derivation.
\end{proof}

\begin{theorem} \label{thm::square_uncorrected}
 The square of the cubic Dirac operator is 
 \[
 (\Dirac')^{2} = \Omega_{\gg}'\otimes 1 + 1\otimes \widehat{q}(\KPsharp) + \tfrac{1}{24}\etr(\ad_{\gg_{0}}(\Omega_{\gg_{0}}))(1\otimes 1).
 \]
\end{theorem}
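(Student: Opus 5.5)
The plan is the standard strategy of Meinrenken and Alekseev--Meinrenken, used already in Proposition~\ref{prop::phi_squared}. We show that $X\coloneqq(\Dirac')^{2}-\Omega_{\gg}'\otimes 1-1\otimes\widehat{q}(\KPsharp)$ is annihilated by all contractions $\iota_{x}$. An element of $\widehat{\WW}_{\varepsilon}(\gg)$ killed by every $\iota_{x}$ lies in $\widehat{\UE}_{\varepsilon}(\gg)\otimes 1$. We then identify this remaining piece by a direct computation of the Clifford-free part.

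First, compute $\iota_{x}((\Dirac')^{2})$. By Lemma~\ref{lemm::properties_Dirac'}\,d) and c), and then Lemma~\ref{lemm::contraction_and_Lie_derivative_in_Weil}\,b),
\[
\iota_{x}((\Dirac')^{2})=[\gamma^{\WW}(x),\Dirac']_{\widehat{\WW}}=L_{x}\Dirac'.
\]
By Lemma~\ref{lemm::properties_Dirac'}\,b) this equals $1\otimes\iota_{x}\widehat{q}(\KPsharp)$. The contraction acts only on the Clifford factor, so $\iota_{x}(\Omega'_{\gg}\otimes 1)=0$. Moreover $\iota_{x}(1\otimes\widehat{q}(\KPsharp))=1\otimes\iota_{x}\widehat{q}(\KPsharp)$. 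Hence $\iota_{x}X=0$ for all $x$.

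Next, we need that the joint kernel of all $\iota_{x}$ in $\widehat{\WW}_{\varepsilon}(\gg)$ is $\widehat{\UE}_{\varepsilon}(\gg)\otimes 1$. Via $\widehat{\calQ}$, which intertwines contractions by Proposition~\ref{prop::normal_ordered_quantization_contraction} and Proposition~\ref{prop::quantization_Weil}, this reduces to the corresponding statement for $\widehat{S}_{\varepsilon}(\gg)\otimes\widehat{\bigwedge}_{\varepsilon}(\gg)$. There it holds componentwise by nondegeneracy of $B$ restricted to finite-dimensional $\gg_{r}\oplus\gg_{-r}$. Thus $X=u\otimes 1$ for some $u\in\widehat{\UE}_{\varepsilon}(\gg)$.

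Finally, we determine $u$ by expanding $(\Dirac')^{2}$ directly, writing $\Dirac'=K+1\otimes\phi'$ with $K=\sum_{a}e^{a}\otimes e_{a}$. Then
\[
(\Dirac')^{2}=K^{2}+[K,1\otimes\phi']_{\widehat{\WW}}+1\otimes(\phi')^{2}.
\]
By Proposition~\ref{prop::phi_squared}, the last term is $1\otimes\widehat{q}(\KPsharp)+\tfrac{1}{24}\etr(\ad_{\gg_{0}}(\Omega_{\gg_{0}}))$. The cross term has Clifford degree two and contributes nothing to the $\Cl$-degree-zero part. For $K^{2}$, we use the Clifford relation to get
\[
K^{2}=\tfrac12\sum_{a,b}[e^{a}\otimes e_{a},e^{b}\otimes e_{b}],
\]
which splits into $\sum_{a}\varepsilon(e_{a})e^{a}e_{a}\otimes 1$ (from $B(e_{a},e^{b})$) plus terms of the form $[e^{a},e^{b}]\otimes e_{a}e_{b}$ of Clifford degree two.

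The main obstacle is making this degree-zero extraction rigorous in the completion. The infinite sums $\sum_{a}e^{a}e_{a}$ must be matched with the normally ordered $\widehat{Q}(p)=\Omega'_{\gg}$ rather than $Q(p)$. The Clifford-degree-two terms $\sum[e^{a},e^{b}]\otimes e_{a}e_{b}$ also produce, after normal reordering, an extra $\UE$-linear term $\tfrac12\br(\pi_{+}A_{p})$-type contribution (cf.\ Example~\ref{ex::S2}). I would handle this by choosing a basis adapted to $\gg_{\pm}$ and comparing componentwise in each $\prod_{r}$ factor. Normal ordering in $\widehat{\UE}$ and in $\widehat{\Cl}$ must produce cancelling corrections so that exactly $\Omega'_{\gg}\otimes 1$ remains. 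Failing a clean cancellation, I would instead evaluate $u$ on each truncation and compare with $\widehat{Q}(p)$ via Example~\ref{ex::S2} and Example~\ref{ex::quantization_map}.
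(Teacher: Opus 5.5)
Your route is the paper's. You show that $X$ is killed by every $\iota_x$, and your computation via Lemma~\ref{lemm::properties_Dirac'} and Lemma~\ref{lemm::contraction_and_Lie_derivative_in_Weil} is exactly the paper's; you also justify the joint-kernel statement the paper uses tacitly. You then read off the $\widehat{\UE}_{\varepsilon}(\gg)$-part with $\pi=\id\otimes\epsilon$.

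The gap is that this last step is not carried out. You only assert that the normal-ordering corrections in $\widehat{\UE}_{\varepsilon}(\gg)$ and $\widehat{\Cl}_{\varepsilon}(\gg)$ ``must'' cancel and list fallbacks, so the value of $u$ remains unproved. There is also a slip: the scalar part of $K^{2}$ is $\sum_a e^a e_a=\sum_a\varepsilon(e_a)e_ae^a$, coming from $B(e_a,e_b)$. It is not $\sum_a\varepsilon(e_a)e^ae_a$, and it does not come from $B(e_a,e^b)$.

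The missing point is short and needs no cancellation argument. Take $\epsilon$ to be the scalar part for $\widehat{\Cl}_{\varepsilon}(\gg)=\CC\oplus\widehat q\bigl(\widehat{\bigwedge}_{\varepsilon}^{>0}(\gg)\bigr)$. Then $\pi$ respects the components of the completion, so $\pi(K^2)=\sum_{a,b}\varepsilon(e_a,e^b)\,e^ae^b\,\epsilon(e_ae_b)$ is a legitimate componentwise sum. Combining Example~\ref{ex::quantization_map}\,a) with $xy=q(x\wedge y)+B(x,y)$ gives $xy=\widehat q(x\wedge y)+2B(\pi_+x,\pi_-y)+B(x_0,y_0)$, where $x_0,y_0$ are the $\gg_0$-components. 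In a basis adapted to the $\ZZ$-grading this yields
\[
\pi(K^{2})=2\sum_{a:\,e_a\in\gg_+}e^{a}e_{a}+\sum_{a:\,e_a\in\gg_0}e^{a}e_{a}.
\]
This is precisely $\widehat Q(p)$. The terms of $p$ with $e_a\in\gg_+$ are already normally ordered. The terms with $\deg e_a<0$ are sent to $\varepsilon(e_a)e_ae^a$; reindexing with the dual pair $(e^a,\varepsilon(e_a)e_a)$ for $\gg_+$ turns them into a second copy of $\sum_{e_a\in\gg_+}e^ae_a$. The $\gg_0$-part is the finite-dimensional Casimir.

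Your claim about the cross term is right: $[K,1\otimes\phi']_{\widehat{\WW}}=2\sum_a e^a\otimes\iota_{e_a}\phi'$ has normally ordered Clifford degree two, and $1\otimes\widehat q(\KPsharp)$ likewise has no scalar part. By Proposition~\ref{prop::phi_squared}, $\epsilon((\phi')^2)=\tfrac1{24}\etr(\ad_{\gg_0}(\Omega_{\gg_0}))$. Hence $u=\tfrac1{24}\etr(\ad_{\gg_0}(\Omega_{\gg_0}))$, and no $\br(\pi_+A_p)$-type term survives.

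The paper compresses this step into one line: it uses $B(e_a,e_b)$ as the scalar part and tacitly reads the resulting $\sum_a e^ae_a$ as $\widehat Q(p)$. Your worry about that is legitimate, but the computation above settles it.
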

\begin{proof}
We first show that $(\Dirac')^{2}-1\otimes \widehat{q}(\KPsharp)\in\widehat{\UE}_{\varepsilon}(\gg)$. It suffices to verify that $\iota_{x}((\Dirac')^{2}-1\otimes \widehat{q}(\KPsharp))=0$ for all $x\in\gg$. By Lemma~\ref{lemm::properties_Dirac'} we compute
\[
\begin{aligned}
\iota_{x}\bigl((\Dirac')^{2}-1\otimes \widehat{q}(\KPsharp)\bigr)
&=\iota_{x}(\Dirac')^{2}-1\otimes\iota_{x}\widehat{q}(\KPsharp)
=[\iota_{x}\Dirac',\Dirac']_{\WW}-1\otimes \iota_{x}\widehat{q}(\KPsharp)\\
&=[\gamma^{\WW}(x),\Dirac']_{\widehat{\WW}}-1\otimes \iota_{x}\widehat{q}(\KPsharp)
= L_{x}\Dirac'-1\otimes \iota_{x}\widehat{q}(\KPsharp))=0,
\end{aligned}
\]
where we use a), b) and c) from Lemma~\ref{lemm::contraction_and_Lie_derivative_in_Weil}.

Next, we determine $(\Dirac')^{2}-1\otimes \widehat{q}(\KPsharp)\in\widehat{\UE}_{\varepsilon}(\gg)$. Let $\pi:\widehat{\mathcal{W}}_{\varepsilon}(\gg)\to\widehat{\UE}_{\varepsilon}(\gg)$ be the natural projection, that is, $\id_{\widehat{\UE}_{\varepsilon}(\gg)}\otimes \epsilon$ with $\epsilon : \widehat{\Cl}_{\varepsilon}(\gg) = \CC\cdot 1 \oplus \widehat{\Cl}_{\varepsilon}^{>0}(\gg) \to \CC \cdot 1$. By Proposition~\ref{prop::phi_squared},
\[
\begin{aligned}
(\Dirac')^{2}\bmod\ker\pi
&=\sum_{a,b}\varepsilon(e_{a},e_{b})e^{a}e^{b}\otimes e_{a}e_{b}+1\otimes (\phi')^{2}\bmod\ker\pi\\
&=\sum_{a,b}\varepsilon(e_{b})B(e_{a},e_{b})e^{a}e^{b}\otimes 1
+\tfrac{1}{24}\etr(\ad_{\gg_{0}}(\Omega_{\gg_{0}}))(1\otimes 1)
\bmod\ker\pi\\
&=\Omega'_{\gg}\otimes1
+\tfrac{1}{24}\etr(\ad_{\gg_{0}}(\Omega_{\gg_{0}}))(1\otimes 1)
\bmod\ker\pi,
\end{aligned}
\]
where we used the graded product in $\widehat{\mathcal{W}}_{\varepsilon}(\gg)$, the identities 
\[
e_{a}e_{b}
=\tfrac{1}{2}(e_{a}e_{b}+\varepsilon(e_{a},e_{b})e_{b}e_{a})
+\tfrac{1}{2}[e_{a},e_{b}]
=B(e_{a},e_{b})+\tfrac{1}{2}[e_{a},e_{b}]
\quad\text{in }\widehat{\Cl}_{\varepsilon}(\gg),
\]
and $\sum_{b}\varepsilon(e_{b})B(e_{a},e_{b})e^{b}=e_{a}$. This completes the proof.
\end{proof}

Theorem~\ref{thm::square_uncorrected} motivates the following definition.

\begin{definition}
Assume $\gg$ has trivial Kac–Peterson class $\KP=\operatorname{d}\!\rho$ for some $\rho\in\gg^{\ast}$. Then the \emph{corrected cubic Dirac operator} of $\gg$ is
\[
\Dirac\coloneqq \Dirac'+1\otimes\rho^{\sharp}.
\]
\end{definition}

The corrected cubic Dirac operator is $\gg$-invariant and it has a nice square.

\begin{corollary} \label{cor::D_square}
 Assume $\gg$ has trivial Kac–Peterson class $\KP = \operatorname{d}\!\rho$ for some $\rho \in \gg^{\ast}$. Then $\Dirac$ is $\gg$-invariant and it has square 
 \[
 \Dirac^{2} = \Omega_{\gg}\otimes 1 + \tfrac{1}{24}\etr(\ad_{\gg_{0}}(\Omega_{\gg_{0}}))(1\otimes 1) + B(\rho^{\sharp}, \rho^{\sharp})(1\otimes 1),
 \]
 where $\Omega_{\gg}$ is the corrected quadratic Casimir element. In particular, $\diff^{\WW} \coloneqq [\Dirac,\cdot]_{\WW}$ defines a differential on $\widehat{W}_{\varepsilon}(\gg)$, that is, it is odd and squares to zero.
\end{corollary}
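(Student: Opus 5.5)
We reduce both claims to Lemma~\ref{lemm::properties_Dirac'}, Theorem~\ref{thm::square_uncorrected} and Lemma~\ref{lemm::properties_Psi}.

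\emph{Invariance.} Lemma~\ref{lemm::properties_Dirac'}\,b) gives $L_x\Dirac'=1\otimes\Psi_{\operatorname{KP}}(x)$. Since $L_x$ acts on the Clifford factor by the adjoint action, $L_x(1\otimes\rho^\sharp)=1\otimes[x,\rho^\sharp]$. Lemma~\ref{lemm::properties_Psi}\,b) gives $\Psi_{\operatorname{KP}}(x)=[\rho^\sharp,x]=-[x,\rho^\sharp]$, using that $\rho^\sharp$ has degree $(0,\ng)$. Hence $L_x\Dirac=0$.

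\emph{Square.} Since $1\otimes\rho^\sharp$ is odd, we can expand
\[
\Dirac^2=(\Dirac')^2+[\Dirac',1\otimes\rho^\sharp]_{\widehat{\WW}}+(1\otimes\rho^\sharp)^2 .
\]
The Clifford relation gives $(\rho^\sharp)^2=B(\rho^\sharp,\rho^\sharp)$. For the middle term, use Lemma~\ref{lemm::contraction_and_Lie_derivative_in_Weil}\,a) together with $\varepsilon'$-skew symmetry of the bracket:
\[
[\Dirac',1\otimes\rho^\sharp]_{\widehat{\WW}}=[1\otimes\rho^\sharp,\Dirac']_{\widehat{\WW}}=2\iota_{\rho^\sharp}\Dirac'.
\]
Here both elements are odd of $\Gamma$-degree $\ng$, so the sign works out to $+$. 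By Lemma~\ref{lemm::properties_Dirac'}\,c), $2\iota_{\rho^\sharp}\Dirac'=2\gamma^{\WW}(\rho^\sharp)=2\rho^\sharp\otimes1-1\otimes\widehat\gamma(\rho^\sharp)$. Combined with Theorem~\ref{thm::square_uncorrected}, the $\UE$-part becomes $(\Omega'_\gg+2\rho^\sharp)\otimes1=\Omega_\gg\otimes 1$, as Corollary~\ref{cor::Casimir_and_KP_class} requires.

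The remaining Clifford terms are $1\otimes\widehat q(\KPsharp)-1\otimes\widehat\gamma(\rho^\sharp)$, and the main step is to show they cancel. Since $\widehat\gamma(\rho^\sharp)=\widehat q(\uplambda(\ad_{\rho^\sharp}))$ and $\widehat q$ is injective, it suffices to show $\KPsharp=\uplambda(\ad_{\rho^\sharp})$ in $\widehat{\bigwedge}^2_\varepsilon(\gg)$. By Lemma~\ref{lemm::identification_lambda_omega}, $\uplambda(T)^\flat=\omega_T=B(T\cdot,\cdot)$. By \eqref{eq::relations_psi_Psi}, $\KP=B(\Psi_{\operatorname{KP}}\cdot,\cdot)=\omega_{\Psi_{\operatorname{KP}}}$, so $\KPsharp=\uplambda(\Psi_{\operatorname{KP}})$ and $\KPsharp=\uplambda(-\ad_{\rho^\sharp})$.

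This is where the proposal is not closed: the identification has a sign discrepancy, giving $-\uplambda(\ad_{\rho^\sharp})$ rather than $+\uplambda(\ad_{\rho^\sharp})$. A cleaner route is to compare contractions. By Lemma~\ref{lemm::komische_Beziehungen_eins}, $\iota_x\widehat q(\KPsharp)=\Psi_{\operatorname{KP}}(x)$. By Lemma~\ref{lemm::properties_phi}\,a) applied to $\phi'$, $\iota_x\widehat\gamma(\rho^\sharp)=\iota_x\widehat q(\uplambda(\ad_{\rho^\sharp}))=\pm[\rho^\sharp,x]$. Since both elements are quadratic, equality of all contractions forces equality up to constants. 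A leftover constant would then be absorbed into the scalar term; at the normal-ordered level, the only possible constant is $\widehat q$'s correction $\tr_\varepsilon(\pi_+\ad_{\rho^\sharp})$, which vanishes because $\ad_{\rho^\sharp}$ preserves degree and is $\varepsilon$-skew on each finite-dimensional $\gg_i$.

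Carefully pinning down the signs and this constant is the main obstacle; we would verify them on a single graded piece $\gg_i\oplus\gg_{-i}$.

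\emph{Differential.} Once the square formula holds, $\Dirac^2$ is $\Omega_\gg\otimes1$ plus a scalar, and $\Omega_\gg$ is central by Corollary~\ref{cor::Casimir_and_KP_class}. Hence $\Dirac^2$ is central, and $[\Dirac,[\Dirac,\cdot]]=[\Dirac^2,\cdot]=0$. The map $[\Dirac,\cdot]$ is odd because $\Dirac$ is odd.
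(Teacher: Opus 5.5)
Your strategy is the paper's: expand $\Dirac^2$, identify the cross term as $[1\otimes\rho^\sharp,\Dirac']_{\widehat{\WW}}=2\iota_{\rho^\sharp}\Dirac'=2\rho^\sharp\otimes1-1\otimes\widehat q(\uplambda(\ad_{\rho^\sharp}))$, and cancel the last term against $1\otimes\widehat q(\KPsharp)$ from Theorem~\ref{thm::square_uncorrected}. The paper dispatches the needed identity $\KPsharp=\uplambda(\ad_{\rho^\sharp})$ with the words ``by assumption''. The step you leave open is not a real obstruction; the sign discrepancy is a slip.

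In your invariance paragraph you correctly read off $\Psi_{\operatorname{KP}}(x)=[\rho^\sharp,x]$ from Lemma~\ref{lemm::properties_Psi}\,b). Since $\rho^\sharp$ has degree $(0,\ng)$, this says $\Psi_{\operatorname{KP}}=+\ad_{\rho^\sharp}$. The rewriting $[\rho^\sharp,x]=-[x,\rho^\sharp]$ does not put a minus sign in front of $\ad_{\rho^\sharp}$. So your first route closes at once: by \eqref{eq::relations_psi_Psi} and Lemma~\ref{lemm::identification_lambda_omega}, $\KP=\omega_{\Psi_{\operatorname{KP}}}=\uplambda(\Psi_{\operatorname{KP}})^{\flat}$, hence $\KPsharp=\uplambda(\ad_{\rho^\sharp})$. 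The contraction check confirms the sign. Lemma~\ref{lemm::properties_phi}\,a) and b) give $\iota_x\uplambda(\ad_{\rho^\sharp})=-2\iota_x\iota_{\rho^\sharp}\phi=-[x,\rho^\sharp]=[\rho^\sharp,x]=\Psi_{\operatorname{KP}}(x)=\iota_x\KPsharp$. More generally, $\iota_x\uplambda(T)=\varepsilon(x,T)T(x)$ for $T\in\mathfrak{so}_\varepsilon(\gg)$. So your $\pm$ is $+$.

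Discard your fallback discussion of a leftover constant rather than repairing it. Do the comparison in $\widehat{\bigwedge}^{2}_{\varepsilon}(\gg)$ before quantizing. An element of $\widehat{\bigwedge}^{2}_{\varepsilon}(\gg)$ all of whose contractions vanish is zero, so no constant can arise, and applying $\widehat q$ gives $\widehat q(\KPsharp)=\widehat q(\uplambda(\ad_{\rho^\sharp}))$.

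The constant you propose, $\tr_\varepsilon(\pi_+\ad_{\rho^\sharp})$, is not available in this setting, for three reasons:
\begin{enumerate}
\item[(i)] The relation $\widehat q(\nu)=q(\nu)+\tr_\varepsilon(\pi_+\upmu(\nu))$ only holds for finite $\nu$, since $q$ does not extend to the completion.
\item[(ii)] The trace is an infinite sum over the $\gg_i$ with $i>0$, which generally diverges. Already for an affine Kac--Moody algebra with its principal grading and $\rho^\sharp\in\hh$, every summand is positive.
\item[(iii)] $\ad_{\rho^\sharp}|_{\gg_i}$ is not $\varepsilon$-skew on $\gg_i$ for $i\neq0$, because $B$ pairs $\gg_i$ with $\gg_{-i}$.
\end{enumerate}

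With the sign fixed, your invariance argument is correct and matches the paper. So is your deduction that $[\Dirac,\cdot]_{\widehat{\WW}}$ squares to zero, via centrality of $\Omega_\gg\otimes1$ and $[\Dirac,[\Dirac,\cdot]]=[\Dirac^2,\cdot]$.
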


\begin{proof}
 Since $\KP=\diff\rho$ one has $\Psi_{\operatorname{KP}}(x)=[\rho^{\sharp},x]=-[x,\rho^{\sharp}]$. By Lemma~\ref{lemm::properties_Dirac'} it follows that $L_{x}\Dirac=0$. 

 Next, since both $\Dirac'$ and $1\otimes x$
are odd and $\Dirac'$ has $\Gamma$-degree $\ng$, we obtain
\[
\begin{split}
\tfrac{1}{2}[1\otimes x,\Dirac]_{\widehat{\WW}}
=\,\iota_{x}\Dirac'
=\iota_{x}(\sum_{a}e^{a}\otimes e_{a})+1\otimes \widehat{q}(\iota_{x}\phi)=(x\otimes1-\tfrac{1}{2}\bigl(1\otimes \widehat{q}(\uplambda(\ad_{x}))\bigr)=\gamma^{\WW}(x)
\end{split}
\]
and
\[
\begin{aligned}
 \Dirac^{2}&=\tfrac{1}{2}[\Dirac'+1\otimes \rho^{\sharp},\Dirac'+1\otimes \rho^{\sharp}]_{\widehat{\WW}} = (\Dirac')^{2}+1\otimes \tfrac{1}{2}(\rho^{\sharp})^{2}+[\Dirac',1\otimes\rho^{\sharp}] \\ &=(\Dirac')^{2}+B(\rho^{\sharp},\rho^{\sharp})(1\otimes 1)+2\rho^{\sharp}\otimes 1-1\otimes \widehat{q}(\uplambda(\ad_{\rho^{\sharp}}))
\end{aligned}
\]
 The formula for the square follows by Theorem~\ref{thm::square_uncorrected} since 
$
\widehat{q}(\KPsharp)=\widehat{q}(\uplambda(\ad_{\rho^{\sharp}}))$ by assumption.
\end{proof}

\subsubsection{Some Comments on relations the BV-BFV formalism}
 The BV-BFV\footnote{Named after Batalin, Fradkin and Vilkovisky \cite{batalin1977relativistic,batalin1981gauge,batalin1983generalized}.} formalism is a tool to treat gauge theories on manifolds with boundary, both at the classical \cite{Cattaneo2014, Mnev2019} and quantum \cite{Cattaneo2018} level. A central object in this formalism are the classical and quantum BFV charges associated with the boundary,\footnote{See also \cite{Schtz2008,Stasheff1997}. } denoted $S^\partial$ and $\hat{S}^\partial$, respectively. I
 n the BV--BFV formulation of $1d$ Chern--Simons theory with coefficient Lie algebra $\mathfrak{g}$ (potentially with Wilson lines) these can be naturally identified with the classical and quantum cubic Dirac operators for $\mathfrak{g}$, see \cite{Mnev1,Mnev2}. 
 While the authors only treat finite-dimensional quadratic Lie algebras, our construction implies the formulation for general $\ZZ$-graded (infinite-dimensional) color Lie algebras. We expect this to be relevant in the BV-BFV formulation of two-dimensional (super)conformal field theories. More generally, one can think of higher-dimensional Chern-Simons theories on cylinders as one-dimensional Chern-Simons theories with target in an infinite-dimensional differential graded lie algebra or $L_\infty$-algebra, see e.g. \cite{Gwilliam2014} for background on $L_\infty$ Chern-Simons theory and \cite{CMW_CS}, \cite{Cattaneo2022} for the case with boundary. Generalizing the notion of cubic Dirac operator further to such cases and investigating the relationship to the classical and quantum BFV charges are the subject of ongoing research. 

\subsection{Relative Cubic Dirac Operator} \label{subsec::relative_cubic_Dirac_operator}
We modify the construction of the cubic Dirac operator to incorporate more color Lie subalgebras of $\gg$.

Let $\ll$ be a $\ZZ$-graded color Lie subalgebra of $\gg$, that is, $\ll$ is a color Lie subalgebra satisfying $\ll_{i}\subset\gg_{i}$ for all $i\in\ZZ$. Suppose that $B$ restricts to an even, non-degenerate, $\varepsilon$-symmetric bilinear form on $\ll$, denoted by $B_{\ll}$ in the following. Set $\ss\coloneqq \ll^{\perp}$ with respect to $B$, and assume $[\ll, \ss] \subset \ss$. Then one has the orthogonal decomposition
\begin{equation}
\gg=\ll\oplus\ss.
\end{equation}
For example, if $\gg$ is a $\ZZ$-graded quadratic Lie superalgebra, one can take $\ll \coloneqq \even$ and $\ss \coloneqq \odd$. By definition of $\ss$, the bilinear form $B$ restricts to a nondegenerate bilinear form on $\ss$, denoted by $B_{\ss}$. In particular, we can define $\varepsilon$-orthogonal algebra $\mathfrak{so}_{\varepsilon}(\ll) \coloneqq \mathfrak{so}_{\varepsilon}(\ll; B_{\ll})$ and $\mathfrak{so}_{\varepsilon}(\ss)\coloneqq \mathfrak{so}_{\varepsilon}(\ss; B_{\ss})$, as well as the $\varepsilon$-Clifford algebras $\Cl_{\varepsilon}(\ll) \coloneqq \Cl_{\varepsilon}(\ll;B_{\ll})$ and $\Cl_{\varepsilon}(\ss) \coloneqq \Cl_{\varepsilon}(\ss; B_{\ss})$, respectively.

Each of the color Lie algebras $\gg$ and $\ll$ has a Kac--Peterson class, denoted by $\psi_{\operatorname{KP}}$ and $\psi_{\operatorname{KP}}^{\ll}$, respectively. We assume that both are trivial, that is, that there exist $\rho\in(\gg^{\ast})_{(0,\ng)}$ and $\rho_{\ll}\in(\ll^{\ast})_{(0,\ng)}$ such that $d\rho=\psi_{\operatorname{KP}}$ and $d\rho_{\ll}=\psi_{\operatorname{KP}}^{\ll}$. We denote the corresponding corrected Casimir elements by $\Omega_{\gg}$ and $\Omega_{\ll}$, and the associated corrected cubic Dirac operators by $\Dirac_{\gg}\in\widehat{\mathcal W}_{\varepsilon}(\gg)$ and $\Dirac_{\ll}\in\widehat{\mathcal W}_{\varepsilon}(\ll)$.

To define the relative cubic Dirac operator $\Dirac_{\gg,\ll}$, we embed $\widehat{\mathcal W}_{\varepsilon}(\ll)$ into $\widehat{\mathcal W}_{\varepsilon}(\gg)$ via the adjoint action of $\ll$ on $\ss$. For $x\in\ll$, the operator $\ad_{x}\in\widehat{\mathfrak{so}}_{\varepsilon}(\gg)$ decomposes as $\ad_{x}=\ad_{x}^{\ll}+\ad_{x}^{\ss}$ with $\ad_{x}^{\ll}\in\widehat{\mathfrak{so}}_{\varepsilon}(\ll)$ and $\ad_{x}^{\ss}\in\widehat{\mathfrak{so}}_{\varepsilon}(\ss)$. We set $\widehat{\gamma}_{\ss}(x)\coloneqq\widehat q(\uplambda(\ad_{x}^{\ss}))\in\widehat{\Cl}_{\varepsilon}(\ss)$. Since $B$ is invariant, $\ad_{x}$ preserves $\ss$ for every $x\in\ll$. By Proposition~\ref{prop::commutator_and_quantization}, the $\varepsilon$-commutator $[\widehat{\gamma}_{\ss}(x),\widehat{\gamma}_{\ss}(y)]_{\widehat{\Cl}_{\varepsilon}}$ defines a $2$-cocycle $\psi_{\operatorname{KP}}^{\ss}\in\widehat{\bigwedge}_{\varepsilon}^{2}(\ll^{\ast})$, and this yields a color Lie algebra homomorphism provided $\psi_{\operatorname{KP}}^{\ss}=d\rho_{\ss}$, where $\rho_{\ss}\coloneqq \rho_{\ll}-\rho|_{\ll}\in(\ll^{\ast})_{(0,\ng)}$. 

\begin{lemma} \label{lemm::gamma_s}
 Assume $\psi_{\operatorname{KP}}^{\ss} = \operatorname{d}\!\rho_{\ss}$ for some $\rho_{\ss} \in (\ll^{\ast})_{(0,\ng)}$. Then the map 
 \[
 \gamma'_{\ss}: \ll \to \widehat{\Cl}_{\varepsilon}(\ss), \qquad \gamma_{\ss}'(x) \coloneqq -\tfrac{1}{2}\widehat{\gamma}_{\ss}(x) - \rho_{\ss}(x)
 \]
 is a color Lie algebra homomorphism.
\end{lemma}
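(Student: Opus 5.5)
The proof reduces to Proposition~\ref{prop::commutator_and_quantization}~b), applied inside the completed Clifford algebra $\widehat{\Cl}_{\varepsilon}(\ss)$ of the quadratic space $(\ss,B_{\ss})$. The ingredients of Section~\ref{subsec::completion_exterior_clifford_superalgebra} carry over verbatim to $\ss$: the normal-ordered quantization $\widehat q$, the map $\uplambda$, and the cocycle $\KP(S,T)$ on $\widehat{\mathfrak{so}}_{\varepsilon}(\ss)$. For this, $\ss$ is given the $\ZZ$-grading $\ss_i=\ss\cap\gg_i$, which is legitimate because $\ll$ is $\ZZ$-graded and hence so is $\ll^{\perp}$, and the polarization is $\ss_\pm=\pi_\pm(\ss)$. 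The statement then follows from a short bracket computation and the coboundary assumption.

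\emph{Step 1: $x\mapsto\ad^{\ss}_x$ is a homomorphism.} For homogeneous $x,y\in\ll$, the map $\ll\to\widehat{\mathfrak{so}}_{\varepsilon}(\ss)$, $x\mapsto\ad_x^{\ss}$, satisfies
\[
[\ad^{\ss}_x,\ad^{\ss}_y]_{\widehat{\mathfrak{so}}_{\varepsilon}(\ss)}=\ad^{\ss}_{[x,y]}.
\]
This holds because $[\ll,\ss]\subset\ss$ and $\ad^{\ss}_x$ is simply the restriction of $\ad_x$ to $\ss$. The $\varepsilon$-Jacobi identity in $\gg$ then gives the identity directly. By Lemma~\ref{lemm::ad_in_completion}, $\ad^{\ss}_x$ is $B_{\ss}$-skew.

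\emph{Step 2: the cocycle.} Apply Proposition~\ref{prop::commutator_and_quantization}~b) with $T_1=\ad^{\ss}_x$ and $T_2=\ad^{\ss}_y$:
\[
-\tfrac12[\widehat\gamma_{\ss}(x),\widehat\gamma_{\ss}(y)]_{\widehat{\Cl}_{\varepsilon}}=\widehat\gamma_{\ss}([x,y])-2\KP(\ad^{\ss}_x,\ad^{\ss}_y).
\]
This computation justifies the claim in the preceding paragraph of the paper that the commutator defines a $2$-cocycle. Concretely, set
\[
\psi^{\ss}_{\operatorname{KP}}(x,y)\coloneqq \KP(\ad^{\ss}_x,\ad^{\ss}_y),
\]
up to the normalization fixed in the definition of $\psi^{\ss}_{\operatorname{KP}}$. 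The main thing to watch is the factor: the $\KP$ on $\widehat{\mathfrak{so}}_{\varepsilon}$ lacks the $\tfrac12$ that appears in the $\gg$-version. This mirrors Lemma~\ref{lemm::commutation_relations_qWeil}, where the analogous identity for $\gamma^{\WW}$ produced $\KP(x,y)$ with coefficient one, and it will fix the constant in the final check.

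\emph{Step 3: the bracket computation.} Since constants are central in $\widehat{\Cl}_{\varepsilon}(\ss)$,
\[
[\gamma'_{\ss}(x),\gamma'_{\ss}(y)]=\tfrac14[\widehat\gamma_{\ss}(x),\widehat\gamma_{\ss}(y)]=-\tfrac12\widehat\gamma_{\ss}([x,y])+\KP(\ad^{\ss}_x,\ad^{\ss}_y)=-\tfrac12\widehat\gamma_{\ss}([x,y])+\psi^{\ss}_{\operatorname{KP}}(x,y).
\]
The coboundary hypothesis gives
\[
\psi^{\ss}_{\operatorname{KP}}(x,y)=\rho_{\ss}([x,y]).
\]
To conclude $[\gamma'_{\ss}(x),\gamma'_{\ss}(y)]=\gamma'_{\ss}([x,y])$, the constant must come out with the sign required by the definition of $\gamma'_{\ss}$. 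This sign is the main obstacle: matching it against the $\operatorname{d}\!\rho$ convention requires carefully tracking the normalization and sign conventions for $\psi^{\ss}_{\operatorname{KP}}$. The natural reading is that $\psi^{\ss}_{\operatorname{KP}}$ is defined precisely by the constant term of $[\gamma'_{\ss}(x),\gamma'_{\ss}(y)]+\tfrac12\widehat\gamma_{\ss}([x,y])$, taken with the sign making the statement hold. Under that reading the identity $[\gamma'_{\ss}(x),\gamma'_{\ss}(y)]=\gamma'_{\ss}([x,y])$ follows immediately.

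Finally, note that $\gamma'_{\ss}$ is even, since $\widehat\gamma_{\ss}(x)$ lies in $\widehat{\Cl}^{2}_{\varepsilon}$ plus constants. It preserves the $\Gamma$-degree, since $\rho_{\ss}(x)=0$ unless $x$ has degree $(0,\ng)$. It is therefore a morphism of color Lie algebras.
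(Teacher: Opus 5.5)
Your proposal is correct and is essentially the paper's own proof. You apply Proposition~\ref{prop::commutator_and_quantization}~b) to $T_1=\ad^{\ss}_x$, $T_2=\ad^{\ss}_y$ and take one quarter of the bracket, so that $[\gamma'_{\ss}(x),\gamma'_{\ss}(y)]=-\tfrac12\widehat{\gamma}_{\ss}([x,y])+c(x,y)$ with $c(x,y)=\KP(\ad^{\ss}_x,\ad^{\ss}_y)$; you then match the constant against the coboundary hypothesis and finish with the $\Gamma$-degree remark.

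The sign tension you flag is genuine, and the paper's proof makes the same choice in effect. It writes $\Psi^{\ss}_{\operatorname{KP}}(x)=[x,\rho_{\ss}^{\sharp}]$, i.e.\ it uses $c=-\operatorname{d}\!\rho_{\ss}$, which is opposite to the convention of Lemma~\ref{lemm::properties_Psi}~b). You do not need to appeal to ``the sign making the statement hold'', because the sign is forced. Since $\widehat{\gamma}(x)=\widehat{\gamma}_{\ll}(x)+\widehat{\gamma}_{\ss}(x)$ with $\varepsilon$-commuting summands, the constant terms add: $c^{\gg}=c^{\ll}+c^{\ss}$ on $\ll$. Together with $c^{\gg}=\operatorname{d}\!\rho$, $c^{\ll}=\operatorname{d}\!\rho_{\ll}$ and $\rho_{\ss}=\rho_{\ll}-\rho|_{\ll}$, this gives exactly $c^{\ss}=-\operatorname{d}\!\rho_{\ss}$.
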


\begin{proof}
By Proposition~\ref{prop::commutator_and_quantization} one has $[\widehat{\gamma}_{\ss}(x),\widehat{\gamma}_{\ss}(y)]_{\widehat{\Cl}_{\varepsilon}}=-2\widehat{\gamma}_{\ss}([x,y])+4\psi_{\operatorname{KP}}^{\ss}(x,y)$, and by assumption
\[
\psi^{\ss}_{\operatorname{KP}}(x,y) = B(\Psi^{\ss}_{\operatorname{KP}}(x),y)=B([x,\rho_{\ss}^{\sharp}],y)=-B(\rho_{\ss}^{\sharp},[x,y])=-\rho_{\ss}([x,y])=-\diff\rho_{\ss}(x,y).
\]
Consequently
\[
[\gamma'_{\ss}(x),\gamma'_{\ss}(y)]_{\widehat{\Cl}_{\varepsilon}}=\tfrac{1}{4}[\widehat{\gamma}_{\ss}(x),\widehat{\gamma}_{\ss}(y)]_{\widehat{\Cl}_{\varepsilon}}=-\tfrac{1}{2}\widehat{\gamma}_{\ss}([x,y])+\psi^{\ss}_{\operatorname{KP}}(x,y)
= (-\tfrac{1}{2}\widehat{\gamma}_{\ss}-\rho_{\ss})([x,y])=\gamma_{\ss}'([x,y]).
\]
Since $\gamma'_{\ss}$, $\widehat q$, and $\uplambda$ preserve the $\Gamma$-degree, and $\rho_{\ss}$ has degree $\ng$, the claim follows.
\end{proof}

\begin{remark}\label{rmk::generator_adjoint_action}
By Proposition~\ref{prop::Lie_derivative_as_commutator}, the action of $-\tfrac{1}{2}\widehat{\gamma}_{\ss}(x)$ generates the adjoint action of $\ll$ on $\widehat{\Cl}_{\varepsilon}(\ss)$; that is, for all $x \in \ll$,
\[
L_{x} = -\tfrac{1}{2}[\widehat{\gamma}_{\ss}(x), \cdot]_{\widehat{\Cl}_{\varepsilon}}.
\]
\end{remark}

In what follows, we assume $\psi_{\operatorname{KP}}^{\ss} = \operatorname{d}\!\rho_{\ss}$ for some $\rho_{\ss} \in (\ll^{\ast})_{(0,\ng)}$. We define a map $j : \mathcal{W}_{\varepsilon}(\ll) \to \widehat{\mathcal{W}}_{\varepsilon}(\gg)$, given on generators by 
\begin{equation}
\begin{aligned}
 j(1\otimes x) = 1\otimes x, \qquad j(x\otimes 1) = x\otimes 1 + 1 \otimes \gamma'_{\ss}(x).
 \end{aligned}
\end{equation}
In particular, $j$ defines a diagonal map of $\ll$ into $\widehat{\mathcal{W}}_{\varepsilon}(\gg)$.
\begin{proposition} \label{lemm::properties_map_j}
 The following assertions hold: \begin{enumerate}
 \item[a)] The map $j : \mathcal{W}_{\varepsilon}(\ll) \to \widehat{\mathcal{W}}_{\varepsilon}(\gg)$ is a color Lie algebra homomorphism. 
 \item[b)] The map $j$ extends to a color Lie algebra homomorphism $j: \widehat{\mathcal{W}}_{\varepsilon}(\ll) \to \widehat{\mathcal{W}}_{\varepsilon}(\gg)$.
 \item[c)] $j: \widehat{\mathcal{W}}_{\varepsilon}(\ll) \to \widehat{\mathcal{W}}_{\varepsilon}(\gg)$ intertwines with contractions and Lie derivatives, that is, 
 \[
 \iota_{x}\circ j = j \circ \iota_{x}, \qquad L_{x}\circ j = j \circ L_{x}, \qquad x\in \ll.
 \]
 \end{enumerate}
\end{proposition}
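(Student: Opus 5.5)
For part a), the plan is to use the presentation of $\mathcal{W}_{\varepsilon}(\ll)=\UE_{\varepsilon}(\ll)\otimes\Cl_{\varepsilon}(\ll)$ by generators and relations. The color superalgebra $\mathcal{W}_{\varepsilon}(\ll)$ is generated by $x\otimes 1$ and $1\otimes x$ for $x\in\ll$, subject to three families of relations:
\[
[x\otimes 1,y\otimes 1]_{\WW}=[x,y]\otimes 1,\qquad [1\otimes x,1\otimes y]_{\WW}=2B(x,y)(1\otimes 1),\qquad [x\otimes 1,1\otimes y]_{\WW}=0.
\]
It therefore suffices to check that the proposed images satisfy the same three families of relations in $\widehat{\mathcal{W}}_{\varepsilon}(\gg)$. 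The assignment then defines an algebra homomorphism, which is automatically a homomorphism for the $\varepsilon'$-commutator bracket. The three checks go as follows.

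\begin{enumerate}
\item[(1)] The Clifford relation holds trivially, since $B_{\ll}=B|_{\ll}$.
\item[(2)] For the mixed relation, $x\otimes 1$ commutes with $1\otimes y$ in $\widehat{\mathcal W}_{\varepsilon}(\gg)$. Moreover, $\gamma'_{\ss}(x)\in\widehat{\Cl}_{\varepsilon}(\ss)$ is even and built from elements of $\ss=\ll^{\perp}$. By the Clifford relation, elements of $\ss$ $\varepsilon$-anticommute with $y\in\ll$, because $B(\ss,\ll)=0$. Hence even elements of $\widehat{\Cl}_{\varepsilon}(\ss)$ $\varepsilon$-commute with $y$, and $[j(x\otimes1),1\otimes y]=0$. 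The constant $\rho_{\ss}(x)$ is central.
\item[(3)] For the enveloping relation, expand
\[
[x\otimes1+1\otimes\gamma'_{\ss}(x),\,y\otimes1+1\otimes\gamma'_{\ss}(y)]_{\widehat{\WW}}.
\]
The cross terms vanish, because $\UE$ and $\Cl$ parts commute with the correct signs, $\gamma'_{\ss}$ being even of the same $\Gamma$-degree. What remains is $[x,y]\otimes 1+1\otimes[\gamma'_{\ss}(x),\gamma'_{\ss}(y)]$, and Lemma~\ref{lemm::gamma_s} turns the second term into $1\otimes\gamma'_{\ss}([x,y])$.
\end{enumerate}

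For b), use the tensor decomposition $\mathcal W_{\varepsilon}(\ll)\cong\mathcal W_{\varepsilon}(\ll_-)\otimes\mathcal W_{\varepsilon}(\ll_+)$ and the fact that $j$ preserves the $\ZZ\times\Gamma$-degree. Generators of $\ll_{\pm}$ map to elements whose components are controlled by the degree. The image $\gamma'_{\ss}(x)$ is itself a normally ordered infinite sum, but for $x$ of degree $i$ only finitely many components lie in each slot $\prod_{r}$ of the completion. Consequently $j$ is continuous for the product topology defining the completions, and it extends degreewise to $\widehat{\mathcal W}_{\varepsilon}(\ll)$. Multiplicativity persists by continuity of the multiplication on both completed algebras. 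The main obstacle lies here: one must verify that the product of the infinite sums $j(x\otimes 1)$ converges in $\widehat{\mathcal W}_{\varepsilon}(\gg)$. This means checking that a product of normally ordered series of fixed total $\ZZ$-degree has finitely many contributions in each component $\mathcal W_{\varepsilon}(\gg_-)_{(i-r)}\otimes\mathcal W_{\varepsilon}(\gg_+)_{(r)}$. I would first try reducing this to the convergence of products in $\widehat{\Cl}_{\varepsilon}(\ss)$, which the paper already uses implicitly when forming $[\widehat\gamma_{\ss}(x),\widehat\gamma_{\ss}(y)]$.

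For c), all operators involved are $\varepsilon'$-derivations, and $j$ is a continuous homomorphism. It therefore suffices to check both identities on generators.

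For contractions, first note $\iota_z(1\otimes x)=B(z,x)$ on both sides. Next consider $\iota_z j(x\otimes1)=\iota_z(1\otimes\gamma'_{\ss}(x))$. It vanishes for $z\in\ll$, since $\gamma'_{\ss}(x)\in\widehat{\Cl}_{\varepsilon}(\ss)$ and $\iota_z$ kills $\ss$ when $z\perp\ss$. This matches $j(\iota_z(x\otimes1))=0$.

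For Lie derivatives, we need $L_z j(1\otimes x)=1\otimes[z,x]=j(L_z(1\otimes x))$, and
\[
L_z j(x\otimes 1)=[z,x]\otimes1+1\otimes L_z\gamma'_{\ss}(x).
\]
By Remark~\ref{rmk::generator_adjoint_action} together with Lemma~\ref{lemm::gamma_s}, $L_z\gamma'_{\ss}(x)=[\gamma'_{\ss}(z),\gamma'_{\ss}(x)]=\gamma'_{\ss}([z,x])$, which equals $j([z,x]\otimes 1)$. On the Clifford factor of $\widehat{\mathcal W}_{\varepsilon}(\gg)$, the Lie derivative $L_z$ for $z\in\ll$ acts as the sum of the actions on the $\ll$- and $\ss$-parts, both of which are compatible here.
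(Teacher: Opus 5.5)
Your parts a) and c) are correct and follow the paper. In a) you spell out the three relation checks that the paper compresses into ``a direct consequence of Lemma~\ref{lemm::gamma_s}''. In c) you use the same generator check via Remark~\ref{rmk::generator_adjoint_action} and Lemma~\ref{lemm::gamma_s}, and you add the observation, omitted in the paper, that $\iota_z$ kills $\widehat{\Cl}_{\varepsilon}(\ss)$ for $z\in\ll$.

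Part b), however, does not prove what is needed. Take $a=\sum_{r\ge 0}a_r\in\widehat{\WW}_{\varepsilon}(\ll)_i$ with $a_r\in\WW_{\varepsilon}(\ll_-)_{i-r}\otimes\WW_{\varepsilon}(\ll_+)_r$. To extend $j$ you must show that the series $\sum_r j(a_r)$ converges. That is, each slot $\WW_{\varepsilon}(\gg_-)_{i-s}\otimes\WW_{\varepsilon}(\gg_+)_s$ of $\widehat{\WW}_{\varepsilon}(\gg)_i$ must receive contributions from only finitely many $r$. The fact you cite, that $j(x\otimes 1)$ has finitely many terms in each slot, only says $j(x\otimes 1)\in\widehat{\WW}_{\varepsilon}(\gg)$. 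It gives no lower bound, in terms of $r$, on the slots occupied by $j(a_r)$, so ``consequently $j$ is continuous'' does not follow. What you call the main obstacle, convergence of products of the series $j(x\otimes 1)$, is just the fact that $\widehat{\WW}_{\varepsilon}(\gg)$ is an algebra. That is part of the setup, and you already used it in a). So the actual content of b) is left open.

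The missing ingredient is the slot shift
\[
j\bigl(\WW_{\varepsilon}(\ll_-)_{i-r}\otimes\WW_{\varepsilon}(\ll_+)_{r}\bigr)\subset\prod_{s\ge r}\WW_{\varepsilon}(\gg_-)_{i-s}\otimes\WW_{\varepsilon}(\gg_+)_{s}.
\]
The paper obtains this by checking that, for $x\in\ll_i$ with $i>0$, the normally ordered terms of $\widehat{\gamma}_{\ss}(x)$ have $\gg_+$-degree at least $i$, and then inducting over products.

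It also follows from degree bookkeeping alone:
\begin{enumerate}
\item[(i)] Write $a_r$ as a finite sum of products $a^-a^+$ with $a^\pm\in\WW_{\varepsilon}(\ll_\pm)$.
\item[(ii)] Since $j$ preserves the $\ZZ$-degree and $\WW_{\varepsilon}(\gg_+)$ lives in nonnegative degrees, every term of $j(a^-)\in\widehat{\WW}_{\varepsilon}(\gg)_{i-r}$ has $\WW_{\varepsilon}(\gg_-)$-part of degree $\le i-r$.
\item[(iii)] To normally order a term $u_-u_+\cdot v_-v_+$ of $j(a^-)\,j(a^+)$, rewrite $u_+v_-\in\WW_{\varepsilon}(\gg)$ by PBW as a finite sum of $w_-w_+$ with $w_\pm\in\WW_{\varepsilon}(\gg_\pm)$. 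Then $\deg w_-\le 0$ automatically, so the new $\WW_{\varepsilon}(\gg_-)$-part $u_-w_-$ still has degree $\le i-r$.
\item[(iv)] Hence $j(a_r)$ lies in slots $s\ge r$, and slot $s$ only sees $r\le s$.
\end{enumerate}
So $\sum_r j(a_r)$ is well defined, and multiplicativity on the completion then follows slotwise.
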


\begin{proof} a) This is a direct consequence of Lemma~\ref{lemm::gamma_s} and the definition of $j$.

b) The statement follows if we show for all $i >0$ that $j(\ll_{i}) \subset \prod_{r>0} \WW_{\varepsilon}(\gg_{-})_{-r} \WW_{\varepsilon}(\gg_{+})_{i+r} \subset \cWeil$ for all $i>0$ and $\gamma(\ll_{j}) \subset \prod_{r>0}\WW_{\varepsilon}(\gg_{-})_{j-r}\WW_{\varepsilon}(\gg_{+})_{r}$ for $j<0$, where $\ll_{k}\subset \gg_{k}$ for all $k \in \ZZ$. Then inductively we conclude
\[
j(\WW_{\varepsilon}(\ll_{+})_{i}) \subset \prod_{r\geq 0} \WW_{\varepsilon}(\gg_{-})_{-r}\WW_{\varepsilon}(\gg_{+})_{i+r}, \qquad j(\WW_{\varepsilon}(\ll_{-})_{j}) \subset \prod_{r\geq 0}\WW_{\varepsilon}(\gg_{-})_{j-r}\WW_{\varepsilon}(\gg_{+})_{r}
\]
such that 
\[
j(\WW_{\varepsilon}(\ll_{-})_{-r}\WW_{\varepsilon}(\ll_{+})_{i+r}) \subset \coprod_{k\geq0} \WW_{\varepsilon}(\gg_{-})_{-r-k}\WW_{\varepsilon}(\gg_{+})_{i+r+k}.
\]
Consequently, the summation over all $r \geq 0$ defines a well-defined map $\widehat{\WW}_{\varepsilon}(\ll)_{i} \to \widehat{\WW}_{\varepsilon}(\gg)_{i}$. 

It suffices to show for all $i >0$ that $j(\ll_{i}) \subset \prod_{r\geq 0} \WW_{\varepsilon}(\gg_{-})_{-r} \WW_{\varepsilon}(\gg_{+})_{i+r} \subset \cWeil$, as the other case is analogous. By definition, the statement reduces to prove that $\widehat{\gamma}_{\ss}(\ll_{i}) \subset \prod_{r\geq 0}\Cl_{\varepsilon}(\gg_{-})_{-r}\Cl_{\varepsilon}(\gg_{+})_{i+r}$. Let $x \in \ll_{i}$ for $i>0$. Then $\rho_{\ss}(x)=0$, $B([x,e_{a}],e^{a})=0$ for all $a$ and consequently
\[
\begin{aligned}
 \widehat{\gamma}_{\ss}(x)= -\tfrac{1}{4}\sum_{a}[x,e^{a}]e_{a} = \tfrac{1}{4}\sum_{a}\varepsilon(x+e^{a},e_{a})e_{a}[x,e^{a}] = -\tfrac{1}{4}\sum_{a}\varepsilon(e_{a})[x,e_{a}]e^{a}=\tfrac14\sum_{a} \varepsilon(e_{a},x)e^{a}[x,e_{a}].
\end{aligned}
\]
We decompose 
\[
\begin{aligned}
 4\widehat{\gamma}_{\ss}(x) = \sum_{a\, : \, e_{a}\in \gg_{+}}\varepsilon(e_{a},x)e^{a}[x,e_{a}] + \sum_{a\, :\, e_{a}\in \gg_{-}}\varepsilon(e_{a},x)e^{a}[x,e_{a}]+ \sum_{a\, : \, e_{a}\in \gg_{0}}\varepsilon(e_{a},x)e^{a}[x,e_{a}]
\end{aligned}
\]
The first and the third sum lie in $\prod_{r\geq 0}\Cl_{\varepsilon}(\gg_{-})_{-r}\Cl_{\varepsilon}(\gg_{+})_{i+r}$ and it remains to consider the second sum. One has 
\[
\sum_{a \, : \, e_{a}\in \gg_{-}} \varepsilon(e_{a},x)e^{a}[x,e_{a}]= \sum_{a \, : \, e_{a}\in \gg_{+}}\varepsilon(e^{a},x)e_{a}[x,e^{a}] = - \sum_{a \, : \, e_{a}\in \gg_{+}} \varepsilon(e^{a})[x,e^{a}]e_{a}
\]
The second sum lie in the space for $e_{b} \in \gg_{j}$ with $j\geq i$. If $e_{b} \in \gg_{j}$ for $j<i$, we have $[x,e^{b}] \in \gg_{i-j}$, and $e_{b}[x,e^{b}]\in \Cl_{\varepsilon}(\gg_{+})_{i}$. This finishes the proof.

 c) We prove the statement for Lie derivatives $L_{x}$ for $x \in \ll$. It is enough to prove the statement on the componentwise generators. One has 
 \[
 (L_{x}\circ j)(1\otimes y)= L_{x}(1\otimes y)= 1\otimes L_{x}(y)=(j \circ L_{x})(1\otimes y)
 \]
 and using that $\gamma'_{\ss}$ is a Lie superalgebra homomorphism
 \[
 (L_{x}\circ j)(y\otimes 1)=L_{x}(y\otimes 1+1\otimes \gamma'_{\ss}(y)) = L_{x}(y)\otimes 1 + 1 \otimes \gamma'_{\ss}(L_{x}(y)) = (j\circ L_{x})(y\otimes 1).
 \]

 That $j$ intertwines with contractions is another direct calculation and will be omitted.
\end{proof}

\begin{definition}
 The \emph{relative cubic Dirac} operator is 
 \[
\Dirac_{\gg,\ll} \coloneqq \Dirac - j(\Dirac_{\ll}) \in \cWeil.
 \]
\end{definition}

The following theorem collects the main properties of $\Dirac_{\gg,\ll}$. We denote by
$\WW_{\varepsilon}(\gg,\ll)$ the $\ZZ$-graded color subsuperalgebra of $\Weil$ of elements annihilated by
$L_x$ and $\iota_x$ for all $x\in\ll$. Its completion is denoted by
$\widehat{\WW}_{\varepsilon}(\gg,\ll)\subset \cWeil$.

\begin{theorem} \label{thm::square_Dirac}
 \begin{enumerate}
 \item[a)] $\Dirac_{\gg,\ll} \in \widehat{\WW}_{\varepsilon}(\gg,\ll)$. In particular, for all $x \in \ll$, one has $$\iota_{x}\Dirac_{\gg,\ll}=0, \qquad L_{x}\Dirac_{\gg,\ll}=0.$$
 \item[b)] The square of $\Dirac_{\gg,\ll}$ is 
 \[
 \Dirac_{\gg,\ll}^{2} = \Omega_{\gg}-j(\Omega_{\ll}) + \tfrac{1}{24} \etr(\Omega_{\gg_{0}}) - \tfrac{1}{24} \etr(\Omega_{\ll_{0}})+B(\rho^{\sharp},\rho^{\sharp})-B(\rho_{\ll}^{\sharp}, \rho_{\ll}^{\sharp}).
 \]
 \end{enumerate}
\end{theorem}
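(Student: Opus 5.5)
Our plan follows Kostant/Meinrenken: everything is derived from the absolute results (Corollary~\ref{cor::D_square}) together with the fact that $j$ is a morphism intertwining $L_x$ and $\iota_x$ (Proposition~\ref{lemm::properties_map_j}).

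\emph{Part a).} For $L_x$ with $x\in\ll$: by Corollary~\ref{cor::D_square}, $L_x\Dirac_{\gg}=0$ for all $x\in\gg$, in particular for $x\in\ll$. Applying the same corollary to $\ll$ gives $L_x\Dirac_{\ll}=0$, so $L_x j(\Dirac_\ll)=j(L_x\Dirac_\ll)=0$ by part c) of the proposition.

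For $\iota_x$: we compute $\iota_x\Dirac_{\gg}=\iota_x\Dirac'_\gg=\gamma^{\WW}(x)=x\otimes1-\tfrac12(1\otimes\widehat\gamma(x))$ by Lemma~\ref{lemm::properties_Dirac'} c), since $\iota_x(1\otimes\rho^\sharp)=B(x,\rho^\sharp)=\rho(x)$. This constant has to be tracked. Similarly, $\iota_x\Dirac_\ll=x\otimes1-\tfrac12(1\otimes\widehat\gamma_\ll(x))+\rho_\ll(x)$. We then apply $j$, which commutes with $\iota_x$, and use $\ad_x=\ad^\ll_x+\ad^\ss_x$, so that
\[
\widehat\gamma(x)=\widehat\gamma_\ll(x)+\widehat\gamma_\ss(x)
\]
inside $\widehat{\Cl}_\varepsilon(\gg)\supset\widehat{\Cl}_\varepsilon(\ll)\otimes\widehat{\Cl}_\varepsilon(\ss)$. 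Here we must check that normal ordering is compatible with the orthogonal splitting: it is, since $\uplambda(\ad_x)$ splits with no mixed $\ll$–$\ss$ terms because $\ad_x$ preserves both. Then
\[
j(\iota_x\Dirac_\ll)=x\otimes1-\tfrac12\,1\otimes\widehat\gamma_\ss(x)-\rho_\ss(x)-\tfrac12\,1\otimes\widehat\gamma_\ll(x)+\rho_\ll(x),
\]
and the difference with $\iota_x\Dirac_\gg$ is $\rho(x)+\rho_\ss(x)-\rho_\ll(x)=0$, using $\rho_\ss=\rho_\ll-\rho|_\ll$.

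\emph{Part b).} Since $j$ is an algebra homomorphism, $j(\Dirac_\ll)^2=j(\Dirac_\ll^2)$, and the latter is known from Corollary~\ref{cor::D_square} applied to $\ll$. Expanding,
\[
\Dirac_{\gg,\ll}^2=\Dirac_\gg^2-[\Dirac_\gg,j(\Dirac_\ll)]+j(\Dirac_\ll)^2 .
\]
The key claim is $[\Dirac_\gg,j(\Dirac_\ll)]=2j(\Dirac_\ll)^2$, equivalently $[\Dirac_{\gg,\ll},j(\Dirac_\ll)]=0$, which gives $\Dirac_{\gg,\ll}^2=\Dirac_\gg^2-j(\Dirac_\ll^2)$ and hence the stated formula.

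To prove the claim, we would expand $\Dirac_\ll=\sum_{a}e^a_\ll\otimes e^\ll_a+\ldots$ and use $[\Dirac_{\gg,\ll},1\otimes y]=2\iota_y\Dirac_{\gg,\ll}=0$ for $y\in\ll$ (Lemma~\ref{lemm::contraction_and_Lie_derivative_in_Weil} a), together with part a)). Then $j(\Dirac_\ll)$ is built from $1\otimes y$ and $j(y\otimes1)$ with $y\in\ll$. Writing $j(y\otimes 1)=\gamma^\WW(y)+1\otimes(\text{stuff in }\Cl(\ll))$ plus a constant, we have $[\gamma^\WW(y),\cdot]=L_y$, which kills $\Dirac_{\gg,\ll}$, and the $\Cl(\ll)$ pieces are generated by the $1\otimes y$, which also supercommute with $\Dirac_{\gg,\ll}$. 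Since $\Dirac_{\gg,\ll}$ supercommutes with all generators of $j(\widehat\WW_\varepsilon(\ll))$, it supercommutes with $j(\Dirac_\ll)$, as the bracket is a derivation. Cleanly establishing that $j(\widehat{\WW}_\varepsilon(\ll))$ is componentwise generated by $1\otimes y$ and $\gamma^{\WW}(y)$ up to constants, and that the derivation argument passes to the completion, is the main obstacle. We would handle it degreewise, since every product involved is finite in each $\ZZ$-degree.

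Finally, the constants: $j(\Omega_\ll\otimes1)$ gives $j(\Omega_\ll)$, and the scalar terms $\tfrac1{24}\etr(\cdots)$ and $B(\rho^\sharp_\ll,\rho^\sharp_\ll)$ pass through $j$ unchanged.
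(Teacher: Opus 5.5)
Your proposal is correct and follows essentially the same route as the paper: part a) via $j$ intertwining $L_x$ and $\iota_x$ together with the bookkeeping $\rho(x)+\rho_{\ss}(x)-\rho_{\ll}(x)=0$, and part b) by showing that $\Dirac_{\gg,\ll}$ supercommutes with the image of $j$, whence $\Dirac_{\gg,\ll}^{2}=\Dirac_{\gg}^{2}-j(\Dirac_{\ll}^{2})$. You are in fact more careful than the paper at the commutation step: you note that $j(y\otimes 1)$ differs from $\gamma^{\WW}(y)$ by a $\widehat{\Cl}_{\varepsilon}(\ll)$-term and a constant (both harmless since $\iota_{y}\Dirac_{\gg,\ll}=0$ for $y\in\ll$), and that the bracket which must vanish is $[\Dirac_{\gg,\ll},j(\Dirac_{\ll})]$.
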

\begin{proof}
 a) By Lemma~\ref{lemm::properties_map_j}, the color Lie algebra homomorphism $j: \widehat{\mathcal{W}}_{\varepsilon}(\ll) \to \widehat{\mathcal{W}}_{\varepsilon}(\gg)$ intertwines Lie derivatives and contractions, consequently using Corollary~\ref{cor::D_square} one has $L_{x}\Dirac_{\gg,\ll}=0$ for all $x \in \ll$.
 It remains to prove $\iota_{x}\Dirac_{\gg,\ll}=0$
 for all $x \in \ll$. One has \[
j(\gamma^{\WW}(x)+\rho_{\ll}(x)(1\otimes 1))
=
\gamma^{\WW}(x)+(\rho_{\ll}(x)-\rho_{\ss}(x))(1\otimes 1)
=
\gamma^{\WW}(x)+\rho(x)(1\otimes 1),
\]
since \(\rho_{\ll}-\rho_{\ss}=\rho|_{\ll}\). Consequently, one has for any $x\in\ll$, $$\iota_x\Dirac_{\gg,\ll}=\iota_x\Dirac_{\gg}-\iota_xj(\Dirac_{\ll})=(\gamma^{\WW}(x)+\rho(x)(1\otimes 1))-j(\gamma^{\WW}(x)+\rho_{\ll}(x)(1\otimes 1))=0.$$ 

 b) In a), we have proven that $\Dirac_{\gg,\ll}$ lies in $\widehat{\WW}(\gg,\ll)$. In particular, it commutes with the image of $j$ since 
 \[
[j(1\otimes x),\Dirac_{\gg,\ll}]_{\widehat{\WW}} = 2\iota_{x}\Dirac_{\gg,\ll}=0, \qquad [j(x\otimes 1),\Dirac_{\gg,\ll}]_{\widehat{\WW}} = L_{x} \Dirac_{\gg,\ll}=0
 \]
where we used Lemma~\ref{lemm::contraction_and_Lie_derivative_in_Weil}.
Thus, $[\Dirac, j(\Dirac_{\ll})]_{\widehat{\WW}}=0$. Using c) of Lemma~\ref{lemm::properties_Dirac'}, we conclude 
 \[
 \Dirac_{\gg,\ll}^{2}= \tfrac{1}{2}[\Dirac_{\gg,\ll}, \Dirac_{\gg,\ll}] = \tfrac{1}{2}[\Dirac, \Dirac]-\tfrac{1}{2}j([\Dirac_{\ll},\Dirac_{\ll}]) = \Dirac^{2}-j(\Dirac_{\ll}^{2}). 
 \]
 The statement now follows with Corollary~\ref{cor::D_square}.
\end{proof}

Assume that $\ll\subset\gg$ is quadratic, with form $B_{\ll}$, and that
$\gg=\ll\oplus\ss$ with $[\ss,\ss]\subset\ll$. Then the cubic term in
$\Dirac_{\gg,\ll}$ vanishes. Thus $\Dirac_{\gg,\ll}$ is quadratic; we call it the
\emph{quadratic Dirac operator} attached to $(\gg,\ll)$. The Kac--Moody superalgebra case treated
below is of this form.

\section{Applications to Kac--Moody Superalgebras} In this section, we study the relative cubic Dirac operator for symmetrizable Kac--Moody superalgebras $\gg=\even\oplus\odd$, with $\ll=\even$ and $\ss=\odd$.

We first recall basic facts on Kac--Moody superalgebras and highest weight supermodules $M$, and describe the action of $\Dirac_{\gg,\even}$ on $M\otimes M(\odd)$, where $M(\odd)$ denotes the oscillator supermodule. We then determine $\ker \Dirac_{\gg,\even}^{2}$ in the example $\gg=\widehat{\osp}(1\vert 2n)$. Finally, we introduce unitarizable $\gg$-supermodules, study the consequences of the properties of $\Dirac_{\gg,\even}$, and relate the kernel of the corresponding Dirac operator $\Dirac_{\gg,\even}$ to a cohomology theory.

In what follows, we fix $\Gamma=\ZZ_{2}$ and let $\varepsilon$ be the parity. Accordingly, we omit the index $\varepsilon$ throughout and write, for example, $\bigwedge(\gg)$, $\Cl(\gg)$, $S(\gg)$, $\WW(\gg)$, and $\mathfrak{osp}(\gg)$ in place of $\bigwedge_{\varepsilon}(\gg)$, $\Cl_{\varepsilon}(\gg)$, $S_{\varepsilon}(\gg)$, $\WW_{\varepsilon}(\gg)$, and $\mathfrak{so}_{\varepsilon}(\gg)$. Likewise, we speak simply of Lie superalgebras, supermodules, superderivations, and supercommutators, and all signs are understood with respect to parity.

\subsection{Generalities on Kac–Moody Superalgebras}\label{subsec::Generalities_KM_Superalgebras} This section introduces Kac–Moody superalgebras. We follow the standard references \cite{Kac_Wakimoto, KM_Structure, zbMATH04122182}.

Let $A \coloneqq (a_{ij})$ be an $n\times n$ matrix. Set $I=\{1,\ldots,n\}$, and let $p:I\to\ZZ_{2}$ be a map, referred to as the \emph{parity function}. Fix an even vector space $\hh$ of dimension $2\abs{I}-\rk(A)$. Then there exist linearly independent elements $\alpha_{i}\in\hh^{\ast}$ and $h_{i}\in\hh$ such that~\cite{Kac_infinite}
\begin{equation}
\alpha_{j}(h_{i})=a_{ij},\qquad i,j\in I.
\end{equation}
Define the superalgebra $\ggbar(A)$ with generators $X_{i},Y_{i}$ ($i\in I$) of parities $p(X_{i})=p(Y_{i})=p(i)$, together with $\hh$, and relations
\begin{equation}
[h,X_{i}]=\alpha_{i}(h)X_{i},\qquad [h,Y_{i}]=-\alpha_{i}(h)Y_{i},\qquad [X_{i},Y_{j}]=\delta_{ij}h_{i},\qquad [\hh,\hh]=0.
\end{equation}
There exists a maximal ideal $\mathfrak{m}$ of $\ggbar(A)$ intersecting $\hh$ trivially~\cite{Kac_infinite}. The \emph{contragredient Lie superalgebra} associated to $A$ is the quotient
$
\gg(A)\coloneqq \ggbar(A)/\mathfrak{m}.
$
We call $A$ the \emph{Cartan matrix} of the Lie superalgebra $\gg(A)$.
Without loss of generality we assume $a_{ii}\in\{2,0\}$, since $\gg(B)\cong\gg(A)$ whenever $B=DA$ for some invertible diagonal $n\times n$ matrix $D$. Such a Cartan matrix $A$ is called \emph{normalized}. 

A contragredient Lie superalgebra $\gg(A)$ is called \emph{quasisimple} if for every ideal $\jj$ of $\gg(A)$ either $\jj \subset \hh$ or $\jj + \hh = \gg(A)$. A Cartan matrix $A$ is called \emph{admissible} if it satisfies the following conditions \cite{zbMATH05896583}:
\begin{enumerate}
 \item[(i)] If $a_{ii} = 0$ and $p(i)=\bar{0}$, then $a_{ij} = 0$ for every $j \in I$.
 \item[(ii)] If $a_{ii} = 2$ and $p(i)=\bar{0}$, then $a_{ij} \in \ZZ_{\leq 0}$ for any $j \in I$, $i \neq j$ and $a_{ij}=0$ implies $a_{ji}=0$.
 \item[(iii)] If $a_{ii} = 2$ and $p(i)=\bar{1}$, then $a_{ij} \in 2\ZZ_{\leq 0}$ for any $j \in I$, $i \neq j$ and $a_{ij}=0$ implies $a_{ji}=0$.
\end{enumerate}
If $A$ is admissible, we call the associated contragredient Lie superalgebra $\gg(A)$ \emph{admissible}. Note that $\gg(A)$ is admissible if and only if $\ad_{X_{i}}$ and $\ad_{Y_{i}}$ act locally nilpotently \cite{zbMATH05149996}.

Any contragredient Lie superalgebra $\gg \coloneqq \gg(A)$ admits a root space decomposition, since the action of $\hh$ on $\gg(A)$ is diagonalizable:
\begin{equation}
\gg = \hh \oplus \bigoplus_{\alpha \in \Delta} \gg^{\alpha}, \qquad 
\gg^{\alpha} \coloneqq \{x \in \gg : [h,x] = \alpha(h)x \ \text{for all} \ h \in \hh\},
\end{equation}
for some $\Delta \subset \hh^{\ast} \setminus \{0\}$, called the \emph{set of roots}. By definition, $\alpha_{1}, \ldots, \alpha_{n}$ are roots, and every root $\alpha \in \Delta$ can be uniquely expressed as $\alpha = \sum_{i} n_{i}\alpha_{i}$, where all $n_{i}$ are either nonnegative or nonpositive integers. The roots $\alpha_{1}, \ldots, \alpha_{n}$ are called \emph{simple roots}. The set $\Uppi$ of all simple roots is called a \emph{simple system}. Furthermore, by linear independence of the $\alpha_{i}$, each root space $\gg^{\alpha}$ is either purely even or purely odd. We define the parity function $p : \Delta \to \ZZ_{2}$ by setting $p(\alpha) = 0$ or $1$ whenever $\gg^{\alpha}$ is even or odd, respectively. Consequently, $\Uppi=\Uppi_{\bar0}\sqcup\Uppi_{\bar1}$ and $\Delta=\Delta_{\bar0}\sqcup\Delta_{\bar1}$, according to the decomposition into even and odd simple roots and roots, respectively.

A root $\alpha \in \Delta$ is called \emph{positive} if, in the expression $\alpha = \sum_{i} n_{i}\alpha_{i}$, all coefficients $n_{i}$ are non-negative and not all zero. Negative roots are defined analogously. We denote by $\Delta^{+}$ and $\Delta^{-}$ the sets of positive and negative roots, respectively, so that $\Delta = \Delta^{+} \sqcup \Delta^{-}$. In particular, one obtains the triangular decomposition
\begin{equation} \label{eq::triangular_decomposition}
\gg = \nn^{-} \oplus \hh \oplus \nn^{+}, \qquad 
\nn^{\pm} \coloneqq \bigoplus_{\alpha \in \Delta^{\pm}} \gg^{\alpha}.
\end{equation}

For each simple root $\alpha_i\in\Uppi$, set
$
X_{\alpha_i}\coloneqq X_i,Y_{\alpha_i}\coloneqq Y_i$ and $H_{\alpha_i}\coloneqq [X_{\alpha_i},Y_{\alpha_i}]=H_i.$ Then
\begin{equation}
[h,X_i]=\alpha_{i}(h)X_{i},\quad [h,Y_i]=-\alpha_{i}(h)Y_i,\quad [X_i,Y_i]=h_{i}
\end{equation}
for all $h\in \hh$. Define a $\ZZ$-grading on $\gg$ by \begin{equation}\label{eq::Z_grading_KM}\deg(X_{i})=1, \qquad \deg(Y_{i})=-1,\qquad \deg(h_{i})=0\end{equation} for all $i\in I$. Moreover, there exists $\rho\in\hh^{\ast}$ such that
\begin{equation}\label{eq::definition_Weyl_element}
\rho(h_{\alpha})=\tfrac{1}{2}\alpha(h_{\alpha}),\qquad \forall\alpha\in\Uppi.
\end{equation}
We refer to $\rho$ as the \emph{Weyl vector} for $\gg$. 

A simple root $\alpha_{i}$ is called \emph{isotropic} if $a_{ii}=0$; otherwise it is called \emph{non-isotropic}. Furthermore, $\alpha_{i}$ is called \emph{regular} if for any other simple root $\alpha_{j}$, $a_{ij}=0$ implies $a_{ji}=0$; otherwise it is called \emph{singular}. We call the Cartan matrix $A$ \emph{regular} if for any $i,j \in I$, $a_{ij}=0$ implies $a_{ji}=0$.

\begin{definition}[\cite{zbMATH04122182}]
 A \emph{Kac–Moody superalgebra} is a regular quasisimple admissible contragredient Lie superalgebra $\gg(A)$.
\end{definition}

We note that the definition adopted in this article differs from that in~\cite{zbMATH05896583}. Moreover, if $p(i) = \bar{0}$ for all $i \in I$, then $A$ is a generalized Cartan matrix, and $\gg(A)$ coincides with the Kac–Moody algebra associated with $A$.

\subsubsection{Symmetrizable Kac–Moody Superalgebras}
We are concerned with a particular class of Kac–Moody superalgebras, namely the \emph{symmetrizable Kac–Moody superalgebras}. A Cartan matrix $A$ is said to be \emph{symmetrizable} if there exists an invertible diagonal matrix $D$ such that $DA$ is symmetric. Symmetrizable Cartan matrices are regular. A contragredient Lie superalgebra $\gg(A)$ is called \emph{symmetrizable} if its Cartan matrix $A$ is symmetrizable.

\begin{lemma}[{\cite{zbMATH04122182}}] \label{lemm::equivalent_characterization_symmetrizable_KM}
A contragredient Lie superalgebra $\gg(A)$ is symmetrizable if and only if there exists a nondegenerate even supersymmetric invariant bilinear form $B(\cdot,\cdot)$ on $\gg(A)$.
\end{lemma}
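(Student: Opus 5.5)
The plan is to prove the two implications separately. For the converse direction I would use only weight considerations. For the forward direction I would use Kac's standard inductive construction of an invariant form on $\ggbar(A)$, followed by an identification of its radical with $\mathfrak{m}$. Throughout, $\Gamma=\ZZ_2$. Since $\hh$ is even, supersymmetry of $B$ on $\hh$ is ordinary symmetry.

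First, assume $B$ is a nondegenerate even supersymmetric invariant form on $\gg(A)$. By invariance, $B(\gg^{\alpha},\gg^{\beta})=0$ unless $\alpha+\beta=0$, and $B(\hh,\gg^{\alpha})=0$ for $\alpha\neq0$. Hence $B|_{\hh}$ is nondegenerate. Moreover $X_i$ can only pair with $\gg^{-\alpha_i}=\CC Y_i$, so $d_i\coloneqq B(X_i,Y_i)\neq0$. Invariance gives
\[
B(h_i,h)=B([X_i,Y_i],h)=B(X_i,[Y_i,h])=\alpha_i(h)\,d_i ,\qquad h\in\hh .
\]
Taking $h=h_j$ gives $B(h_i,h_j)=a_{ji}d_i$. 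Symmetry of $B$ on $\hh$ then yields $a_{ji}d_i=a_{ij}d_j$. With $D=\diag(d_i)$ this says exactly that $DA$ is symmetric, so $\gg(A)$ is symmetrizable.

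For the forward direction, assume $DA$ is symmetric with $D=\diag(\epsilon_i^{-1})$, $\epsilon_i\neq0$. I would follow Kac's construction. Choose a complement $\hh''$ of $\mathrm{span}(h_i)$ in $\hh$. Define a symmetric form on $\hh$ by
\[
(h_i,h)=\alpha_i(h)\epsilon_i ,\qquad (\hh'',\hh'')=0 .
\]
The usual rank argument, using the linear independence of the $\alpha_i$ and $\dim\hh=2\abs{I}-\rk(A)$, shows this form is nondegenerate. Next set $(X_i,Y_j)=\delta_{ij}\epsilon_i$, with the supersymmetric sign for $(Y_j,X_i)$, and extend to $\ggbar(A)=\bigoplus_k\ggbar_k$ for the principal $\ZZ$-grading~\eqref{eq::Z_grading_KM}. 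The extension proceeds by induction on $\abs{k}$: the pairing $\ggbar_{k}\times\ggbar_{-k}$ is defined by writing elements of $\ggbar_{-k}$ as brackets $[Y_i,u]$ with $u\in\ggbar_{-k+1}$, and imposing invariance together with super-signs. One must check that this is well defined, invariant, even and supersymmetric. That verification reduces, via symmetry of $DA$, to the relation $(h_i,h_j)=a_{ji}\epsilon_i=a_{ij}\epsilon_j$. It is a lengthy but routine sign check, carried out exactly as in Kac's book.

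The step I expect to be the main obstacle is descending the form to $\gg(A)$ and showing it is nondegenerate there. This amounts to proving that the radical $R$ of the form on $\ggbar(A)$ equals $\mathfrak{m}$.
\begin{enumerate}
\item[(i)] By invariance, $R$ is a $\ZZ$-graded ideal. It meets $\hh$ trivially because the form on $\hh$ is nondegenerate. By maximality of $\mathfrak{m}$, this gives $R\subseteq\mathfrak{m}$.
\item[(ii)] For the reverse inclusion, let $x\in\mathfrak{m}\cap\ggbar_k$ with $k>0$ (the case $k<0$ is symmetric). I would show $(x,\ggbar_{-k})=0$ by induction on $k$. The space $\ggbar_{-k}$ is spanned by elements $[Y_i,u]$ with $u\in\ggbar_{-k+1}$. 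By invariance, $(x,[Y_i,u])=\pm([x,Y_i],u)$. Since $[x,Y_i]\in\mathfrak{m}\cap\ggbar_{k-1}$, the induction hypothesis applies. The base case $k=0$ holds because $\mathfrak{m}\cap\hh=0$.
\end{enumerate}
Hence $\mathfrak{m}=R$, and the form descends to a nondegenerate even supersymmetric invariant form on $\gg(A)$.
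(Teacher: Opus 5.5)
The paper gives no proof of this lemma; it only cites \cite{zbMATH04122182}. Your argument is essentially the standard one, the super version of the construction in \cite{Kac_infinite}, and it is correct.

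There is one slip in the converse direction. The relation $a_{ji}d_i=a_{ij}d_j$ says that $A\,\diag(d_i)$ is symmetric, or equivalently that $\diag(d_i^{-1})A$ is symmetric. It does not say that $\diag(d_i)A$ is symmetric. For instance, $A=\begin{psmallmatrix}2&-1\\-2&2\end{psmallmatrix}$ forces $d=(1,2)$ up to scale, and $\diag(1,2)A$ is not symmetric. The corrected version matches your forward direction, where $D=\diag(\epsilon_i^{-1})$, and symmetrizability still follows since $d_i\neq 0$.

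The only real difference from the usual route is where the form is built. In \cite{Kac_infinite} the same induction on the principal grading is run directly on $\gg(A)$. There the pieces of degree $\pm k$, $k\geq2$, are again spanned by brackets with $X_i$, respectively $Y_i$. Nondegeneracy is then immediate: the radical is a graded ideal meeting $\hh$ trivially, and $\gg(A)$ has no nonzero such ideal.

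Building the form on $\ggbar(A)$ instead costs you the extra inclusion $\mathfrak m\subseteq R$. Your induction for it is fine. It tacitly uses two facts: that $\mathfrak m$ is $\ad\hh$-stable, hence $\ZZ$-graded, and that $\mathfrak m$ contains every ideal meeting $\hh$ trivially, not merely that it is maximal. In exchange you get the sharper statement that $\mathfrak m$ is exactly the radical of the form on $\ggbar(A)$.

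In both versions, symmetrizability is used only at the base of the induction, namely for the symmetry of the form on $\hh$ and its invariance on $\ggbar_{-1}\oplus\hh\oplus\ggbar_{1}$. Well-definedness in degree $k\geq2$ is formal. One also writes $x=\sum_j[X_j,w_j]$ and checks, using Jacobi and lower-degree invariance, that $\sum_i([x,Y_i],u_i)=\sum_j(X_j,[w_j,y])$ for $y=\sum_i[Y_i,u_i]$. This holds with no extra super signs, and it shows the value depends on neither presentation.
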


This lemma justifies our interest in symmetrizable Kac–Moody superalgebras, for which a cubic Dirac operator can be constructed. In the sequel we write $\gg \coloneqq \gg(A)$ for a symmetrizable Kac–Moody superalgebra, where $A$ is fixed. If $D = \diag(d_{1}, \ldots, d_{n})$, we may normalize $B(\cdot,\cdot)$ such that
\begin{equation}\label{eq::normalization_B}
B(h,H_{j}) = d_{j}\alpha_{j}(h), \qquad h \in \hh.
\end{equation}
Such a bilinear form $B$ is called the \emph{standard form}, and we fix this normalization on $\gg$.

The restriction of $B$ to $\hh$ is nondegenerate, and $B(\gg^{\alpha}, \gg^{\beta}) = 0$ for $\alpha \ne -\beta$; hence $B$ defines a nondegenerate pairing between $\gg^{\alpha}$ and $\gg^{-\alpha}$. Moreover, $B$ induces an isomorphism $\eta : \hh \to \hh^{\ast}$ and thereby a nondegenerate bilinear form on $\hh^{\ast}$, denoted by the same symbol by abuse of notation.

Examples of symmetrizable Kac–Moody superalgebras include the basic classical Lie superalgebras
\begin{equation}
A(m\vert n)\ (m\neq n),\quad B(m\vert n),\quad C(n),\quad D(m\vert n),\quad D(2,1;\alpha)\ (\alpha\neq 0,-1),\quad F(4),\quad G(3)
\end{equation}
classified in~\cite{Kac}, as well as the affine and twisted affine superalgebras. Our main interest lies in the latter.

\subsubsection{Affine and Twisted Affine Lie Superalgebras} Let $\ss$ be a finite-dimensional contragredient simple Lie superalgebra, $\ss \neq \psl(n\vert n)$, equipped with an even nondegenerate supersymmetric invariant form $B$. We define the \emph{affine Lie superalgebra} $\widehat{\ss}$ to be the infinite-dimensional super vector space
\begin{equation}
 \ss \otimes \CC[t,t^{-1}] \oplus \CC D \oplus \CC K, 
\end{equation}
with even elements $K,D$ and with the Lie bracket defined by 
\begin{equation}
 \begin{aligned} \label{eq::commutation_relations_affine_Lie_superalgebra}
 [x \otimes t^{k}, y \otimes t^{l}] = [x,y] \otimes t^{k+l}+k\delta_{k,-l}B(x,y)K, \\ [K,D] = [K, x \otimes t^{l}]=0, \qquad [D, x \otimes t^{l}] = lx \otimes t^{l} 
 \end{aligned}
\end{equation}
for any $x,y \in \ss$ and $k,l \in \ZZ$. Explicitly, if $X_{1}, \ldots, X_{n},Y_{1},\ldots, Y_{n}$ are generators of $\ss$, there exist (unique) $x_{0},y_{0}\in \ss$ such that $[Y_{i},x_{0}]=[X_{i},y_{0}]$ for all $i > 0$, and if we set $X_{0} \coloneqq x_{0}\otimes t^{-1}$ and $Y_{0} \coloneqq y_{0} \otimes t$, then $X_{0},X_{1},\ldots,X_{n}, Y_{0},Y_{1},\ldots,Y_{n}, K,D$ generate $\widehat{\ss}$. 

$B$ extends to $\widehat{\ss}$, again denoted by $B$, by $B(K,D)=1$, $B(x\otimes t^{k},y\otimes t^{l})=\delta_{k,-l}B(x,y)$ and $B(K,K)=B(D,D)=B(K,\ss)=B(D,\ss)=0$. By Lemma~\ref{lemm::equivalent_characterization_symmetrizable_KM}, this yields the following proposition.

\begin{proposition}
 $\widehat{\ss}$ is a symmetrizable Kac–Moody superalgebra.
\end{proposition}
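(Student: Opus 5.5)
The statement is that $\widehat{\ss}$ is a symmetrizable Kac--Moody superalgebra. I would deduce it from Lemma~\ref{lemm::equivalent_characterization_symmetrizable_KM} together with the standard realization of $\widehat{\ss}$ as a contragredient Lie superalgebra. The plan has three steps, which I would carry out in the following order.

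\textbf{Step 1: $\widehat{\ss}$ is contragredient.} I would identify $\widehat{\ss}$ with $\gg(\widehat A)$ for an extended Cartan matrix $\widehat A$. Let $A$ be the normalized Cartan matrix of $\ss$.
\begin{enumerate}
\item[(i)] Adjoin a row and a column for the index $0$, with $a_{0j}=\alpha_j(h_0)$ and $a_{i0}=\alpha_0(h_i)$. Here $h_0=[X_0,Y_0]=[x_0,y_0]\otimes 1-B(x_0,y_0)K$, up to sign conventions, and the parity of $0$ is that of $x_0$.
\item[(ii)] Take the Cartan subalgebra $\hh\oplus\CC K\oplus\CC D$. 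Its dimension is $2(n+1)-\rk\widehat A$, because $\rk\widehat A=n$ ($\ss$ is simple and $\ss\neq\psl(n\vert n)$).
\item[(iii)] Check that $X_0,\dots,X_n,Y_0,\dots,Y_n$ satisfy the defining relations of $\ggbar(\widehat A)$, using the bracket \eqref{eq::commutation_relations_affine_Lie_superalgebra}. This gives a surjection $\ggbar(\widehat A)\to\widehat{\ss}$, since these elements together with $K,D$ generate $\widehat{\ss}$.
\item[(iv)] Show the kernel is the maximal ideal $\mathfrak m$. For this it suffices that $\widehat{\ss}$ has no nonzero ideal meeting the Cartan subalgebra trivially. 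Such an ideal is graded by the $t$-degree and by root spaces. Simplicity of $\ss$ then forces it to contain some $x\otimes t^k$, and bracketing with $\ss\otimes t^{-k}$ produces a nonzero element of $\hh\oplus\CC K$, which is a contradiction.
\end{enumerate}

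\textbf{Step 2: symmetrizability.} I would verify directly that the extended form $B$ is even, supersymmetric and nondegenerate.
\begin{enumerate}
\item[(i)] \emph{Nondegeneracy:} $B$ pairs $\ss\otimes t^k$ with $\ss\otimes t^{-k}$ nondegenerately, pairs $K$ with $D$, and the two blocks are orthogonal.
\item[(ii)] \emph{Invariance:} check $B([a,b],c)=B(a,[b,c])$ on the basis types.
 \begin{itemize}
 \item For three loop elements, the central term $k\delta_{k,-l}B(x,y)K$ pairs to zero with $\ss\otimes t^m$. The identity then reduces to invariance of $B$ on $\ss$.
 \item For $a=D$, the left side $B([D,x\otimes t^k],y\otimes t^l)=k\delta_{k+l,0}B(x,y)$ must equal $B(D,[x\otimes t^k,y\otimes t^l])=k\delta_{k,-l}B(x,y)B(D,K)$. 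These agree since $B(D,K)=1$.
 \item The cases involving $K$ are trivial.
 \end{itemize}
\end{enumerate}
Lemma~\ref{lemm::equivalent_characterization_symmetrizable_KM} then shows that $\widehat A$ is symmetrizable.

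\textbf{Step 3: the remaining Kac--Moody axioms.}
\begin{enumerate}
\item[(i)] \emph{Regularity:} this follows from symmetrizability, as noted above.
\item[(ii)] \emph{Admissibility:} I would use the criterion that $\ad_{X_i}$ and $\ad_{Y_i}$ act locally nilpotently. This holds for $i>0$ because it holds in $\ss$ and extends degreewise in $t$. For $i=0$, $X_0=x_0\otimes t^{-1}$ with $x_0$ a root vector of $\ss$, which acts nilpotently on $\ss$, so the same argument applies.
\item[(iii)] \emph{Quasisimplicity:} I would reuse the ideal analysis from Step 1. An ideal not contained in the Cartan subalgebra contains some $x\otimes t^k$. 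Generating by brackets, it then contains all of $\ss\otimes\CC[t,t^{-1}]$, so together with the Cartan subalgebra it spans $\widehat{\ss}$.
\end{enumerate}

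\textbf{Main obstacle.} I expect Step 1 to be the hardest part, namely showing that the presentation yields exactly $\gg(\widehat A)$ and not a proper quotient or an extension. If a full argument is too long, I would cite the standard result of Kac and van de Leur, which identifies affine superalgebras as contragredient with extended Cartan matrix, and only verify the form-theoretic part (Step 2) explicitly.
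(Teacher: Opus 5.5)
Your proposal is correct, and its core is exactly the paper's argument: extend $B$ by $B(K,D)=1$ and $B(x\otimes t^k,y\otimes t^l)=\delta_{k,-l}B(x,y)$, check that this form is even, supersymmetric, invariant and nondegenerate, and then invoke Lemma~\ref{lemm::equivalent_characterization_symmetrizable_KM}. The paper stops there, taking as known from the literature both that $\widehat{\ss}$ is the contragredient algebra $\gg(\widehat A)$ and that it satisfies the regularity, admissibility and quasisimplicity axioms; your Steps~1 and~3 correctly supply these details, which the paper leaves implicit.
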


The affinization $\widehat{\psl}(n\vert n)$ is the infinite-dimensional super vector space
\begin{equation}
 \ss \otimes \CC[t,t^{-1}] \oplus \CC D \oplus \CC K \oplus \CC D' \oplus \CC K'
\end{equation}
with additional even elements $D',K'$ and with Lie bracket
\begin{equation}
\begin{aligned}
 [x \otimes t^{k}, y \otimes t^{l}] = [x,y] \otimes t^{k+l} + k\delta_{k,-l}B(x,y)K+\delta_{k,-l}\tr([x,y])K' \\
 [D',x \otimes t^{k}] = \left(1-(-1)^{p(x)}\right)x \otimes t^{k}, \quad [K',x \otimes t^{k}] =0.
 \end{aligned}
\end{equation}
Here, $\tr$ denotes the ordinary trace. $\widehat{\psl}(n\vert n)$ is a symmetrizable Kac--Moody superalgebra with two-dimensional center. In what follows, $\ss$ denotes a finite-dimensional quadratic contragredient simple Lie superalgebra, and $\widehat{\ss}$ its affinization.

We describe the roots of $\widehat{\ss}$. Let $\hh_{\ss}$ be a Cartan subalgebra of $\ss$, and set $\hh\coloneqq\hh_{\ss}\oplus\CC K\oplus\CC D$. Let $\delta,\kappa\in\hh^{\ast}$ be defined by \begin{equation}\label{eq::definition_delta_kappa}
\delta_{\hh_{\ss}\oplus\CC K}=0, \quad \delta(D)=1, \qquad \kappa\vert_{\hh_{\ss}\oplus \CC D}=0, \quad \kappa(K)=1,\end{equation} so that $\hh^{\ast}=\hh^{\ast}_{\ss}\oplus\CC\kappa\oplus\CC\delta$. If $\Delta_{\ss}$ denotes the root system of $\ss$, then the roots of $\widehat{\ss}$ are the elements $\alpha+k\delta$ with $\alpha\in\Delta_{\ss}$ and $k\in\ZZ$, together with the roots $k\delta$ for $k\in\ZZ\setminus\{0\}$. If $\Uppi_{\ss}=\{\alpha_{1},\ldots,\alpha_{n}\}$ is the standard simple system of $\ss$, then the standard simple system of $\widehat{\ss}$ is $\Uppi \coloneqq \{\alpha_{0},\alpha_{1},\ldots,\alpha_{n}\}$, where $\alpha_{0}\coloneqq\theta+\delta$ and $\theta$ denotes the lowest root of $\ss$. We denote the set of roots of $\widehat{\ss}$ by $\Delta$. The even and odd roots are given by
\begin{equation}
 \Delta_{\bar{0}} =\{\alpha+n\delta : \alpha \in (\Delta_{\ss})_{\bar{0}}, \ n \in \ZZ\} \sqcup \{n\delta : n\in \ZZ\setminus\{0\}\}, \qquad \Delta_{\bar{1}} = \{\alpha + n \delta : \alpha \in (\Delta_{\ss})_{\bar{1}}, \ n \in \ZZ\}.
\end{equation}

Finally, assume that $\ss_{\bar0}$ is reductive, say $\ss_{\bar0}=\bigoplus_{j=0}^{N}\ss_{\bar0,j}$, where $\ss_{\bar0,0}$ is abelian and $\ss_{\bar0,j}$ is a simple Lie algebra for $j\geq 1$. Then, for each $j=0,\ldots,N$, the affinization $\widehat{\ss}$ contains the affine Kac--Moody algebra associated with $\ss_{\bar0,j}$, which we denote by $\widehat{\ss}_{\bar0,j}$.

\medskip
Next we introduce \emph{twisted affine Lie superalgebras}. Consider an automorphism $\phi$ of $\ss$ of finite order $q$, which preserves the invariant form on $\ss$, that is, $B(x,y)=B(\phi(x),\phi(y))$ for all $x,y \in \ss$. In addition, let $\epsilon$ be a $q$-th primitive root of $1$. Extend $\phi$ to $\widehat{\ss}$ via 
\begin{equation}
 \phi(x \otimes t^{k}) = \epsilon^{k} \phi(x)\otimes t^{k}, \quad \phi(D)=D, \quad \phi(K)=K.
\end{equation}
The \emph{twisted affine Lie superalgebra} $\ss^{\phi}$ is the subsuperalgebra of the fixed points of $\phi$. The construction does not depend on the choice of $\epsilon$, and if $\phi$ and $\psi$ are two automorphisms in the same connected component of $\operatorname{Aut}(\ss)$, then the corresponding twisted affine Lie superalgebras are isomorphic.

\begin{proposition}[\cite{zbMATH04122182}]
 $\ss^{\phi}$ is a symmetrizable Kac–Moody superalgebra.
\end{proposition}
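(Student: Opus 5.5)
The plan is to reduce to Lemma~\ref{lemm::equivalent_characterization_symmetrizable_KM}: it suffices to show that the twisted affine Lie superalgebra $\ss^{\phi}$ is a contragredient Lie superalgebra $\gg(A')$ that is regular, quasisimple and admissible, and that it carries a nondegenerate even supersymmetric invariant form.

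\textbf{Step 1: the form.} We take the restriction of the form $B$ on $\widehat{\ss}$. Since $\phi$ preserves $B$ on $\ss$ and acts on $t^k$ by $\epsilon^{k}$, we get
\[
B(\phi(x\otimes t^k),\phi(y\otimes t^l))=\epsilon^{k+l}B(x\otimes t^k,y\otimes t^l).
\]
The right-hand side is nonzero only when $k+l=0$, so the extended $\phi$ preserves $B$ on all of $\widehat{\ss}$. To see that $B$ restricts nondegenerately to $\ss^{\phi}$, decompose $\widehat{\ss}$ into eigenspaces of the finite-order semisimple operator $\phi$. Because $\phi$ preserves $B$, eigenspaces for eigenvalues $\mu,\nu$ are $B$-orthogonal unless $\mu\nu=1$. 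In particular the fixed-point space ($\mu=1$) pairs nondegenerately with itself. Invariance and evenness of the restricted form are inherited directly from $\widehat{\ss}$.

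\textbf{Step 2: contragredient structure.} We produce the Chevalley-type generators.
\begin{itemize}
\item Choose $\phi$ within its connected component of $\operatorname{Aut}(\ss)$ (allowed, by the independence statement above) so that it preserves a Cartan subalgebra $\hh_{\ss}$ and a Borel subalgebra, i.e.\ $\phi$ is a diagram-type automorphism composed with an inner toral element.
\item Set $\hh^{\phi}=\hh_{\ss}^{\phi}\oplus\CC K\oplus\CC D$.
\item Grade $\ss$ by the eigenvalues of $\phi$, $\ss=\bigoplus_{j\in\ZZ/q}\ss_j$, so that $\ss^{\phi}=\bigoplus_k \ss_{k \bmod q}\otimes t^k\oplus\CC K\oplus\CC D$.
\item Take as generators the simple root vectors of the fixed-point algebra $\ss_0$, together with lowest weight vectors of the $\ss_0$-module $\ss_1$ tensored with $t$ (and their opposites with $t^{-1}$). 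This is the super version of Kac's construction of twisted affine algebras.
\end{itemize}
The resulting matrix $A'$ is read off from the pairings $\alpha_i(h_j)$. It is symmetrizable because $B$ restricted to $\hh^{\phi}$ is nondegenerate and invariant, so $d_j a_{ij}$ is symmetric under the normalization \eqref{eq::normalization_B}. Symmetrizable matrices are regular.

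\textbf{Step 3: admissibility and quasisimplicity.} These are the main obstacles.
\begin{itemize}
\item \emph{Admissibility.} Use the criterion that $\ad_{X_i}$ and $\ad_{Y_i}$ act locally nilpotently. This holds because each generator is a root vector in a finite-dimensional $\ss$ twisted by finitely many powers of $t$, and the $t$-degree bounds repeated brackets in the loop realization.
\item \emph{Quasisimplicity.} Use the simplicity of $\ss$ and the fact that any ideal meeting some $\ss_j\otimes t^k$ nontrivially generates everything modulo $\hh$.
\item \emph{Identification with $\gg(A')$.} The hardest point is showing that $\ss^{\phi}$ equals $\gg(A')$ rather than merely a quotient or extension of it. That is, one must show the maximal ideal $\mathfrak{m}$ is exactly the kernel of $\ggbar(A')\to\ss^{\phi}$.
\end{itemize}
For the identification, the first approach would be the standard argument: $\ss^{\phi}$ has no nonzero ideal meeting $\hh^{\phi}$ trivially, since such an ideal would be $\hh^{\phi}$-graded and would lie in root spaces. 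A root vector $x\otimes t^k$ with $B$ nondegenerate pairs with some $y\otimes t^{-k}$, and their bracket has a nonzero $K$- or $\hh_{\ss}$-component, a contradiction. Hence the surjection from $\ggbar(A')$ has kernel $\mathfrak{m}$, and Lemma~\ref{lemm::equivalent_characterization_symmetrizable_KM} concludes.
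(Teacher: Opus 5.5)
The paper gives no proof of this proposition; it is quoted from the literature. Your outline is therefore a self-contained alternative rather than a reproduction: it is the super analogue of Kac's realization of twisted affine algebras as contragredient algebras. Several pieces are sound:
\begin{itemize}
\item Step~1: a finite-order isometry has nondegenerate fixed points, since the $\mu$- and $\nu$-eigenspaces are $B$-orthogonal unless $\mu\nu=1$.
\item Symmetrizability, via $a_{ij}=\alpha_j(h_i)=B(X_i,Y_i)B(\alpha_i,\alpha_j)$.
\item The argument that $\ss^{\phi}$ has no nonzero ideal meeting $\hh^{\phi}$ trivially.
\end{itemize}
The admissibility step, however, is wrong as written. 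The $t$-degree bounds nothing, since $\ad_{x_0\otimes t}^{N}$ raises it by $N$. The correct reason is that $\ad_{x\otimes t^{k}}^{N}(y\otimes t^{l})=(\ad_{x}^{N}y)\otimes t^{l+Nk}$ up to a central term arising at most once, which the next bracket kills. Moreover, $\ad_{x}$ is nilpotent on the finite-dimensional $\ss$ because $x$ is an $\hh_{\ss}^{\phi}$-weight vector of nonzero weight (for $X_0$ this weight is $\theta\neq 0$).

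The substantive gaps are in Step~2 and in the identification with $\gg(A')$.

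\textbf{Normalization of $\phi$.} Taking $\phi$ to be a diagram automorphism times an inner toral element does not justify your choice of generators. Write $\ss^{j}$ for your $\ss_j$. For a general toral part, $\ss^{0}$ is only reductive, $\ss^{1}$ may be reducible, and Chevalley generators can lie in $\ss^{j}\otimes t^{j}$ with $j\geq 2$. For instance, take $\ss=\mathfrak{sl}_2$ and the inner automorphism of order $3$ with $\phi(h)=h$, $\phi(f)=\epsilon f$, $\phi(e)=\epsilon^{2}e$. Then $\ss^{0}=\CC h$ and $\ss^{1}=\CC f$, and $e\otimes t^{2}$ does not lie in the subalgebra generated by $\hh^{\phi}$, $f\otimes t$, $e\otimes t^{-1}$. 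You must choose the representative so that $\ss^{0}$ is simple, $\ss^{1}$ is an irreducible $\ss^{0}$-module, and $\ss^{0}+\ss^{1}$ generates $\ss$. That is precisely the input the paper quotes from the literature. In the super case it is a real theorem, not a formality: the twistings of $\mathfrak{sl}(m\vert n)$ built from the supertranspose lead to order-$4$ automorphisms.

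\textbf{Surjectivity.} Your last paragraph only shows that the kernel of a \emph{surjective} map $\ggbar(A')\to\ss^{\phi}$ equals $\mathfrak{m}$; surjectivity is never established. The relations $[X_i,Y_j]=0$ for $i\neq j$ do follow from the extremal-weight property of $x_0$ and $y_0$. You still need the realization conditions:
\begin{itemize}
\item linear independence of the $\alpha_i$ and of the $h_i$, which uses $B(x_0,y_0)\neq 0$;
\item $\dim\hh^{\phi}=2(n+1)-\rk(A')$.
\end{itemize}
Above all, you need that $\ss^{0}\otimes 1$, $\ss^{1}\otimes t$ and $\ss^{q-1}\otimes t^{-1}$ generate every $\ss^{k\bmod q}\otimes t^{k}$. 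That Kac-type generation argument, built on $\ss^{0}+\ss^{1}$ generating $\ss$, is where the real content of the proposition lies; the example above shows it genuinely fails without the right normalization of $\phi$. Your quasisimplicity claim also still needs its routine argument, via $D$-gradedness of ideals and $\ZZ_q$-graded simplicity of $\ss$.
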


To describe the simple system of $\ss^{\phi}$, we introduce a $\ZZ_{q}$-grading 
\begin{equation}
\ss = \ss^{0} \oplus \ss^{1} \oplus \cdots \oplus \ss^{q-1}, \qquad 
\ss^{k} \coloneqq \{ s \in \ss : \phi(s) = \epsilon^{k}s \}.
\end{equation}
It can be arranged that $\phi$ satisfies the following properties: $\ss^{0}$ is a simple finite-dimensional Lie superalgebra, $\ss^{1}$ is a simple $\ss^{0}$-module, and $\ss^{0} + \ss^{1}$ generates $\ss$~\cite[Chapter~6]{zbMATH05896583}. Let $\Delta^{0}$ denote the set of roots of $\ss^{0}$, and $\Delta^{j}$ the set of roots of $\ss^{j}$ with respect to a Cartan subalgebra of $\ss^{0}$. If $\theta$ is the lowest weight of $\ss^{1}$ and $\{\alpha_{1}, \ldots, \alpha_{n}\}$ is the standard simple system of $\ss^{0}$, then a simple system of $\ss^{\phi}$ is given by $\{\alpha_{0}, \alpha_{1}, \ldots, \alpha_{n}\}$, where $\alpha_{0} = \theta + \delta$. The roots of $\ss^{\phi}$ are of the form $\alpha + k\delta$ with $\alpha \in \Delta^{j}$ and $k \in j + q\ZZ$, or $k\delta$ with $k \in q\ZZ \setminus \{0\}$.

\subsubsection{Kac–Peterson Class}
Let $\gg \coloneqq \gg(A)$ be a symmetrizable Kac–Moody superalgebra, and let $B$ denote the standard form on $\gg$. Let $(e_{a})$ be a homogeneous basis with $B$-dual basis $(e^{a})$. It admits a corrected Casimir element~\cite{KM_Structure}
\begin{equation}
\Omega_{\gg} = 2\rho^{\sharp} + \sum_{a} \varepsilon(e_{a}) e_{a} e^{a}.
\end{equation}
By Corollary~\ref{cor::Casimir_and_KP_class}, this implies that the Kac–Peterson class is trivial.

\begin{proposition}\label{prop::trivial_KP_class_KM}
For a symmetrizable Kac–Moody superalgebra $\gg$ one has
\[
\KP = \operatorname{d}\!\rho,
\]
where $\rho \in \hh^{\ast}$ is the Weyl vector. In particular, the Kac–Peterson class of $\gg$ is trivial.
\end{proposition}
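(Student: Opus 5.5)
The plan is to verify directly the equivalent condition in Lemma~\ref{lemm::properties_Psi}~b): it suffices to show $\Psi_{\operatorname{KP}}(x)=[\rho^{\sharp},x]$ for all $x\in\gg$. Equivalently, we must show
\[
\KP(x,y)=\rho([x,y])
\]
for all homogeneous $x,y$. By Lemma~\ref{lemm::properties_KP}~a), $\KP(x,y)=0$ unless $\deg x+\deg y=0$. The cocycle is also $\hh$-invariant, since the projections $\pi_\pm$ commute with $\ad_h$. Hence we may restrict to $x\in\gg^{\alpha}$ and $y\in\gg^{-\alpha}$, or to $x,y\in\hh$.

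\textbf{The case $x,y\in\hh$.} Both sides vanish here. On the right, $[x,y]=0$. On the left, $\ad_x\pi_-\ad_y\pi_+$ and its partner commute with each other, and the two trace terms cancel by $\varepsilon$-cyclicity.

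\textbf{The case $x\in\gg^{\alpha}$, $y\in\gg^{-\alpha}$, $\alpha\in\Delta^+$ of degree $k>0$.} In $\ad_x\pi_-\ad_y\pi_+$, a vector $z\in\gg^{\beta}$ with $\deg\beta>0$ survives only when $\deg(\beta-\alpha)\le 0$. So the first trace equals
\[
\sum_{\beta\in\Delta^+,\ \deg\beta\le k}\str_{\gg^\beta}(\ad_x\ad_y).
\]
In the second term, $\ad_y\pi_-\ad_x\pi_+$ kills $\gg_+$ once $x$ raises the degree further. That term therefore vanishes, since $\pi_-\ad_x$ applied to positive-degree vectors gives $0$. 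Thus
\[
2\KP(x,y)=\sum_{\beta\in\Delta^+,\ \deg\beta\le k}\str_{\gg^\beta}\bigl(\ad_x\ad_y|_{\gg^\beta}\bigr).
\]
This is a finite sum, and it is the super analogue of the sum that appears in Kac's proof that the generalized Casimir is central \cite{KM_Structure}.

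\textbf{Comparison with $\rho$.} Rather than evaluate this sum root by root, I would argue by uniqueness. By Lemma~\ref{lemm::properties_KP}~b), $\KP$ is a $2$-cocycle. For $x\in\gg^{\alpha}$ and $y\in\gg^{-\alpha}$ we have $[x,y]=B(x,y)\,\alpha^{\sharp}$, because $[x,y]\in\hh$ and invariance gives $B(h,[x,y])=\alpha(h)B(x,y)$. So the claim reads $\KP(x,y)=B(x,y)\,\rho(\alpha^\sharp)=B(x,y)\,B(\rho,\alpha)$. A cocycle vanishing on $\hh\times\hh$ and of weight zero is determined by its values on the pairs $(X_i,Y_i)$: using the cocycle identity and the fact that $\gg$ is generated by $X_i,Y_i,\hh$, one inducts on the height of $\alpha$. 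For simple $\alpha_i$ the sum above reduces to $\beta=\alpha_i$ only, at least when the grading is by height, as in \eqref{eq::Z_grading_KM}. This gives
\[
2\KP(X_i,Y_i)=\str_{\gg^{\alpha_i}}(\ad_{X_i}\ad_{Y_i})=(-1)^{p(i)}\alpha_i(h_i)\cdot(\text{sign conventions})=a_{ii}.
\]
By \eqref{eq::definition_Weyl_element}, this equals $2\rho(h_i)=2\,d\rho(X_i,Y_i)$. Since $\KP-d\rho$ is then a cocycle vanishing on generators and on $\hh$, it vanishes identically.

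Alternatively, one can use the quoted fact that $\Omega_\gg=\Omega'_\gg+2\rho^\sharp$ is central \cite{KM_Structure}, together with Corollary~\ref{cor::Casimir_and_KP_class}. This only requires checking that the normal-ordered $\Omega'_\gg=\widehat Q(p)$ agrees with $\sum_a\varepsilon(e_a)e_ae^a$, which is ordered with $\gg_+$ on the right, via Example~\ref{ex::S2}, up to a constant. That would be the quicker route, and the paper's text right before the proposition suggests it.

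\textbf{Main obstacle.} The delicate step is the sign and normalization bookkeeping. For odd isotropic simple roots, $a_{ii}=0$ and both sides vanish; for odd non-isotropic ones, $\gg^{2\alpha_i}$ must be accounted for if it lies in degree $\le 1$, which it does not. The identification of $\widehat Q(p)$ with Kac's Casimir via Example~\ref{ex::S2} also has to match the standard normalization \eqref{eq::normalization_B}.
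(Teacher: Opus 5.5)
Your proposal is correct, but your main argument is not the paper's. The paper's proof is exactly your ``alternative'': it quotes from \cite{KM_Structure} that Kac's corrected Casimir $2\rho^{\sharp}+\sum_a\varepsilon(e_a)e_ae^a$ is central, reading this sum with the $\gg_+$-factors on the right, i.e.\ as $\Omega'_{\gg}+2\rho^{\sharp}$. It then concludes via Corollary~\ref{cor::Casimir_and_KP_class}.

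Your primary route instead computes $\KP$ intrinsically. The steps are:
\begin{enumerate}
\item[(i)] $\KP$ has weight zero.
\item[(ii)] It vanishes on $\hh\times\hh$. Justify this more simply than you do: $\pi_-\ad_y\pi_+=0$ for $y$ of degree $0$, so each trace term is zero on its own.
\item[(iii)] $2\KP(x,y)=\sum_{0<j\le k}\str_{\gg_j}(\ad_x\ad_y)$ for $x\in\gg^{\alpha}$, $y\in\gg^{-\alpha}$, $\deg\alpha=k>0$.
\item[(iv)] The cocycle identity, together with the facts that $\nn^{+}$ is generated by the $X_i$ and $\gg^{-\alpha_i}=\CC Y_i$, reduces the cocycle $\KP-\operatorname{d}\!\rho$ to its values on the pairs $(X_i,Y_i)$.
\end{enumerate}
On those pairs the ``sign conventions'' you leave open resolve cleanly. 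One has $\ad_{X_i}\ad_{Y_i}X_i=(-1)^{p(i)}a_{ii}X_i$, and the supertrace on $\gg^{\alpha_i}$ contributes a further $(-1)^{p(i)}$. Hence $2\KP(X_i,Y_i)=a_{ii}=2\rho(h_i)=2\operatorname{d}\!\rho(X_i,Y_i)$.

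The paper's route is two lines but rests on an external theorem. Yours is self-contained and makes explicit where the principal grading \eqref{eq::Z_grading_KM} enters: only the $\gg^{\alpha_i}$ lie in degrees $(0,1]$. It also shows why precisely the Weyl vector appears. Read backwards through Corollary~\ref{cor::Casimir_and_KP_class}, it gives an independent proof that Kac's corrected Casimir is central.
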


\begin{corollary}
 Any symmetrizable Kac--Moody superalgebra $\gg$ has a corrected cubic Dirac operator $\Dirac \in \widehat{\WW}(\gg)$.
\end{corollary}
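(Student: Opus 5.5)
The statement to prove is the Corollary: every symmetrizable Kac--Moody superalgebra $\gg$ has a corrected cubic Dirac operator $\Dirac\in\widehat{\WW}(\gg)$. The plan is to check the standing hypotheses of Section~\ref{subsec::conventions} for $\gg$ with its principal $\ZZ$-grading~\eqref{eq::Z_grading_KM}. Then Proposition~\ref{prop::trivial_KP_class_KM} supplies the primitive $\rho$ with $\KP=\operatorname{d}\!\rho$, and the definition of $\Dirac$ together with Corollary~\ref{cor::D_square} applies.

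\textbf{Step 1: $\gg$ is a $\ZZ$-graded color Lie algebra with finite-dimensional homogeneous components.}
\begin{itemize}
\item Take $\Gamma=\ZZ_2$ with the parity commutation factor.
\item The principal grading $\deg X_i=1$, $\deg Y_i=-1$, $\deg\hh=0$ is compatible with the bracket, since it is respected by the defining relations of $\ggbar(A)$ and the maximal ideal $\mathfrak m$ is $\ZZ$-graded.
\item The component $\gg_k$ is spanned by root spaces $\gg^{\alpha}$ with $\alpha=\sum n_i\alpha_i$ and $\sum n_i=k$ (plus $\hh$ when $k=0$).
\item There are finitely many such $\alpha$ with all $n_i$ of one sign. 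Each root space is finite-dimensional, being spanned by images of finitely many brackets of generators. Hence $\dim\gg_k<\infty$.
\end{itemize}

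\textbf{Step 2: the form $B$ has the required vanishing properties.} Use the standard form $B$ of Lemma~\ref{lemm::equivalent_characterization_symmetrizable_KM}.
\begin{itemize}
\item $B$ is even, nondegenerate, supersymmetric and invariant.
\item $B(\gg^{\alpha},\gg^{\beta})=0$ unless $\alpha+\beta=0$. This gives $B(\gg_i,\gg_j)=0$ unless $i+j=0$.
\item Evenness gives the $\Gamma$-degree condition.
\item So $(\gg,B)$ is a $\ZZ$-graded quadratic color Lie algebra, and $\KP$, $\Psi_{\KP}$, $\widehat{\WW}(\gg)$ are all defined.
\end{itemize}

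\textbf{Step 3: the primitive $\rho$ has the right degree.} By Proposition~\ref{prop::trivial_KP_class_KM}, $\KP=\operatorname{d}\!\rho$ with $\rho\in\hh^{\ast}$ the Weyl vector. View $\rho$ as the element of the restricted dual vanishing on all root spaces. Since $\hh\subset\gg_0$ is even, $\rho\in(\gg^{\ast})_{(0,\ng)}$ and $\rho^{\sharp}\in\hh$ has bidegree $(0,\ng)$.

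\textbf{Step 4: conclude.} The definition then gives
\[
\Dirac=\Dirac'+1\otimes\rho^{\sharp},\qquad \Dirac'=\calQ(\calD)\in\widehat{\WW}(\gg),
\]
so $\Dirac\in\widehat{\WW}(\gg)$. Corollary~\ref{cor::D_square} shows that $\Dirac$ is $\gg$-invariant and has the stated square.

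\textbf{Expected obstacle.} The only genuinely non-formal input is Proposition~\ref{prop::trivial_KP_class_KM}, which we may assume. If it had to be justified, the route would be as follows.
\begin{itemize}
\item Kac's corrected Casimir $2\rho^{\sharp}+\sum_a\varepsilon(e_a)e_ae^a$ is central.
\item Compare this ordering with $\Omega'_{\gg}=\widehat Q(p)$ using Example~\ref{ex::S2}. The difference consists of terms $[x,\pi_+y]$ that assemble into elements of $\hh$ (sums of $[e_\alpha,e^{\alpha}]$ over positive roots).
\item The required central element therefore still has the form $\Omega'_{\gg}+2\tilde\rho^{\sharp}$ with $\tilde\rho\in\hh^{\ast}$.
\item Corollary~\ref{cor::Casimir_and_KP_class} then identifies $\KP=\operatorname{d}\!\tilde\rho$.
\end{itemize}
The bookkeeping of this normal-ordering discrepancy, in particular the signs for odd roots, is where care is needed. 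For the Corollary itself, though, only the existence of some degree-$(0,\ng)$ primitive matters.
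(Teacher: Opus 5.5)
Your proposal is correct and takes the paper's route. There the corollary is immediate from Proposition~\ref{prop::trivial_KP_class_KM} (obtained from Kac's central corrected Casimir via Corollary~\ref{cor::Casimir_and_KP_class}, giving $\KP=\operatorname{d}\!\rho$ with $\rho$ the Weyl vector) together with the definition $\Dirac=\Dirac'+1\otimes\rho^{\sharp}$; your Steps 1--3 additionally verify the standing hypotheses (finite-dimensional graded pieces, the vanishing properties of $B$, and $\rho\in(\gg^{\ast})_{(0,\ng)}$), which the paper leaves implicit.
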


In what follows, we refer to the corrected cubic Dirac operator simply as the cubic Dirac operator.

\subsection{Oscillator Module}\label{subsec::Oscillator_supermodule} Let $\gg$ be a symmetrizable Kac--Moody superalgebra with nondegenerate invariant supersymmetric bilinear form $B$ in standard normalization. Assume that $B(\alpha,\beta)\in\RR$ for all simple roots $\alpha,\beta$. This condition is satisfied for all twisted and untwisted affine Lie superalgebras constructed from basic classical Lie superalgebras, except for those associated with $D(2,1;\alpha)$ when $\alpha\notin\RR$. For the rest of this article, we fix $\ll = \even$ and $\ss = \odd$, and consider the relative Dirac operator $\Dirac_{\gg,\even}$, which is an element of $\widehat{\WW}(\gg,\ll)$ and quadratic since $[\odd,\odd]\subset \even$. Moreover, following Section~\ref{subsec::relative_cubic_Dirac_operator}, we set $\rho_{\bar1}\coloneqq \rho_{\bar0}-\rho$. 

In what follows, we fix a simple system $\Uppi$ of $\gg$, whose simple roots we denote by $\alpha_{i}$. If $M$ is a highest weight $\gg$-module (\emph{cf.}~Section~\ref{subsec::HWM}), we will see that $\Dirac_{\gg,\even}$ has a well-defined action on $M\otimes M(\odd)$, where $M(\odd)$ is the simple module over $\Cl(\odd)$ induced by the triangular decomposition, called the \emph{oscillator module}.

The restriction of $B$ to $\odd$ is nondegenerate and skew-supersymmetric. Hence $\odd$ is symplectic, and $\Cl(\odd)$ identifies with the Weyl algebra of $\odd$, that is, the algebra generated by the elements of $\odd$ with defining relations
$
xy-yx=2B(x,y)
$
for homogeneous $x,y\in\odd$. Let $\widehat{\Cl}(\odd)$ denote its completion.

Fix a triangular decomposition $\gg=\nn^{-}\oplus\hh\oplus\nn^{+}$ (\emph{cf.}~\eqref{eq::triangular_decomposition}). Then
\begin{equation}
\odd=(\odd)_{-}\oplus(\odd)_{+},\qquad (\odd)_{\pm}\coloneqq \odd\cap\nn^{\pm}.
\end{equation}
The subspaces $(\odd)_{-}$ and $(\odd)_{+}$ are isotropic, and $B$ identifies them in the restricted sense, that is, it induces a nondegenerate pairing $(\odd)_{+}\times(\odd)_{-}\longrightarrow\mathbb C$ so that $(\odd)_{+} \cong (\odd)_{-}^{\ast}$ (restricted dual). Let $(x_{a})$ be a basis of $(\odd)_{-}$ and $(\partial_{a})$ a basis of $(\odd)_{+}$ such that
\begin{equation}
B(x_{a},x_{b})=0,\qquad B(\partial_{a},\partial_{b})=0,\qquad B(\partial_{a},x_{b})=\tfrac{1}{2}\delta_{ab}.
\end{equation} 
With this notation, the Weyl algebra of $\odd$ is generated by the $x_{a}$ and $\partial_{a}$, and identifies with the algebra of polynomial differential operators in the variables $x_{a}$, where $\partial_{a}$ acts as $\partial/\partial x_{a}$.

The simple $\ZZ$-graded $\Cl(\odd)$-module is
\begin{equation}
M(\odd)\coloneqq S((\odd)_{-})=\bigoplus_{k\geq 0}S^{k}((\odd)_{-})\cong \CC[x_{1},x_{2},\ldots],
\end{equation}
which is the polynomial algebra in countably many variables. Its action extends componentwise to $\widehat{\Cl}(\odd)$, although $M(\odd)$ is in general not a $\widehat{\Cl}(\odd)$-module. In addition, the module $M(\odd)$ has a natural $\ZZ$-grading coming from the $\ZZ$-grading of $\odd$. This $\ZZ$-grading is by construction compatible with the action of $\Cl(\odd)$.

Additionally, $M(\odd)$ carries a Hermitian form $\langle\cdot,\cdot\rangle_{M(\odd)}$ such that the operators $x_{a}$ and $\partial_{a}$ are adjoint to each other, that is, $\langle x_{a}v,w\rangle_{M(\odd)} = \langle v, \partial_{a}w\rangle_{M(\odd)}$ for any $v,w \in M(\odd)$. Concretely, let $\alpha = (\alpha_1,\alpha_2,\dots)$
be a multi-index with finite support. We write
\begin{equation}
x^\alpha \coloneqq \prod_{i\ge 1} x_i^{\alpha_i}, 
\qquad 
\alpha! \coloneqq \prod_{i\ge 1} \alpha_i!.
\end{equation}
Then the Hermitian inner product on $\mathbb{C}[x_1,x_2,\dots]$ is given by
\begin{equation}
\langle x^\alpha, x^\beta \rangle_{M(\odd)} 
= \delta_{\alpha,\beta}\alpha!.
\end{equation}
Here we use the convention that Hermitian forms are conjugate-linear in the first argument and linear in the second.

In what follows, we regard $M(\odd)$ as a module over $\Cl(\odd)$ equipped with the Hermitian form $\langle\cdot,\cdot\rangle_{M(\odd)}$. 

The assignment $x_{a} \mapsto \partial_{a}$ and $\partial_{a} \mapsto x_{a}$ extends uniquely to a $^{\ast}$-operation $(\cdot)^{\ast}$ on $\Cl(\odd)$, that is, a conjugate-linear map
such that $(x^{\ast})^{\ast}=x$ and $(xy)^{\ast}=y^{\ast}x^{\ast}$ for all $x,y \in \Cl(\odd)$. This allows us to regard $M(\odd)$ as a $^{\ast}$-module over $\Cl(\odd)$, that is, 
\begin{equation}
 \langle xv,w \rangle_{M(\odd)}= \langle v,x^{\ast}w \rangle_{M(\odd)}
\end{equation}
for all $x \in \Cl(\odd)$ and $v,w \in M(\odd)$.

Recall the Lie superalgebra morphism $\gamma_{\odd}'\colon \even\to\widehat{\Cl}(\odd)$ from Lemma~\ref{lemm::gamma_s}, which defines a diagonal embedding of $\even$ into $\widehat{\WW}(\gg)$. Via this map, we regard $M(\odd)$ as an $\even$-module and denote the corresponding representation by $\pi_{M(\odd)}$. Recall $\rho_{\bar1}\coloneqq \rho_{\bar0}-\rho.$

Let $\omega$ be a conjugate-linear anti-involution of $\gg$ so that $\omega(x)=x^{\ast}$ for all $x \in \odd$. We assume that $\omega$ is consistent, that is, $\omega(\gg^{\alpha})=\gg^{-\alpha}$ for all $\alpha \in \Delta$. Then $\pi_{M(\odd)}$ is $\omega$-contravariant. 

\begin{lemma}\label{lemm::contravariance_oscillator_module}
The representation $(\pi_{M(\odd)},M(\odd))$ is $\omega$-contravariant, that is,
\[
\langle \pi_{M(\odd)}(x)v,w\rangle_{M(\odd)}=\langle v,\pi_{M(\odd)}(\omega(x))w\rangle_{M(\odd)}
\]
for all $x\in\even$ and all $v,w\in M(\odd)$.
\end{lemma}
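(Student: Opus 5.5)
The plan is to reduce the claim to a statement about generators of $\widehat{\Cl}(\odd)$ and the $^{\ast}$-operation. By Lemma~\ref{lemm::gamma_s}, $\pi_{M(\odd)}(x)$ is the action of $\gamma'_{\odd}(x)=-\tfrac12\widehat{\gamma}_{\odd}(x)-\rho_{\bar1}'(x)$, where the scalar is $\rho_{\odd}(x)$ with $\rho_{\odd}=\rho_{\ll}-\rho|_{\ll}$. Since $M(\odd)$ is a $^{\ast}$-module over $\Cl(\odd)$, it suffices to show that
\[
\gamma'_{\odd}(x)^{\ast}=\gamma'_{\odd}(\omega(x))
\]
as operators on $M(\odd)$, for all $x\in\even$. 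The componentwise infinite sums act on each $v\in M(\odd)$ through only finitely many nonzero terms, so the $^{\ast}$ identity can be checked termwise.

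First I would write $\widehat{\gamma}_{\odd}(x)$ in the adapted basis $x_a,\partial_a$. The dual basis of $x_a$ is $2\partial_a$ up to sign, because $B(\partial_a,x_b)=\tfrac12\delta_{ab}$ and $B$ is skew on $\odd$. The normal-ordered quantization puts $(\odd)_-$ to the left, so
\[
\widehat{\gamma}_{\odd}(x)=c\sum_{a,b}\bigl(\alpha_{ab}(x)\,x_ax_b+\beta_{ab}(x)\,x_a\partial_b+\gamma_{ab}(x)\,\partial_a\partial_b\bigr)+\text{const}(x),
\]
where the coefficients are matrix entries $B(\partial_\cdot,[x,\cdot])$ of $\ad_x|_{\odd}$, and the constant comes from Example~\ref{ex::quantization_map}. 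The normal ordering here is with respect to the $\ZZ$-grading. I would first verify that this agrees with the ordering by $\nn^\pm$ used to define $M(\odd)$; if it does not, the difference is a finite scalar correction.

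Next, I would use consistency of $\omega$, namely $\omega(\gg^\alpha)=\gg^{-\alpha}$ and $\omega|_{\odd}=(\cdot)^{\ast}$, together with the compatibility $B(\omega x,\omega y)=\overline{B(x,y)}$. This compatibility is the step I expect to need: it follows from realness of $B$ on roots together with invariance and antilinearity, possibly up to a sign that the anti-involution forces. From it one gets $\overline{B(\partial_a,[x,x_b])}=B(x_a,[\omega(x),\partial_b])$ up to the order reversal of the bracket. Applying $^{\ast}$, which swaps $x_a\leftrightarrow\partial_a$ and reverses products, then maps the quadratic part for $x$ to the normally ordered quadratic part for $\omega(x)$. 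The $x_a\partial_b$ terms become $x_b\partial_a$, which are still normally ordered, so no reordering constants appear. The $x_ax_b$ and $\partial_a\partial_b$ terms are exchanged.

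The main obstacle is the scalar part, that is, showing $\overline{\mathrm{const}(x)+\rho_{\odd}(x)}=\mathrm{const}(\omega(x))+\rho_{\odd}(\omega(x))$. The scalar parts are nonzero only for $x\in\hh$, so I would reduce to $x=h\in\hh$ and then to real $h$, where $\omega(h)$ is real. I would then use that $\rho_{\odd}$ and the trace constants are real on the real form of $\hh$, by the assumption $B(\alpha,\beta)\in\RR$ and the definition of the Weyl vectors. If this bookkeeping is awkward, an alternative is to check contravariance directly on the vacuum $1\in M(\odd)$ and then propagate it using the commutation relation $[\gamma'_{\odd}(x),y]=[x,y]$ for $y\in\odd$ (Remark~\ref{rmk::generator_adjoint_action}), which is already $^{\ast}$-compatible. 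This reduces the scalar check to comparing $\langle \pi(h)1,1\rangle$ with $\langle 1,\pi(\omega h)1\rangle$, which holds by reality of the $\hh$-weight of the vacuum.
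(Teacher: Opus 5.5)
Your proposal is correct and follows essentially the paper's route. You reduce to $\gamma'_{\odd}(x)^{\ast}=\gamma'_{\odd}(\omega(x))$, treat the quadratic part termwise using that $\omega$ is a consistent anti-involution agreeing with $(\cdot)^{\ast}$ on $\odd$, and treat the scalar part on $\hh$ via $\overline{\rho_{\bar1}(x)}=\rho_{\bar1}(\omega(x))$, which follows from $B(\alpha_i,\alpha_j)\in\RR$.

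Working with the normally ordered expression, as you do, is cleaner than the paper's reordering step. First, $(\cdot)^{\ast}$ preserves normal ordering. Second, the normally ordered $\widehat{\gamma}_{\odd}(x)$ has no constant term at all, because $(\odd)_{\pm}$ are isotropic. Third, the identity you need, $\overline{B(u,v)}=B(\omega(v),\omega(u))$ on $\odd$, follows directly from $\omega(x_a)=\partial_a$ and $B(\partial_a,x_b)=\tfrac12\delta_{ab}$.
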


\begin{proof}
By definition,
\[
\gamma'_{\odd}(x)=-\tfrac12\widehat{\gamma}(x)-\rho_{\bar1}(x)=\tfrac14\sum_{a}[x,\partial_{a}]x_{a}+\tfrac14\sum_{i}[x,x_{a}]\partial_{a}-\rho_{\bar1}(x).
\]
Since $\pi_{M(\odd)}(x)=\gamma'_{\odd}(x)$, it is enough to prove
\[
\langle \gamma'_{\odd}(x)v,w\rangle_{M(\odd)}=\langle v,\gamma'_{\odd}(\omega(x))w\rangle_{M(\odd)}.
\]
We first treat the scalar term $\rho_{\bar1}(x)$. One has $\rho_{\bar1}(x)=0$ unless $x\in\hh$. For $x\in\hh$, write $x=\sum_{i}c_{i}h_{\alpha_{i}}$. Since $B(\alpha_{i},\alpha_{j})\in\RR$ for all simple roots $\alpha_{i},\alpha_{j}$, it follows that $\rho_{\bar1}(\omega(x))=\overline{\rho_{\bar1}(x)}$. As $\langle\cdot,\cdot\rangle_{M(\odd)}$ is conjugate-linear in the first variable and linear in the second, one obtains
\[
\langle \rho_{\bar1}(x)v,w\rangle_{M(\odd)}=\overline{\rho_{\bar1}(x)}\langle v,w\rangle_{M(\odd)}=\langle v,\rho_{\bar1}(\omega(x))w\rangle_{M(\odd)}.
\]
It therefore remains to show that
$
\widehat{\gamma}(x)^{\ast}=\widehat{\gamma}(\omega(x)).
$
For the first summand,
\[
\Bigl(\tfrac14\sum_{a}[x,\partial_{a}]x_{a}\Bigr)^{\ast}=\tfrac14\sum_{a}x_{a}^{\ast}[x,\partial_{a}]^{\ast}=\tfrac14\sum_{a}\partial_{a}\,\omega([x,\partial_{a}])=\tfrac14\sum_{a}\partial_{a}[x_{a},\omega(x)].
\]
Since $\partial_{a}$ and $[x_{a},\omega(x)]$ are odd, the Clifford relation gives
\[
\partial_{a}[x_{a},\omega(x)]+[x_{a},\omega(x)]\partial_{a}=2B(\partial_{a},[x_{a},\omega(x)])=0
\]
because $B$ is invariant and even. 
Thus
\[
\Bigl(\tfrac14\sum_{a}[x,\partial_{a}]x_{a}\Bigr)^{\ast}=\tfrac14\sum_{a}[\omega(x),x_{a}]\partial_{a}.
\]
Analogously,
\[
\Bigl(\tfrac14\sum_{a}[x,x_{a}]\partial_{a}\Bigr)^{\ast}=\tfrac14\sum_{a}[\omega(x),\partial_{a}]x_{a}.
\]
Combining these identities with the relation for $\rho_{\bar1}$, one gets
$
\gamma'_{\odd}(x)^{\ast}=\gamma'_{\odd}(\omega(x)).
$
This proves the claim.
\end{proof}

Combining Lemma~\ref{lemm::gamma_s} and Remark~\ref{rmk::generator_adjoint_action}, one has the following.

\begin{lemma}\label{lemm::action_on_oscillator_supermodule}
The restriction of $\pi_{M(\odd)}\colon \even\to\End(M(\odd))$ to the Cartan subalgebra $\hh$ differs from the adjoint action of $\hh$ on $M(\odd)$ by the shift $-\rho_{\bar1}$, that is,
\begin{equation*}
\pi_{M(\odd)}(h)=-\rho_{\bar1}(h)+\ad_{h},\qquad h\in\hh.
\end{equation*}
\end{lemma}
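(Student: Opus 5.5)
We will compute $\pi_{M(\odd)}(h)=\gamma'_{\odd}(h)=-\tfrac12\widehat{\gamma}_{\odd}(h)-\rho_{\bar1}(h)$ for $h\in\hh$ directly on $M(\odd)=S((\odd)_{-})$. Recall that $\rho_{\ss}=\rho_{\ll}-\rho|_{\ll}$ and that $\rho_{\bar1}=\rho_{\bar0}-\rho$, so the scalar term is exactly $-\rho_{\bar1}(h)$. It therefore remains to show that $-\tfrac12\widehat{\gamma}_{\odd}(h)$ acts on $M(\odd)$ as the derivation extending $\ad_h$ on $(\odd)_{-}$, with no extra constant.

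First, by Remark~\ref{rmk::generator_adjoint_action}, $-\tfrac12[\widehat{\gamma}_{\odd}(h),\cdot]$ equals $L_h$ on $\widehat{\Cl}(\odd)$. In particular $[-\tfrac12\widehat{\gamma}_{\odd}(h),x_a]=[h,x_a]$ and $[-\tfrac12\widehat{\gamma}_{\odd}(h),\partial_a]=[h,\partial_a]$. Since $h\in\hh$ has $\ZZ$-degree $0$ and $\ad_h$ is diagonal on root vectors, we may choose the $x_a,\partial_a$ to be root vectors. Then $[h,x_a]=-\beta_a(h)x_a$ for $x_a\in\gg^{-\beta_a}$, and $[h,\partial_a]=\beta_a(h)\partial_a$.

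Next, we write $\widehat{\gamma}_{\odd}(h)$ explicitly in normal-ordered form. The pairs $\{x_a,\partial_a\}$ form dual bases with $B(\partial_a,x_b)=\tfrac12\delta_{ab}$, and normal ordering with respect to the $\ZZ$-grading puts the $\partial_a$ (in $\gg_+$, or more precisely in positive-root directions) to the right. Using the formula from the proof of Lemma~\ref{lemm::contravariance_oscillator_module}, $-\tfrac12\widehat{\gamma}_{\odd}(h)$ becomes a sum of terms $c_a\,\beta_a(h)\,x_a\partial_a$ with the annihilator on the right, together with a possible constant coming from terms $\partial_a x_a$ that have been rewritten via $\partial_a x_a=x_a\partial_a+2B(\partial_a,x_a)$.

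The key point, and the main obstacle, is to check that no constant survives on the vacuum $1\in M(\odd)$, i.e. that the normal-ordered element kills $1$. For root vectors in $\gg_0\cap\odd$, the normal ordering is by $\ZZ$-degree and not by root positivity, so one must verify that the reordering constants from such degree-$0$ pairs are accounted for. We would handle this by noting that $\widehat{q}$ is defined so that $\widehat{q}(\uplambda(\ad_h))$ on the degree-$0$ part is $q$, which is symmetric. The resulting constant there is absorbed in the definition of $\rho_{\ss}$ via the $\ll$- versus $\gg$-Weyl vectors. This is the content of $\psi^{\ss}_{\operatorname{KP}}=\diff\rho_{\ss}$ and of the definition of $\rho$ in \eqref{eq::definition_Weyl_element}.

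Once $-\tfrac12\widehat{\gamma}_{\odd}(h)\cdot1=0$ is established, the commutator relations show that it acts as $\ad_h$ extended as a derivation to $S((\odd)_{-})$. Since $M(\odd)$ is generated from $1$ by the $x_a$, the claimed identity $\pi_{M(\odd)}(h)=-\rho_{\bar1}(h)+\ad_h$ follows.
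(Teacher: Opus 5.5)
Your reduction is correct and uses the same ingredients as the paper, which simply combines Lemma~\ref{lemm::gamma_s} with Remark~\ref{rmk::generator_adjoint_action}. You have $\pi_{M(\odd)}(h)=-\tfrac12\widehat{\gamma}_{\odd}(h)-\rho_{\bar1}(h)$, and the Remark gives $[-\tfrac12\widehat{\gamma}_{\odd}(h),x_a]=[h,x_a]$. You are also right that one further fact is needed, which the paper leaves implicit: $-\tfrac12\widehat{\gamma}_{\odd}(h)\cdot 1=0$.

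Your treatment of that step does not work. You suggest that a vacuum constant coming from degree-zero odd pairs is ``absorbed in the definition of $\rho_{\ss}$'' via $\psi^{\ss}_{\operatorname{KP}}=\diff\rho_{\ss}$. Nothing can be absorbed there:
\begin{itemize}
\item $\rho_{\ss}=\rho_{\bar0}-\rho|_{\even}=\rho_{\bar1}$ is already fixed, and it already appears as the separate scalar in $\gamma'_{\odd}$. A nonzero constant $c(h)$ in $-\tfrac12\widehat{\gamma}_{\odd}(h)\cdot 1$ would just change the lemma to $\pi_{M(\odd)}(h)=\ad_h-\rho_{\bar1}(h)+c(h)$.
\item The coboundary identity concerns commutators $[\widehat{\gamma}_{\ss}(x),\widehat{\gamma}_{\ss}(y)]$, from which additive constants drop out. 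Also, $\diff\rho_{\ss}$ only sees $\rho_{\ss}$ on $[\even,\even]$ (for instance, not $\rho_{\ss}(D)$). So it cannot determine a vacuum eigenvalue.
\item Starting from the unordered expression $\tfrac14\sum_a[h,\partial_a]x_a+\cdots$ and rewriting $\partial_a x_a=x_a\partial_a+1$ does not give a finite constant. It gives the infinite, in general divergent, sum $\tfrac14\sum_a\beta_a(h)$.
\end{itemize}

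The missing observation is that the obstacle does not exist.
\begin{itemize}
\item With the principal grading~\eqref{eq::Z_grading_KM} one has $\gg_0=\hh\subset\even$. Hence $\odd\cap\gg_0=0$ and $(\odd)_{\pm}=\odd\cap\gg_{\pm}$. So the $\ZZ$-degree normal ordering is exactly ``$x_a$ to the left, $\partial_a$ to the right''.
\item $\ad_h$ preserves root spaces and $B$ pairs $\gg^{\beta}$ with $\gg^{-\beta}$. Therefore $\uplambda(\ad_h^{\odd})=-\tfrac12\sum_a \ad_h(e^a)\wedge e_a$ is a completed sum of terms $x\wedge\partial$ with $x\in(\odd)_-$ and $\partial\in(\odd)_+$. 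It has no $x\wedge x$ or $\partial\wedge\partial$ components.
\item By Definition~\ref{def::normal_ordered_quantization} (compare Example~\ref{ex::quantization_map}\,a)), $\widehat q(x\wedge\partial)=x\partial$ exactly, with no constant.
\end{itemize}
Hence $\widehat{\gamma}_{\odd}(h)=\sum_a c_a\beta_a(h)\,x_a\partial_a$, which annihilates $1$. Your closing induction on monomials in the $x_a$ then gives $\pi_{M(\odd)}(h)=\ad_h-\rho_{\bar1}(h)$.
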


The following corollary is immediate.

\begin{corollary}\label{cor::weights_M_odd}
The $\hh$-weights of $M(\odd)$ are
\begin{equation*}
-\rho_{\bar1}-\sum_{\alpha\in\Delta_{\bar1}^{+}}n_{\alpha}\alpha,\qquad n_{\alpha}\in \ZZ_{+}.
\end{equation*}
\end{corollary}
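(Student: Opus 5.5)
The claim is that the $\hh$-weights of $M(\odd)=S((\odd)_{-})$ are exactly $-\rho_{\bar1}-\sum_{\alpha\in\Delta^{+}_{\bar1}}n_\alpha\alpha$ with nonnegative integers $n_\alpha$, almost all zero. The plan is to compute the $\hh$-action directly from Lemma~\ref{lemm::action_on_oscillator_supermodule}, which states $\pi_{M(\odd)}(h)=-\rho_{\bar1}(h)+\ad_h$ on $M(\odd)$. Here $\ad_h$ means the derivation of $S((\odd)_{-})$ extending the adjoint action of $h$ on $(\odd)_-$. This is consistent with Remark~\ref{rmk::generator_adjoint_action}: $-\tfrac12\widehat{\gamma}_{\odd}(h)$ implements $L_h$ on the Clifford generators, and it annihilates the vacuum $1\in M(\odd)$ up to normal-ordering constants, which are absorbed into $-\rho_{\bar1}$.

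First, I choose the basis $(x_a)$ of $(\odd)_-$ to consist of root vectors. This is possible because $(\odd)_-=\bigoplus_{\alpha\in\Delta^{+}_{\bar1}}\gg^{-\alpha}$ and each root space of a Kac--Moody superalgebra is finite-dimensional. Then $[h,x_a]=-\beta_a(h)x_a$ for some $\beta_a\in\Delta^{+}_{\bar1}$.

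Second, since $\ad_h$ acts as a derivation on the polynomial algebra $\CC[x_1,x_2,\dots]$, every monomial $x^\alpha$ is an eigenvector. Its eigenvalue is $-\sum_a\alpha_a\beta_a(h)$, so $\pi_{M(\odd)}(h)x^\alpha=\bigl(-\rho_{\bar1}-\sum_a\alpha_a\beta_a\bigr)(h)\,x^\alpha$. Grouping the variables by their root gives $n_\beta=\sum_{a:\beta_a=\beta}\alpha_a$, which are nonnegative integers with finite support.

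Third, the monomials form a basis of $M(\odd)$. Hence $M(\odd)$ is $\hh$-diagonalizable and its weights are exactly the stated set. Every such weight occurs: choose for each $\beta$ with $n_\beta>0$ a root vector $x\in\gg^{-\beta}$ and take the monomial $\prod x^{n_\beta}$. The symmetric algebra of the odd part is a genuine polynomial algebra because $\Cl(\odd)$ is the Weyl algebra, so these powers are nonzero.

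The only delicate point is the convention for $\ZZ_+$. The exponents $n_\alpha$ must be allowed to equal $0$ in order to include the highest weight $-\rho_{\bar1}$ of the vacuum vector, so $\ZZ_+$ should be read as the nonnegative integers here. Apart from that, the argument is immediate once Lemma~\ref{lemm::action_on_oscillator_supermodule} is available.
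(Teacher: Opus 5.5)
Your proposal is correct and takes the same approach as the paper, which calls the corollary immediate from Lemma~\ref{lemm::action_on_oscillator_supermodule}; you make that step explicit by using monomials in root vectors of $(\odd)_-$ as a weight basis of $S((\odd)_-)$, with weights shifted by $-\rho_{\bar1}$. You are also right to read $\ZZ_+$ here as the nonnegative integers with finite support, even though the paper's stated convention says otherwise.
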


If $M$ is a $\gg$-supermodule we have a natural action of $\WW(\gg,\even)$ on $M\otimes M(\odd)$. This action extends componentwise to an action of the completion $\widehat{\WW}(\gg,\even)$ on $\widehat{\End}(M\otimes M(\odd))$. In this case, we consider the relative cubic Dirac operator as the image under this representation, that is,
\begin{equation}
 \Dirac_{\gg,\even} \in \widehat{\End}(M \otimes M(\odd)).
\end{equation}

We examine this action in the next section for highest weight supermodules.

\subsection{Action of the Cubic Dirac Operator} \label{subsec::action_of_the_cubic_Dirac_operator} In this section we derive an explicit formula for $\Dirac_{\gg,\even}^{2}$, and prove that $\ker \Dirac_{\gg,\even}^{2}\neq\{0\}$. We begin by introducing highest weight supermodules of $\gg$.

\subsubsection{Highest Weight Supermodules}\label{subsec::HWM}
Fix a simple system $\Uppi$ such that $\bb\coloneqq\hh\oplus\nn^{+}$ and $\gg=\nn^{-}\oplus\hh\oplus\nn^{+}$. Let $\rho$ denote the corresponding Weyl vector.

A $\gg$-supermodule $M$ is called a \emph{weight supermodule} if $\hh$ acts semisimply on $M$, that is,
\begin{equation}
M=\bigoplus_{\mu\in\hh^{\ast}}M^{\mu},\qquad M^{\mu}\coloneqq\{v\in M:hv=\mu(h)v\ \text{for all }h\in\hh\},
\end{equation}
with $\dim M^{\mu}<\infty$ for all $\mu\in\hh^{\ast}$. A weight supermodule $M$ is called a \emph{highest weight supermodule} if there exist $\Lambda\in\hh^{\ast}$ and $v_{\Lambda}\in M$ such that
\begin{equation}
\nn^{+}v_{\Lambda}=0,\qquad hv_{\Lambda}=\Lambda(h)v_{\Lambda}\ \text{for all }h\in\hh,\qquad \UE(\gg)v_{\Lambda}=M.
\end{equation}
$M$ is called \emph{integrable} if $X_{\alpha}$ and $Y_{\alpha}$ act locally nilpotently for all even real simple roots $\alpha$. In particular, $\Lambda(h_{\alpha})\in \ZZ_{\geq 0}$ for all even real simple roots $\alpha$.

Any highest weight supermodule of $\gg$ has a canonical $\ZZ$-grading induced by the principal $\ZZ$-grading of $\gg$ (see~\eqref{eq::Z_grading_KM}). This $\ZZ$-grading will be used without further mention.

On every highest weight $\gg$-supermodule $M$ the corrected quadratic Casimir $\Omega_{\gg}$ is a well-defined operator and acts by a scalar.

\begin{lemma}[{\cite{KM_Structure}}]\label{lemm::action_Casimir}
Let $M$ be a highest weight $\gg$-supermodule of highest weight $\Lambda$. Then
\begin{equation*}
\Omega_{\gg}v=B(\Lambda+2\rho,\Lambda)v=(B(\Lambda+\rho,\Lambda+\rho)-B(\rho,\rho))v
\end{equation*}
for all $v\in M$.
\end{lemma}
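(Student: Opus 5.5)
The plan is to show first that $\Omega_{\gg}$ is central, so it acts on all of $M=\UE(\gg)v_\Lambda$ by the scalar it takes on $v_\Lambda$. After that, only a direct evaluation on the highest weight vector remains.

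For centrality we invoke Proposition~\ref{prop::trivial_KP_class_KM}: it gives $\KP=\operatorname{d}\!\rho$ with $\rho$ the Weyl vector. Corollary~\ref{cor::Casimir_and_KP_class} then says that $\Omega_{\gg}=\Omega'_{\gg}+2\rho^\sharp$ lies in the center of $\widehat{\UE}(\gg)$.

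We also need $\Omega_{\gg}$ to act as a genuine operator on $M$. For this we choose the homogeneous basis adapted to the root decomposition and normal-order it. Concretely, take an orthonormal-dual basis $(h_i),(h^i)$ of $\hh$ and root vectors $e_\alpha^{(k)}\in\gg^{\alpha}$ with dual vectors $e_{-\alpha}^{(k)}\in\gg^{-\alpha}$.

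Using the $\ZZ$-grading \eqref{eq::Z_grading_KM}, positive roots have positive degree, so $\gg_+=\nn^+$ and $\gg_-=\hh\oplus\nn^-$. Consequently $\widehat Q$ places every $\nn^+$ factor on the right, and $\Omega'_{\gg}$ takes the form
\[
\Omega'_{\gg}=\sum_i h^ih_i+2\sum_{\alpha\in\Delta^+}\sum_k (\pm) e_{-\alpha}^{(k)}e_{\alpha}^{(k)}.
\]
Here the sign comes from the parity factor $\varepsilon$. The coefficient $2$ arises because the terms with the negative root vector first and those with the positive root vector first combine after reordering into $\gg_-\cdot\gg_+$ form; one should check the bookkeeping against Example~\ref{ex::S2}.

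For any $v\in M$, only finitely many $e_\alpha$ act nontrivially, since $M$ has weights bounded above and finite-dimensional weight spaces. Hence the infinite sum is finite on each vector and defines an operator, which is compatible with the componentwise extension of the action.

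Next we evaluate on $v_\Lambda$. Every term containing $e_\alpha$ with $\alpha>0$ on the right kills $v_\Lambda$, so
\[
\Omega_{\gg}v_\Lambda=\Bigl(\sum_i\Lambda(h^i)\Lambda(h_i)+2\Lambda(\rho^\sharp)\Bigr)v_\Lambda=\bigl(B(\Lambda,\Lambda)+2B(\rho,\Lambda)\bigr)v_\Lambda.
\]
This uses the identification $\hh\cong\hh^*$ via $B$, and Lemma~\ref{lemm::basis_description} for the Cartan part.

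By centrality, $\Omega_{\gg}$ then acts on $\UE(\gg)v_\Lambda=M$ by this scalar. The second formula is the trivial rewriting $B(\Lambda+2\rho,\Lambda)=B(\Lambda+\rho,\Lambda+\rho)-B(\rho,\rho)$.

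The main obstacle is the sign and normalization bookkeeping in the normal-ordered expression. One must check that odd root vectors contribute with the correct supersign, so that the $\gg_-\gg_+$ terms really annihilate $v_\Lambda$ and no leftover Cartan terms $[e_\alpha,e_{-\alpha}]$ appear. Such terms would shift the constant; they are exactly what the $2\rho^\sharp$ correction absorbs in the classical Kac argument. The cleanest route is to avoid re-deriving the Kac--Peterson formula and to rely on Proposition~\ref{prop::trivial_KP_class_KM} with $\rho$ the Weyl vector, checking only that $\widehat Q$ produces no extra constant on $\hh$-terms. It produces none, since $\hh\subset\gg_-$ and $Q$ is symmetric there.
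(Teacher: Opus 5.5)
Your argument is correct. It is also the standard argument behind the citation: the paper gives no proof of this lemma and simply refers to \cite{KM_Structure}. There, the scalar is obtained exactly as you do it, by combining centrality of $\Omega_{\gg}$ with an evaluation on $v_\Lambda$ of the normal-ordered form $\sum_i h^ih_i+2\sum_{\alpha>0}\sum_k e^{(k)}_{-\alpha}e^{(k)}_{\alpha}+2\rho^\sharp$.

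Two small remarks.

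First, the bookkeeping you flag as the main obstacle is harmless. With $B(e^{(k)}_\alpha,e^{(l)}_{-\alpha})=\delta_{kl}$, the $B$-dual of $e^{(k)}_{-\alpha}$ is $(-1)^{p(\alpha)}e^{(k)}_\alpha$. The two contributions therefore already combine to $2e^{(k)}_{-\alpha}e^{(k)}_{\alpha}$ in $\widehat{S}^{2}(\gg)$, with sign $+$, by $\varepsilon$-commutativity. This happens before quantization, not after reordering in $\widehat{\UE}(\gg)$. Since $\widehat{Q}$ is $Q\otimes Q$ on $S(\gg_-)\otimes S(\gg_+)$, it sends this product to the product $e^{(k)}_{-\alpha}e^{(k)}_{\alpha}$ itself, so no Cartan terms can arise.

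Second, in the paper's logic, Proposition~\ref{prop::trivial_KP_class_KM} is itself deduced, via Corollary~\ref{cor::Casimir_and_KP_class}, from the centrality of the corrected Casimir quoted from \cite{KM_Structure}. Your proof is therefore not circular. However, its centrality input is the same external fact the citation supplies; you trade citing the scalar for citing centrality.
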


Examples of highest weight supermodules are given by \emph{Verma supermodules}. For each $\Lambda \in \hh^{\ast}$, we define the \emph{Verma supermodule} $M_{\bb}(\Lambda)$ of highest weight $\Lambda$ by the induced supermodule
\begin{equation}
 M_{\bb}(\Lambda) \coloneqq \UE(\gg) \otimes_{\UE(\bb)} \CC_{\Lambda},
\end{equation}
where $\CC_{\Lambda}$ is the one-dimensional $\bb$-module with an even generator $v$ such that $\nn^{+}v=0$ and $hv=\Lambda(h)v$ for all $h \in \hh$. The vector $v$ is called the \emph{highest weight vector} of $M_{\bb}(\Lambda)$. Since $\bb$ is fixed, we omit the corresponding subscript in what follows.

\begin{proposition}[\cite{KM_Structure}]
 \begin{enumerate}
 \item[a)] $M(\Lambda)$ has a unique simple quotient, denoted by $L(\Lambda)$.
 \item[b)] If $2B(\Lambda+\rho,\alpha)\neq m B(\alpha,\alpha)$ for any $\alpha \in \Delta^{+}$ and $m \in \NN$, then $M(\Lambda)$ is simple.
 \end{enumerate}
\end{proposition}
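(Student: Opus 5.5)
The plan is to argue in the standard way via the weight structure of $M(\Lambda)$ and the contravariant (Shapovalov) form, with the quadratic Casimir of Lemma~\ref{lemm::action_Casimir} as the main tool in b).

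For a), I would first use the PBW theorem, $M(\Lambda)\cong\UE(\nn^{-})\otimes\CC_{\Lambda}$ as $\hh$-modules. This shows that $M(\Lambda)$ is a weight supermodule with weights in $\Lambda-Q^{+}$, where $Q^{+}=\sum_i\ZZ_{\geq0}\alpha_i$, and that $\dim M(\Lambda)^{\Lambda}=1$. Any $\ZZ_2$-graded submodule is $\hh$-stable, hence a sum of its weight components. A proper submodule cannot contain $v$, since $v$ generates $M(\Lambda)$. Therefore every proper submodule is contained in $\bigoplus_{\mu\neq\Lambda}M(\Lambda)^{\mu}$. It follows that the sum $N$ of all proper submodules is again proper, so $N$ is the unique maximal submodule and $L(\Lambda)\coloneqq M(\Lambda)/N$ is the unique simple quotient.

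For b), I would introduce the contravariant (Shapovalov) form on $M(\Lambda)$. Fix the anti-involution exchanging $X_i$ and $Y_i$ and fixing $\hh$, and let $\beta\colon\UE(\gg)\to\UE(\hh)=S(\hh)$ be the Harish-Chandra projection along $\nn^{-}\UE(\gg)+\UE(\gg)\nn^{+}$. Evaluating $\beta$ at $\Lambda$ gives a bilinear form on $M(\Lambda)$. Its radical is exactly the maximal submodule $N$, because a vector lies in the radical iff it generates a submodule avoiding $v$. Since distinct weight spaces are orthogonal, $M(\Lambda)$ is simple iff the determinant $\det_{\eta}(\Lambda)$ of the form restricted to $M(\Lambda)^{\Lambda-\eta}$ is nonzero for every $\eta\in Q^{+}$.

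The key step, and the main obstacle, is the super analogue of the Kac--Kazhdan determinant formula. It states that, up to a nonzero constant, $\det_{\eta}(\Lambda)$ is a product over $\alpha\in\Delta^{+}$ and $m\in\NN$ of powers of the linear factors $2B(\Lambda+\rho,\alpha)-mB(\alpha,\alpha)$. The exponents are given by partition functions $P(\eta-m\alpha)$ counted with the super multiplicities. For odd roots, $m$ ranges only over odd integers for non-isotropic $\alpha$, and over $m=1$ with factor $B(\Lambda+\rho,\alpha)$ for isotropic $\alpha$. In all cases every factor is of the form excluded in the hypothesis.

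I would prove this formula in three steps. First, compute the leading term of $\det_{\eta}$ as a polynomial on $\hh^{\ast}$ from the PBW monomials; this fixes the degree and the leading coefficient. Second, show that each listed factor divides $\det_{\eta}$. For this I would use the Casimir: by Lemma~\ref{lemm::action_Casimir}, a singular vector of weight $\Lambda-\gamma$ in a subquotient forces $2B(\Lambda+\rho,\gamma)=B(\gamma,\gamma)$. Combined with Jantzen's filtration, or with explicit singular vectors on the generic hyperplane $2B(\Lambda+\rho,\alpha)=mB(\alpha,\alpha)$, obtained via $\mathfrak{sl}_2$, $\mathfrak{osp}(1|2)$ or $\mathfrak{sl}(1|1)$ subalgebras for real roots and via the Heisenberg-type argument of Kac--Kazhdan for imaginary roots, this yields vanishing on each hyperplane with the correct multiplicity. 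Third, compare degrees to conclude equality. Granting the formula, b) is immediate: under the hypothesis no factor vanishes, so every $\det_{\eta}(\Lambda)\neq0$, the radical $N$ is zero, and $M(\Lambda)$ is simple.
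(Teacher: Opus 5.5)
The paper does not prove this proposition; it quotes it from \cite{KM_Structure}, so the comparison is with the standard literature argument. That is the route you take, and it is correct in outline. Part a) is complete as written. In b), the reduction is right: the radical of the Shapovalov form is the maximal submodule and distinct weight spaces are orthogonal, so $M(\Lambda)$ is simple iff $\det_\eta(\Lambda)\neq0$ for all $\eta\in Q^{+}$. Granting the super Kac--Kazhdan formula, the claim then follows.

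Your sketch of that formula would not go through as written, and b) does not need it.

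\emph{Problems with step~2.}
\begin{enumerate}
\item[(i)] Rank-one subalgebras produce $\gg$-singular vectors only when $\alpha$ is simple.
\item[(ii)] For an isotropic root, a singular vector need not generate a copy of a Verma module. In $\mathfrak{sl}(1|1)$ one has $Y^{2}=0$, so $\UE(\nn^{-})Yv=\CC Yv$. This is why the exponent of $B(\Lambda+\rho,\alpha)$ is the (super) partition function of $\eta-\alpha$ with $\alpha$ not used.
\item[(iii)] If $\gamma$ is odd and non-isotropic, the even root $2\gamma$ contributes no factor at all. Its factors cancel against those of $\gamma$ with even $m$. In $\osp(1|2)$ one finds $XY^{k}v=c_{k}Y^{k-1}v$ with $c_{2j}=-j\gamma(h)$ and $c_{2j+1}=\Lambda(h)-j\gamma(h)$. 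Hence $\det_{2\gamma}(\Lambda)=-\Lambda(h)\gamma(h)$, which does not vanish on the hyperplane $2B(\Lambda+\rho,2\gamma)=B(2\gamma,2\gamma)$. If your product includes the even roots $2\gamma$, step~2 fails for them.
\end{enumerate}

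\emph{What b) actually needs.} Only the easy half of Kac--Kazhdan's argument is required.
\begin{enumerate}
\item[(1)] If $\det_\eta(\Lambda)=0$, a maximal-weight vector in the radical is singular of weight $\Lambda-\beta$ with $0<\beta\leq\eta$. Lemma~\ref{lemm::action_Casimir} then forces $2B(\Lambda+\rho,\beta)=B(\beta,\beta)$.
\item[(2)] So the zero set of $\det_\eta$ lies in finitely many affine hyperplanes. Every irreducible factor is therefore, up to a scalar, some $2B(\Lambda+\rho,\beta)-B(\beta,\beta)$.
\item[(3)] Your step~1 shows that the leading term is a nonzero product of the linear forms $B(\cdot,\alpha)$ with $\alpha\in\Delta^{+}$. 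Unique factorization then forces each such $\beta$ to be proportional to a positive root.
\end{enumerate}
This yields b) with no construction of singular vectors, so steps~2 and~3 can be dropped. One check remains: $\beta$ must be a positive integer multiple of a positive root. This is automatic for isotropic roots. It also holds whenever the positive root of minimal height on the ray $\mathbb{R}_{>0}\beta$ is primitive in $Q$, for example for real roots.
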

 
\subsubsection{Action on \texorpdfstring{$M\otimes M(\odd)$}{}} Let $M$ be a highest weight $\gg$-supermodule of highest weight $\Lambda$, and consider $\Dirac_{\gg,\even}\in\widehat{\End}(M\otimes M(\odd))$. Since $\Dirac_{\gg,\even}\in\widehat{\WW}(\gg,\even)$, it is $\even$-invariant. Hence one has the following.

\begin{lemma}
The space $\ker \Dirac_{\gg,\even}$ is an $\even$-module.
\end{lemma}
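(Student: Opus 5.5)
The plan is to show that $\ker\Dirac_{\gg,\even}$ is stable under the diagonal action of $\even$ on $M\otimes M(\odd)$. Concretely, I will show that each operator by which $x\in\even$ acts supercommutes with $\Dirac_{\gg,\even}$.

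\textbf{Step 1: identify the $\even$-action.} The action of $x\in\even$ on $M\otimes M(\odd)$ is the image of $j(x\otimes 1)=x\otimes 1+1\otimes\gamma'_{\odd}(x)\in\widehat{\WW}(\gg)$ under the representation of Section~\ref{subsec::Oscillator_supermodule}. Since $j$ is a color Lie algebra homomorphism (Proposition~\ref{lemm::properties_map_j}), and $\gamma'_{\odd}$ is a Lie superalgebra homomorphism by Lemma~\ref{lemm::gamma_s}, this really defines an $\even$-module structure on $M\otimes M(\odd)$. The element $j(x\otimes 1)$ only acts componentwise, but its action is well defined on $M\otimes M(\odd)$. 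The reason is that $\gamma'_{\odd}(x)$ is a quadratic expression in $x_a,\partial_a$, and in its normally ordered form only finitely many terms act nontrivially on any given polynomial.

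\textbf{Step 2: the commutator vanishes.} Write $j(x\otimes 1)=\gamma^{\WW}(x)+\rho(x)(1\otimes 1)$, as in the proof of Theorem~\ref{thm::square_Dirac}. By Lemma~\ref{lemm::contraction_and_Lie_derivative_in_Weil}, the bracket with this element is $L_x$, since the scalar term is central. Theorem~\ref{thm::square_Dirac}(a) gives $L_x\Dirac_{\gg,\even}=0$ for all $x\in\even$, so
\[
[j(x\otimes 1),\Dirac_{\gg,\even}]_{\widehat{\WW}}=0 .
\]
Because $j(x\otimes 1)$ is even, this supercommutator is an honest commutator. Applying the representation then gives $\pi(x)\Dirac_{\gg,\even}=\Dirac_{\gg,\even}\pi(x)$ on $M\otimes M(\odd)$.

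\textbf{Step 3: conclude.} If $\Dirac_{\gg,\even}v=0$, then $\Dirac_{\gg,\even}\pi(x)v=\pi(x)\Dirac_{\gg,\even}v=0$. Hence $\ker\Dirac_{\gg,\even}$ is an $\even$-submodule, and it is graded because all the operators involved are homogeneous.

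\textbf{Main obstacle.} The delicate point is passing from the identity in the completed algebra $\widehat{\WW}(\gg)$ to an identity of operators. The representation is only defined componentwise on $\widehat{\End}(M\otimes M(\odd))$, so I must check that the product of the completed elements is compatible with composition of operators. The approach is to evaluate on a vector $v$ of fixed $\ZZ$-degree in $M\otimes M(\odd)$. The component of $\Dirac_{\gg,\even}$ indexed by $r\ge0$ contains a factor from $\WW(\gg_+)_r$ on the right. For $r$ large this factor lowers the degree below the highest weight of $M$ and below the vacuum of $M(\odd)$, so it kills $v$. Thus only finitely many components contribute on $v$, and there the commutator identity reduces to the finite one.
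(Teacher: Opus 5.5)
Your strategy is the paper's. The paper's proof is the one-line observation that $\Dirac_{\gg,\even}\in\widehat{\WW}(\gg,\even)$ is $\even$-invariant, so its kernel is stable under the diagonal action by $j(x\otimes 1)$; your Steps 1 and 3 and the finiteness discussion flesh this out. The problem is Step 2, which rests on a false identity. What the proof of Theorem~\ref{thm::square_Dirac} establishes is
\[
j\bigl(\gamma^{\WW}_{\even}(x)+\rho_{\bar0}(x)(1\otimes 1)\bigr)=\gamma^{\WW}(x)+\rho(x)(1\otimes 1),
\qquad
\gamma^{\WW}_{\even}(x)\coloneqq x\otimes 1-\tfrac12\bigl(1\otimes\widehat{q}(\uplambda(\ad_x^{\even}))\bigr)\in\widehat{\WW}(\even).
\]
The argument of $j$ there is $\gamma^{\WW}_{\even}(x)$, not $x\otimes 1$. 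Unwinding the definition of $j$ and using $\widehat{\gamma}(x)=\widehat{q}(\uplambda(\ad_x^{\even}))+\widehat{\gamma}_{\odd}(x)$ gives instead
\[
j(x\otimes 1)=\gamma^{\WW}(x)+\tfrac12\bigl(1\otimes\widehat{q}(\uplambda(\ad_x^{\even}))\bigr)-\rho_{\bar1}(x)(1\otimes 1).
\]
So $[j(x\otimes 1),\cdot\,]_{\widehat{\WW}}$ is not $L_x$ on $\widehat{\WW}(\gg)$. For instance, it kills $1\otimes y$ for $y\in\even$, whereas $L_x(1\otimes y)=1\otimes[x,y]$. It differs from $L_x$ by the bracket with an element of $1\otimes\widehat{\Cl}(\even)$, and $L_x\Dirac_{\gg,\even}=0$ says nothing about that term.

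The repair uses the other half of Theorem~\ref{thm::square_Dirac}(a): $\iota_y\Dirac_{\gg,\even}=0$ for $y\in\even$. By Lemma~\ref{lemm::contraction_and_Lie_derivative_in_Weil}(a) this means $[1\otimes y,\Dirac_{\gg,\even}]_{\widehat{\WW}}=0$. Componentwise, $\widehat{q}(\uplambda(\ad_x^{\even}))$ is a quadratic expression in elements of $\even\subset\Cl(\even)$ plus a constant. The Leibniz rule then gives $[1\otimes\widehat{q}(\uplambda(\ad_x^{\even})),\Dirac_{\gg,\even}]_{\widehat{\WW}}=0$. Equivalently, $\Dirac_{\gg,\even}$ lies in the completion of $\UE(\gg)\otimes\Cl(\odd)$ (explicitly, \eqref{eq::explicit_form_Dirac_g_even}), which supercommutes with $1\otimes\Cl(\even)$ because $B(\even,\odd)=0$. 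Only then do you get $[j(x\otimes 1),\Dirac_{\gg,\even}]_{\widehat{\WW}}=L_x\Dirac_{\gg,\even}=0$, after which your Step 3 and your finiteness argument go through.

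This cancellation must be done inside $\widehat{\WW}(\gg)$ before passing to operators, since $\widehat{\Cl}(\even)$ does not act on $M\otimes M(\odd)$. One minor slip in your last paragraph: the factor in $\WW(\gg_+)_r$ \emph{raises} the principal degree. It kills $v$ for large $r$ because $M$ and $M(\odd)$ are bounded above in degree.
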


Next, we apply Theorem~\ref{thm::square_Dirac} to obtain an explicit formula for $\Dirac_{\gg,\even}^{2}$. Since $\gg_{0}=\hh=(\gg_{\bar0})_{0}$ by the $\ZZ$-grading~\eqref{eq::Z_grading_KM}, one has $\str(\ad_{\gg_{0}}(\Omega_{\gg_{0}}))=0$ and $\str(\ad_{(\even)_{0}}(\Omega_{(\even)_{0}}))=0$. Together with Lemma~\ref{lemm::action_Casimir}, this shows that $\Dirac_{\gg,\even}^{2}$ is a well-defined operator on $M\otimes M(\odd)$ and yields the following proposition. 

\begin{proposition}\label{prop::action_on_HW_of_Dirac}
 Let $M$ be a highest weight $\gg$-supermodule with highest weight $\Lambda$. Suppose that $M\otimes M(\odd)$
contains a highest weight $\even$-submodule $V(\mu)$ of highest weight $\mu$. Then
$\Dirac_{\gg,\even}^{2}$ acts on $V(\mu)$ by the scalar
\[
B(\Lambda+\rho,\Lambda+\rho)-B(\mu+\rho_{\bar{0}},\mu+\rho_{\bar{0}}).
\]
In particular, $V(\mu)\subset\ker\Dirac_{\gg,\even}^{2}$ if and only if
\[
B(\Lambda+\rho,\Lambda+\rho)=B(\mu+\rho_{\bar{0}},\mu+\rho_{\bar{0}}).
\]
\end{proposition}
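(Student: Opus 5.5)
Apply the square formula of Theorem~\ref{thm::square_Dirac}~b) with $\ll=\even$, $\ss=\odd$, and evaluate each term on $V(\mu)$. First, the $\varepsilon$-trace constants vanish. Indeed, $\gg_0=\hh=(\even)_0$ is abelian, so $\ad_{\gg_0}(\Omega_{\gg_0})=0$ and likewise for $\even$. Hence
\[
\Dirac_{\gg,\even}^{2}=\Omega_{\gg}\otimes 1-j(\Omega_{\even})+B(\rho^{\sharp},\rho^{\sharp})-B(\rho_{\bar0}^{\sharp},\rho_{\bar0}^{\sharp}).
\]
Here we use Proposition~\ref{prop::trivial_KP_class_KM} for both $\gg$ and $\even$. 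For $\even$ we need that its Kac--Peterson class is trivialized by its own Weyl vector $\rho_{\bar0}$. This is where I would check that $\even$ is (a sum of) symmetrizable Kac--Moody algebras with the induced grading, or argue directly via the corrected Casimir of $\even$ from \cite{KM_Structure}. Then $\rho_{\ss}=\rho_{\bar0}-\rho|_{\even}=\rho_{\bar1}$, consistent with the convention of Section~\ref{subsec::Oscillator_supermodule}.

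Next, evaluate the two Casimir terms. By Lemma~\ref{lemm::action_Casimir}, $\Omega_{\gg}\otimes 1$ acts on $M\otimes M(\odd)$ by $B(\Lambda+\rho,\Lambda+\rho)-B(\rho,\rho)$. For $j(\Omega_{\even})$, note that $j$ restricted to $\even$ is exactly the diagonal action $x\mapsto x\otimes 1+1\otimes\gamma'_{\odd}(x)$. Since $j$ is an algebra homomorphism on the completions (Proposition~\ref{lemm::properties_map_j}), $j(\Omega_{\even})$ acts as the corrected Casimir of $\even$ for the tensor product $\even$-module structure on $M\otimes M(\odd)$. On the highest weight $\even$-submodule $V(\mu)$, the even analogue of Lemma~\ref{lemm::action_Casimir} gives the scalar $B(\mu+\rho_{\bar0},\mu+\rho_{\bar0})-B(\rho_{\bar0},\rho_{\bar0})$.

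Summing, the $B(\rho,\rho)$ and $B(\rho_{\bar0},\rho_{\bar0})$ constants cancel against the $B(\rho^\sharp,\rho^\sharp)$ and $B(\rho_{\bar0}^\sharp,\rho_{\bar0}^\sharp)$ terms, which yields the stated scalar. The "in particular" statement follows immediately.

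The main obstacle is justifying that $j(\Omega_{\even})$, an element of a completion, acts on $M\otimes M(\odd)$ as the normally ordered corrected Casimir with respect to the diagonal action. We also need that this action is well defined on $M\otimes M(\odd)$, even though $M(\odd)$ is not a $\widehat{\Cl}(\odd)$-module. To handle this, I would use the $\ZZ$-gradings. Both $M$ and $M(\odd)$ are bounded above in principal degree, so the normally ordered parts (positive-degree factors on the right) act locally finitely on each vector. The correction in Lemma~\ref{lemm::gamma_s}, namely the shift by $\rho_{\bar1}$, is precisely what makes the diagonal $\hh$-action produce the correct weights (Lemma~\ref{lemm::action_on_oscillator_supermodule}). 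Therefore a highest weight vector of $V(\mu)$ is annihilated by the positive part and acted on by the Cartan part of $j(\Omega_{\even})$. This gives the scalar, which then propagates to all of $V(\mu)$ by centrality.
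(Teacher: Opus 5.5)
Your proposal is correct and follows the paper's argument. You apply Theorem~\ref{thm::square_Dirac}~b) with $\ll=\even$, observe that the trace constants vanish because $\gg_0=(\even)_0=\hh$ is abelian, and evaluate $\Omega_{\gg}$ and $j(\Omega_{\even})$ via Lemma~\ref{lemm::action_Casimir} and its $\even$-analogue so that the $\rho$-constants cancel. The paper leaves implicit the two points you flag: that $\rho_{\bar0}$ trivializes the Kac--Peterson class of $\even$, and that the completed elements act on $M\otimes M(\odd)$ because principal degrees are bounded above. Your version is therefore a more detailed account of the same proof.
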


This shows that $\Dirac_{\gg,\even}^{2}$ measures the difference between the Casimir actions of $\gg$ and $\even$.

\begin{corollary}\label{lemm::action_Dirac_square_even_modules}
Let $M$ be a highest weight $\gg$-supermodule, and let $V(\mu)$ be an $\even$-submodule of $M$ with highest weight vector $v_{\mu}$. Then $\Dirac_{\gg,\even}^{2}$ acts on the highest weight $\even$-module generated by $v_{\mu}\otimes 1$ by the scalar
\[
B(\Lambda+\rho,\Lambda+\rho)-B(\mu+\rho,\mu+\rho).
\]
In particular, $v_{\mu}\otimes 1\in\ker\Dirac_{\gg,\even}^{2}$ if and only if one of the following equivalent conditions holds:
\begin{enumerate}
\item[a)] $B(\Lambda+\rho,\Lambda+\rho)=B(\mu+\rho,\mu+\rho)$;
\item[b)] $B(\Lambda+2\rho,\Lambda)=B(\mu+2\rho,\mu)$.
\end{enumerate}
\end{corollary}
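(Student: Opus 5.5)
The plan is to deduce the corollary from Proposition~\ref{prop::action_on_HW_of_Dirac}, applied to the $\even$-submodule of $M\otimes M(\odd)$ generated by $v_{\mu}\otimes 1$.

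First, identify the $\hh$-weight and the $\even$-highest-weight property of $v_{\mu}\otimes 1$. The vacuum $1\in M(\odd)=S((\odd)_{-})$ is annihilated by all $\partial_{a}$. We also need it to be annihilated by $\gamma'_{\odd}(x)$ for $x\in(\even)\cap\nn^{+}$. For such $x$, $\gamma'_{\odd}(x)=-\tfrac12\widehat{\gamma}(x)$, and by the normal-ordered expression in the proof of Lemma~\ref{lemm::contravariance_oscillator_module} every term is written with an annihilator $\partial_{a}$ on the right. We will check this using the ordering $\widehat q$ and the fact that $[x,\cdot]$ maps $(\odd)_{+}$ into $(\odd)_{+}$ and raises degree. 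Hence $1$ is an $\even$-highest-weight vector, and by Lemma~\ref{lemm::action_on_oscillator_supermodule} its weight is $-\rho_{\bar1}$. Since $v_{\mu}$ is killed by $(\even)\cap\nn^{+}$ and the action on the tensor product is diagonal (via $j$), $v_{\mu}\otimes 1$ is an $\even$-highest-weight vector of weight $\mu-\rho_{\bar1}$. It therefore generates a highest weight $\even$-submodule $V(\mu-\rho_{\bar1})$.

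Next, Proposition~\ref{prop::action_on_HW_of_Dirac} says that $\Dirac_{\gg,\even}^{2}$ acts on this submodule by
\[
B(\Lambda+\rho,\Lambda+\rho)-B(\mu-\rho_{\bar1}+\rho_{\bar0},\mu-\rho_{\bar1}+\rho_{\bar0}).
\]
Using $\rho_{\bar1}=\rho_{\bar0}-\rho$ gives $\mu-\rho_{\bar1}+\rho_{\bar0}=\mu+\rho$, which yields the stated scalar.

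Finally, for the equivalence of a) and b), expand both sides: $B(\Lambda+\rho,\Lambda+\rho)=B(\Lambda+2\rho,\Lambda)+B(\rho,\rho)$, and likewise for $\mu$. The $B(\rho,\rho)$ terms cancel, which matches Lemma~\ref{lemm::action_Casimir}.

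The main obstacle is the first step: verifying that the vacuum is annihilated by $\gamma'_{\odd}(\even\cap\nn^{+})$ in the normal ordering, and that the constant shift is exactly $-\rho_{\bar1}$ rather than some other correction. If the direct ordering argument becomes messy, I will fall back on Lemma~\ref{lemm::action_on_oscillator_supermodule} for the $\hh$-part and use a weight argument for $\nn^{+}$: a positive even root vector would send $1$ to weight $-\rho_{\bar1}+\alpha$, and that weight does not occur by Corollary~\ref{cor::weights_M_odd}.
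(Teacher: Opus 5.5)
Your proposal is correct and is essentially the argument the paper leaves implicit: $v_{\mu}\otimes 1$ is an $\even$-highest weight vector of weight $\mu-\rho_{\bar1}$, so Proposition~\ref{prop::action_on_HW_of_Dirac} applies, and $\mu-\rho_{\bar1}+\rho_{\bar0}=\mu+\rho$. Your fallback weight argument via Corollary~\ref{cor::weights_M_odd} is the cleaner way to see that the vacuum is $\even$-highest, because the formula displayed in the proof of Lemma~\ref{lemm::contravariance_oscillator_module} is not literally normally ordered (the terms $[x,\partial_{a}]x_{a}$ have a creation operator on the right), whereas the genuinely $\widehat{q}$-ordered expression does end in annihilators.
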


These results imply that the relative cubic Dirac operator has nontrivial kernel on every highest weight supermodule.

\begin{proposition}\label{prop::non_vanishing_HW}
Let $M$ be a highest weight $\gg$-supermodule. Then $\Ker\Dirac_{\gg,\even}^{2}\neq\{0\}$.
\end{proposition}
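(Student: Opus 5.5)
The plan is to exhibit an explicit nonzero vector in $M\otimes M(\odd)$ on which $\Dirac_{\gg,\even}^{2}$ vanishes, namely $v_{\Lambda}\otimes 1$, where $v_{\Lambda}$ is a highest weight vector of $M$ and $1\in M(\odd)=S((\odd)_{-})$ is the constant polynomial. We then apply Proposition~\ref{prop::action_on_HW_of_Dirac} to the highest weight $\even$-module it generates.

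First we check that $v_{\Lambda}\otimes 1$ is an $\even$-highest weight vector. It has an $\hh$-weight: by Lemma~\ref{lemm::action_on_oscillator_supermodule} (equivalently Corollary~\ref{cor::weights_M_odd} with all $n_{\alpha}=0$), $\hh$ acts on $1$ by $-\rho_{\bar1}$. Hence $v_{\Lambda}\otimes 1$ has weight $\mu\coloneqq\Lambda-\rho_{\bar1}$. Next we show that $\nn^{+}\cap\even$ annihilates it. On $v_{\Lambda}$ this is immediate. On $1$, for $x\in\nn^{+}_{\bar0}$ we have $\rho_{\bar1}(x)=0$, and $\gamma'_{\odd}(x)=-\tfrac12\widehat{\gamma}(x)$. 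Using the explicit formula from the proof of Lemma~\ref{lemm::contravariance_oscillator_module}, this equals
\[
\tfrac14\sum_{a}[x,\partial_{a}]x_{a}+\tfrac14\sum_{a}[x,x_{a}]\partial_{a}.
\]
The second sum kills $1$ because $\partial_{a}1=0$. In the first sum, $[x,\partial_{a}]\in(\odd)_{+}$ is a combination of the $\partial_{b}$. After normal ordering (which is what $\widehat q$ produces), each term moves the annihilator to the right, so it also kills $1$. The scalar correction arising from reordering vanishes for $x$ of nonzero $\ZZ$-degree by Lemma~\ref{lemm::properties_KP}~a). The diagonal action is a Lie superalgebra action on the tensor product, so $v_{\Lambda}\otimes 1$ is a highest weight vector of weight $\mu$, and $V(\mu)\coloneqq\UE(\even)(v_{\Lambda}\otimes1)$ is a highest weight $\even$-submodule.

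By Proposition~\ref{prop::action_on_HW_of_Dirac}, $\Dirac_{\gg,\even}^{2}$ acts on $V(\mu)$ by
\[
B(\Lambda+\rho,\Lambda+\rho)-B(\mu+\rho_{\bar0},\mu+\rho_{\bar0}).
\]
Since $\rho_{\bar1}=\rho_{\bar0}-\rho$, we get $\mu+\rho_{\bar0}=\Lambda-\rho_{\bar0}+\rho+\rho_{\bar0}=\Lambda+\rho$. The scalar is therefore $0$, so $0\neq v_{\Lambda}\otimes1\in\ker\Dirac_{\gg,\even}^{2}$.

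The main obstacle is the annihilation step $\gamma'_{\odd}(x)1=0$ for $x\in\nn^{+}_{\bar0}$ in the completed setting. There we must make sure that the normal-ordered $\widehat q$ places the $(\odd)_{+}$-factors to the right in each term acting nontrivially. The subtlety is that the normal ordering is defined via the $\ZZ$-grading rather than via $\nn^{\pm}$, so we need to check that elements of $\gg_{0}=\hh$ do not contribute. This holds since $\odd\cap\hh=0$. A second point to verify is that the sign convention for $\rho_{\bar1}$ matches the shift in Lemma~\ref{lemm::action_on_oscillator_supermodule}; a sign error there would break the cancellation.
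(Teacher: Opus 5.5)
Your proof is correct and is essentially the paper's argument: the paper uses the same vector $v_\Lambda\otimes 1$ and invokes Corollary~\ref{lemm::action_Dirac_square_even_modules} with $\mu=\Lambda$. That corollary packages exactly your computation $(\Lambda-\rho_{\bar1})+\rho_{\bar0}=\Lambda+\rho$, and the paper leaves implicit your check that $v_\Lambda\otimes 1$ is an $\even$-highest weight vector. One small correction: the reordering scalar vanishes because $B(\gg_i,\gg_j)=0$ for $i+j\neq 0$, not by Lemma~\ref{lemm::properties_KP}~a); indeed, if you work directly from the definition of $\widehat q$, which already outputs normally ordered monomials, no scalar arises at all.
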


\begin{proof}
Let $v_{\Lambda}$ be a highest weight vector of $M$. Then $v_{\Lambda}$ generates a nonzero highest weight $\even$-submodule of highest weight $\Lambda$. By Corollary~\ref{lemm::action_Dirac_square_even_modules}, $\Dirac_{\gg,\even}^{2}$ acts trivially on the highest weight $\even$-module generated by $v_{\Lambda}\otimes 1$. Hence $v_{\Lambda}\otimes 1\in\Ker\Dirac_{\gg,\even}^{2}$.
\end{proof}

In general, $\ker \Dirac_{\gg,\even}\neq\ker \Dirac_{\gg,\even}^{2}$ and equality holds for example in the unitarizable case (see Section~\ref{subsec::unitaizable_case}). Nevertheless, Proposition~\ref{prop::action_on_HW_of_Dirac} allows explicit computations of $\ker \Dirac_{\gg,\even}^{2}$. We perform these calculations in one example: integrable highest weight supermodules of $\widehat{\osp}(1\vert 2n)$.

\subsection{An Example: Integrable Highest Weight Supermodules of \texorpdfstring{$\widehat{\osp}(1\vert 2n)$}{}} \label{subsec::osp}
We consider the affine Lie superalgebras $\widehat{\osp}(1\vert 2n)$, which constitute the simplest examples of affine Kac--Moody superalgebras (see Section~\ref{subsec::Generalities_KM_Superalgebras}) with a nontrivial odd part. 

\subsubsection{The affine Lie superalgebra \texorpdfstring{$\widehat{\osp}(1\vert 2n)$}{}}

The affine Lie superalgebra $\widehat{\osp}(1\vert 2n)$ is the untwisted affinization of the basic classical Lie superalgebra $\osp(1\vert 2n)$. Its structure and representation theory are described in detail in \cite{zbMATH04122182,Kac_Wakimoto,Frappat_Sciarrino_Sorba, zbMATH05896583}. We briefly recall its construction and structural properties.

Let $V=V_{\bar0}\oplus V_{\bar1}$ be a $\ZZ_{2}$-graded vector space with $\dim V_{\bar0}=1$ and $\dim V_{\bar1}=2n$, endowed with a nondegenerate even supersymmetric bilinear form. The corresponding orthosymplectic Lie superalgebra is denoted by $\osp(1\vert 2n)$; in the notation of Example~\ref{ex::color_Lie_algebras}, it is the $\varepsilon$-orthogonal Lie algebra of $V$ for $\Gamma=\ZZ_{2}$ and $\varepsilon$ given by parity. In Kac's notation, this is the simple Lie superalgebra of type $B(0,n)$. Its even part identifies canonically with $\mathfrak{sp}(V_{\bar{1}})\cong \mathfrak{sp}(2n)$ and its odd part is the standard simple $2n$-dimensional module of $\mathfrak{sp}(2n)$.
Moreover, $\osp(1\vert 2n)$ is simple and admits a nondegenerate even supersymmetric invariant bilinear form $B(\cdot,\cdot)$ which is unique up to an overall scalar, hence it is basic classical. A canonical choice is obtained from the defining representation via the supertrace (\emph{cf.}~Lemma~\ref{lemm::form_epsilon_orthogonal}):
\begin{equation}
B(x,y) \coloneqq -\operatorname{str}(xy), \qquad x,y\in\osp(1\vert 2n).
\end{equation}

Let
$
\hh_{\mathfrak{sp}(2n)}=\left\{\operatorname{diag}(h_{1},\ldots,h_{n},-h_{1},\ldots,-h_{n})\right\}\subseteq\mathfrak{sp}(2n)
$
be a Cartan subalgebra of $\mathfrak{sp}(2n)$, viewed as a subalgebra of the even part of $\mathfrak{osp}(1\vert 2n)$. It is then a Cartan subalgebra of $\osp(1\vert 2n)$, denoted by $\hh_{\osp}$. For $1\leq i\leq n$, define $\varepsilon_{i}\in\hh^{\ast}_{\osp}$ by $\varepsilon_{i}(h)=h_{i}$ for $h\in\hh_{\osp}$. Then the root system of $\osp(1\vert 2n)$ is $\Delta_{\osp}=(\Delta_{\osp})_{\bar{0}}\sqcup (\Delta_{\osp})_{\bar{1}}$, where
\begin{equation}
(\Delta_{\osp})_{\bar0}=\{\pm\varepsilon_{i}\pm\varepsilon_{j},\ \pm2\varepsilon_{i}:i\neq j\},\qquad
(\Delta_{\osp})_{\bar1}=\{\pm\varepsilon_{i}\}.
\end{equation}
The form $B$ restricts to a nondegenerate symmetric bilinear form on $\hh_{\osp}$. Via this restriction, we identify $\hh_{\osp}$ with $\hh^{\ast}_{\osp}$. This induces a nondegenerate symmetric bilinear form on $\hh^{\ast}_{\osp}$, again denoted by $B$. With the above normalization one has
\begin{equation}
B(\epsilon_i,\epsilon_j)=\tfrac12\delta_{ij}.
\end{equation}

Importantly, every root has positive length
\begin{equation}\label{eq::positivity_roots_osp}B(\pm\epsilon_i,\pm\epsilon_i)=\tfrac12,\qquad
B(\pm\epsilon_i\pm\epsilon_j,\pm\epsilon_i\pm\epsilon_j)=1,\qquad
B(\pm2\epsilon_i,\pm2\epsilon_i)=2.
\end{equation}

The \emph{affine Lie superalgebra} associated with $\osp(1\vert 2n)$ is
$
\widehat{\osp}(1\vert 2n)
=
(\osp(1\vert 2n)\otimes_{\CC} \CC[t,t^{-1}])
\oplus \CC K\oplus \CC D
$
with parity decomposition
\begin{equation}
\widehat{\osp}(1\vert 2n)_{\bar0}
=
(\osp(1\vert 2n)_{\bar0}\otimes_{\CC} \CC[t,t^{-1}])
\oplus \CC K\oplus \CC D,
\qquad
\widehat{\osp}(1\vert 2n)_{\bar1}
=
\osp(1\vert 2n)_{\bar1}\otimes_{\CC} \CC[t,t^{-1}]
\end{equation}
and the commutation relations from~\eqref{eq::commutation_relations_affine_Lie_superalgebra}. Recall that $\widehat{\osp}(1\vert 2n)$ is a symmetrizable Kac--Moody superalgebra, equipped with the nondegenerate invariant supersymmetric bilinear form induced by $B$, again denoted by $B$; see Section~\ref{subsec::Generalities_KM_Superalgebras}. As above, let $\hh\coloneqq \hh_{\osp}\oplus\CC K\oplus\CC D$ be the affine Cartan subalgebra of $\widehat{\osp}(1\vert 2n)$, with dual space $\hh^{\ast}=\hh^{\ast}_{\osp}\oplus\CC\kappa\oplus\CC\delta$ as in~\eqref{eq::definition_delta_kappa}. Then $\widehat{\osp}(1\vert 2n)$ admits the root space decomposition with root system
\begin{equation}
\Delta=\{\alpha+n\delta:\alpha\in\Delta_{\osp},\ n\in\ZZ\}\cup\{n\delta:n\in\ZZ\setminus\{0\}\}.
\end{equation}

 The set of roots $\Delta$ of $\widehat{\osp}(1\vert 2n)$ decomposes into \emph{real} and \emph{imaginary roots}. The real roots are
\begin{equation}
\pm\epsilon+n\delta,\qquad \pm2\epsilon+n\delta,\qquad n\in\ZZ,
\end{equation}
where $\epsilon+n\delta$ are odd roots and $2\epsilon+n\delta$ are even roots. The imaginary roots are the nonzero multiples
\begin{equation}
n\delta,\qquad n\in\ZZ\setminus\{0\},
\end{equation}
which are even and have multiplicity one.

The standard simple system is \begin{equation}\Uppi_{\st}=\{\delta-2\epsilon_{1},\epsilon_{1}-\epsilon_{2},\ldots,\epsilon_{n-1}-\epsilon_{n},\epsilon_{n}\}.\end{equation} Let $\rho_{\osp}$ denote the corresponding Weyl vector of $\osp(1\vert 2n)$. Then the affine Weyl vector is $\rho=\rho_{\osp}+h^{\vee}\kappa$, where $h^{\vee}=n+\tfrac12$ is the dual Coxeter number. 

The \emph{Weyl group} $W$ of $\aff$ is the subgroup of $\operatorname{GL}(\hh^{\ast})$ generated by the even reflections
\begin{equation}
r_{\beta}(\alpha)
=
\alpha-B(\alpha,\beta)\beta,
\qquad
\beta\ \text{real even root}.
\end{equation}
The form $B$ is invariant under the action of $W$. Moreover, the simple systems of $\widehat{\osp}(1\vert 2n)$ form a single orbit under the Weyl group. Consequently, $\Uppi_{\st}$ is the unique, up to $W$ action, simple system of $\gg$.

\subsubsection{Action on Integrable Highest Weight Supermodules} \label{subsubsec::action_on_integrable_highest_weight_supermodules}
Fix a simple system $\Uppi = \{\alpha_{1}, \ldots, \alpha_{n+1}\}$. Let $\Delta^{+}$ be the associated positive system, and $\bb$ the Borel subalgebra.

Any element $\Lambda\in\hh^{\ast}$ is uniquely of the form
\begin{equation}
\Lambda=\Lambda\vert_{\hh_{\osp}}+k\kappa+d_{0}\delta.
\end{equation}
We call $\Lambda\vert_{\hh_{\osp}}$ the $\hh$-weight and $k$ the level of $\Lambda$. With this notation, Kac and Wakimoto classified the simple integrable highest weight $\widehat{\osp}(1\vert 2n)$-supermodules in \cite{Kac_Wakimoto}.

\begin{theorem}[{\cite{Kac_Wakimoto}}] $L_{\bb}(\Lambda)$ is an integrable highest weight $\widehat{\osp}(1\vert 2n)$-supermodule if and only if 
\[
2\tfrac{B(\Lambda,\alpha_{i})}{B(\alpha_{i},\alpha_{i})} \in \ZZ_{+} \qquad i=1, \ldots, n+1
\]
and the level is $k=\sum_{i}k_{i}\in \ZZ_{+}$.
\end{theorem}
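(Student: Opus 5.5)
The plan is to reduce integrability of $L_{\bb}(\Lambda)$ to a family of rank-one conditions, one for each simple root of $\Uppi$, and then assemble them. Here "integrable" is in the sense of Section~\ref{subsec::HWM}: $X_\beta$ and $Y_\beta$ act locally nilpotently for every even real simple root $\beta$. The only even real roots that matter are $\alpha_i$ itself when $\alpha_i$ is even, and $2\alpha_i$ when $\alpha_i$ is odd non-isotropic. By Weyl group invariance of $B$ and of the set of simple systems (recalled above), it suffices to work with $\Uppi_{\st}=\{\delta-2\epsilon_1,\epsilon_1-\epsilon_2,\ldots,\epsilon_{n-1}-\epsilon_n,\epsilon_n\}$. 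In this system all simple roots are even except $\epsilon_n$, which is odd and non-isotropic. For each $i$, the elements $X_i,Y_i,h_i$ generate a copy $\gg^{(i)}$ of $\sl(2)$ if $\alpha_i$ is even, and of $\osp(1\vert 2)$ if $\alpha_i=\epsilon_n$.

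For necessity, take the highest weight vector $v_\Lambda$ and consider the $\gg^{(i)}$-submodule $\UE(\gg^{(i)})v_\Lambda$.

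\emph{Even case.} Local nilpotency of $Y_i$ forces this submodule to be a finite-dimensional $\sl(2)$-module. Hence $\Lambda(h_i)=2B(\Lambda,\alpha_i)/B(\alpha_i,\alpha_i)\in\ZZ_+$, using the normalization~\eqref{eq::normalization_B}.

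\emph{Odd case.} The even root $2\epsilon_n$ spans an $\sl(2)$ inside $\osp(1\vert 2)$. A finite-dimensional highest weight $\osp(1\vert 2)$-module has highest weight $\lambda$ with $\lambda(h_{2\epsilon_n})\in\ZZ_+$. Its odd lowering operator then satisfies $Y_{\epsilon_n}^{2m+1}v=0$ with $m=\lambda(h_{2\epsilon_n})$; this is the standard singular vector computation in the $\osp(1\vert 2)$ Verma module, using $Y^2=\pm\tfrac12 Y_{2\epsilon_n}$. Since $B(\epsilon_n,\epsilon_n)=\tfrac14 B(2\epsilon_n,2\epsilon_n)$, the condition $\lambda(h_{2\epsilon_n})\in\ZZ_+$ translates into $2B(\Lambda,\epsilon_n)/B(\epsilon_n,\epsilon_n)\in 2\ZZ_+\subset\ZZ_+$. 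The parity bookkeeping here should reproduce exactly the stated condition.

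\emph{Level.} Write $K=\sum_i k_i' h_i$ as the combination of simple coroots determined by the null root $\delta=\sum a_i\alpha_i$ (the marks). The level $k=\Lambda(K)$ is then $\sum_i a_i^\vee\,\Lambda(h_i)$, a nonnegative integer combination of the integers just obtained. This yields the formula $k=\sum_i k_i\in\ZZ_+$.

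For sufficiency, assume the conditions. For each $i$, the vector $Y_i^{m_i+1}v_\Lambda$ (even case) or $Y_i^{2m_i+1}v_\Lambda$ (odd case) is annihilated by all $X_j$: for $j\neq i$ this holds since $[X_j,Y_i]=0$, and for $j=i$ it is the rank-one computation. It is therefore a singular vector in $M_{\bb}(\Lambda)$, and it vanishes in the simple quotient $L_{\bb}(\Lambda)$. So $Y_i$ acts nilpotently on $v_\Lambda$; $X_i$ does too, trivially. Local nilpotency then spreads to all of $L_{\bb}(\Lambda)=\UE(\nn^-)v_\Lambda$. The argument is the usual one: $\ad_{X_i}$ and $\ad_{Y_i}$ are locally nilpotent on $\gg$ by admissibility, and the set of vectors on which $Y_i$ acts nilpotently is stable under $\gg$, since $Y_i^N u v=\sum\binom{N}{k}(\pm\ad_{Y_i})^k(u)\,Y_i^{N-k}v$. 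Applying this to the even generators $Y_{2\epsilon_n}$ and the remaining even real simple roots gives integrability.

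The main obstacle I expect is the odd simple root. Two points need care: the correct normalization relating $h_{\epsilon_n}$, $h_{2\epsilon_n}$ and $B$, and the exact singular-vector exponent in the $\osp(1\vert 2)$ Verma module. These determine whether the condition is $\ZZ_+$ or $2\ZZ_+$ in the stated normalization, so I would first verify them on $\widehat{\osp}(1\vert 2)$, where everything is explicit.
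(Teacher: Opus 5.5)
Your necessity argument is fine, but the sufficiency direction has a genuine gap at the odd simple root $\epsilon_n$. To form the singular vector $Y_{\epsilon_n}^{2m+1}v_\Lambda$ you need $m=\Lambda(h_{2\epsilon_n})\in\ZZ_{\geq 0}$. The label hypothesis at $\epsilon_n$ only gives $2B(\Lambda,\epsilon_n)/B(\epsilon_n,\epsilon_n)=2m\in\ZZ_{\geq 0}$, that is, $m\in\tfrac12\ZZ_{\geq 0}$. Your own remark $2\ZZ_+\subset\ZZ_+$ shows that the stated label condition is strictly weaker than what your rank-one analysis produces, so no parity bookkeeping can make the two agree.

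For half-integral $m$ the construction fails outright. With $X=X_{\epsilon_n}$ and $Y=Y_{\epsilon_n}$ scaled so that $[X,Y]=h_{2\epsilon_n}$, the $\osp(1\vert 2)$ Verma module satisfies $XY^{2j}v=-jY^{2j-1}v$ and $XY^{2j+1}v=(m-j)Y^{2j}v$. Hence no $Y^{N}v$ with $N\geq 1$ is singular. Moreover $L_{\bb}(\Lambda)$ is genuinely not integrable, because $v_\Lambda$ is a highest weight vector of non-integral weight $m$ for the $\sl(2)$ attached to $2\epsilon_n$. A concrete example is $\widehat{\osp}(1\vert 2)$ with $\Lambda(h_{\delta-2\epsilon_1})=0$ and $\Lambda(h_{2\epsilon_1})=\tfrac12$: every label lies in $\ZZ_{\geq 0}$, yet the module is not integrable.

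The missing idea is that the level hypothesis is not a consequence to be re-derived. It is an independent hypothesis, and it carries exactly this parity. For $\Uppi_{\st}$ the simple roots of $\even=\widehat{\mathfrak{sp}}(2n)$ are $\delta-2\epsilon_1,\epsilon_1-\epsilon_2,\ldots,\epsilon_{n-1}-\epsilon_n,2\epsilon_n$, and all their comarks equal $1$. With $h_\beta$ denoting $\sl(2)$-coroots, this gives
$K=\delta^{\sharp}=h_{\delta-2\epsilon_1}+\sum_{i<n}h_{\epsilon_i-\epsilon_{i+1}}+h_{2\epsilon_n}$.
Writing $k_\alpha\coloneqq 2B(\Lambda,\alpha)/B(\alpha,\alpha)$, it follows that
\[
\Lambda(K)=k_{\delta-2\epsilon_1}+\sum_{i=1}^{n-1}k_{\epsilon_i-\epsilon_{i+1}}+\tfrac12\,k_{\epsilon_n}.
\]
Given the label conditions, $\Lambda(K)\in\ZZ$ holds if and only if $k_{\epsilon_n}$ is even, that is, if and only if $m\in\ZZ_{\geq 0}$. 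Insert this step before constructing the singular vectors; the rest of your sufficiency argument then goes through.

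The same point affects your necessity step, in a minor way. Your claim that the level is a nonnegative \emph{integer} combination of labels holds only if you use $h_{2\epsilon_n}$. With the odd coroot $[X_{\epsilon_n},Y_{\epsilon_n}]$, normalized by $a_{nn}=2$, the comark is $\tfrac12$, and integrality of the level relies on the evenness you proved.

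Note that the paper gives no proof of this statement; it only cites Kac--Wakimoto. Its formula $k=\sum_i k_i$ is literally correct only if $k_i$ denotes the $\even$-labels, with $\Lambda(h_{2\epsilon_n})$ at the odd node.
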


\begin{remark}
Writing $\Lambda=\Lambda\vert_{\hh}+k\kappa+d\delta$, the part $\Lambda\vert_{\hh}$ is the highest weight of a finite-dimensional simple $\osp(1\vert 2n)$-module, and $k\in\ZZ_{+}$. Thus, for fixed level $k$, only finitely many finite parts occur among integrable simple highest weight modules. The parameter $d$ records the eigenvalue of the derivation $D$ on the highest weight vector and is independent.
\end{remark}

In what follows, let $L_{\bb}(\Lambda)$ be a fixed integrable highest weight supermodule. We determine $\ker \Dirac_{\gg,\even}^{2}$ by solving (Proposition~\ref{prop::action_on_HW_of_Dirac})
\begin{equation}
B(\Lambda+\rho,\Lambda+\rho)=B(\mu+\rho_{\bar{0}},\mu+\rho_{\bar{0}}),
\end{equation}
where $\mu$ is a weight of $L_{\bb}(\Lambda)\otimes M(\odd)$. The weights of $M\otimes M(\odd)$ are parametrized as follows. Let $Q$ be the root lattice generated by the simple roots $\alpha_{i}$, and let $Q_{+}$ be the submonoid generated by positive linear combinations of the $\alpha_{i}$. Then the set of weights $\mathcal{P}_{L_{\bb}(\Lambda)}$ of $L_{\bb}(\Lambda)$ is contained in $\{\Lambda-\gamma:\gamma\in Q_{+}\}$. On the other hand, by Corollary~\ref{cor::weights_M_odd}, the weights of $M(\odd)$ are contained in $\{-\rho_{\bar1}-\sum_{\alpha\in\Delta_{\bar1}^{+}}n_{\alpha}\alpha:n_{\alpha}\in\ZZ_{+}\}$ with $\rho_{\bar{1}}\coloneqq\rho_{\bar{0}}-\rho$. Hence the weights of $M\otimes M(\odd)$ are contained in
\begin{equation}\label{eq::weight_set}
\Bigl\{\Lambda-\rho_{\bar1}-\gamma-\sum_{\alpha\in\Delta_{\bar1}^{+}}n_{\alpha}\alpha:\gamma\in Q_{+},\ n_{\alpha}\in\ZZ_{+}\Bigr\}.
\end{equation}

\begin{lemma}\label{lemm::estimate} Let $\nu \coloneqq \Lambda-\gamma$ for $\gamma \in Q_{+}$. Assume $2\tfrac{B(\nu,\alpha)}{B(\alpha,\alpha)}\geq 0$ for all $\alpha \in \Uppi_{\bar{0}}$. Then
 $$
B(\Lambda,\Lambda)\geq B(\nu,\nu).
 $$
\end{lemma}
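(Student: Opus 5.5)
We prove the inequality by writing $B(\Lambda,\Lambda)-B(\nu,\nu)=B(\Lambda-\nu,\Lambda+\nu)=B(\gamma,\Lambda+\nu)=B(\gamma,\Lambda)+B(\gamma,\nu)$ and showing that both summands are nonnegative. The input is the nonnegativity of $B(\beta,\alpha)$ for suitable simple roots $\alpha$ and elements $\beta$ pairing nonnegatively with them, together with the fact that every simple root of $\widehat{\osp}(1\vert 2n)$ has positive length by \eqref{eq::positivity_roots_osp}. (Imaginary directions $\delta$ have length zero but pair with $\kappa$; we handle them below.)

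\textbf{Step 1: the summand $B(\gamma,\nu)$.} Write $\gamma=\sum_i c_i\alpha_i$ with $c_i\in\ZZ_{\geq0}$. Then $B(\gamma,\nu)=\sum_i c_iB(\alpha_i,\nu)$, so it suffices to have $B(\nu,\alpha_i)\geq 0$ for every simple root $\alpha_i\in\Uppi$. For $\widehat{\osp}(1\vert 2n)$ every simple system is $W$-conjugate to $\Uppi_{\st}$. In $\Uppi_{\st}$ the only odd simple root is $\epsilon_n$, and for it $2\epsilon_n$ is an even real root. Thus, for a general $\Uppi$, each odd simple root $\alpha$ is non-isotropic with $2\alpha\in\Delta_{\bar0}^{+}$, and $2\alpha$ is a simple root of the even positive system, i.e.\ lies in $\Uppi_{\bar0}$ in the sense of the hypothesis. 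Consequently the hypothesis $2B(\nu,\alpha)/B(\alpha,\alpha)\geq0$ for all $\alpha\in\Uppi_{\bar0}$ gives $B(\nu,\alpha_i)\geq0$ for every $\alpha_i\in\Uppi$, using $B(\alpha,\alpha)>0$. I will interpret $\Uppi_{\bar0}$ accordingly, namely as the even simple roots together with the doubles of the odd non-isotropic simple roots, and verify this interpretation against the standard simple system.

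\textbf{Step 2: the summand $B(\gamma,\Lambda)$.} Since $L_{\bb}(\Lambda)$ is integrable, the Kac--Wakimoto theorem gives $2B(\Lambda,\alpha_i)/B(\alpha_i,\alpha_i)\in\ZZ_+$ for all $i$. Hence $B(\Lambda,\alpha_i)\geq0$ and therefore $B(\gamma,\Lambda)\geq0$.

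\textbf{Step 3: conclusion.} Adding Steps 1 and 2 yields $B(\Lambda,\Lambda)-B(\nu,\nu)\geq0$. The main obstacle is Step 1, namely making rigorous that the dominance hypothesis for $\nu$, stated only on $\Uppi_{\bar0}$, really covers every simple root of $\Uppi$, including $\alpha_0=\delta-2\epsilon_1$ and its $W$-conjugates. I would handle this by using $W$-invariance of $B$ to reduce to $\Uppi_{\st}$ and checking it there explicitly. The real parts of $\nu$ involve the level only through $\kappa$, and $B(\kappa,\delta)=1$ takes care of the pairing with $\alpha_0$.
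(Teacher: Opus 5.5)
Your proposal is correct and follows the paper's proof essentially verbatim. Both use the factorization $B(\Lambda,\Lambda)-B(\nu,\nu)=\sum_i m_iB(\Lambda+\nu,\alpha_i)$, integrability for the $\Lambda$-terms, and the hypothesis for the $\nu$-terms, with the odd simple root handled via $2\epsilon_n$ in the even simple system and Weyl conjugacy to $\Uppi_{\st}$; the paper implicitly reads $\Uppi_{\bar0}$ exactly as you do. Your closing worry about $\alpha_0=\delta-2\epsilon_1$ and $B(\kappa,\delta)=1$ is unnecessary. $\alpha_0$ is itself an even simple root with $B(\alpha_0,\alpha_0)=2>0$, so the hypothesis covers it directly.
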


\begin{proof}
Write $\gamma=\Lambda-\nu=\sum_{i}m_{i}\alpha_{i}$ with $m_{i}\in\ZZ_{+}$. Then $$B(\Lambda,\Lambda)-B(\nu,\nu)=B(\Lambda+\nu,\Lambda-\nu)=\sum_{i}m_{i}B(\Lambda+\nu,\alpha_{i}).$$ Now $B(\Lambda,\alpha_{i})\geq 0$ for all $\alpha_{i}\in\Uppi$ by integrability of $M$, and $B(\nu,\alpha_{i})\geq 0$ for all $\alpha_{i}\in\Uppi_{\bar0}$ by assumption. For the standard system, the odd simple root $\epsilon_n$ satisfies $2\epsilon_n\in\Uppi_{\bar0}$, hence the assumption implies $B(\nu,\epsilon_n)\geq0$. Since every simple system of $\widehat{\osp}(1\vert 2n)$ lies in the Weyl orbit of the standard system and $B$ is Weyl-invariant, the same conclusion holds for the odd simple root of any simple system. Hence each summand is nonnegative, and therefore $B(\Lambda,\Lambda)-B(\nu,\nu)\geq 0$.
\end{proof}

\begin{theorem}\label{thm::main_osp}
 Let $L_{\bb}(\Lambda)$ be a simple integrable highest weight supermodule. Then 
 \[
 \ker \Dirac_{\gg,\even}^{2} = L_{0}(\Lambda-\rho_{\bar{1}}),
 \]
 where $L_{0}(\Lambda-\rho_{\bar{1}})$ is the integrable highest weight $\even$-module of highest weight $\Lambda-\rho_{\bar{1}}$ relative to $\bb_{\bar{0}}$.
\end{theorem}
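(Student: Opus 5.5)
The plan is to decompose $L_{\bb}(\Lambda)\otimes M(\odd)$ into highest weight $\even$-submodules and use Proposition~\ref{prop::action_on_HW_of_Dirac} to decide which of them lie in the kernel. Since $\Lambda$ is integrable, $L_{\bb}(\Lambda)$ restricted to $\even$ is integrable with finite-dimensional weight spaces. The weights of the product lie in the set \eqref{eq::weight_set}. For $\omega$-unitarity reasons (or directly, using that $\pi_{M(\odd)}$ is contravariant with respect to a positive definite form by Lemma~\ref{lemm::contravariance_oscillator_module}), the tensor product should be completely reducible as an $\even$-module. It would then be a direct sum of integrable highest weight modules $L_0(\mu)$ with $\mu$ $\bb_{\bar0}$-dominant integral. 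It therefore suffices to determine for which such $\mu$ the equality
\[
B(\Lambda+\rho,\Lambda+\rho)=B(\mu+\rho_{\bar0},\mu+\rho_{\bar0})
\]
holds, and with what multiplicity.

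Write $\mu=\Lambda-\rho_{\bar1}-\gamma-\sum n_\alpha\alpha$. Then $\mu+\rho_{\bar0}=\Lambda+\rho-\beta$ with $\beta:=\gamma+\sum n_\alpha \alpha\in Q_+$, since $\rho_{\bar0}-\rho_{\bar1}=\rho$. Set $\nu:=\Lambda+\rho$ and $\nu':=\mu+\rho_{\bar0}=\nu-\beta$. Dominance of $\mu$ for $\bb_{\bar0}$ gives $2B(\nu',\alpha)/B(\alpha,\alpha)>0$ for even simple $\alpha$, because $\rho_{\bar0}$ is strictly positive on the even simple coroots. Integrability of $\Lambda$ together with $B(\rho,\alpha_i)=\tfrac12 B(\alpha_i,\alpha_i)>0$ (all roots of $\osp(1|2n)$ have positive length by \eqref{eq::positivity_roots_osp}) gives $B(\nu,\alpha_i)>0$ for every simple root. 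Applying Lemma~\ref{lemm::estimate} to the pair $(\nu,\nu')$ with the strict inequalities, we obtain
\[
B(\nu,\nu)-B(\nu',\nu')=\sum m_iB(\nu+\nu',\alpha_i),
\]
where each summand is strictly positive whenever $m_i>0$. This includes the odd simple root, by the Weyl group argument in that lemma. Hence equality forces $\beta=0$, that is, $\gamma=0$ and all $n_\alpha=0$, so $\mu=\Lambda-\rho_{\bar1}$.

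Next I check that this weight occurs exactly once and generates $L_0(\Lambda-\rho_{\bar1})$. The weight space of $\Lambda-\rho_{\bar1}$ in the tensor product is spanned by $v_\Lambda\otimes 1$, because both $\Lambda$ and $-\rho_{\bar1}$ are extremal with one-dimensional weight spaces (the vacuum $1$ in $S((\odd)_-)$). The vector $v_\Lambda\otimes 1$ is annihilated by $\nn^+_{\bar0}$: on $v_\Lambda$ this is clear, and on $1$ the formula for $\gamma'_{\odd}$ shows that positive even root vectors act by $\partial$-containing terms that kill $1$. So it is a highest weight vector, and by complete reducibility and integrability it generates the simple module $L_0(\Lambda-\rho_{\bar1})$. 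Since $\Dirac_{\gg,\even}^2$ acts on each isotypic component by the scalar of Proposition~\ref{prop::action_on_HW_of_Dirac}, the kernel is exactly this component.

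The main obstacle is the complete reducibility of $L_{\bb}(\Lambda)\otimes M(\odd)$ over $\even$, which is needed so that the kernel is spanned by highest weight submodules. I would first try to obtain it from positive definite contravariant forms, using the compact-type anti-involution on $\widehat{\osp}(1|2n)$ for which integrable modules are unitarizable, together with Lemma~\ref{lemm::contravariance_oscillator_module}. Failing that, I would argue through integrability and weights bounded above, invoking Kac's complete reducibility for integrable category-$\mathcal O$ modules over the Kac--Moody algebra $\even$. A secondary point is to ensure that the strict inequality holds for the odd simple root of an arbitrary simple system, which is handled by Weyl invariance as in Lemma~\ref{lemm::estimate}.
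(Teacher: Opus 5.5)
Your outline matches the paper's: restrict to $\even$-highest weight vectors, run the positivity estimate, then use the multiplicity-one vector $v_\Lambda\otimes 1$. Merging the paper's two nonnegative terms into one sum $\sum_i m_iB(\nu+\nu',\alpha_i)$ is fine.

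The gap is the input. You assume that $L_{\bb}(\Lambda)\otimes M(\odd)$ is a sum of integrable $L_0(\mu)$ with $\mu$ $\bb_{\bar0}$-dominant integral, and this is what gives $B(\mu+\rho_{\bar0},\alpha)>0$. That assumption is false, because $M(\odd)$ is not $\even$-integrable. From $2B(\rho_{\bar0},2\epsilon_n)=B(2\epsilon_n,2\epsilon_n)$ and $2B(\rho,\epsilon_n)=B(\epsilon_n,\epsilon_n)=\tfrac12$ one gets $B(\rho_{\bar1},2\epsilon_n)=\tfrac12$. Hence the vacuum weight $-\rho_{\bar1}$ has label $-\tfrac12$ on the coroot of $2\epsilon_n$. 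Even $\Lambda-\rho_{\bar1}$ has the half-integral label $B(\Lambda,2\epsilon_n)-\tfrac12$, so $L_0(\Lambda-\rho_{\bar1})$ is not integrable, whatever the wording of the statement says. Consequently Kac's complete reducibility theorem is unavailable. Your unitarity route also fails: the Fock form can only be contravariant for a noncompact-type $\omega|_{\even}$, since it makes the infinite-dimensional oscillator representation of $\mathfrak{sp}(2n)$ unitary.

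This cannot be patched, because the dominance assumption discards constituents that do lie in the kernel. Take $\Lambda$ integrable with $B(\Lambda,\epsilon_n)=0$, for instance $\Lambda=k\kappa$ or $\Lambda=0$, and let $x_{-\epsilon_n}\in(\odd)_-$ be a root vector of $-\epsilon_n$.
\begin{itemize}
\item Since $\epsilon_n$ is simple and odd, it is not of the form $\beta+\sum n_\alpha\alpha$ with $\beta\in\Delta^+_{\bar0}$ and $\alpha\in\Delta^+_{\bar1}$. By Corollary~\ref{cor::weights_M_odd}, every positive even root vector therefore kills $v_\Lambda\otimes x_{-\epsilon_n}$.
\item So $v_\Lambda\otimes x_{-\epsilon_n}$ is an $\even$-highest weight vector of weight $\mu=\Lambda-\rho_{\bar1}-\epsilon_n$. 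It does not lie in $\UE(\even)(v_\Lambda\otimes1)$, since $\epsilon_n$ is not a sum of positive even roots.
\item Here $\mu+\rho_{\bar0}=\Lambda+\rho-\epsilon_n$. The scalar of Proposition~\ref{prop::action_on_HW_of_Dirac} is therefore $2B(\Lambda+\rho,\epsilon_n)-B(\epsilon_n,\epsilon_n)=2B(\Lambda,\epsilon_n)=0$.
\item For $\Lambda=0$ this is visible directly. $\Dirac_{\gg,\even}$ acts by zero, so the kernel is all of $M(\odd)$. Its odd-degree part is never reached by $\UE(\even)\cdot1$, because $\gamma'_{\odd}$ is quadratic plus a constant.
\end{itemize}
Thus $\ker\Dirac^2_{\gg,\even}\supsetneq L_0(\Lambda-\rho_{\bar1})$ in these cases. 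More generally, $\gamma=m\epsilon_n$ with $m=2B(\Lambda+\rho,\epsilon_n)/B(\epsilon_n,\epsilon_n)$ always solves $2B(\Lambda+\rho,\gamma)=B(\gamma,\gamma)$, and only the false dominance assumption excludes it.

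The paper's proof has the same defect. Its reduction to $\mu=\nu-\rho_{\bar1}$ with $\nu$ $\even$-integrable discards exactly such vectors; here $\nu=\Lambda-\epsilon_n$ has label $-1$ on $2\epsilon_n$. So the statement needs an extra hypothesis or a corrected right-hand side, not just a better proof.
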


\begin{proof}
Let $\mu$ be a weight of $L_{\bb}(\Lambda)\otimes M(\odd)$ such that
\[
B(\Lambda+\rho,\Lambda+\rho)=B(\mu+\rho_{\bar{0}},\mu+\rho_{\bar{0}}).
\]
By the PBW theorem, the odd negative root spaces define a natural filtration on $L_{\bb}(\Lambda)\otimes M(\odd)$ by degree. This filtration is $\gg_{\bar0}$-stable, and its successive quotients are highest weight $\gg_{\bar0}$-modules. Since $\Dirac_{\gg,\even}$ is $\even$-invariant and $L_{\bb}(\Lambda)$ is integrable, it suffices to consider $\even$-highest weights of the form $\mu=\nu-\rho_{\bar1}$ with $\nu$ integrable for $\even$, that is,
$$
2\tfrac{B(\nu,\alpha)}{B(\alpha,\alpha)}\in \ZZ_{+}, \qquad \forall \alpha \in \Uppi_{\bar{0}}.
$$
By Proposition~\ref{prop::action_on_HW_of_Dirac}, it is enough to prove
\[
B(\Lambda+\rho,\Lambda+\rho)=B(\mu+\rho_{\bar{0}},\mu+\rho_{\bar{0}})=B(\nu+\rho,\nu+\rho) \Longleftrightarrow \Lambda = \nu.
\]
One computes
\[
B(\Lambda+\rho,\Lambda+\rho)-B(\nu+\rho,\nu+\rho)
=
B(\Lambda,\Lambda)-B(\nu,\nu)+2B(\Lambda-\nu,\rho).
\]
By Lemma~\ref{lemm::estimate}, $B(\Lambda,\Lambda)-B(\nu,\nu)\geq 0$. Moreover, writing $\Lambda-\nu=\sum_{i}m_{i}\alpha_{i}$ with $m_{i}\in \ZZ_{+}$, one has
$$
B(\Lambda-\nu,\rho)=\sum_{i}m_{i}B(\alpha_{i},\rho)=\sum_{i}m_{i}\rho(h_{\alpha_{i}})=\tfrac{1}{2}\sum_{i}m_{i}B(\alpha_{i},\alpha_{i})\geq 0
$$
since $B(\alpha_{i},\alpha_{i})>0$ for all $i=1,\ldots, n+1$. Hence
$
B(\Lambda+\rho,\Lambda+\rho)=B(\nu+\rho,\nu+\rho)
$
holds if and only if both summands vanish. The second vanishes if and only if $m_{i}=0$ for all $i$, hence if and only if $\Lambda=\nu$. Therefore $\mu=\Lambda-\rho_{\bar1}$.

Since both $\Lambda$ in $L_{\bb}(\Lambda)$ and $-\rho_{\bar1}$ in $M(\odd)$ occur with multiplicity one, the weight
$
\Lambda-\rho_{\bar1}
$
occurs with multiplicity one in $L_{\bb}(\Lambda)\otimes M(\odd)$ and is represented by $v_\Lambda\otimes 1$. This vector is $\even$-highest and hence generates an $\even$-highest weight submodule $N$.

Fix a consistent conjugate-linear anti-involution $\omega$ on $\gg$, whose restriction to $\odd$ is the anti-involution introduced in Section~\ref{subsec::Oscillator_supermodule}. Since $L_{\bb}(\Lambda)$ is simple, it admits a nondegenerate $\omega$-contravariant form. Together with the nondegenerate contravariant form on $M(\odd)$, this induces a nondegenerate contravariant form on $L_{\bb}(\Lambda)\otimes M(\odd)$. Its restriction to $N$ has radical an $\even$-submodule. Since distinct $\hh$-weight spaces are orthogonal and the highest weight space of $N$ is one-dimensional with non-zero norm, this radical cannot meet the highest weight space. Hence the radical is zero, and $N$ is simple.

Since $\Dirac_{\gg,\even}^{2}$ is $\even$-equivariant and acts on each highest weight $\even$-subquotient by the scalar in Proposition~\ref{prop::action_on_HW_of_Dirac}, its kernel is the sum of those highest weight $\even$-subquotients for which this scalar vanishes. The preceding argument shows that the only such highest weight is $\Lambda-\rho_{\bar1}$. Since this weight occurs with multiplicity one, the kernel is precisely the simple submodule generated by $v_\Lambda\otimes1$.
\end{proof}

The preceding argument relies on the positivity property $B(\alpha,\alpha)\geq 0$ for all simple roots $\alpha$. It is this property which makes it possible to determine the equation $B(\Lambda+\rho,\Lambda+\rho)=B(\mu+\rho_{\bar0},\mu+\rho_{\bar0})$ uniquely. For general basic classical Lie superalgebras, this mechanism breaks down: isotropic odd positive roots occur, and the form $B$ is in general highly indefinite. Consequently, the equation $B(\Lambda+\rho,\Lambda+\rho)=B(\mu+\rho_{\bar0},\mu+\rho_{\bar0})$ typically admits many solutions. Indeed, if $\mu=\nu-\rho_{\bar1}$ and $\gamma\coloneqq\Lambda-\nu$, then the condition is equivalent to
\begin{equation}
2B(\Lambda+\rho,\gamma)=B(\gamma,\gamma).
\end{equation}
If $\gamma$ is odd isotropic, this reduces to the atypicality condition for $\Lambda$. Thus the determination of $\ker \Dirac_{\gg,\even}^{2}$ requires new methods.

\subsection{Unitarizable Highest Weight Supermodules} \label{subsec::unitaizable_case} The relative cubic Dirac operator $\Dirac_{\gg,\even}$ for a symmetrizable Kac--Moody superalgebra $\gg$ is directly relevant to the study of unitarizable highest weight supermodules. It acts skew-adjointly on $M\otimes M(\odd)$ and yields a Dirac inequality, hence a necessary condition for unitarity. It also provides a relation between $\ker \Dirac_{\gg,\even}$ and Lie superalgebra cohomology.

In what follows, $\gg$ denotes a symmetrizable Kac--Moody superalgebra. We retain the notation of Section~\ref{subsec::Generalities_KM_Superalgebras} and Section~\ref{subsec::Oscillator_supermodule}. We begin with unitarizable supermodules.

\subsubsection{Unitarizable Supermodules of Kac--Moody Superalgebras}
Unitarizable highest weight supermodules of affine Kac--Moody superalgebras were studied by Jakobsen in \cite{Jakobsen_affine_Lie_superalgebras}. We follow his conventions.

Let $M$ be a highest weight supermodule of $\gg$ for a fixed Borel subalgebra $\bb \subset \gg$. We equip $M$ with a Hermitian form. A conjugate-linear map $\omega : \gg \to \gg$ is called an \emph{anti-involution} if $\omega^{2} = \operatorname{id}_{\gg}$ and $\omega([x,y]) = [\omega(y), \omega(x)]$ for all $x,y \in \gg$. 
It is \emph{consistent} if $\omega(\gg^{\alpha}) = \gg^{-\alpha}$ for all $\alpha \in \Delta$. 
In what follows, $\omega$ denotes a consistent conjugate-linear anti-involution. 
It extends to $\UE(\gg)$ in the obvious way.

\begin{example}[$\widehat{\mathfrak{sl}}(m\vert n)$]\label{ex::Jakobsen_involution}
Let $X=\begin{psmallmatrix}a&\beta\\ \gamma&d\end{psmallmatrix}\in\mathfrak{sl}(m\vert n)$, where $a\in \Mat(m;\CC)$, $d\in \Mat(n;\CC)$, $\beta\in \Mat(m\times n;\CC)$, and $\gamma\in \Mat(n\times m;\CC)$. Define conjugate-linear anti-involutions by $\omega_{\pm}(X)=\begin{psmallmatrix}a^{\ast}&\pm\gamma^{\ast}\\ \pm\beta^{\ast}&d^{\ast}\end{psmallmatrix}$. They extend to $\widehat{\mathfrak{sl}}(m\vert n)$ by $\omega_{\pm}(X\otimes t^{r})=\omega_{\pm}(X)\otimes t^{-r}$, $\omega_{\pm}(K)=K$, and $\omega_{\pm}(D)=D$, for all $X\in\mathfrak{sl}(m\vert n)$ and $r\in\ZZ$. Then $\omega_{\pm}([u,v])=[\omega_{\pm}(v),\omega_{\pm}(u)]$ for all homogeneous $u,v\in\widehat{\mathfrak{sl}}(m\vert n)$.
\end{example}

Any real form of $\gg$ determines a conjugate-linear anti-involution. As in the case of Kac–Moody algebras, the following proposition holds (see for example \cite{Kac_infinite, Jakobsen_affine_Lie_superalgebras}). 

\begin{proposition} \label{prop::Hermitian_form}
 Let $M$ be a highest weight $\gg$-supermodule of highest weight $\Lambda$ and highest weight vector $v_{\Lambda}$. Assume $\Lambda(h)=\overline{\Lambda(\omega(h))}$ for all $h \in \hh$. Then there exists a unique Hermitian form $\langle\cdot,\cdot\rangle$ on $M$ such that $\langle v_{\Lambda},v_{\Lambda}\rangle =1$ and
 \[
 \langle xv,w\rangle = \langle v, \omega(x)w\rangle, \qquad v,w \in M
 \]
 for all $x \in \gg$. A Hermitian form satisfying this property is called \emph{contravariant}.
\end{proposition}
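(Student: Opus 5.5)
We follow the standard Shapovalov-form argument familiar from Kac--Moody algebras (cf.~\cite{Kac_infinite}). The plan has three steps: first uniqueness, then existence on the Verma supermodule $M(\Lambda)$, and finally descent to the highest weight quotient $M$.

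\emph{Uniqueness.} Since $M=\UE(\gg)v_\Lambda=\UE(\nn^-)v_\Lambda$, every vector is a finite sum of elements $y\,v_\Lambda$ with $y\in\UE(\nn^-)$. Contravariance gives
\[
\langle y v_\Lambda, y' v_\Lambda\rangle=\langle v_\Lambda,\omega(y)y'v_\Lambda\rangle .
\]
Because $\omega$ is consistent, $\omega(y)\in\UE(\nn^+)$. Hence $\omega(y)y'v_\Lambda$ is a multiple of $v_\Lambda$ plus vectors of other weights. Together with $\langle v_\Lambda,v_\Lambda\rangle=1$ and orthogonality of distinct weight spaces (which follows from contravariance applied to $h\in\hh$ and the hypothesis on $\Lambda$), the form is forced.

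\emph{Existence on $M(\Lambda)$.} Let $\pi_0\colon\UE(\gg)=\UE(\hh)\oplus(\nn^-\UE(\gg)+\UE(\gg)\nn^+)\to\UE(\hh)=S(\hh)$ be the Harish-Chandra projection given by PBW. Define
\[
\langle u v_\Lambda, u'v_\Lambda\rangle \coloneqq \overline{\Lambda}\bigl(\pi_0(\omega(u)u')\bigr),
\]
where $\overline{\Lambda}$ is the conjugate-linear extension of $\Lambda$. We first check that this is well defined on $M(\Lambda)=\UE(\gg)\otimes_{\UE(\bb)}\CC_\Lambda$, i.e.\ that it vanishes when $u'\in\UE(\gg)\nn^+$ or $u'\in\UE(\gg)(h-\Lambda(h))$. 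This uses that $\omega$ maps $\nn^\pm$ to $\nn^\mp$ and that $\pi_0$ is an $(\UE(\nn^-),\UE(\nn^+))$-bimodule killing map. The hypothesis $\Lambda(h)=\overline{\Lambda(\omega(h))}$ is exactly what is needed for compatibility in the first argument and for Hermitian symmetry, via $\pi_0\circ\omega=\omega\circ\pi_0$ on $\UE(\hh)$. Sesquilinearity holds by construction, and contravariance follows from $\omega(xu)=\omega(u)\omega(x)$, which is the anti-involution property extended to $\UE(\gg)$.

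\emph{Descent to $M$.} Any highest weight $M$ is a quotient $M(\Lambda)/N$. The radical of the Shapovalov form is the maximal proper submodule, so the form descends to $L(\Lambda)$. For general $M$ we also need $N$ to lie in the radical. This is the main obstacle: as literally stated, the claim can fail for non-simple quotients. I would therefore either read $M$ as any quotient on which the form descends, or argue that $N$ is in the radical whenever $N$ is generated by vectors orthogonal to $M(\Lambda)$. Super signs appear only through $\omega$ on odd elements, so they are inessential once $\omega$ is taken to reverse products without signs, as in the definition.
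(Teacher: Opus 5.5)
Your uniqueness argument and your construction of the Shapovalov-type form on the Verma supermodule $M(\Lambda)$ via the Harish-Chandra projection are the standard Kac--Moody argument the paper defers to by citation, and they go through. The genuine gap is the descent step. You assert that the statement can fail for non-simple highest weight modules and leave existence open there. This is false, and without settling it your proposal proves the statement only for $M(\Lambda)$ and $L(\Lambda)$.

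In fact every proper submodule $N\subset M(\Lambda)$ lies in the radical of the form. Since $M(\Lambda)^{\Lambda}=\CC v_{\Lambda}$ and $v_{\Lambda}$ generates, one has $N^{\Lambda}=0$. For $n\in N$ and $u\in\UE(\gg)$, contravariance together with $\omega^{2}=\id$ gives $\langle n,uv_{\Lambda}\rangle=\langle\omega(u)n,v_{\Lambda}\rangle$. Here $\omega(u)n\in N$ has no component of weight $\Lambda$, so this vanishes by orthogonality of distinct weight spaces. Equivalently, $N$ is contained in the unique maximal proper submodule (the sum of all proper submodules still has zero $\Lambda$-weight space), and you yourself identified that submodule with the radical.

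Hence the form descends to every quotient $M=M(\Lambda)/N$, still with $\langle v_{\Lambda},v_{\Lambda}\rangle=1$. It may be degenerate, but the proposition only claims a Hermitian form, so no reinterpretation of $M$ is needed.

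Two minor points. First, orthogonality of distinct weight spaces needs, besides the hypothesis on $\Lambda$, that $\overline{\alpha(h)}=\alpha(\omega(h))$ for every root $\alpha$. This follows from consistency of $\omega$ by applying $\omega$ to $[h,x]=\alpha(h)x$ with $x\in\gg^{\alpha}$. Second, with the paper's convention (conjugate-linear in the first argument), you should evaluate $\pi_{0}(\omega(u)u')$ with the $\CC$-linear character $\Lambda\colon S(\hh)\to\CC$. The conjugate-linearity in $u$ already comes from $\omega$, and a conjugate-linear $\overline{\Lambda}$ would make the form linear in the first slot instead.
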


We call a $\Lambda \in \hh^{\ast}$ such that $\Lambda(h)=\overline{\Lambda(\omega(h))}$ \emph{symmetric}. In what follows, we assume that $M$ is a highest weight $\gg$-supermodule with symmetric highest weight $\Lambda$. Let $\langle\cdot, \cdot\rangle$ denote the associated unique contravariant Hermitian form on $M$.

\begin{definition}
 $M$ is called $\omega$-\emph{unitarizable} if $\langle\cdot,\cdot\rangle$ is positive definite.
\end{definition}

Explicit examples of unitarizable supermodules over affine Lie superalgebras are constructed in~\cite{Jakobsen_affine_Lie_superalgebras}.

\subsubsection{Dirac inequality} 
Let $M$ be a $\omega$-unitarizable $\gg$-supermodule, where $\omega$ is a consistent conjugate-linear anti-involution of $\gg$, and let $\langle\cdot,\cdot\rangle_{M}$ be the corresponding positive-definite Hermitian form. We let $\even$ act on $M\otimes M(\odd)$ by
\begin{equation}
x(v\otimes P)\coloneqq xv\otimes P+v\otimes \pi_{M(\odd)}(x)P,\qquad x\in\even,\ v\in M,\ P\in M(\odd).
\end{equation}
Moreover, we equip $M\otimes M(\odd)$ with the Hermitian form
\begin{equation}
\langle v\otimes P,w\otimes Q\rangle\coloneqq\langle v,w\rangle_{M}\langle P,Q\rangle_{M(\odd)}.
\end{equation}
Lemma~\ref{lemm::contravariance_oscillator_module} then yields the following.

\begin{proposition}\label{prop::unitarity_tensor_twist}
If $M$ is $\omega$-unitarizable, then $M\otimes M(\odd)$ is a $\omega\vert_{\even}$-unitarizable $\even$-module.
\end{proposition}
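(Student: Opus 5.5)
The plan is to reduce the statement to two properties and then check that both survive passing to the tensor product. The properties are contravariance of the $\even$-action with respect to $\omega\vert_{\even}$, and positive definiteness of the product form $\langle\cdot,\cdot\rangle=\langle\cdot,\cdot\rangle_{M}\langle\cdot,\cdot\rangle_{M(\odd)}$ on $M\otimes M(\odd)$. Unitarizability of an $\even$-module means precisely these two things. So I will not need to identify a highest weight vector or invoke uniqueness from Proposition~\ref{prop::Hermitian_form}; it suffices to check them directly.

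\emph{Contravariance.} For $x\in\even$, $v,w\in M$ and $P,Q\in M(\odd)$, I expand $\langle x(v\otimes P),w\otimes Q\rangle$ using the diagonal action. This produces the sum
\[
\langle xv,w\rangle_M\langle P,Q\rangle_{M(\odd)}+\langle v,w\rangle_M\langle \pi_{M(\odd)}(x)P,Q\rangle_{M(\odd)}.
\]
In the first term I apply the $\omega$-contravariance of $\langle\cdot,\cdot\rangle_M$ (Proposition~\ref{prop::Hermitian_form}), restricted to $\even$. In the second I apply Lemma~\ref{lemm::contravariance_oscillator_module}. Together these give $\langle v\otimes P,\omega(x)(w\otimes Q)\rangle$. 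Since $\omega$ is consistent and preserves parity, $\omega(\even)\subseteq\even$, so $\omega\vert_{\even}$ is a consistent anti-involution of $\even$. One point to check is that no Koszul signs appear. Here $x$ is even, so the diagonal action carries no sign, and the form on the tensor product is the plain product form. The form is defined on the algebraic tensor product, and on pure tensors all sums are finite, because the componentwise action of $\gamma'_{\odd}(x)$ on polynomials only involves finitely many terms. This is the point of the completion discussion in Section~\ref{subsec::Oscillator_supermodule}.

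\emph{Positivity.} The form $\langle\cdot,\cdot\rangle_M$ is positive definite by assumption. The form $\langle\cdot,\cdot\rangle_{M(\odd)}$ is positive definite because the monomials $x^\alpha$ are orthogonal with norms $\alpha!>0$. The tensor product of two positive definite Hermitian forms is again positive definite. To see this, I write a vector as a finite sum $\sum_{i,j}c_{ij}\,u_i\otimes x^{\alpha_j}$, where $(u_i)$ is an orthonormal family in the relevant finite-dimensional span in $M$ (obtained by Gram--Schmidt) and the $x^{\alpha_j}$ are distinct monomials. Its norm is then $\sum|c_{ij}|^2\alpha_j!>0$ unless the vector vanishes.

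\emph{Expected obstacle.} The only delicate point is whether $\pi_{M(\odd)}$ really makes $M(\odd)$, and hence $M\otimes M(\odd)$, an honest $\even$-module. This requires $\gamma'_{\odd}(x)$ to act by a well-defined operator on each polynomial, which holds because each monomial is annihilated by all but finitely many $\partial_a$. It also requires compatibility of $\omega$ with the chosen polarization, namely $\omega(x_a)=\partial_a$. That compatibility is exactly the hypothesis under which Lemma~\ref{lemm::contravariance_oscillator_module} was proved, so I will state it explicitly as the standing $\omega$-adaptedness assumption and otherwise quote that lemma.
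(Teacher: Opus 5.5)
Your proposal is correct and follows the paper's argument. The paper obtains the proposition directly from the $\omega$-contravariance of $\langle\cdot,\cdot\rangle_M$ together with Lemma~\ref{lemm::contravariance_oscillator_module}, applied to the product form, with positivity inherited from the two positive definite factors; you simply make explicit the sign check, the well-definedness of $\pi_{M(\odd)}$ on polynomials, and the positivity of the tensor product form.
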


Since the definition of $\Dirac_{\gg,\even}$ is independent of the choice of basis, the conventions of Section~\ref{subsec::Oscillator_supermodule} give
\begin{equation} \label{eq::explicit_form_Dirac_g_even}
\Dirac_{\gg,\even}=2\sum_{a}(x_{a}\otimes \partial_{a}-\partial_{a}\otimes x_{a}) \in \widehat{\End}(M\otimes M(\odd)).
\end{equation}

A polarization $\odd=\odd_{-}\oplus\odd_{+}$ is called \emph{$\omega$-adapted of sign $\sigma\in\{\pm 1\}$} if there exist $B$-dual homogeneous bases $(x_{a})$ of $(\odd)_{-}$ and $(\partial_{a})$ of $(\odd)_{+}$, normalized by $B(\partial_{a},x_{b})=\tfrac12\delta_{ab}$, such that $\omega(x_{a})=\sigma\partial_{a}$ and $\omega(\partial_{a})=\sigma x_{a}$ for all $a$. Such a basis is called an \emph{$\omega$-adapted basis of sign $\sigma$}. An explicit example is given in \cite{SchmidtDirac} for $\sl(m\vert n)$, and extends directly to $\widehat{\sl}(m\vert n)$ via Example~\ref{ex::Jakobsen_involution}.

As a direct consequence of \eqref{eq::explicit_form_Dirac_g_even} and Lemma~\ref{lemm::contravariance_oscillator_module}, one obtains the following.

\begin{lemma}\label{lemm::Dirac_skew} Let $M$ be a $\omega$-unitarizable $\gg$-supermodule and consider $\Dirac_{\gg,\even}\in \widehat{\End}(M\otimes M(\odd))$. If $\odd=(\odd)_{-}\oplus(\odd)_{+}$ is $\omega$-adapted of sign $+1$, then $\Dirac_{\gg,\even}$ is skew-adjoint. If $\odd=(\odd)_{-}\oplus(\odd)_{+}$ is $\omega$-adapted of sign $-1$, then $\Dirac_{\gg,\even}$ is self-adjoint. In particular,
\begin{equation*}
\Ker \Dirac_{\gg,\even}^{2}=\Ker \Dirac_{\gg,\even}.
\end{equation*}
\end{lemma}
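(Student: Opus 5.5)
We need to prove Lemma~\ref{lemm::Dirac_skew}: skew/self-adjointness of $\Dirac_{\gg,\even}$ in an $\omega$-adapted basis, and then $\Ker\Dirac_{\gg,\even}^{2}=\Ker\Dirac_{\gg,\even}$.

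The plan is to compute the formal adjoint of each summand of the explicit formula~\eqref{eq::explicit_form_Dirac_g_even} with respect to the product Hermitian form on $M\otimes M(\odd)$.

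\textbf{Step 1: adjoints of the factors.} On $M$ the form is $\omega$-contravariant by Proposition~\ref{prop::Hermitian_form}. If the basis is $\omega$-adapted of sign $\sigma$, then the adjoint of $x_{a}$ acting on $M$ is $\omega(x_{a})=\sigma\partial_{a}$, and the adjoint of $\partial_{a}$ is $\sigma x_{a}$. On $M(\odd)$ the Clifford (Weyl) generators satisfy $x_{a}^{\ast}=\partial_{a}$ and $\partial_{a}^{\ast}=x_{a}$, by construction of $\langle\cdot,\cdot\rangle_{M(\odd)}$.

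\textbf{Step 2: adjoint of a tensor summand.} The operator $u\otimes c$ acts on $v\otimes P$ as $(-1)^{p(c)p(v)}uv\otimes cP$. Here $u$ and $c$ are both odd, so there is a parity sign. I would check that this sign is compatible with taking adjoints. This should hold because $\langle v,w\rangle_{M}$ vanishes unless $p(v)=p(w)$, so the sign produced by the Koszul rule is the same on both sides.

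With that, the adjoint of $x_{a}\otimes\partial_{a}$ is $\sigma\,\partial_{a}\otimes x_{a}$, up to a possible extra sign $s$ coming from the reordering of two odd operators in the adjoint. The sign $s$ needs care. Taking adjoints of a product of two odd operators in the super tensor product may give $(u\otimes c)^{\ast}=\pm u^{\ast}\otimes c^{\ast}$. I would fix this sign by evaluating both sides on homogeneous vectors directly, rather than by abstract rules.

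Summing over $a$:
\[
\Dirac_{\gg,\even}^{\ast}=2\sum_{a}s\sigma\bigl(\partial_{a}\otimes x_{a}-x_{a}\otimes\partial_{a}\bigr)=-s\sigma\,\Dirac_{\gg,\even}.
\]
The statement forces $s=1$, which gives skew-adjointness for $\sigma=+1$ and self-adjointness for $\sigma=-1$. Pinning down $s$ is the main obstacle. I would verify it on the lowest-degree vectors: $v$ even in $M$ and $P$ a polynomial in $M(\odd)$.

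\textbf{Step 3: the kernels.} Each $v\in M\otimes M(\odd)$ is a finite sum of weight vectors, and $\Dirac_{\gg,\even}$ is a finite sum when applied to $v$. So $\Dirac_{\gg,\even}$ and its adjoint are genuinely defined on this pre-Hilbert space. If $\Dirac_{\gg,\even}^{2}v=0$, then
\[
\langle \Dirac_{\gg,\even}v,\Dirac_{\gg,\even}v\rangle=\pm\langle v,\Dirac_{\gg,\even}^{2}v\rangle=0.
\]
The form is positive definite by Proposition~\ref{prop::unitarity_tensor_twist} and the positive definiteness of the form on $M(\odd)$. Hence $\Dirac_{\gg,\even}v=0$. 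The reverse inclusion $\Ker\Dirac_{\gg,\even}\subseteq\Ker\Dirac_{\gg,\even}^{2}$ is trivial.
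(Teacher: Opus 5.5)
Your plan (compute the adjoint of each summand of \eqref{eq::explicit_form_Dirac_g_even}, then use positivity) is the argument the paper has in mind, since it calls the lemma a direct consequence of that formula. However, you leave undetermined exactly the sign on which the statement depends, and your justification for it in Step~2 is wrong. Saying ``the statement forces $s=1$'' is circular. The pairing that actually occurs is not $\langle v,w\rangle_M$ but $\langle v,\partial_a w\rangle_M$. Since $\partial_a$ is odd, evenness of the form forces $p(v)=p(w)+\bar 1$. With the action you wrote, $(u\otimes c)(v\otimes P)=(-1)^{p(c)p(v)}uv\otimes cP$, one gets for homogeneous $v,w,P,Q$
\begin{gather*}
\langle (x_a\otimes\partial_a)(v\otimes P),w\otimes Q\rangle=(-1)^{p(v)}\sigma\,\langle v,\partial_a w\rangle_{M}\langle P,x_aQ\rangle_{M(\odd)},\\
\langle v\otimes P,(\partial_a\otimes x_a)(w\otimes Q)\rangle=(-1)^{p(w)}\langle v,\partial_a w\rangle_{M}\langle P,x_aQ\rangle_{M(\odd)}.
\end{gather*}
Hence $(x_a\otimes\partial_a)^{*}=-\sigma\,\partial_a\otimes x_a$, which is the usual rule $(A\otimes B)^{*}=(-1)^{p(A)p(B)}A^{*}\otimes B^{*}$. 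So $s=-1$ and $\Dirac_{\gg,\even}^{*}=\sigma\,\Dirac_{\gg,\even}$. This means self-adjoint for sign $+1$ and skew-adjoint for sign $-1$, the reverse of what you set out to prove.

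So the sign assignment depends on the convention, and you have to fix the convention and compute rather than read $s$ off the conclusion. The assignment as stated is what one obtains from the unsigned action $v\otimes P\mapsto uv\otimes cP$, i.e.\ from $(x_a\otimes\partial_a)^{*}=\omega(x_a)\otimes\partial_a^{*}$; this is presumably what the paper's one-line justification intends. Be aware, however, that the unsigned action is not compatible with the multiplication of $\widehat{\WW}(\gg)$, where $u\otimes 1$ and $1\otimes c$ anticommute for odd $u,c$. It also reverses the sign of $\Dirac_{\gg,\even}^{2}$, whereas Theorem~\ref{thm::square_Dirac} and Proposition~\ref{prop::action_on_HW_of_Dirac} concern the Koszul action. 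Consider the finite-dimensional analogue $\gl(1\vert 1)$ with $B(X,Y)=\str(XY)$, $\omega(X)=X^{\dagger}$, $\partial=E_{12}/\sqrt{2}$, $x=E_{21}/\sqrt{2}$, which is adapted of sign $+1$. There the Koszul action gives $\Dirac^{2}(v_\Lambda\otimes x^{n})=2n\bigl(\Lambda(E_{11})+\Lambda(E_{22})\bigr)\,v_\Lambda\otimes x^{n}$. This agrees with the square formula and is $\geq 0$, because unitarity forces $\Lambda(E_{11})+\Lambda(E_{22})\geq 0$; so $\Dirac$ is self-adjoint in that example. What holds in every convention is $\Dirac_{\gg,\even}^{*}=\pm\Dirac_{\gg,\even}$. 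That is all your Step~3 uses, so your proof of $\Ker\Dirac_{\gg,\even}^{2}=\Ker\Dirac_{\gg,\even}$ is fine.
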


The lemma yields the Dirac inequality.

\begin{proposition} \label{prop::Dirac_inequality}
Let $M$ be a $\omega$-unitarizable highest weight $\gg$-supermodule of highest weight $\Lambda$.
\begin{enumerate}
\item[a)] If the polarization is $\omega$-adapted of sign $+1$, then $\Dirac_{\gg,\even}^{2}$ is non-positive, that is,
\begin{equation*}
\langle \Dirac_{\gg,\even}^{2}v,v\rangle\leq 0\qquad \forall v\in M\otimes M(\odd).
\end{equation*}
In particular, if $V(\mu)\subset M\otimes M(\odd)$ is an $\even$-submodule of highest weight $\mu$, then
\begin{equation*}
B(\Lambda+\rho,\Lambda+\rho)\leq B(\mu+\rho_{\bar0},\mu+\rho_{\bar0}).
\end{equation*}
\item[b)] If the polarization is $\omega$-adapted of sign $-1$, then $\Dirac_{\gg,\even}^{2}$ is non-negative, that is,
\begin{equation*}
\langle \Dirac_{\gg,\even}^{2}v,v\rangle\geq 0\qquad \forall v\in M\otimes M(\odd).
\end{equation*}
In particular, if $V(\mu)\subset M\otimes M(\odd)$ is an $\even$-submodule of highest weight $\mu$, then
\begin{equation*}
B(\Lambda+\rho,\Lambda+\rho)\geq B(\mu+\rho_{\bar0},\mu+\rho_{\bar0}).
\end{equation*}
\end{enumerate}
\end{proposition}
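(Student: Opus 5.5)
The plan is to combine the adjointness in Lemma~\ref{lemm::Dirac_skew} with the scalar action in Proposition~\ref{prop::action_on_HW_of_Dirac}. The key point is that an operator which is skew-adjoint or self-adjoint for a positive definite Hermitian form has a square of definite sign.

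First, by Proposition~\ref{prop::unitarity_tensor_twist}, the Hermitian form $\langle v\otimes P,w\otimes Q\rangle=\langle v,w\rangle_{M}\langle P,Q\rangle_{M(\odd)}$ on $M\otimes M(\odd)$ is positive definite. Suppose the polarization is $\omega$-adapted of sign $+1$. Then Lemma~\ref{lemm::Dirac_skew} gives $\langle \Dirac_{\gg,\even}u,w\rangle=-\langle u,\Dirac_{\gg,\even}w\rangle$, and hence
\[
\langle \Dirac_{\gg,\even}^{2}v,v\rangle=-\langle \Dirac_{\gg,\even}v,\Dirac_{\gg,\even}v\rangle\leq 0 .
\]
In the sign $-1$ case, self-adjointness gives $\langle \Dirac_{\gg,\even}^{2}v,v\rangle=\langle \Dirac_{\gg,\even}v,\Dirac_{\gg,\even}v\rangle\geq0$. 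One technical point must be addressed before this computation is valid: $\Dirac_{\gg,\even}$ lives only in $\widehat{\End}(M\otimes M(\odd))$, so I have to check that $\Dirac_{\gg,\even}v$ is a finite vector in $M\otimes M(\odd)$. I will do this with the explicit formula \eqref{eq::explicit_form_Dirac_g_even}. Fix a weight (or $\ZZ$-degree) vector $v=m\otimes P$. Only finitely many $\partial_a$ fail to annihilate $P$, since $P$ is a polynomial in finitely many variables. Likewise, only finitely many $\partial_a$ act nontrivially on $m$: $M$ is a highest weight module, so $\nn^{+}$ raises the $\ZZ$-degree, which is bounded above. Thus both sums in \eqref{eq::explicit_form_Dirac_g_even} are finite on $v$, and the adjointness identities hold termwise. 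This step is where I expect the only real care to be needed, but it is routine.

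For the inequalities on highest weights, take an $\even$-submodule $V(\mu)$ of highest weight $\mu$ and a nonzero highest weight vector $v_\mu\in V(\mu)$. By Proposition~\ref{prop::action_on_HW_of_Dirac}, $\Dirac_{\gg,\even}^{2}v_\mu=c\,v_\mu$ with
\[
c=B(\Lambda+\rho,\Lambda+\rho)-B(\mu+\rho_{\bar0},\mu+\rho_{\bar0}).
\]
Then $\langle \Dirac_{\gg,\even}^2 v_\mu,v_\mu\rangle=c\langle v_\mu,v_\mu\rangle$ with $\langle v_\mu,v_\mu\rangle>0$. So $c\leq0$ in case a) and $c\geq0$ in case b), which are exactly the stated inequalities.

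Finally, the pairing $\langle\Dirac^2 v_\mu,v_\mu\rangle=c\langle v_\mu,v_\mu\rangle$ shows that $c$ is real for any nonzero $v_\mu$. This is what makes the comparison with $0$ meaningful, so no separate reality hypothesis on $c$ is needed.
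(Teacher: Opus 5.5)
Your proposal is correct and is essentially the paper's argument, which is exactly the combination of positive-definiteness of the form on $M\otimes M(\odd)$, the (skew-)adjointness from Lemma~\ref{lemm::Dirac_skew}, and the scalar action from Proposition~\ref{prop::action_on_HW_of_Dirac}. Your extra checks are details the paper leaves implicit: the finiteness of $\Dirac_{\gg,\even}v$ via \eqref{eq::explicit_form_Dirac_g_even}, and the automatic reality of the scalar. One small correction: since the form is conjugate-linear in the first slot, the pairing gives $\bar{c}\,\langle v_\mu,v_\mu\rangle$ rather than $c\,\langle v_\mu,v_\mu\rangle$, but this is harmless because you show $c\in\RR$.
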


\begin{proof}
This follows immediately from the positive-definiteness of the Hermitian form, Lemma~\ref{lemm::Dirac_skew}, and Proposition ~\ref{prop::action_on_HW_of_Dirac}.
\end{proof}

\subsubsection{Cohomology} We now relate $\ker \Dirac_{\gg,\even}$ to the $\gg_{+1}$-cohomology for a class of affine Lie superalgebras. Let $\odd=(\gg_{\bar1})_{+}\oplus(\gg_{\bar1})_{-}$ be the decomposition from Section~\ref{subsec::Oscillator_supermodule}, where $(\gg_{\bar1})_{+}$ is spanned by the $\partial_{a}$ and $(\gg_{\bar1})_{-}$ by the $x_{a}$ such that $(\gg_{\bar{1}})_{\pm}$ are abelian. For brevity, write $\gg_{+1}\coloneqq(\gg_{\bar1})_{+}$ and $\gg_{-1}\coloneqq(\gg_{\bar1})_{-}$. Assume that the $\even$-action is compatible, that is, $\gg_{+1}\cong\gg_{-1}$ as $\even$-modules. The main example is the Kac--Moody superalgebra associated to $\sl(m\vert n)$, with $\widehat{\sl}(m\vert n)_{0}\coloneqq\widehat{\sl}(m\vert n)_{\bar0}$ and $\widehat{\sl}(m\vert n)_{\pm1}\coloneqq \sl(m\vert n)_{\pm1}\otimes_{\CC}\CC[t,t^{-1}]$, where $\sl(m\vert n)_{-1}$ and $\sl(m\vert n)_{+1}$ denote the lower and upper odd block matrices in the standard realization. 

 Let $M$ be a highest weight $\gg$-supermodule. Assume that $M$ is discretely decomposable as an $\even$-module and $\omega$-unitarizable for a conjugate-linear anti-involution $\omega$ satisfying $\omega(x_{a})=\partial_{a}$ and $\omega(\partial_{a})=x_{a}$, that is, such that the polarization is $\omega$-adapted of sign $+1$. The case of sign $-1$ is analogous. In particular, $M\otimes M(\odd)$ is a $\omega|_{\even}$-unitarizable $\even$-module for the diagonal action of Section~\ref{subsec::Oscillator_supermodule} and it is discretely decomposable as an $\even$-module. Indeed, $M$ is discretely decomposable by assumption, and $M(\odd)$ is a $\omega$-unitarizable $\even$-weight module whose weights are bounded above (Corollary~\ref{cor::weights_M_odd}). Hence every nonzero $\even$-submodule of $M(\odd)$ contains a highest weight vector, and the positive-definite contravariant form yields an orthogonal decomposition into highest weight submodules. Since $M(\odd)$ is a weight module, this decomposition is an algebraic direct sum.

We define the cochain complex
\begin{equation}
 C^{\ast}(\gg_{+1};M) \coloneqq \bigoplus_{i}C^{i}(\gg_{+1}, M) \coloneqq \bigoplus_{i} M \otimes S^{i}(\gg_{-1}), \qquad d \coloneqq \sum_{a} \partial_{a} \otimes x_{a} \in \widehat{\End}(C^{\ast}(\gg_{+1};M)).
\end{equation}
Here, $d^{2}=0$ and $d$ commutes with the action of $\even$ by assumption. The associated cohomology group is denoted by $\operatorname{H}^{\ast}(\gg_{+1},M)$. It is naturally a $\even$-module.

\begin{theorem}\label{thm::unitarity_cohomology} There is an isomorphism of $\even$-modules
 $$\ker \Dirac_{\gg,\even} \cong \operatorname{H}^{\ast}(\gg_{+1};M) \otimes \CC_{-\rho_{\bar{1}}}$$
\end{theorem}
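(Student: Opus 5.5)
The plan is the Hodge-theoretic argument familiar from Huang--Pandžić and from the finite-dimensional case \cite{SchmidtDirac}. The first step is to split the explicit formula \eqref{eq::explicit_form_Dirac_g_even} into two parts,
\[
\Dirac_{\gg,\even}=2(C-C^{\ast}),\qquad C\coloneqq\sum_{a}x_{a}\otimes\partial_{a},\qquad C^{\ast}\coloneqq\sum_{a}\partial_{a}\otimes x_{a}.
\]
We then identify $M\otimes M(\odd)=M\otimes S(\gg_{-1})$ with the cochain space $C^{\ast}(\gg_{+1};M)$. Under this identification $C^{\ast}=d$, where $\partial_{a}$ acts on $M$ and $x_{a}$ acts on $M(\odd)$ by multiplication. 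The operator $C$ lowers the polynomial degree and acts on $M$ through $x_{a}\in\gg_{-1}$.

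Next we show that $C$ is the formal adjoint of $d$ for the Hermitian form on $M\otimes M(\odd)$. Contravariance gives $\omega(\partial_{a})=x_{a}$ on $M$, and on the oscillator module $\partial_{a}$ and $x_{a}$ are mutually adjoint. The signs coming from the odd elements should combine so that $C^{\dagger}=d$ and $d^{\dagger}=C$; the skew-adjointness in Lemma~\ref{lemm::Dirac_skew} already encodes this. Since $\gg_{\pm1}$ is abelian, $C^{2}=0$ and $d^{2}=0$ as well.

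The third step treats $\Dirac_{\gg,\even}$ as a Dirac-type operator $d-d^{\dagger}$, up to scalar and sign. We have $\Dirac_{\gg,\even}^{2}=-4(dC+Cd)$, which is $-4$ times a Laplacian $\Delta\coloneqq dd^{\dagger}+d^{\dagger}d$. For any $v$,
\[
\langle\Delta v,v\rangle=\|d^{\dagger}v\|^{2}+\|dv\|^{2}.
\]
Hence $\ker\Dirac_{\gg,\even}=\ker\Delta=\ker d\cap\ker d^{\dagger}$. Here one uses Lemma~\ref{lemm::Dirac_skew} together with the fact that $\Dirac_{\gg,\even}$ preserves each finite-dimensional weight space, because $\Dirac_{\gg,\even}$ has weight zero.

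Then we prove the Hodge decomposition
\[
\ker d=\ker\Delta\oplus\operatorname{im}d.
\]
This is the main obstacle, since the spaces are infinite-dimensional. The plan is to reduce to finite-dimensional pieces. By $\even$-invariance of $\Dirac_{\gg,\even}$ (equivalently, $\Delta$ is a scalar on each $\even$-isotypic component by Proposition~\ref{prop::action_on_HW_of_Dirac}), and using discrete decomposability, $\Delta$ acts by scalars on the summands. Moreover $d$ and $C$ preserve each $\hh$-weight space of $M\otimes M(\odd)$ together with the $\ZZ$-grading, and these joint pieces are finite-dimensional. Indeed, $M$ has finite-dimensional weight spaces, and within a fixed weight and $\ZZ$-degree only finitely many monomials in $M(\odd)$ occur.

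It has to be checked carefully that $d$ and $d^{\dagger}$ have well-defined finite actions on each such piece, and not only in the completion $\widehat{\End}$. The plan is to do this by showing that each acts on a fixed vector through only finitely many terms. For $d^{\dagger}=C$ this holds because $\partial_{a}$ kills all but finitely many variables of a polynomial. For $d$ it holds because $\partial_{a}$ acts locally nilpotently on the highest weight module $M$ by degree reasons. Once this is in place, finite-dimensional Hodge theory on each positive-definite weight piece gives $\ker d/\operatorname{im}d\cong\ker d\cap\ker d^{\dagger}$, isomorphic as vector spaces.

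Finally, we upgrade this to an isomorphism of $\even$-modules. The harmonic space is $\even$-stable, and $d$ commutes with the $\even$-action, so the map $\ker\Delta\to\operatorname{H}^{\ast}(\gg_{+1};M)$ is $\even$-equivariant. The last point is to compare the two $\even$-actions. On the Dirac side, $\even$ acts on $M(\odd)$ by $\pi_{M(\odd)}$. On the cohomology side, it acts on $S(\gg_{-1})$ by the adjoint action. By Lemma~\ref{lemm::gamma_s}, Remark~\ref{rmk::generator_adjoint_action} and Lemma~\ref{lemm::action_on_oscillator_supermodule}, these differ by the character $-\rho_{\bar1}$. It remains to check that the quadratic part of $\gamma'_{\odd}$ reproduces exactly the adjoint action on polynomials, with no further scalar from normal ordering beyond $\rho_{\bar1}$. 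Given that, we obtain the twist by $\CC_{-\rho_{\bar1}}$ and hence $\ker\Dirac_{\gg,\even}\cong\operatorname{H}^{\ast}(\gg_{+1};M)\otimes\CC_{-\rho_{\bar1}}$.
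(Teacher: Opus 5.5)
Your proposal is correct and follows the paper's proof. You split $\Dirac_{\gg,\even}=2(\delta-d)$ with $\delta=\sum_a x_a\otimes\partial_a$ adjoint to $d$, prove $\ker d=\ker\Dirac_{\gg,\even}\oplus\Im d$, and account for the twist by $-\rho_{\bar1}$ via Lemma~\ref{lemm::action_on_oscillator_supermodule}.

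The one difference is that you get the Hodge decomposition on the finite-dimensional $\hh$-weight spaces of $M\otimes M(\odd)$, whereas the paper uses discrete decomposability and diagonalizability of $\Dirac_{\gg,\even}^{2}$; your route is slightly more self-contained. The check you flag at the end fills a step the paper's final citation leaves implicit: $\gamma'_{\odd}(x)$ acts on $S(\gg_{-1})$ as $\ad_x-\rho_{\bar1}(x)$ for all $x\in\even$, not only on $\hh$. This holds because, when $\even$ preserves $\gg_{\pm1}$, the normally ordered quadratic part consists of $x_a\partial_b$-type terms, which kill the vacuum.
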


\begin{proof}
The relative quadratic Dirac operator $\Dirac_{\gg,\even}$ has the form
\[
\Dirac_{\gg,\even} = 2\sum_{a}(x_{a}\otimes \partial_{a}-\partial_{a}\otimes x_{a}) \in \widehat{\End}(M\otimes M(\odd)).
\]
Setting $\delta \coloneqq \sum_{a} x_{a}\otimes\partial_{a} \in \widehat{\End}(M\otimes M(\odd))$ gives the decomposition $\Dirac_{\gg,\even} = 2(\delta-d)$. 
By the choice of $\omega$, the operators $d$ and $\delta$ are adjoint, and since $d^{2}=\delta^{2}=0$, one has that $\Im d$ is orthogonal to $\ker \delta$.

As $M$ and $M(\odd)$ are discretely decomposable as $\even$-modules, the tensor product $M\otimes M(\odd)$ is also discretely decomposable. 
Hence, using Theorem~\ref{thm::square_Dirac} and Lemma~\ref{lemm::Dirac_skew}, $\Dirac_{\gg,\even}^{2}$ is diagonalizable and
\[
M\otimes M(\odd)
 \cong \ker \Dirac_{\gg,\even}^{2} \oplus \Im \Dirac_{\gg,\even}^{2}
 = \ker \Dirac_{\gg,\even} \oplus \Im \Dirac^{2}_{\gg,\ll}
 \subset \ker \Dirac_{\gg,\even} \oplus \Im d \oplus \Im\delta,
\]
so that $\Im \Dirac_{\gg,\even}^{2} = \Im d \oplus \Im \delta$. 
It follows that
\[
\Ker d = \Ker \Dirac_{\gg,\even} \oplus \Im d,
\]
since $d^{\ast}=\delta, d^{2}=0$, and $M\otimes M(\odd)=\ker \Dirac_{\gg,\even}\oplus \Im d\oplus \Im \delta$. This completes the proof with Lemma~\ref{lemm::action_on_oscillator_supermodule}.
\end{proof}
\thispagestyle{empty}